 \theoremstyle{plain}
 \newtheorem{thm1}{Theorem}
 \newtheorem{cor1}[thm1]{Corollary}
\newtheorem{thm}{Theorem}[section]
\newtheorem{lemma}[thm]{Lemma}
\newtheorem{prop}[thm]{Proposition}
\newtheorem{cor}[thm]{Corollary}
\newtheorem{fact}[thm]{Fact}
\theoremstyle{definition}
\newtheorem{defn}[thm]{Definition}
\newtheorem{remark}[thm]{Remark}
\newtheorem{example}[thm]{Example}
\numberwithin{equation}{section}
\def\sA{\mathsf{A}}
\def\sB{\mathsf{B}}
\def\sC{\mathsf{C}}
\def\sD{\mathsf{D}}
\def\sE{\mathsf{E}}
\def\sF{\mathsf{F}}
\def\sG{\mathsf{G}}
\def\sX{\mathsf{X}}
\def\cA{\mathcal{A}}
\def\cC{\mathcal{C}}
\def\FF{\mathbb{F}}
\def\KK{\mathbb{K}}
\DeclareMathOperator\type{\tau}
\DeclareMathOperator\Type{\mathrm{Typ}}
\DeclareMathOperator\Res{\mathrm{Res}}
\DeclareMathOperator\proj{\mathrm{proj}}
\DeclareMathOperator\Opp{\mathrm{Opp}}
\DeclareMathOperator\disp{\mathrm{disp}}
\def\op{\mathrm{op}}
\def\id{\mathrm{id}}
\def\<{\langle}
\def\>{\rangle}
\renewcommand{\@makefnmark}{\mbox{\textsuperscript{}}}
\title{Opposition diagrams for automorphisms of large spherical buildings}
\author{James Parkinson 
\and
Hendrik Van Maldeghem}
\date{\today}
\begin{document}

\maketitle

\begin{abstract}
Let $\theta$ be an automorphism of a thick irreducible spherical building~$\Delta$ of rank at least~$3$ with no Fano plane residues. We prove that if there exist both type $J_1$ and $J_2$ simplices of $\Delta$ mapped onto opposite simplices by $\theta$, then there exists a type $J_1\cup J_2$ simplex of $\Delta$ mapped onto an opposite simplex by~$\theta$. This property is called \textit{cappedness}. We give applications of cappedness to opposition diagrams, domesticity, and the calculation of displacement in spherical buildings. In a companion piece to this paper we study the thick irreducible spherical buildings containing Fano plane residues. In these buildings automorphisms are not necessarily capped.
\end{abstract}


\section*{Introduction}

Let $\theta$ be an automorphism of a spherical building $\Delta$ of type $(W,S)$. The analysis of the fixed element geometry $\mathrm{Fix}(\theta)$ of $\theta$ is a powerful and well-established technique in building theory, see for example the beautiful theory of Tits indices and fixed subbuildings~\cite{PMW:15,Tit:66}. A complementary concept to fixed element theory is the ``opposite geometry'' $\mathrm{Opp}(\theta)$ consisting of all simplices of $\Delta$ that are mapped onto opposite simplices by~$\theta$. This geometry arises naturally in Curtis-Phan Theory, where it is used to efficiently encode presentations of groups acting on buildings (see \cite{BGHS:03,Gra:09}), however compared to the fixed element theory very little is known concerning $\Opp(\theta)$. In this paper we initiate a systematic analysis of the structure of the geometry~$\Opp(\theta)$.

To motivate and illustrate the key concepts in an example, let $\theta$ be a collineation of a thick $\sE_7$ building~$\Delta$, and construct the \textit{opposition diagram} of $\theta$ by encircling all nodes $s\in S$ of the Coxeter graph with the property that there exists a type~$s$ vertex in $\Opp(\theta)$. What are the possible opposition diagrams that can arise? It turns out that the number of possible diagrams is far less than the trivial bound of $2^7$. In fact it follows from our work that there are only $6$ possibilities:
\begin{center}
\begin{tabular}{l}
\begin{tikzpicture}[scale=0.5]
\node at (0,0.3) {};
\node [inner sep=0.8pt,outer sep=0.8pt] at (-2,0) (1) {$\bullet$};
\node [inner sep=0.8pt,outer sep=0.8pt] at (-1,0) (3) {$\bullet$};
\node [inner sep=0.8pt,outer sep=0.8pt] at (0,0) (4) {$\bullet$};
\node [inner sep=0.8pt,outer sep=0.8pt] at (1,0) (5) {$\bullet$};
\node [inner sep=0.8pt,outer sep=0.8pt] at (2,0) (6) {$\bullet$};
\node [inner sep=0.8pt,outer sep=0.8pt] at (3,0) (7) {$\bullet$};
\node [inner sep=0.8pt,outer sep=0.8pt] at (0,-1) (2) {$\bullet$};
\draw (-2,0)--(3,0);
\draw (0,0)--(0,-1);
\phantom{\draw [line width=0.5pt,line cap=round,rounded corners] (1.north west)  rectangle (1.south east);}
\phantom{\draw [line width=0.5pt,line cap=round,rounded corners] (7.north west)  rectangle (7.south east);}
\end{tikzpicture}\hspace{2cm}
\begin{tikzpicture}[scale=0.5]
\node at (0,0.3) {};
\node [inner sep=0.8pt,outer sep=0.8pt] at (-2,0) (1) {$\bullet$};
\node [inner sep=0.8pt,outer sep=0.8pt] at (-1,0) (3) {$\bullet$};
\node [inner sep=0.8pt,outer sep=0.8pt] at (0,0) (4) {$\bullet$};
\node [inner sep=0.8pt,outer sep=0.8pt] at (1,0) (5) {$\bullet$};
\node [inner sep=0.8pt,outer sep=0.8pt] at (2,0) (6) {$\bullet$};
\node [inner sep=0.8pt,outer sep=0.8pt] at (3,0) (7) {$\bullet$};
\node [inner sep=0.8pt,outer sep=0.8pt] at (0,-1) (2) {$\bullet$};
\draw (-2,0)--(3,0);
\draw (0,0)--(0,-1);
\draw [line width=0.5pt,line cap=round,rounded corners] (1.north west)  rectangle (1.south east);
\end{tikzpicture}\hspace{2cm}
\begin{tikzpicture}[scale=0.5]
\node [inner sep=0.8pt,outer sep=0.8pt] at (-2,0) (1) {$\bullet$};
\node [inner sep=0.8pt,outer sep=0.8pt] at (-1,0) (3) {$\bullet$};
\node [inner sep=0.8pt,outer sep=0.8pt] at (0,0) (4) {$\bullet$};
\node [inner sep=0.8pt,outer sep=0.8pt] at (1,0) (5) {$\bullet$};
\node [inner sep=0.8pt,outer sep=0.8pt] at (2,0) (6) {$\bullet$};
\node [inner sep=0.8pt,outer sep=0.8pt] at (3,0) (7) {$\bullet$};
\node [inner sep=0.8pt,outer sep=0.8pt] at (0,-1) (2) {$\bullet$};
\draw (-2,0)--(3,0);
\draw (0,0)--(0,-1);
\draw [line width=0.5pt,line cap=round,rounded corners] (1.north west)  rectangle (1.south east);
\draw [line width=0.5pt,line cap=round,rounded corners] (6.north west)  rectangle (6.south east);
\end{tikzpicture}\\
\begin{tikzpicture}[scale=0.5]
\node at (0,0.3) {};
\node [inner sep=0.8pt,outer sep=0.8pt] at (-2,0) (1) {$\bullet$};
\node [inner sep=0.8pt,outer sep=0.8pt] at (-1,0) (3) {$\bullet$};
\node [inner sep=0.8pt,outer sep=0.8pt] at (0,0) (4) {$\bullet$};
\node [inner sep=0.8pt,outer sep=0.8pt] at (1,0) (5) {$\bullet$};
\node [inner sep=0.8pt,outer sep=0.8pt] at (2,0) (6) {$\bullet$};
\node [inner sep=0.8pt,outer sep=0.8pt] at (3,0) (7) {$\bullet$};
\node [inner sep=0.8pt,outer sep=0.8pt] at (0,-1) (2) {$\bullet$};
\draw (-2,0)--(3,0);
\draw (0,0)--(0,-1);
\draw [line width=0.5pt,line cap=round,rounded corners] (1.north west)  rectangle (1.south east);
\draw [line width=0.5pt,line cap=round,rounded corners] (6.north west)  rectangle (6.south east);
\draw [line width=0.5pt,line cap=round,rounded corners] (7.north west)  rectangle (7.south east);
\end{tikzpicture}\hspace{2cm}
\begin{tikzpicture}[scale=0.5]
\node [inner sep=0.8pt,outer sep=0.8pt] at (-2,0) (1) {$\bullet$};
\node [inner sep=0.8pt,outer sep=0.8pt] at (-1,0) (3) {$\bullet$};
\node [inner sep=0.8pt,outer sep=0.8pt] at (0,0) (4) {$\bullet$};
\node [inner sep=0.8pt,outer sep=0.8pt] at (1,0) (5) {$\bullet$};
\node [inner sep=0.8pt,outer sep=0.8pt] at (2,0) (6) {$\bullet$};
\node [inner sep=0.8pt,outer sep=0.8pt] at (3,0) (7) {$\bullet$};
\node [inner sep=0.8pt,outer sep=0.8pt] at (0,-1) (2) {$\bullet$};
\draw (-2,0)--(3,0);
\draw (0,0)--(0,-1);
\draw [line width=0.5pt,line cap=round,rounded corners] (1.north west)  rectangle (1.south east);
\draw [line width=0.5pt,line cap=round,rounded corners] (3.north west)  rectangle (3.south east);
\draw [line width=0.5pt,line cap=round,rounded corners] (4.north west)  rectangle (4.south east);
\draw [line width=0.5pt,line cap=round,rounded corners] (6.north west)  rectangle (6.south east);
\end{tikzpicture}\hspace{2cm}
\begin{tikzpicture}[scale=0.5]
\node [inner sep=0.8pt,outer sep=0.8pt] at (-2,0) (1) {$\bullet$};
\node [inner sep=0.8pt,outer sep=0.8pt] at (-1,0) (3) {$\bullet$};
\node [inner sep=0.8pt,outer sep=0.8pt] at (0,0) (4) {$\bullet$};
\node [inner sep=0.8pt,outer sep=0.8pt] at (1,0) (5) {$\bullet$};
\node [inner sep=0.8pt,outer sep=0.8pt] at (2,0) (6) {$\bullet$};
\node [inner sep=0.8pt,outer sep=0.8pt] at (3,0) (7) {$\bullet$};
\node [inner sep=0.8pt,outer sep=0.8pt] at (0,-1) (2) {$\bullet$};
\draw (-2,0)--(3,0);
\draw (0,0)--(0,-1);
\draw [line width=0.5pt,line cap=round,rounded corners] (1.north west)  rectangle (1.south east);
\draw [line width=0.5pt,line cap=round,rounded corners] (3.north west)  rectangle (3.south east);
\draw [line width=0.5pt,line cap=round,rounded corners] (4.north west)  rectangle (4.south east);
\draw [line width=0.5pt,line cap=round,rounded corners] (6.north west)  rectangle (6.south east);
\draw [line width=0.5pt,line cap=round,rounded corners] (2.north west)  rectangle (2.south east);
\draw [line width=0.5pt,line cap=round,rounded corners] (5.north west)  rectangle (5.south east);
\draw [line width=0.5pt,line cap=round,rounded corners] (7.north west)  rectangle (7.south east);
\end{tikzpicture} 
\end{tabular}
\end{center}

A fundamental result of Leeb~\cite[\S5]{Lee:00} and Abramenko and Brown~\cite[Proposition~4.2]{AB:09} states that if $\theta$ is a nontrivial automorphism of a thick spherical building then $\mathrm{Opp}(\theta)$ is necessarily nonempty, and hence the first diagram above occurs if and only if $\theta$ is the identity. For the second, third, forth, and fifth diagrams it is clear that the automorphism in question maps no chamber to an opposite chamber. Automorphisms mapping no chamber to an opposite chamber are called \textit{domestic automorphisms} (the terminology here is aligned with the thematics of the language of building theory, reflecting the idea that these automorphisms stay ``close to home''). These automorphisms have recently enjoyed extensive investigation, including the series \cite{TTM:11,TTM:12,TTM:12b} where domesticity in projective spaces, polar spaces, and generalised quadrangles is studied, \cite{HVM:12} where symplectic polarities of large $\sE_6$ buildings are classified in terms of domesticity, \cite{HVM:13} where domestic trialities of $\sD_4$ buildings are classified, and \cite{PTM:15} where domesticity in generalised polygons is studied. 

Returning to the $\sE_7$ example, if $\theta$ is not domestic then the opposition diagram of $\theta$ is necessarily the sixth of the above diagrams, with all nodes encircled. However, can this diagram be the opposition diagram of a domestic automorphism? It is a priori possible that there are vertices of each type $1,2,3,4,5,6$ and $7$ mapped onto opposite vertices, yet no chamber mapped to an opposite chamber. Such an automorphism is called \textit{exceptional domestic}. It turns out from the results of this paper that if the $\sE_7$ building $\Delta$ contains no Fano plane residues then exceptional domestic automorphisms do not exist. In contrast, we show in~\cite{PVM:17b} that if $\Delta$ is an $\sE_7$ building containing a Fano residue (thus $\Delta$ is the building of the Chevalley group $\sE_7(2)$) then $\Delta$ admits exceptional domestic automorphisms.

More generally one may ask whether the existence of both a type $J_1$ simplex and a type $J_2$ simplex in $\Opp(\theta)$ implies the existence of a type $J_1\cup J_2$ simplex in $\Opp(\theta)$. An automorphism satisfying this property is called \textit{capped}. An equivalent formulation of this concept is as follows. The \textit{type} $\Type(\theta)$ of an automorphism $\theta$ is the union of all subsets $J\subseteq S$ such that there exists a type $J$ simplex in $\Opp(\theta)$. Thus, in the above diagrams, $\Type(\theta)$ is the set of all encircled nodes. Then an automorphism $\theta$ of a spherical building is capped if and only if there exists a type $\Type(\theta)$ simplex in $\Opp(\theta)$.

The main theorem of this paper is the following. The proof is contained in Sections~\ref{sec:3} and~\ref{sec:4}.

\begin{thm1}\label{thm:main}
Let $\Delta$ be a thick irreducible spherical building of type $(W,S)$ of rank at least~$3$, and let $\theta$ be an automorphism of $\Delta$. If $\Delta$ has no Fano plane residues then $\theta$ is capped.
\end{thm1}

We call the thick irreducible spherical buildings of rank at least~$3$ with no Fano plane residues \textit{large buildings}, and those containing at least one Fano plane residue are called \textit{small buildings}. Thus Theorem~\ref{thm:main} says that every automorphism of a large spherical building is capped. Note that the small buildings are precisely the buildings $\sA_n(2)$, $\sB_n(2)$, $\sB_n(2,4)={^2}\sD_{n+1}(4)$, $\sD_n(2)$, $\sE_6(2)$, $\sE_7(2)$, $\sE_8(2)$, $\sF_4(2)$, $\sF_4(2,4)={^2}\sE_6(4)$ for some $n\geq 3$. 

Theorem~\ref{thm:main} has the following immediate corollary.

\begin{cor1}\label{cor:1}
No large building admits an exceptional domestic automorphism.
\end{cor1}

Cappedness places severe restrictions on the possible opposition diagram of an automorphism. We develop this theory in Section~\ref{sec:2} via a combinatorial approach reminiscent of Tits indices (see \cite[Part 3]{PMW:15}). In particular we show that the opposition diagrams of capped automorphisms satisfy three simple combinatorial properties, and we then use these properties to classify the possible opposition diagrams. The result is the following theorem.

\begin{thm1}\label{thm:main2}
If $\theta$ is a capped automorphism of a thick irreducible spherical building then the opposition diagram of $\theta$ appears in the corresponding Table~\ref{table:1}--Table~\ref{table:5} (where in the table $\Gamma$ is the Coxeter graph of $\Delta$, and $\pi_{\theta}$ is the automorphism of $\Gamma$ induced by $\theta$). 
\end{thm1}

Thus, for example, by Theorems~\ref{thm:main} and~\ref{thm:main2} the $6$ diagrams listed above exhaust the possible opposition diagrams for automorphisms of large $\sE_7$ buildings. More generally, Theorems~\ref{thm:main} and~\ref{thm:main2} immediately imply the following corollary. 

\begin{cor1}\label{cor:3}
Let $\theta$ be an automorphism of a large building~$\Delta$. Then the opposition diagram of $\theta$ appears in the corresponding Table~\ref{table:1}--Table~\ref{table:5}.
\end{cor1}

\begin{center}
\noindent
 is implicit in Table~\ref{table:5}. We also emphasise that we work with Coxeter graphs rather than Dynkin diagrams in this paper.

In Section~\ref{sec:2} we show that if $\theta$ is a capped automorphism of a thick spherical building then the displacement of $\theta$ can be computed directly from the opposition diagram in a simple way (recall that the \textit{displacement} $\disp(\theta)$ of an automorphism $\theta$ is the maximum gallery distance between a chamber $C\in\Delta$ and its image $C^{\theta}$). More precisely, we prove:

\begin{thm1}\label{thm:main3}
Let $\theta$ be a capped automorphism of a thick spherical building $\Delta$ of type $(W,S)$ and let $J=\Type(\theta)$. The displacement of $\theta$ is
$$
\disp(\theta)=\mathrm{diam}(W)-\mathrm{diam}(W_{S\backslash J}),
$$ 
where $\mathrm{diam}(W)$ and $\mathrm{diam}(W_{S\backslash J})$ are the diameters of $W$ and the parabolic subgroup $W_{S\backslash J}$.
\end{thm1}

Thus, applying Theorem~\ref{thm:main} and Theorem~\ref{thm:main3} gives:

\begin{cor1}\label{cor:2}
Let $\theta$ be an automorphism of a large building $\Delta$ of type $(W,S)$ and let $J=\Type(\theta)$. The displacement of $\theta$ is
$
\disp(\theta)=\mathrm{diam}(W)-\mathrm{diam}(W_{S\backslash J}).
$ 
\end{cor1}

This paper can be seen as a natural continuation of \cite{PTM:15} where automorphisms of rank~$2$ spherical buildings were investigated, and our series of investigations continues in~\cite{PVM:17b} where we study small buildings, and show that for these buildings not all automorphisms are capped, exceptional domestic automorphisms exist, and that the above formula for displacement may fail. In future work we will show that Corollary~\ref{cor:3} is ``tight'' in the sense that for each diagram $D=(\Gamma,J,\pi)$ listed in Tables~\ref{table:1}--\ref{table:5}, every thick irreducible split spherical building~$\Delta$ of type~$\Gamma$ admits an automorphism with diagram $D$. Thus our list of diagrams has no redundancies. There is a minor caveat here: split $\sF_4$ buildings admit dualities if and only if the underlying field has characteristic~$2$, and thus the duality opposition diagram for $\sF_4$ listed in Table~\ref{table:5} is only achieved in characteristic~$2$ (and a similar comment applies for dualities of split $\sB_2$, $\sC_2$ and $\sG_2$ buildings). Moreover we note that if $\Delta$ is not split then it is possible that some of the diagrams in the tables are unobtainable as the opposition diagram of an automorphism of~$\Delta$ (this will be discussed further in future work).

We conclude this introduction with an outline of the structure of the paper. In Section~\ref{sec:1} we provide background and expand on the definitions given above. We also outline the residue techniques that will be used extensively throughout the paper. In Section~\ref{sec:2} we define a class of ``admissible diagrams'' via combinatorial axioms reminiscent of Tits indices. We show that the opposition diagram of every capped automorphism is an admissible diagram, and thus we obtain all possible opposition diagrams for capped automorphisms, proving Theorem~\ref{thm:main2}. We also provide applications to the calculation of the displacement of an automorphism, proving Theorem~\ref{thm:main3}.  

The proof of Theorem~\ref{thm:main} is divided across Sections~\ref{sec:3} and~\ref{sec:4}. In Section~\ref{sec:3} we prove Theorem~\ref{thm:main} for large classical buildings (types $\sA$, $\sB$, $\sC$, and $\sD$). Most of the work here involves a series of lemmas concerning polar spaces, which forms a natural extension and completion of the analysis in~\cite{TTM:12}. In Section~\ref{sec:4} we prove Theorem~\ref{thm:main} for large exceptional buildings (types $\sE$ and $\sF$). It turns out that our residue arguments cover most cases here, with the exception of two particular configurations in $\sF_4$ and $\sE_7$ buildings. In fact a large part of Section~\ref{sec:4} is devoted to showing that the diagram 
\begin{center}
\begin{tikzpicture}[scale=0.5]
\node [inner sep=0.8pt,outer sep=0.8pt] at (-2,0) (1) {$\bullet$};
\node [inner sep=0.8pt,outer sep=0.8pt] at (-1,0) (3) {$\bullet$};
\node [inner sep=0.8pt,outer sep=0.8pt] at (0,0) (4) {$\bullet$};
\node [inner sep=0.8pt,outer sep=0.8pt] at (1,0) (5) {$\bullet$};
\node [inner sep=0.8pt,outer sep=0.8pt] at (2,0) (6) {$\bullet$};
\node [inner sep=0.8pt,outer sep=0.8pt] at (3,0) (7) {$\bullet$};
\node [inner sep=0.8pt,outer sep=0.8pt] at (0,-1) (2) {$\bullet$};
\draw (-2,0)--(3,0);
\draw (0,0)--(0,-1);
\draw [line width=0.5pt,line cap=round,rounded corners] (1.north west)  rectangle (1.south east);
\draw [line width=0.5pt,line cap=round,rounded corners] (3.north west)  rectangle (3.south east);
\draw [line width=0.5pt,line cap=round,rounded corners] (4.north west)  rectangle (4.south east);
\draw [line width=0.5pt,line cap=round,rounded corners] (6.north west)  rectangle (6.south east);
\draw [line width=0.5pt,line cap=round,rounded corners] (7.north west)  rectangle (7.south east);
\end{tikzpicture}
\end{center}
does not arise as an opposition diagram of any automorphism of a thick $\sE_7$ building. 

\section{Background and definitions}\label{sec:1}

Let $(W,S)$ be a spherical Coxeter system with length function $\ell(\cdot)$ and Coxeter graph $\Gamma=\Gamma(W,S)$. We will adopt Bourbaki~\cite{Bou:02} conventions for the indexing of the generators of irreducible crystallographic spherical systems. For $J\subseteq S$ let $W_J$ be the parabolic subgroup generated by $J$, and let $\Gamma_J=\Gamma(W_J,J)$ be the Coxeter graph of $(W_J,J)$, a subgraph of $\Gamma$. Let $w_0$ be the longest element of $W$, and for each $J\subseteq S$ let  $w_J$ be the longest element of $W_J$. Thus $w_0=w_S$. For each $J\subseteq S$ the element $w_J$  induces a diagram automorphism of $\Gamma_J$, also denoted $w_J$, by $s^{w_J}=w_J^{-1}sw_J$. Note that for irreducible spherical Coxeter systems the automorphism $w_0$ of $\Gamma$ is the identity except for the cases $\sA_n$ with $n\geq 2$, $\sD_n$ with $n$ odd, and $\sE_6$, and in these cases $w_0$ is the unique order $2$ diagram automorphism. We also note that if the Coxeter graph of $(W,S)$ is disconnected then $w_0$ is the product of the longest elements of each connected component, and thus opposition is simply opposition on each component.

Let $\Delta$ be a building of type $(W,S)$. Our main references for the theory of buildings are \cite{AB:08,Tit:74}, and we assume that the reader is already acquainted with the theory. Typically we will regard spherical buildings as simplicial complexes, however at times an incidence geometry approach is more appropriate. Let $\cC=\cC(\Delta)$ denote the set of all chambers (maximal simplices) of $\Delta$, and let $\type:\Delta\to 2^S$ be a fixed type map on the simplicial complex $\Delta$. 

A \textit{panel} of $\Delta$ is a simplex of the form $C\backslash\{v\}$ for some chamber $C$ and some vertex $v$ of $C$. The type of the panel $C\backslash\{v\}$ is $S\backslash\{s\}$ where $s=\tau(v)$, and we call such a panel an \textit{$s$-panel}. Chambers $C,D$ are called \textit{$s$-adjacent} if $C\cap D$ is an $s$-panel. Let $\delta:\cC\times \cC\to W$ be the Weyl distance function associated to the $s$-adjacency relations.

Chambers $C$ and $D$ are \textit{opposite} if and only if they are at maximal distance in the chamber graph (with adjacency given by the union of the $s$-adjacency relations). Equivalently, chambers $C,D\in\cC$ are opposite if and only if
$
\delta(C,D)=w_0
$. The definition of opposition extends to arbitrary simplices as follows.

\begin{defn} Simplices $\alpha,\beta$ of $\Delta$ are \textit{opposite} if for each chamber $A$ containing $\alpha$,  there exists a chamber $B$ containing $\beta$ such that $A$ and $B$ are opposite, and conversely for each chamber $B'$ containing $\beta$ there exists a chamber $A'$ containing $\alpha$ such that $A'$ and $B'$ are opposite. 
\end{defn}

If $J\subseteq S$ we write $J^{\mathrm{op}}=J^{w_0}=w_0^{-1}Jw_0$ (the `opposite type' to $J$). If $\alpha$ and $\beta$ are opposite simplices then necessarily $\tau(\beta)=\tau(\alpha)^{\mathrm{op}}$. Moreover, if $\alpha$ and $\beta$ are simplices with $\tau(\beta)=\tau(\alpha)^{\mathrm{op}}$ then $\alpha$ and $\beta$ are opposite if and only if there exists a chamber $A$ containing $\alpha$ and a chamber $B$ containing $\beta$ such that $A$ and $B$ are opposite.

An \textit{automorphism} of $\Delta$ is a simplicial complex automorphism $\theta:\Delta\to\Delta$. Note that $\theta$ does not necessarily preserve types. Indeed each automorphism $\theta:\Delta\to\Delta$ induces a permutation $\pi_{\theta}$ of the type set $S$, given by $\delta(C,D)=s$ if and only if $\delta(C^{\theta},D^{\theta})=s^{\pi_{\theta}}$, and this permutation $\pi_{\theta}$ is a diagram automorphism of the Coxeter graph~$\Gamma$. When there is no risk of confusion we will often use the symbol $\theta$ for both the automorphism of $\Delta$ and the induced diagram automorphism~$\pi_{\theta}$. If $\Delta$ is irreducible, then from the classification of irreducible spherical Coxeter systems we see that $\theta:S\to S$ is either:

\begin{compactenum}[$(1)$]
\item the identity, in which case $\theta$ is called a \textit{collineation} (or \textit{type preserving}),
\item has order $2$, in which case $\theta$ is called a \textit{duality}, or
\item has order $3$, in which case $\theta$ is called a \textit{triality}; this case only occurs for type $\sD_4$.
\end{compactenum}
Automorphisms $\theta:\Delta\to\Delta$ that induce opposition on the type set are called \textit{oppomorphisms}. For example, oppomorphisms of an $\sE_6$ building are dualities, and oppomorphisms of an $\sE_7$ building are collineations. 

Let $\theta$ be an automorphism of $\Delta$. The \textit{opposite geometry} of $\theta$ is 
$$
\mathrm{Opp}(\theta)=\{\sigma\in\Delta\mid \sigma\text{ is opposite }\sigma^{\theta}\},
$$ 
and the fundamental result of Leeb~\cite{Lee:00} and Abramenko and Brown~\cite{AB:09} mentioned above is as follows (see \cite{DPM:13} for the generalisation to twin buildings).
\begin{thm}[\cite{AB:09,Lee:00}]\label{thm:fund}
If $\theta$ is a nontrivial automorphism of a thick spherical building then $\mathrm{Opp}(\theta)$ is nonempty.
\end{thm}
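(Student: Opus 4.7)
The plan is to follow the combinatorial approach of Abramenko and Brown, arguing by contradiction. Suppose $\Opp(\theta)=\emptyset$, so in particular no chamber is mapped to an opposite chamber. Since $W$ is finite, the function $C\mapsto \ell(\delta(C,C^\theta))$ on $\cC(\Delta)$ attains its maximum at some chamber $C_0$; set $w=\delta(C_0,C_0^\theta)$, so $\ell(w)<\ell(w_0)$. I would then choose $s\in S$ with $\ell(sw)>\ell(w)$, let $P$ be the $s$-panel of $C_0$, and set $t=s^{\pi_\theta}$, so that $P^\theta$ is the $t$-panel of $C_0^\theta$. Thickness supplies at least three chambers $C_0,C_1,\ldots,C_k$ in $P$, with $k\geq 2$.

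The core step is to compute $\delta(C_i,C_i^\theta)$ for $i\geq 1$ using the standard panel-projection formula. The choice of $s$ forces $\delta(C_i,C_0^\theta)=sw$, of length $\ell(w)+1$. I would then split into cases based on the signs of $\ell(wt)-\ell(w)$ and $\ell(swt)-\ell(sw)$. If $\ell(wt)<\ell(w)$, then $\proj_{P^\theta}(C_0)=C_{j^*}^\theta$ for a unique index $j^*\geq 1$, and for any $i\geq 1$ with $i\neq j^*$ (which exists since $k\geq 2$) one finds $\delta(C_i,C_i^\theta)=sw$, contradicting maximality. If $\ell(wt)>\ell(w)$ and $\ell(swt)=\ell(w)+2$, then $\delta(C_i,C_i^\theta)=swt$ has length $\ell(w)+2$, again contradicting maximality.

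The remaining case $\ell(wt)>\ell(w)$ and $\ell(swt)=\ell(w)$ is the main obstacle. Here the naive length estimates collapse to $\ell(w)$, and avoiding the contradiction forces the rigid configuration $C_i=\proj_P(C_i^\theta)$ for every $i\geq 1$. To close this case I would apply the symmetric argument to $\theta^{-1}$ at $C_0^\theta$ using the panel $P^\theta$, yielding the matching constraint $C_i^\theta=\proj_{P^\theta}(C_i)$; iterating the construction at the new maximisers $C_i$ (which now satisfy $\delta(C_i,C_i^\theta)=swt\neq w$) and appealing to the finiteness of $W$ then rules out the rigid configuration. A conceptually cleaner alternative is Leeb's approach, which works in the $\mathrm{CAT}(1)$ realisation $|\Delta|$ of $\Delta$: the continuous displacement function $x\mapsto d(x,x^\theta)$ attains its maximum on the compact space $|\Delta|$, and the $\mathrm{CAT}(1)$ geometry of spherical buildings forces this maximum to be $\pi$, yielding a pair of antipodal points and hence an opposite simplex in $\Opp(\theta)$.
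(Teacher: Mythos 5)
The paper offers no proof of this theorem; it is stated and cited verbatim from Leeb and Abramenko--Brown, so there is no ``paper's own proof'' to compare against. Evaluating your sketch on its own terms, the combinatorial route has a genuine gap that cannot be repaired in the way you suggest. If your case analysis closed, it would prove that the maximal displacement is $\ell(w_0)$, i.e.\ that some \emph{chamber} is mapped to an opposite chamber. That is false: domestic automorphisms (symplectic polarities, for example --- a central object of this very paper) are nontrivial yet map no chamber to an opposite, so $\disp(\theta)<\ell(w_0)$ occurs. Your third case is therefore not an obstacle to be ruled out; it is exactly the configuration realised by such automorphisms. The proposed fix (apply the symmetric argument to $\theta^{-1}$ at $C_0^\theta$, then iterate the construction at the new maximisers and invoke finiteness of $W$) cannot produce a contradiction: the new maximisers $C_i$ satisfy $\delta(C_i,C_i^\theta)=swt$, which has the \emph{same} length as $w$, so the iteration merely cycles within a fixed length level of $W$ and nothing decreases.

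What Abramenko--Brown actually establish (Lemma~2.4 and the proof of Theorem~4.2 in \cite{AB:09}, exactly the form invoked by the paper in the proof of Theorem~\ref{thm:main3}) is a structure theorem rather than a maximality statement: if $C$ achieves maximal displacement then $\delta(C,C^{\theta})=w_Kw_0$ for some $K\subseteq S$ with $K^{\theta}=K^{\op}$. The rigid configuration you encounter is precisely the mechanism that forces the $w_K$ prefix. The theorem then follows because $\theta\neq 1$ gives $w\neq 1$, hence $K\neq S$; and the cotype-$K$ face of $C$ is a nonempty simplex of type $S\setminus K$ lying in $\Opp(\theta)$, even though $C$ itself need not. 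Your alternative appeal to Leeb's $\mathrm{CAT}(1)$ argument is glossed at the same critical point: that the continuous displacement attains $\pi$ is precisely what must be proved, and it does not follow from abstract $\mathrm{CAT}(1)$ comparison geometry alone (a nontrivial $\theta$ can easily have fixed points in $|\Delta|$). Leeb's proof uses the specific simplicial and polyhedral structure of the spherical building in an essential way, and is not a one-line compactness argument.
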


The following basic properties of $\Opp(\theta)$ are elementary. Note that while we typically use the notation $x^{gh}=(x^g)^h$ for group actions, it is sometimes convenient to regard $gh$ as a function $X\to X$, in which case we write $h\circ g$.

\begin{lemma}\label{lem:opbasic}
Let $\theta$ be an automorphism of a spherical building~$\Delta$. Let $\sigma\in\Opp(\theta)$ and $J=\tau(\sigma)$.
\begin{compactenum}[$(1)$]
\item The set $J$ is stable under the diagram automorphism $w_0\circ\theta$.
\item If $J'\subseteq J$ is stable under $w_0\circ \theta$ and $\sigma'$ is the type $J'$ subsimplex of $\sigma$ then $\sigma'\in\Opp(\theta)$. 
\end{compactenum}
\end{lemma}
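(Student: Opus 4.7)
The plan is to reduce both parts to two observations: opposite simplices have opposite types, and the diagram automorphism induced by $\theta$ describes how $\theta$ acts on types.

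For part~$(1)$, I would first note that since $\sigma$ and $\sigma^{\theta}$ are opposite simplices, their types are opposite, so $\tau(\sigma^{\theta})=J^{w_0}$. Separately, since $\theta$ induces the diagram automorphism $\pi_{\theta}$ on types, we also have $\tau(\sigma^{\theta})=J^{\pi_{\theta}}$. Equating these gives $J^{\pi_{\theta}}=J^{w_0}$. Using that $w_0$ is an involution, and recalling that $w_0\circ\theta$ denotes ``first apply $\theta$, then $w_0$'', this rearranges to $J^{w_0\circ\theta}=J$, which is exactly the stability claim.

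For part~$(2)$, let $\sigma'$ be the face of $\sigma$ of type $J'$; this exists because $\sigma$ has a vertex of each type in $J\supseteq J'$. Applying the same type calculation as in part~$(1)$ to $J'$ in place of $J$, and using the hypothesis that $J'$ is stable under $w_0\circ\theta$, gives $\tau((\sigma')^{\theta})=(J')^{w_0}=\tau(\sigma')^{\mathrm{op}}$. With matching types in hand, I would invoke the criterion recalled just before the lemma: once the types are opposite, it suffices to exhibit a single pair of opposite chambers $A\supseteq\sigma'$ and $B\supseteq(\sigma')^{\theta}$. Since $\sigma$ and $\sigma^{\theta}$ are opposite, the definition of opposite simplices applied to $\sigma$ provides, for any chamber $A\supseteq\sigma$, an opposite chamber $B\supseteq\sigma^{\theta}$; automatically $A\supseteq\sigma'$ and $B\supseteq(\sigma')^{\theta}$, so we are done.

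Both parts are formal consequences of the definitions, and I do not expect any real obstacle. The only subtle point is the bookkeeping of the convention $w_0\circ\theta$ together with the involutivity $w_0^2=1$, which converts the type identity $J^{\pi_{\theta}}=J^{w_0}$ into the stability statement $J^{w_0\circ\theta}=J$.
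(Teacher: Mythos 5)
Your argument is correct and matches the paper's proof essentially line for line: part (1) is the same type computation $J^{\pi_\theta}=J^{w_0}$ combined with $w_0^2=1$, and part (2) uses the same observation that any opposite chamber pair $A\supseteq\sigma$, $B\supseteq\sigma^\theta$ already witnesses opposition of $\sigma'$ and $(\sigma')^\theta$. No gaps.
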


\begin{proof}
Since $\sigma$ and $\sigma^{\theta}$ are opposite we have $\tau(\sigma)^{\theta}=\tau(\sigma^{\theta})=\tau(\sigma)^{w_0}$. Thus $J^{\theta}=J^{w_0}$ and since $w_0^2=1$ we have $J^{\theta w_0}=J$, hence~(1). If $J'\subseteq J$ is stable under $w_0\circ\theta$ then $J'^{\theta}=J'^{\op}$. Since $\sigma$ and $\sigma^{\theta}$ are opposite there is a chamber $A$ containing $\sigma$ and a chamber $B$ containing $\sigma^{\theta}$ with $A$ and $B$ opposite. Since $A$ contains $\sigma'$ and $B$ contains $\sigma'^{\theta}$ the simplices $\sigma'$ and $\sigma'^{\theta}$ are opposite. 
\end{proof}

The \textit{type} $\Type(\theta)$ of an automorphism $\theta$ is the union of all subsets $J\subseteq S$ such that there exists a type $J$ simplex mapped to an opposite simplex by~$\theta$. The \textit{opposition diagram} of $\theta$ is the triple $(\Gamma,\Type(\theta),\theta)$, where the third component is the automorphism of $\Gamma$ induced by~$\theta$.

 Less formally, the opposition diagram of $\theta$ is depicted by drawing $\Gamma$ and encircling the nodes of $\Type(\theta)$, where we encircle nodes in minimal subsets invariant under $w_0\circ \theta$. We draw the diagram `bent' (in the standard way) if $w_0\circ\theta\neq 1$. For example, consider the diagrams
\begin{center}
\begin{tikzpicture}[scale=0.5]
\node at (-4,0) {(a)};
\node at (0,0.3) {};
\node [inner sep=0.8pt,outer sep=0.8pt] at (-2,0) (2) {$\bullet$};
\node [inner sep=0.8pt,outer sep=0.8pt] at (-1,0) (4) {$\bullet$};
\node [inner sep=0.8pt,outer sep=0.8pt] at (0,-0.5) (5) {$\bullet$};
\node [inner sep=0.8pt,outer sep=0.8pt] at (0,0.5) (3) {$\bullet$};
\node [inner sep=0.8pt,outer sep=0.8pt] at (1,-0.5) (6) {$\bullet$};
\node [inner sep=0.8pt,outer sep=0.8pt] at (1,0.5) (1) {$\bullet$};
\draw (-2,0)--(-1,0);
\draw (-1,0) to [bend left=45] (0,0.5);
\draw (-1,0) to [bend right=45] (0,-0.5);
\draw (0,0.5)--(1,0.5);
\draw (0,-0.5)--(1,-0.5);
\draw [line width=0.5pt,line cap=round,rounded corners] (2.north west)  rectangle (2.south east);
\draw [line width=0.5pt,line cap=round,rounded corners] (1.north west)  rectangle (6.south east);
\end{tikzpicture}\qquad\qquad\qquad\qquad\qquad
\begin{tikzpicture}[scale=0.5]
\node at (-4,-0.5) {(b)};
\node at (0,0.3) {};
\node [inner sep=0.8pt,outer sep=0.8pt] at (-2,0) (1) {$\bullet$};
\node [inner sep=0.8pt,outer sep=0.8pt] at (-1,0) (3) {$\bullet$};
\node [inner sep=0.8pt,outer sep=0.8pt] at (0,0) (4) {$\bullet$};
\node [inner sep=0.8pt,outer sep=0.8pt] at (1,0) (5) {$\bullet$};
\node [inner sep=0.8pt,outer sep=0.8pt] at (2,0) (6) {$\bullet$};
\node [inner sep=0.8pt,outer sep=0.8pt] at (0,-1) (2) {$\bullet$};
\draw (-2,0)--(2,0);
\draw (0,0)--(0,-1);
\draw [line width=0.5pt,line cap=round,rounded corners] (1.north west)  rectangle (1.south east);
\draw [line width=0.5pt,line cap=round,rounded corners] (6.north west)  rectangle (6.south east);
\end{tikzpicture} 
\end{center}
Diagram (a) represents a collineation $\theta$ of an $\sE_6$ building with $\Type(\theta)=\{1,2,6\}$, and diagram (b) represents a duality $\theta$ of an $\sE_6$ building with $\Type(\theta)=\{1,6\}$.

We call an opposition diagram \textit{empty} if no nodes are encircled (that is, $\Type(\theta)=\emptyset$), and \textit{full} if all nodes are encircled (that is, $\Type(\theta)=S$). The main concepts of this paper are introduced in the following definition.

\begin{defn}
Let $\Delta$ be a spherical building of type $(W,S)$. Let $\theta$ be a nontrivial automorphism of $\Delta$, and let $J\subseteq S$. Then $\theta$ is called:
\begin{compactenum}[$(1)$]
\item \textit{capped} if there exists a type $\Type(\theta)$ simplex in $\Opp(\theta)$, and \textit{uncapped} otherwise.
\item \textit{domestic} if $\Opp(\theta)$ contains no chamber. 
\item \textit{$J$-domestic} if $\Opp(\theta)$ contains no type $J$ simplex. 
\item \textit{exceptional domestic} if $\theta$ is domestic with full opposition diagram.
\item \textit{strongly exceptional domestic} if $\theta$ is domestic, but not $J$-domestic for any strict subset $J$ of $S$ invariant under $w_0\circ\theta$. 
\end{compactenum}
Note that if $\theta$ is a domestic automorphism with $w_0\circ\theta=1$ then $\theta$ is exceptional domestic if and only if there exists a vertex of each type mapped to an opposite vertex, and $\theta$ is strongly exceptional domestic if and only if there exists a panel of each cotype mapped to an opposite panel (recall that a \textit{panel} is a codimension~$1$ simplex). 
\end{defn}

If $J$ is not stable under the diagram automorphism $w_0\circ\theta$ then by Lemma~\ref{lem:opbasic} the automorphism $\theta$ is necessarily $J$-domestic. For example, if $\theta$ is a nontrivial collineation of a projective plane then $\Opp(\theta)$ necessarily contains neither points nor lines, and hence $\theta$ is both $\{1\}$-domestic and $\{2\}$-domestic. However by Theorem~\ref{thm:fund} $\Opp(\theta)$ is nonempty, and therefore must contain a chamber, and hence $\theta$ is not domestic. For this reason we will reserve the expression ``$J$-domestic'' for subsets $J$ stable under $w_0\circ\theta$, and with this assumption it is true that $J$-domesticity implies domesticity:

\begin{lemma}\label{lem:Jdom}
If $J\subseteq S$ is invariant under $w_0\circ\theta$ and if $\theta$ is $J$-domestic, then $\theta$ is domestic. 
\end{lemma}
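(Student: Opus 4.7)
The plan is to prove the contrapositive: if $\theta$ is not domestic, then $\theta$ is not $J$-domestic. So suppose there exists a chamber $C \in \cC(\Delta)$ such that $C$ and $C^{\theta}$ are opposite, i.e.\ $C \in \Opp(\theta)$. A chamber is a maximal simplex with $\tau(C) = S$, and by hypothesis $J \subseteq S$ is invariant under the diagram automorphism $w_0 \circ \theta$.

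The key observation is that Lemma~\ref{lem:opbasic}(2) applies directly: taking $\sigma = C$ (so $\tau(\sigma) = S$) and $J' = J$, the lemma guarantees that the type-$J$ subsimplex $\sigma'$ of $C$ lies in $\Opp(\theta)$. This produces a type-$J$ simplex mapped onto an opposite simplex by $\theta$, contradicting $J$-domesticity. Hence no such chamber $C$ can exist, and $\theta$ is domestic.

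The proof is essentially a one-line invocation of Lemma~\ref{lem:opbasic}, so there is no real obstacle. The only subtlety worth flagging is the necessity of the invariance hypothesis on $J$: without it, Lemma~\ref{lem:opbasic}(1) shows that a type-$J$ simplex can never lie in $\Opp(\theta)$, trivially making $\theta$ both $J$-domestic and (typically) non-domestic, as illustrated by the projective plane example discussed just above the statement.
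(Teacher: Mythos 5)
Your proof is correct and matches the paper's argument exactly: both prove the contrapositive by taking a chamber $C\in\Opp(\theta)$ and applying Lemma~\ref{lem:opbasic}(2) with $\sigma=C$ and $J'=J$ to extract a type-$J$ simplex in $\Opp(\theta)$. The closing remark about why the invariance hypothesis is needed is a nice sanity check but not part of the paper's proof.
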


\begin{proof}
If $\theta$ is not domestic then there is a chamber $C$ in $\Opp(\theta)$, and by Lemma~\ref{lem:opbasic} the type $J$-simplex $\sigma$ of $C$ is in $\Opp(\theta)$, and hence $\theta$ is not $J$-domestic. 
\end{proof}

\noindent\begin{minipage}{0.6\textwidth}
\begin{example}\label{ex:fano}
The simplest example of an uncapped automorphism is as follows. Consider the Fano plane, drawn as an incidence geometry (with type $1$ vertices represented as ``points'' and type $2$ vertices represented as ``lines''). Let $\theta$ be the duality given by the permutation $\theta=(1,2,3,4,5,6,7,8)(9,10)(11,12,13,14)$, where the points and lines are numbered as in the diagram. The points $11$ and $13$ are the only points mapped to opposite lines, and the lines $12$ and $14$ are the only lines mapped to opposite points. There is no chamber mapped to an opposite, and so $\theta$ is uncapped. 
\end{example}
\end{minipage}
\begin{minipage}{0.4\textwidth}
\begin{center}
\begin{tikzpicture} [scale=1.1]
\path 
(0,0) node (0) [shape=circle,draw,fill=black,scale=0.5,label=right:\footnotesize{$13$}] {}
({cos(30)},{sin(30)}) node (1) [shape=circle,draw,fill=black,scale=0.5,label=right:\footnotesize{$7$}] {}
({cos(150)},{sin(150)}) node (2) [shape=circle,draw,fill=black,scale=0.5,label=left:\footnotesize{$5$}] {}
({cos(270)},{sin(270)}) node (3) [shape=circle,draw,fill=black,scale=0.5,label=below:\footnotesize{$11$}] {}
(0,{1/(sin(30))}) node (4) [shape=circle,draw,fill=black,scale=0.5,label=right:\footnotesize{$9$}] {}
({-1/(tan(30))},{-1}) node (5) [shape=circle,draw,fill=black,scale=0.5,label=below left:\footnotesize{$1$}] {}
({1/(tan(30))},{-1}) node (6) [shape=circle,draw,fill=black,scale=0.5,label=below right:\footnotesize{$3$}] {}
;
\draw (1)--(4)--(2)--(5)--(3)--(6)--(1);
\draw (4)--(0)--(3);
\draw (5)--(0)--(1);
\draw (6)--(0)--(2);
\draw (0,0) circle (1cm);
\path (1.5,-0.15) node {\footnotesize{$12$}};
\path (-1.5,-0.15) node {\footnotesize{$14$}};
\draw [->] (-0.9,1.25)--(-0.1,1.25);
\path (-1.1,1.25) node {\footnotesize{$10$}};
\draw [bend left,<-] (0.3,1) to (1,1.5);
\path (1.15,1.5) node {\footnotesize{$6$}};
\draw [<-] (-1,-0.65)--(-1,-1.2);
\path (-1,-1.4) node {\footnotesize{$8$}};
\draw [<-] (1.3,-0.65)--(1.85,-0.65);
\path (2,-0.65) node {\footnotesize{$4$}};
\path (1,-1.2) node {\footnotesize{$2$}};
\end{tikzpicture}
\end{center}
\end{minipage}
\medskip

Note that the above example is also exceptional domestic and strongly exceptional domestic. In fact, the concepts of uncapped, exceptional domestic, and strongly exceptional domestic are clearly equivalent for rank~$2$ buildings. More generally, the connections between these concepts are given in the following proposition. 

\begin{prop}
Let $\theta$ be a nontrivial automorphism of a spherical building $\Delta$ of type $(W,S)$. 
\begin{compactenum}[$(1)$]
\item If $\theta$ is strongly exceptional domestic then $\theta$ is exceptional domestic. 
\item If $\theta$ is exceptional domestic then $\theta$ is uncapped. 
\item If $\theta$ is uncapped then $\theta$ is domestic. 
\end{compactenum}
\end{prop}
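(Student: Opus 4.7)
The plan is to establish the three implications in order of increasing subtlety, namely $(3)$, then $(2)$, then $(1)$, each essentially by unwinding the definitions and invoking Lemma~\ref{lem:opbasic}(1) together with the Leeb--Abramenko--Brown nonemptiness theorem (Theorem~\ref{thm:fund}).

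For $(3)$ I will argue the contrapositive: if $\theta$ is not domestic then some chamber $C$ lies in $\Opp(\theta)$, and since $\tau(C)=S$, the union defining $\Type(\theta)$ contains $S$, forcing $\Type(\theta)=S$; but then $C$ itself is a type-$\Type(\theta)$ simplex in $\Opp(\theta)$, contradicting uncappedness. For $(2)$ I will use that exceptional domesticity supplies $\Type(\theta)=S$, so a type-$\Type(\theta)$ simplex is a chamber; domesticity rules chambers out of $\Opp(\theta)$, and hence no type-$\Type(\theta)$ simplex lies there either, which is uncappedness.

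For $(1)$ the key preliminary step is to observe that $\Type(\theta)$ is always invariant under $w_0\circ\theta$: by Lemma~\ref{lem:opbasic}(1) every $K\subseteq S$ arising as the type of a simplex in $\Opp(\theta)$ is $(w_0\circ\theta)$-invariant, and $\Type(\theta)$ is the union of all such $K$. I will then split on whether $w_0\circ\theta$ acts transitively on $S$. In the transitive case the only invariant subsets of $S$ are $\emptyset$ and $S$; since Theorem~\ref{thm:fund} forces $\Opp(\theta)\neq\emptyset$ and hence $\Type(\theta)\neq\emptyset$, we get $\Type(\theta)=S$ immediately. In the non-transitive case each $(w_0\circ\theta)$-orbit $O$ on $S$ is a proper nonempty invariant subset, so the strongly exceptional hypothesis supplies a type-$O$ simplex in $\Opp(\theta)$, whence $O\subseteq\Type(\theta)$; taking the union over all orbits gives $\Type(\theta)=S$. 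Combined with the assumed domesticity, this says $\theta$ is exceptional domestic.

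I do not expect any serious obstacle: the proposition is a formal consequence of the definitions together with Lemma~\ref{lem:opbasic}(1) and Theorem~\ref{thm:fund}. The only mild subtlety is the transitive/non-transitive split in $(1)$, where the transitive case (such as a duality of $\mathsf{I}_2(m)$ with $m$ even) has no proper nonempty invariant subsets, so the orbit argument must be replaced by the nonemptiness of $\Opp(\theta)$.
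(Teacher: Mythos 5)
Your proof is correct, and for parts $(2)$ and $(3)$ it follows essentially the paper's route: $(2)$ is the same one-line unwinding of the definitions, and your contrapositive for $(3)$ simply inlines the content of Lemma~\ref{lem:Jdom} rather than citing it. In part $(1)$ you take a slightly different path. You work bottom-up, observing that $\Type(\theta)$ is $(w_0\circ\theta)$-invariant and then arguing orbit by orbit: in the nontransitive case each $(w_0\circ\theta)$-orbit $O\subsetneq S$ is a proper invariant subset, so the strongly exceptional hypothesis forces a type-$O$ simplex in $\Opp(\theta)$ and hence $O\subseteq\Type(\theta)$; in the transitive case Theorem~\ref{thm:fund} already gives $\Type(\theta)=S$. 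The paper instead argues top-down in one step, applying the strongly-exceptional hypothesis to the single invariant set $J=S\setminus\Type(\theta)$: since $J\cap\Type(\theta)=\emptyset$ the automorphism is automatically $J$-domestic, and since $\Type(\theta)\neq\emptyset$ gives $J\subsetneq S$, strong exceptional domesticity forces $J=\emptyset$. Both arguments are valid and both implicitly rely on Theorem~\ref{thm:fund} to get $\Type(\theta)\neq\emptyset$; the paper's version avoids the transitivity dichotomy and is a little tighter, but this is a cosmetic difference, not a change in substance.
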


\begin{proof}
(1) Let $\theta$ be strongly exceptional domestic. Note that $\emptyset\neq \Type(\theta)\subseteq S$ is stable under $w_0\circ\theta$ (since it is a union of stable subsets), and hence $J=S\backslash\Type(\theta)$ is stable under $w_0\circ\theta$. Thus, since $J\neq S$, $J=\emptyset$ by the definition of strongly exceptional domestic, and hence $\Type(\theta)=S$, and so $\theta$ is exceptional domestic. 

(2) Let $\theta$ be exceptional domestic. Then $\Type(\theta)=S$, yet since $\theta$ is domestic there is no type $S=\Type(\theta)$ simplex (ie, no chamber) in $\Opp(\theta)$, and so $\theta$ is uncapped. 

(3) Let $\theta$ be uncapped. Hence there is no type $\Type(\theta)$ simplex in $\Opp(\theta)$. Thus $\theta$ is $J$-domestic for the $w_0\circ\theta$ stable set $J=\Type(\theta)$, and hence $\theta$ is domestic by Lemma~\ref{lem:Jdom}. 
\end{proof}

Before continuing we make three remarks concerning the above definitions.

\begin{remark}
In general the above concepts are distinct. It is easy to find examples of domestic automorphisms that are capped. For example symplectic polarities of projective spaces (see Lemma~\ref{lem:sp}), or central collineations of generalised quadrangles. It is, however, harder to find examples of (a) uncapped automorphisms that are not exceptional domestic, and (b) exceptional domestic automorphisms that are not strongly exceptional domestic. Examples will be provided in future work on uncapped automorphisms of small buildings, and so for now we simply state that the smallest building admitting an example of (a) is the polar space $\sC_4(2)$ of $\mathsf{Sp}_8(2)$, and the smallest building admitting an example of (b) is the polar space $\sC_3(2)$ of $\mathsf{Sp}_6(2)$.
\end{remark}

\begin{remark}
A group theoretic interpretation of domesticity is as follows. Let $\Delta$ be a thick irreducible spherical building of rank at least $3$. Thus by Tits' classification~\cite{Tit:74} the type preserving automorphism group $G=\mathrm{Aut}(\Delta)$ admits a $BN$-pair, where $B$ is the stabiliser of a chamber $C_0$, and $N$ is the normaliser of an apartment containing $C_0$. Since $\delta(hB,ghB)=w$ if and only if $h^{-1}gh\in BwB$ we see that an automorphism $g\in G$ is domestic if and only if $g$ is not conjugate to any element of $Bw_0B$. In particular, since $Bw_0B$ is the ``largest'' double coset in the Bruhat decomposition $G=\bigsqcup_{w\in W}BwB$ the above interpretation shows, in a loose sense, that domestic automorphisms are relatively ``rare''. 
\end{remark}

\begin{remark}
Call an automorphism $\theta$ ``strongly capped'' if each simplex $\sigma\in\Opp(\theta)$ is contained in a type $\Type(\theta)$ simplex of $\Opp(\theta)$. This is obviously a stronger condition than cappedness. We note that while there are many examples of automorphisms with this strongly capped property (for example, symplectic polarities of projective spaces), there are also many examples of automorphisms that are not strongly capped. For example, consider the projective space $\sA_3(\FF)$ where $\FF$ is any field, and let $\theta$ be the duality induced by the linear map $g:e_1\mapsto e_1-e_2$, $e_2\mapsto -e_1$, $e_3\mapsto -e_4$, $e_4\mapsto e_3$ (that is, $V^{\theta}=(gV)^{\perp}$). Then $\theta$ is not domestic, for example the chamber $\langle e_1\rangle\subseteq \langle e_1,e_2+e_3\rangle\subseteq \langle e_1,e_2+e_3,e_4\rangle$ is mapped onto an opposite chamber. The line $L=\langle e_3,e_4\rangle$ is also mapped onto an opposite line, however no point on $L$ nor plane through $L$ is mapped onto an opposite. The prevalence of such counter examples to ``strong cappedness'' has lead us to believe that our ``weaker'' notion of cappedness is the appropriate one. 
\end{remark}




Residue arguments are used extensively in our proofs, and so we conclude this section with a summary of the main techniques, along with an example of how they are applied. We first briefly define residues and projections (see \cite{AB:08,Tit:74} for details). The \textit{residue} $\Res(\sigma)$ of a simplex $\sigma\in\Delta$ is the set of all simplices of $\Delta$ which contain~$\sigma$, together with the order relation induced by that of~$\Delta$. Then $\Res(\sigma)$ is a building whose diagram is obtained from the diagram of $\Delta$ by removing all nodes which belong to $\tau(\sigma)$. 

Let $\alpha$ be a simplex of $\Delta$. The \textit{projection onto $\alpha$} is the map $\proj_{\alpha}:\Delta\to\Res(\alpha)$ defined as follows (see \cite[Section~3]{Tit:74}). Firstly, if $B$ is a chamber of $\Delta$ then there is a unique chamber $A\in \Res(\alpha)$ such that $\ell(\delta(A,B))<\ell(\delta(A',B))$ for all chambers $A'\in \Res(\alpha)$ with $A'\neq A$, and we define $\proj_{\alpha}(B)=A$. In other words, $\proj_{\alpha}(B)$ is the unique chamber $A$ of $\Res(\alpha)$ with the property that every minimal length gallery from $B$ to $\Res(\alpha)$ ends with the chamber~$A$. Now, if $\beta$ is an arbitrary simplex we define
$$
\proj_{\alpha}(\beta)=\bigcap_{B}\,\proj_{\alpha}(B)
$$
where the intersection is over all chambers $B$ in $\Res(\beta)$. In other words, $\proj_{\alpha}(\beta)$ is the unique simplex $\gamma$ of $\Res(\alpha)$ which is maximal subject to the property that every minimal length gallery from a chamber of $\Res(\beta)$ to $\Res(\alpha)$ ends in a chamber containing~$\gamma$.

Let $\theta$ be an automorphism of $\Delta$, and suppose that $\sigma\in\Opp(\theta)$. It follows from \cite[Theorem~3.28]{Tit:74} that the projection map $\proj_{\sigma}:\Res(\sigma^{\theta})\to\Res(\sigma)$ is an isomorphism. Define
$$
\theta_{\sigma}:\Res(\sigma)\xrightarrow{\sim} \Res(\sigma)\quad \text{by}\quad \theta_{\sigma}=\proj_{\sigma}\circ\,\theta.
$$
The type map induced by $\theta_{\sigma}$ is as follows.

\begin{prop}\label{prop:typemap}
Let $\theta$ be an automorphism of a spherical building $\Delta$ of type $(W,S)$. Suppose that $\sigma\in\Opp(\theta)$ and let $J=\tau(\sigma)$. Then the type map on $S\backslash J$ induced by $\theta_{\sigma}$ is $w_{S\backslash J}\circ w_0\circ \theta$. 
\end{prop}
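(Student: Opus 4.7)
The plan is to reduce to computing the type map of the projection isomorphism and then do the calculation inside an apartment. Since $\theta$ already induces $\pi_\theta$ on $S$ and restricts to an isomorphism $\Res(\sigma)\to\Res(\sigma^\theta)$ with the same induced type map on $S\backslash J$, and since $\theta_\sigma=\proj_\sigma\circ\theta$, the task reduces to showing that the type map on $S\backslash J^{w_0}$ induced by the isomorphism $\proj_\sigma:\Res(\sigma^\theta)\to\Res(\sigma)$ (which is an isomorphism by \cite[Theorem~3.28]{Tit:74}) is given by $w_{S\backslash J}\circ w_0$; composing with $\pi_\theta$ on the right then yields the claimed formula $w_{S\backslash J}\circ w_0\circ\theta$.

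To compute this type map, I would work in an apartment $\Sigma$ containing both $\sigma$ and $\sigma^\theta$ (such an apartment exists because opposite simplices lie in a common apartment). Identify the chambers of $\Sigma$ with $W$ so that chambers $w$ and $ww_0$ are opposite, and so that $\sigma$ corresponds to the coset $W_{S\backslash J}$ (i.e.\ the chambers of $\Sigma$ containing $\sigma$ are precisely the elements of $W_{S\backslash J}$). Since $\tau(\sigma^\theta)=J^{w_0}$ and $W_{S\backslash J^{w_0}}=w_0W_{S\backslash J}w_0$, the chambers of $\Sigma$ containing $\sigma^\theta$ are exactly $W_{S\backslash J}w_0$.

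Now I would write down $\proj_\sigma$ explicitly in $\Sigma$. For $B=ww_0$ with $w\in W_{S\backslash J}$ and any $C\in W_{S\backslash J}$ we have $\delta(C,B)=C^{-1}ww_0$, and using the standard length identity $\ell(uw_0)=\ell(w_0)-\ell(u)$ for all $u\in W$, minimising $\ell(\delta(C,B))$ over $C\in W_{S\backslash J}$ is equivalent to maximising $\ell(C^{-1}w)$ over the same set. The unique maximum is at $C^{-1}w=w_{S\backslash J}$, so
$$
\proj_\sigma(ww_0)=w\,w_{S\backslash J}\qquad(w\in W_{S\backslash J}).
$$
Reading off the type map from panels is then direct: for $s\in S\backslash J^{w_0}$ the $s$-panel $\{ww_0,\,ww_0s\}$ of $B$ equals $\{ww_0,\,(ws^{w_0})w_0\}$, which projects to $\{w\,w_{S\backslash J},\,ws^{w_0}w_{S\backslash J}\}=\{C,\,C\cdot(s^{w_0})^{w_{S\backslash J}}\}$ with $C=w\,w_{S\backslash J}$. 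Hence this is an $(s^{w_0 w_{S\backslash J}})$-panel, so $\proj_\sigma$ induces $s\mapsto s^{w_0 w_{S\backslash J}}$ on types, i.e.\ the permutation $w_{S\backslash J}\circ w_0$ in function-composition notation.

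Composing on the right with the type map $\theta$ of the isomorphism $\theta:\Res(\sigma)\to\Res(\sigma^\theta)$ gives the type map $w_{S\backslash J}\circ w_0\circ\theta$ of $\theta_\sigma$, as required. The only subtlety in the argument is a conventional one, namely keeping straight the right action of diagram automorphisms on $S$ versus function composition; the substantive content is the apartment-level calculation using $\ell(uw_0)=\ell(w_0)-\ell(u)$, and no deeper building-theoretic input beyond \cite[Theorem~3.28]{Tit:74} is needed.
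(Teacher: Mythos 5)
Your proof is correct and takes a genuinely different route. The paper disposes of this proposition in one line by citing Abramenko--Brown \cite[Corollary~5.116]{AB:08}, which records exactly the type-change map of the projection isomorphism between opposite residues; your argument instead rederives that fact from scratch by an explicit computation in a thin building. The decomposition into (type map of $\proj_\sigma$) composed with (type map of $\theta$) is the right reduction, and the apartment calculation is sound: identifying $\Res(\sigma)\cap\Sigma$ with $W_{S\setminus J}$ and $\Res(\sigma^\theta)\cap\Sigma$ with $W_{S\setminus J}w_0$, using $\ell(uw_0)=\ell(w_0)-\ell(u)$ to pin down $\proj_\sigma(ww_0)=w\,w_{S\setminus J}$, and then reading $\{ww_0,ww_0s\}\mapsto\{C,\,C\cdot s^{w_0w_{S\setminus J}}\}$ off a panel all check out, giving the type map $w_{S\setminus J}\circ w_0$ for $\proj_\sigma$ and hence $w_{S\setminus J}\circ w_0\circ\theta$ for $\theta_\sigma$. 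The one implicit step worth making explicit is that the building projection restricted to a thin apartment agrees with the Coxeter-complex projection computed there (minimal galleries from a chamber of $\Sigma$ to a residue meeting $\Sigma$ may be taken inside $\Sigma$), which is standard. The trade-off is familiar: the paper's citation is shorter and offloads the work to a reference, while your version is self-contained and makes the mechanism (that $w_{S\setminus J}$ arises as the element maximising $\ell(C^{-1}w)$ over the parabolic) transparent; that is the same mechanism underlying the AB:08 corollary.
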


\begin{proof}
This follows easily from \cite[Corollary~5.116]{AB:08}.
\end{proof}

\begin{example}\label{ex:D}
In words, Proposition~\ref{prop:typemap} says that to compute the induced action on the types of $S\backslash J$, compute the type map of $\theta$, followed by opposition of the graph $\Gamma$, followed by opposition of the graph $\Gamma_{S\backslash J}$. We will use this proposition many times, often without reference. For example, consider a duality $\theta$ of a $\sD_n$ building, and suppose that $v\in\Opp(\theta)$ is a type $i$ vertex, with $i\leq n-2$. The residue of $v$ is a building of type $\sA_{i-1}\times \sD_{n-i}$, and the induced automorphism $\theta_v$ of $\Res(v)$ is a duality on the $\sA_{i-1}$ component (with the convention that duality~$\equiv$~collineation for~$\sA_1$), and a duality (respectively collineation) on the $\sD_{n-i}$ component if~$i$ is even (respectively odd). 
\end{example}

The following proposition is used repeatedly throughout this paper.
 
\begin{prop}{\cite[Proposition~3.29]{Tit:74}}\label{prop:proj} Let $\theta$ be an automorphism of a spherical building $\Delta$ and let $\alpha\in\Opp(\theta)$. If $\beta\in\Res(\alpha)$ then $\beta$ is opposite $\beta^{\theta}$ in the building $\Delta$ if and only if $\beta$ is opposite $\beta^{\theta_{\alpha}}$ in the building $\Res(\alpha)$. 
\end{prop}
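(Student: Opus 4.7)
The plan is to reduce the statement to a chamber-level claim and then apply the gate property of $\proj_\alpha$ together with the constancy of the distance between opposite residues.

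First, unpacking the definition of opposite simplices, $\beta$ is opposite $\beta^\theta$ in $\Delta$ iff there exist chambers $B \supseteq \beta$ and $C \supseteq \beta^\theta$ of $\Delta$ that are opposite in $\Delta$. Since $\beta \in \Res(\alpha)$ and $\beta^{\theta_\alpha} = \proj_\alpha(\beta^\theta) \in \Res(\alpha)$, the analogous characterisation in the building $\Res(\alpha)$ says that $\beta$ is opposite $\beta^{\theta_\alpha}$ in $\Res(\alpha)$ iff there exist chambers $B, B''$ of $\Res(\alpha)$ with $B \supseteq \beta$, $B'' \supseteq \proj_\alpha(\beta^\theta)$, and $B$ opposite $B''$ in $\Res(\alpha)$. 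Using the fact that $\proj_\alpha(C) \supseteq \proj_\alpha(\beta^\theta)$ whenever $C \supseteq \beta^\theta$, the proof reduces to the following chamber-level claim: for chambers $B \supseteq \alpha$ and $C \supseteq \alpha^\theta$, $B$ is opposite $C$ in $\Delta$ iff $B$ is opposite $\proj_\alpha(C)$ in $\Res(\alpha)$.

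Second, set $J = \tau(\alpha)$ and $K = S \setminus J$, so that $\Res(\alpha)$ is a building of type $(W_K, K)$ with longest element $w_K$. The gate property of projection states that every minimal gallery from $C$ to $B$ passes through $\proj_\alpha(C)$, yielding the reduced decomposition
\[
\delta(C, B) = \delta(C, \proj_\alpha(C)) \cdot \delta(\proj_\alpha(C), B).
\]
Since $\proj_\alpha(C)$ and $B$ both lie in $\Res(\alpha)$, the second factor lies in $W_K$ and satisfies $\ell(\delta(\proj_\alpha(C), B)) \leq \ell(w_K)$, with equality iff $B$ is opposite $\proj_\alpha(C)$ in $\Res(\alpha)$. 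The first factor has length equal to $d(C, \Res(\alpha))$; because $\alpha$ and $\alpha^\theta$ are opposite simplices, this distance is constant across $C \in \Res(\alpha^\theta)$ and equals $\ell(w_0) - \ell(w_K)$ (the standard characterisation of opposite residues in terms of the chamber graph). Adding lengths,
\[
\ell(\delta(C, B)) = \bigl(\ell(w_0) - \ell(w_K)\bigr) + \ell(\delta(\proj_\alpha(C), B)) \leq \ell(w_0),
\]
with equality on the right (i.e.\ $\delta(C, B) = w_0$) iff equality holds on the middle term, i.e.\ $\delta(\proj_\alpha(C), B) = w_K$. This is exactly the desired equivalence.

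The main obstacle is the identity $d(C, \Res(\alpha)) = \ell(w_0) - \ell(w_K)$, which is the combinatorial content of the opposition of $\alpha$ and $\alpha^\theta$ at the chamber-graph level; this is where the hypothesis $\alpha \in \Opp(\theta)$ is really used, and it is the nontrivial input (taken from the general theory of opposite residues, e.g.\ \cite[Section~4.8]{AB:08}). Everything else is bookkeeping with lengths in the projection formula, together with the elementary fact that opposition of arbitrary simplices can be detected on any, equivalently all, chambers extending them.
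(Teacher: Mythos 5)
The paper does not prove this proposition; it is stated as a direct citation of Tits's Lecture Notes \cite[Proposition~3.29]{Tit:74}. Your reconstruction via the gate property and the codistance formula $d(C,\Res(\alpha))=\ell(w_0)-\ell(w_K)$ for $C\in\Res(\alpha^\theta)$, with $K=S\setminus\tau(\alpha)$, is correct and is the standard route. Two details are elided and worth tightening. First, the characterisation of simplex opposition by the existence of an opposite chamber pair requires the type compatibility $\tau(\beta)^\theta=\tau(\beta)^{\op}$ in $\Delta$, and its analogue $(\tau(\beta)\setminus J)^{\theta_\alpha}=(\tau(\beta)\setminus J)^{w_K}$ in $\Res(\alpha)$, where $J=\tau(\alpha)$; you should note that these two conditions are equivalent, using $J^\theta=J^{\op}$ and the disjointness of $J$ and $\tau(\beta)\setminus J$, so that when compatibility fails both sides of the biconditional are vacuously false and the statement still holds. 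Second, your reduction to the chamber-level claim is only explicit in one direction; for the converse you also need that $\proj_\alpha\colon\Res(\alpha^\theta)\to\Res(\alpha)$ is an isomorphism of buildings with inverse $\proj_{\alpha^\theta}$ (this is \cite[Theorem~3.28]{Tit:74}, already invoked in Section~\ref{sec:1}), so that a chamber $B''\in\Res(\alpha)$ containing $\beta^{\theta_\alpha}=\proj_\alpha(\beta^\theta)$ pulls back to a chamber $C=\proj_{\alpha^\theta}(B'')\in\Res(\alpha^\theta)$ containing $\beta^\theta$ with $\proj_\alpha(C)=B''$, which you then feed into the chamber-level equivalence. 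With these patched in, the additivity $\ell(\delta(C,B))=\bigl(\ell(w_0)-\ell(w_K)\bigr)+\ell(\delta(\proj_\alpha(C),B))$ and the uniqueness of the longest elements $w_0\in W$ and $w_K\in W_K$ give exactly the statement.
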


The following corollary facilitates inductive residue arguments. 

\begin{cor}\label{cor:proj}
Let $\theta:\Delta\to\Delta$ be a domestic automorphism and let $\sigma\in\Opp(\theta)$. Then $\theta_{\sigma}:\Res(\sigma)\to \Res(\sigma)$ is a domestic automorphism of the building~$\Res(\sigma)$.
\end{cor}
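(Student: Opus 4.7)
The proof plan is to apply Proposition~\ref{prop:proj} directly via contrapositive. The key observation is that the chambers of the residue $\Res(\sigma)$ are precisely the chambers of $\Delta$ that contain the simplex $\sigma$, since $\Res(\sigma)$ is defined as the set of simplices containing $\sigma$ and its maximal elements are the chambers of $\Delta$ containing $\sigma$. In particular, any chamber of $\Res(\sigma)$ is itself a simplex $\beta\in\Res(\sigma)$ to which Proposition~\ref{prop:proj} applies.

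Suppose for contradiction that $\theta_{\sigma}$ is not domestic as an automorphism of $\Res(\sigma)$. Then there exists a chamber $C$ of $\Res(\sigma)$ such that $C$ is opposite $C^{\theta_{\sigma}}$ within the building $\Res(\sigma)$. Since $C\in\Res(\sigma)$, Proposition~\ref{prop:proj} (with $\alpha=\sigma$ and $\beta=C$) yields that $C$ is opposite $C^{\theta}$ in the building $\Delta$. But $C$ is also a chamber of $\Delta$, so this contradicts the assumption that $\theta$ is domestic.

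There is essentially no obstacle here: the entire content is packaged into Proposition~\ref{prop:proj}, together with the identification of chambers of $\Res(\sigma)$ with the chambers of $\Delta$ containing $\sigma$. The only minor point worth making explicit in the writeup is that $\theta_{\sigma}=\proj_{\sigma}\circ\,\theta$ is indeed an automorphism of $\Res(\sigma)$ (this follows from the fact that $\proj_{\sigma}:\Res(\sigma^{\theta})\to\Res(\sigma)$ is an isomorphism by \cite[Theorem~3.28]{Tit:74}, as noted just above Proposition~\ref{prop:typemap}), so the statement ``$\theta_{\sigma}$ is a domestic automorphism'' is meaningful. Hence the proof reduces to a two-line contrapositive argument.
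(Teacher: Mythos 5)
Your proof is correct and is essentially the paper's proof: the paper also runs a short contrapositive argument through Proposition~\ref{prop:proj}, taking a chamber $\sigma'$ of $\Res(\sigma)$ mapped to an opposite by $\theta_\sigma$ and observing that $\sigma\cup\sigma'$ is then a chamber of $\Delta$ mapped to an opposite by $\theta$. Your explicit remark that chambers of $\Res(\sigma)$ are just the chambers of $\Delta$ containing $\sigma$ is the same identification, stated a little more carefully.
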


\begin{proof}
Let $J=\tau(\sigma)$. If $\theta_{\sigma}$ is not domestic then there is a chamber $\sigma'$ of $\Res(\sigma)$ mapped onto an opposite chamber by $\theta_{\sigma}$. Then $\sigma\cup\sigma'$ is a chamber of $\Delta$, and from Proposition~\ref{prop:proj} this chamber is mapped onto an opposite chamber, a contradiction. 
\end{proof}

In light of Corollary~\ref{cor:proj} it is natural that the theory of domesticity in rank~$2$ buildings (ie, generalised polygons) plays a central role. This analysis has been undertaken in \cite{PTM:15,TTM:09,TTM:12b}, and since only residues of types $\sA_1\times\sA_1$, $\sA_2$, and $\sB_2/\sC_2$ appear as rank $2$ residues of irreducible thick spherical buildings of rank $3$ or more, the relevant results are as follows.

\begin{thm}{\cite{PTM:15,TTM:09,TTM:12b}}\label{thm:rank2} Let $\Delta$ be a thick generalised $n$-gon. If $n$ is odd then no nontrivial collineation is domestic, and if $n$ is even then no duality is domestic. Moreover, 
\begin{compactenum}[$(1)$]
\item If $n=2$ then $\Delta$ admits no exceptional domestic automorphisms. 
\item If $n=3$ then $\Delta$ admits an exceptional domestic duality if and only if $\Delta$ is a Fano plane, and in this case there exists a unique exceptional domestic duality up to conjugation.
\item If $n=4$ then $\Delta$ admits an exceptional domestic collineation if and only if the generalised quadrangle $\Delta$ has parameters $(2,2)$, $(2,4)$, $(4,2)$, $(3,5)$, or $(5,3)$, and in each case there exists a unique exceptional domestic collineation up to conjugation.
\end{compactenum}
\end{thm}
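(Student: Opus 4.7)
The non-domesticity assertions reduce quickly to Theorem~\ref{thm:fund} via Lemma~\ref{lem:opbasic}. In a thick generalised $n$-gon the type set is $S=\{s_1,s_2\}$, and a direct computation shows that the longest element $w_0$ swaps the two types precisely when $n$ is odd. Thus when $\theta$ is a collineation and $n$ is odd, or when $\theta$ is a duality and $n$ is even, the diagram automorphism $w_0\circ\theta$ exchanges $s_1$ and $s_2$, so Lemma~\ref{lem:opbasic}(1) implies that every simplex of $\Opp(\theta)$ is a chamber. Since $\Opp(\theta)\neq\emptyset$ by Theorem~\ref{thm:fund}, $\theta$ is not domestic. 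The same observation handles (1): in a thick generalised digon every point is on every line, so two chambers are opposite if and only if they share no vertex, and if $\theta$ were exceptional domestic then both some moved point $p$ and some moved line $\ell$ would exist, giving $\{p,\ell\}$ opposite $\{p^\theta,\ell^\theta\}$ and contradicting domesticity.

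For (2), I would fix an exceptional domestic duality $\theta$ of a projective plane $\Pi$ of order $q$ and analyse its \emph{absolute} points (those $p$ with $p\in\theta(p)$) and \emph{absolute} lines. Domesticity is equivalent to the condition that every flag contains at least one absolute element, and the exceptional condition demands both a non-absolute point and a non-absolute line. A flag-count then forces many absolute points and lines, and combining this with classical structure results for the absolute configuration of a duality (including the information forced by the collineation $\theta^2$) reduces the problem to $q=2$. Example~\ref{ex:fano} then provides the explicit duality, and uniqueness up to conjugation is a finite check using that the collineation group of the Fano plane has order~$168$ together with a classification of the possible cycle types of a duality.

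For (3), the plan follows \cite{TTM:09,PTM:15}. Starting from an exceptional domestic collineation $\theta$ of a thick generalised quadrangle with parameters $(s,t)$: (i) use Proposition~\ref{prop:proj} to transfer domesticity to the residues at a vertex of $\Opp(\theta)$, producing rank~$1$ constraints on the induced local action; (ii) feed these into Higman-type inequalities for $(s,t)$ to cut the possible parameters down to a short list; (iii) for each remaining pair, either exhibit the exceptional domestic collineation explicitly or rule it out by a combinatorial argument on the absolute points and lines of $\theta$; and (iv) verify uniqueness inside the small automorphism group of each surviving quadrangle. The hardest step will be the third: the parameter inequalities alone do not quite isolate the five listed pairs, so marginal cases such as $(s,t)=(3,3)$ or $(4,4)$ must be eliminated by dedicated structural arguments on the absolute configuration of~$\theta$, and uniqueness in each surviving quadrangle requires a careful case analysis inside the relevant finite group.
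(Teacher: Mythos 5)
The paper does not prove this theorem; it is quoted from \cite{PTM:15,TTM:09,TTM:12b}, so there is no in-paper argument to compare yours against. That said, your treatment of the first two claims is complete and correct, and it is a neat observation that these follow directly from Theorem~\ref{thm:fund} and Lemma~\ref{lem:opbasic}: for $\mathsf{I}_2(n)$ the longest element $w_0$ acts nontrivially on $S$ iff $n$ is odd, so when $w_0\circ\pi_\theta$ swaps the two types every (nonempty) element of $\Opp(\theta)$ is forced to be a chamber, and $\Opp(\theta)\neq\emptyset$ then immediately kills domesticity. Your argument for $n=2$ is likewise fine: the opposition relation in a digon is ``no common vertex,'' so a moved point together with a moved line gives a chamber mapped to an opposite.

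For parts (2) and (3), however, what you have written is a plan, not a proof, and you say as much yourself. The equivalence ``domestic $\Leftrightarrow$ every flag contains an absolute element'' is the right starting point, but the reduction of $q$ to~$2$ in the plane case, and the isolation of exactly the five parameter pairs in the quadrangle case, each require substantial combinatorial analysis that your sketch gestures at without supplying. In particular, the Higman-type inequalities you invoke in step (ii) of your plan for (3) do \emph{not} cut the parameters down to a short list by themselves; the genuine work in \cite{TTM:09,PTM:15} is a detailed count of fixed and absolute elements, block by block, combined with Benson-type eigenvalue constraints, and the marginal cases you flag (such as $(s,t)=(3,3)$, $(4,4)$) are indeed where most of the effort goes. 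The uniqueness claims in both (2) and (3) also need to be actually carried out inside the relevant automorphism groups, which is a nontrivial computation especially for the $(3,5)$ and $(5,3)$ quadrangles. So: the easy parts of your write-up are correct proofs, but the substantive assertions (2) and (3) would need the full arguments from the cited papers rather than the outline you give.
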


Recall that for generalised polygons an automorphism is uncapped if and only if it is exceptional domestic. Thus the above theorem shows that uncapped automorphisms of rank~$2$ buildings are very rare, and are restricted to ``small'' buildings. Theorem~\ref{thm:main} shows that this pattern continues into higher rank spherical buildings.

\begin{example}\label{ex:E7}
We now provide a detailed example of our residue arguments by outlining how  Propositions~\ref{prop:typemap} and \ref{prop:proj} can be used to prove Theorem~\ref{thm:main} in the case of $\sE_7$ buildings. Similar residue techniques will be used repeatedly throughout the paper, often with briefer explanations. Recall that we adopt the Bourbaki labelling conventions:
\begin{center}
\begin{tikzpicture}[scale=0.5]
\node [inner sep=0.8pt,outer sep=0.8pt] at (-2,0) (1) {$\bullet$};
\node [inner sep=0.8pt,outer sep=0.8pt] at (-1,0) (3) {$\bullet$};
\node [inner sep=0.8pt,outer sep=0.8pt] at (0,0) (4) {$\bullet$};
\node [inner sep=0.8pt,outer sep=0.8pt] at (1,0) (5) {$\bullet$};
\node [inner sep=0.8pt,outer sep=0.8pt] at (2,0) (6) {$\bullet$};
\node [inner sep=0.8pt,outer sep=0.8pt] at (3,0) (7) {$\bullet$};
\node [inner sep=0.8pt,outer sep=0.8pt] at (0,-1) (2) {$\bullet$};
\draw (-2,0)--(3,0);
\draw (0,0)--(0,-1);
\node [above] at (1) {$1$};
\node [left] at (2) {$2$};
\node [above] at (3) {$3$};
\node [above] at (4) {$4$};
\node [above] at (5) {$5$};
\node [above] at (6) {$6$};
\node [above] at (7) {$7$};
\end{tikzpicture}
\end{center}
We will require the following five facts that will be proved later in the paper. The first four facts are relatively easy, and their utility arises naturally due the presence of $\sA_n$, $\sD_n$, and $\sE_6$ residues in an $\sE_7$ building. The fifth fact is of a rather different flavour, and turns out to be highly nontrivial. 
\begin{compactenum}
\item[(1)] No duality of a large $\sA_{2n}$ building is domestic (Theorem~\ref{thm:Alarge}).
\item[(2)] No duality of a large $\sA_{2n+1}$ building is $\{2,4,\ldots,2n\}$-domestic (Theorem~\ref{thm:Alarge} and Lemma~\ref{lem:sp}).
\item[(3)] No duality of a thick $\sD_n$ building is $\{1\}$-domestic (Proposition~\ref{prop:DDual}).
\item[(4)] No duality of a large $\sE_6$ building is $\{1,6\}$-domestic (Lemma~\ref{lem:exceptional} and Theorem~\ref{thm:fund}).
\item[(5)] If $\theta$ is $\{3,7\}$-domestic collineation of a thick $\sE_7$ building then $\theta$ is either $\{3\}$-domestic or $\{7\}$-domestic (Proposition~\ref{prop:summary2}).
\end{compactenum}
Let $\theta$ be a collineation of a large $\sE_7$ building~$\Delta$, and let $J=\Type(\theta)$. We are required to show that there exists a type $J$ simplex in $\Opp(\theta)$, or equivalently that $\theta$ is not $J$-domestic.

We make the following claims:
\begin{compactenum}
\item[(a)] If either $2\in J$ or $5\in J$ then $\theta$ is not domestic.
\item[(b)] If either $3\in J$ or $4\in J$ then $\theta$ is not $\{1,3,4,6\}$-domestic.
\item[(c)] If $6\in J$ then $\theta$ is not $\{1,6\}$-domestic.
\item[(d)] If $7\in J$ then $\theta$ is not $\{1,6,7\}$-domestic.
\end{compactenum}
To prove (a), suppose first that $2\in J$ and let $v$ be a type $2$ vertex in $\Opp(\theta)$. Then the automorphism $\theta_v$ of the large type $\sA_6$ building $\mathrm{Res}(v)$ is a duality (by Proposition~\ref{prop:typemap}), and so by~(1) $\theta_v$ maps a chamber $\sigma$ of $\mathrm{Res}(v)$ to an opposite chamber of $\mathrm{Res}(v)$. Thus by Proposition~\ref{prop:proj} the chamber $\sigma\cup\{v\}$ of $\Delta$ is mapped onto an opposite by $\theta$, and so $\theta$ is not domestic. 

Suppose now that $5\in J$, and let $v$ be a type $5$ vertex of $\Opp(\theta)$. Then the automorphism $\theta_v$ of the type $\sA_4\times\sA_2$ building $\mathrm{Res}(v)$ acts as a duality on each component (by Proposition~\ref{prop:typemap}), and so by~(1) there is a chamber $\sigma_1$ (respectively $\sigma_2$) of the $\sA_4$ (respectively $\sA_2$) component mapped onto an opposite chamber by $\theta_v$. Thus by Proposition~\ref{prop:proj} the chamber $\sigma_1\cup\sigma_2\cup\{v\}$ of $\Delta$ is mapped onto an opposite chamber by~$\theta$, and hence the proof of (a) is complete. 

To prove (b), suppose first that $4\in J$, and let $v$ be a type $4$ vertex of $\Opp(\theta)$. Then $\theta_v$ is an automorphism of a type $\sA_1\times\sA_2\times\sA_3$ building acting as a duality on the $\sA_2$ and $\sA_3$ components (by Proposition~\ref{prop:typemap}), and so by~(1), (2), and Proposition~\ref{prop:proj}, there is a type $\{1,3\}\cup\{6\}\cup\{4\}$ simplex of $\Delta$ mapped onto an opposite simplex. Thus $\theta$ is not $\{1,3,4,6\}$-domestic.

Suppose that $3\in J$, and let $v$ be a type $3$ vertex of $\Opp(\theta)$. Then $\theta_v$ acts as a duality on the $\sA_5$ component of $\mathrm{Res}(v)$ (by Proposition~\ref{prop:typemap}), and so by~(2) $\theta_v$ maps a type $\{2,4\}$ simplex of this residue to an opposite simplex. Thus by Proposition~\ref{prop:proj} we have $4\in J$, and hence by the previous paragraph $\theta$ is not $\{1,3,4,6\}$-domestic, completing the proof of~(b).

To prove (c), suppose that $6\in J$, and let $v$ be a type $6$ vertex of $\Opp(\theta)$. Then $\theta_v$ acts as a duality on the $\sD_5$ component of $\mathrm{Res}(v)$ (by Proposition~\ref{prop:typemap}), and hence by~(1) and Proposition~\ref{prop:proj} we see that $\theta$ is not $\{1,6\}$-domestic.

To prove (d), suppose that $7\in J$, and let $v$ be a type $7$ vertex of $\Opp(\theta)$. Then $\theta_v$ acts as a duality on the type $\sE_6$ building $\mathrm{Res}(v)$ (by Proposition~\ref{prop:typemap}), and hence by~(4) and Proposition~\ref{prop:proj} we see that $\theta$ is not $\{1,6,7\}$-domestic. 

It follows statements (a), (b), (c), and (d) that the possibilities for $J$ are $J=\{1\}$, $\{1,6\}$, $\{1,6,7\}$, $\{1,3,4,6\}$, or $\{1,3,4,6,7\}$, and that in the first four cases $\theta$ is capped. Thus it remains to consider the possitility $J=\{1,3,4,6,7\}$. This case is not approachable by residue arguments alone. However by~(5) there exists a type $\{3,7\}$ simplex $\sigma$ in $\Opp(\theta)$, and by Proposition~\ref{prop:typemap} the automorphism $\theta_{\sigma}$ of the type $\sA_1\times\sA_4$ building $\mathrm{Res}(\sigma)$ acts as a duality on the $\sA_4$ component (by Proposition~\ref{prop:typemap}). Hence by~(1) and Proposition~\ref{prop:proj} there is a type $\{2,4,5,6\}\cup\{3,7\}$ simplex in $\Opp(\theta)$. In particular there is a type $2$ vertex in $\Opp(\theta)$, and so by~(a) $\theta$ is not domestic. Thus $J=\{1,3,4,6,7\}$ is in fact impossible, completing the proof that every collineation of a large $\sE_7$ building is capped.
\end{example}

\section{Opposition diagrams}\label{sec:2}

In this section we introduce a diagram combinatorics for opposition diagrams of capped automorphisms, and use this combinatorial approach to classify the possible opposition diagrams of capped automorphisms, proving Theorem~\ref{thm:main2}. We also give an application to the calculation of displacement of a capped automorphism of a spherical building, proving Theorem~\ref{thm:main3}.

\subsection{Admissible diagrams}

Let $(W,S)$ be a (not necessarily irreducible) spherical Coxeter system with Coxeter graph $\Gamma$, and let $\pi$ be an automorphism of $\Gamma$. If $K\subseteq S$ is invariant under $w_0\circ \pi$ then $S\backslash K$ is also invariant under $w_0\circ \pi$, and hence 
$$
\pi_K=w_{S\backslash K}\circ w_0\circ \pi:S\backslash K\to S\backslash K
$$ is an automorphism of the subgraph $\Gamma_{S\backslash K}$.

Let $\mathcal{A}$ denote the set of all triples $(\Gamma,J,\pi)$ where $\Gamma=\Gamma(W,S)$ is the Coxeter graph of some spherical Coxeter system $(W,S)$, $J\subseteq S$ is a subset of the vertex set of $\Gamma$, and $\pi$ is a graph automorphism of $\Gamma$, such that the following axioms hold:
\begin{compactenum}[$(1)$]
\item if $\pi\neq 1$ then $J\neq \emptyset$,
\item $J$ is closed under the actions of both $w_0$ and $\pi$,
\item if $K\subseteq J$ is closed under $w_0\circ \pi$ then $(\Gamma_{S\backslash K},J\backslash K,\pi_K)\in\mathcal{A}$. 
\end{compactenum}
This is an inductive definition, and thus we need to specify the permitted ``base cases''. These are taken to be $(\bullet,\{\},\mathrm{id})$ and $(\bullet,\{s\},\mathrm{id})$, where $\bullet$ is the Coxeter graph of the type $A_1$ Coxeter system with $S=\{s\}$. 

The elements $(\Gamma,J,\pi)$ of $\cA$ are called \textit{admissible diagrams}. The admissible diagram $(\Gamma,J,\pi)$ will be depicted by drawing the graph $\Gamma$ and encircling each minimal $w_0\circ\pi$ invariant subset of $J$. We draw the diagram `bent' (in the standard way, c.f. Section~\ref{sec:1}) if $w_0\circ\pi\neq 1$. We call a diagram \textit{empty} if no nodes are encircled, and \textit{full} if all nodes are encircled. We note that axiom (3) can be interpreted as saying that the set $\mathcal{A}$ is closed under ``taking residues'' in an appropriate sense.

The connection between admissible diagrams and capped automorphisms is given in the following elementary proposition. 

\begin{prop}\label{prop:conn}
If $\theta$ is a capped automorphism of a thick spherical building of type $(W,S)$ then $(\Gamma,J,\pi)$ is an admissible diagram, where $\Gamma=\Gamma(W,S)$, $J=\Type(\theta)$, and $\pi=\pi_{\theta}$ is the automorphism of $\Gamma$ induced by~$\theta$.
\end{prop}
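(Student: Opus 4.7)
The plan is to proceed by induction on $|S|$, verifying the three axioms of $\cA$ for the triple $(\Gamma,J,\pi)$. The base case $|S|\leq 1$ is immediate, since the only triples that can arise are exactly the two declared base cases of $\cA$. So assume $|S|\geq 2$.

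Axiom (1) is immediate from Theorem~\ref{thm:fund}: if $\pi\neq 1$ then $\theta$ is nontrivial, so $\Opp(\theta)$ is nonempty and hence $J\neq\emptyset$. For axiom (2), note that automorphisms of $\Delta$ preserve opposition (the element $w_0$ is fixed by every diagram automorphism of~$\Gamma$), so $\Opp(\theta)$ is $\theta$-stable. Writing $J=\bigcup_{\sigma\in\Opp(\theta)}\tau(\sigma)$ and using $\tau(\sigma^\theta)=\tau(\sigma)^\pi=\tau(\sigma)^{w_0}$ for $\sigma\in\Opp(\theta)$ (Lemma~\ref{lem:opbasic}(1)), it follows that $J^\pi\subseteq J$ and $J^{w_0}\subseteq J$, with equality in each case by applying the inverse map. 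Note that cappedness is not needed here.

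The substantive work is axiom (3). Let $K\subseteq J$ be closed under $w_0\circ\pi$; the case $K=\emptyset$ returns the original statement and is handled by the induction at its own level, so assume $K\neq\emptyset$. Using cappedness, fix a type-$J$ simplex $\sigma_J\in\Opp(\theta)$ and let $\sigma_K\subseteq\sigma_J$ be its type-$K$ face; Lemma~\ref{lem:opbasic}(2) gives $\sigma_K\in\Opp(\theta)$. The residue automorphism $\theta_{\sigma_K}$ then acts on the thick spherical building $\Res(\sigma_K)$ of type $\Gamma_{S\backslash K}$, and Proposition~\ref{prop:typemap} identifies its induced diagram automorphism as exactly $\pi_K=w_{S\backslash K}\circ w_0\circ\pi$. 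By Proposition~\ref{prop:proj}, a simplex $\beta\supseteq\sigma_K$ lies in $\Opp(\theta)$ if and only if it lies in $\Opp(\theta_{\sigma_K})$. Since the residue-type of $\sigma_J$ is $J\backslash K$, we obtain a type-$(J\backslash K)$ simplex in $\Opp(\theta_{\sigma_K})$; conversely, any $\beta\in\Opp(\theta_{\sigma_K})$ lies in $\Opp(\theta)$, forcing its residue-type $\tau(\beta)\backslash K\subseteq J\backslash K$. Hence $\Type(\theta_{\sigma_K})=J\backslash K$ and $\theta_{\sigma_K}$ is capped, so the inductive hypothesis (applicable since $|S\backslash K|<|S|$) yields $(\Gamma_{S\backslash K},J\backslash K,\pi_K)\in\cA$.

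The only step requiring genuine care is the identification $\Type(\theta_{\sigma_K})=J\backslash K$, where both inclusions must be verified via Proposition~\ref{prop:proj} to move between opposition in $\Delta$ and opposition in the residue. The remaining verifications are direct unwindings of the definitions, aided by Lemma~\ref{lem:opbasic} and Propositions~\ref{prop:typemap} and~\ref{prop:proj}, and I do not anticipate further difficulty.
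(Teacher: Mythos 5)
Your proof is correct and follows essentially the same strategy as the paper's: verify the three axioms of $\cA$ directly, using Lemma~\ref{lem:opbasic} for axioms (1)--(2) and Propositions~\ref{prop:typemap} and~\ref{prop:proj} to pass to residues for axiom (3), with an induction that the paper phrases informally as ``continue in this way until we reach a diagram with $J=\emptyset$''. Your added care in confirming $\Type(\theta_{\sigma_K})=J\setminus K$ (both inclusions, via Proposition~\ref{prop:proj}) and your observation that axiom (2) follows from $\theta$-stability of $\Opp(\theta)$ without invoking cappedness are valid refinements, but they do not change the underlying argument.
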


\begin{proof}
If $\pi\neq\mathrm{id}$ then $\theta$ is a nontrivial automorphism, and hence $\Opp(\theta)\neq\emptyset$ by Theorem~\ref{thm:fund}. Thus $J\neq \emptyset$. 

By cappedness there exists a simplex $\sigma$ of type $J$ mapped to an opposite simplex by $\theta$, and so by Lemma~\ref{lem:opbasic}(1) $J$ is stable under $w_0\circ \pi$. Moreover, since $\sigma$ and $\sigma^{\theta}$ are opposite, $\sigma^{\theta^{-1}}$ and $\sigma$ are also opposite, and hence there is a simplex of type $J^{\pi^{-1}}$ mapped to an opposite by $\theta$. Cappedness forces $J^{\pi^{-1}}=J$, and hence $J$ is stable under $\pi$. Since $J$ is also stable under $w_0\circ \pi$ we see that $J$ is stable under both $w_0$ and $\pi$. 

Suppose that $K\subseteq J$ is stable under $w_0\circ\pi$. Then by Lemma~\ref{lem:opbasic}(2) the type $K$ subsimplex $\sigma'$ of $\sigma$ is mapped to an opposite by $\theta$. The induced automorphism $\theta_{\sigma'}:\mathrm{Res}(\sigma')\to \mathrm{Res}(\sigma')$ is an automorphism of a building of type $(W_{S\backslash K},S\backslash K)$ inducing $\pi_K=w_{S\backslash K}\circ w_0\circ \pi$ on the type set (see Proposition~\ref{prop:typemap}). Moreover, $\theta_{\sigma'}$ is capped by Proposition~\ref{prop:proj}. By the above paragraphs the automorphism $\theta_{\sigma'}$ satisfies conditions (1) and (2), and since $\theta_{\sigma'}$ is capped we can continue in this way until we reach a diagram with $J=\emptyset$ and then necessarily the associated diagram automorphism is trivial. Such a diagram satisfies all three axioms, and hence the result.
\end{proof}

We now classify the connected admissible diagrams. We first note that $\cA$ has the following obvious closure properties. 

\begin{lemma}\label{lem:closed}
Let $(W_1,S_1)$ and $(W_2,S_2)$ be disjoint spherical Coxeter systems (not necessarily irreducible) and let $(W,S)=(W_1\times W_2,S_1\cup S_2)$. Let $\Gamma$ be the Coxeter graph of $(W,S)$, and let $\Gamma_i$ be the Coxeter graph of $(W_i,S_i)$ for $i=1,2$. 
\begin{compactenum}[$(1)$]
\item If $(\Gamma_i,J_i,\pi_i)\in\cA$ for $i=1,2$, then $(\Gamma,J_1\cup J_2,\pi_1\circ\pi_2)\in\cA$. 
\item If $(\Gamma,J,\pi)\in\cA$ and $\pi(S_i)=S_i$ for $i=1,2$ then $(\Gamma_i,J\cap J_i,\pi|_{S_i})\in\cA$ for $i=1,2$.
\end{compactenum}
\end{lemma}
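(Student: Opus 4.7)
The plan is to prove both parts simultaneously by induction on $|S|$, starting from the rank-one base cases of $\cA$. The structural key in both parts is that the longest element of $W=W_1\times W_2$ factorises as $w_0 = w_{S_1}w_{S_2}$, so $w_0$ preserves the partition $S=S_1\sqcup S_2$ and restricts on $S_i$ to $w_{S_i}$. The same holds for $\pi=\pi_1\circ\pi_2$ in part~(1), and by hypothesis in part~(2). Consequently, for any $K\subseteq S$ invariant under $w_0\circ\pi$, writing $K_i = K\cap S_i$, the parabolic subgroup $W_{S\setminus K}$ also splits as a product, with $w_{S\setminus K} = w_{S_1\setminus K_1}w_{S_2\setminus K_2}$, and one computes directly that $\pi_K$ preserves each $S_i\setminus K_i$ and restricts there to $(\pi_i)_{K_i}$ (in the context of part~(1)) or to $(\pi|_{S_i})_{K_i}$ (in the context of part~(2)). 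This is the only real computation involved; the rest is bookkeeping through the three axioms defining $\cA$.

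For part~(1) I would verify the three axioms for $(\Gamma, J_1\cup J_2, \pi_1\circ\pi_2)$. Axiom~(1) is immediate: if $\pi_1\circ\pi_2\neq 1$ then some $\pi_i\neq 1$, giving $J_i\neq\emptyset$ and hence $J_1\cup J_2\neq\emptyset$ by axiom~(1) for $(\Gamma_i,J_i,\pi_i)$. Axiom~(2) follows from the component-wise stability of each $J_i$ under $w_{S_i}$ and $\pi_i$. For axiom~(3), given $K\subseteq J_1\cup J_2$ invariant under $w_0\circ\pi$, each $K_i$ is invariant under $w_{S_i}\circ\pi_i$, so axiom~(3) for the two factors provides $((\Gamma_i)_{S_i\setminus K_i},\, J_i\setminus K_i,\, (\pi_i)_{K_i})\in\cA$; by the structural observation the required residue is precisely the product of these two diagrams, and when $K\neq\emptyset$ it has rank strictly less than $|S|$, so the inductive hypothesis of part~(1) places it in $\cA$. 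The case $K=\emptyset$ is the tautological instance of axiom~(3).

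For part~(2), fix $(\Gamma, J, \pi)\in\cA$ with $\pi$ preserving each $S_i$. Axiom~(2) for $(\Gamma_1, J\cap S_1, \pi|_{S_1})$ is immediate from the $S_1$-stability of the ambient $w_0$ and $\pi$. For axiom~(1), assume $\pi|_{S_1}\neq 1$; if $J\cap S_2=\emptyset$ then $J\subseteq S_1$ and $\pi\neq 1$ forces $J = J\cap S_1\neq\emptyset$ via axiom~(1) for $(\Gamma,J,\pi)$, while if $J\cap S_2\neq\emptyset$ I would apply axiom~(3) to $(\Gamma,J,\pi)$ with $K = J\cap S_2$ to obtain a residue in $\cA$ of strictly smaller rank on which the induced automorphism restricts to $\pi|_{S_1}\neq 1$, so axiom~(1) for the residue forces $J\cap S_1\neq\emptyset$. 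For axiom~(3), given $K_1\subseteq J\cap S_1$ invariant under $w_{S_1}\circ \pi|_{S_1}$, take $K = K_1\cup(J\cap S_2)\subseteq J$; this is $w_0\circ\pi$-invariant, and when $K\neq\emptyset$ axiom~(3) for $(\Gamma,J,\pi)$ yields a residue in $\cA$ of strictly smaller rank, whose $S_1\setminus K_1$-component is precisely $((\Gamma_1)_{S_1\setminus K_1},\, (J\cap S_1)\setminus K_1,\, (\pi|_{S_1})_{K_1})$ by the structural observation, and this component lies in $\cA$ by the inductive hypothesis of part~(2).

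The only real obstacle is the routine but delicate verification that $\pi_K$ decomposes along the product structure as described; everything else is either axiomatic or an immediate consequence of closure of $J$ under $w_0$ and $\pi$. No building theory enters at this point — the lemma is a purely combinatorial closure statement for the inductively defined class $\cA$.
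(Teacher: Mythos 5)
Your proof is correct. The paper offers no proof of this lemma at all — it is introduced with the remark that ``$\cA$ has the following obvious closure properties'' and is then used without verification — so there is no official argument to compare against. Your simultaneous induction on $|S|$, keyed to the two computations that $w_0$, $w_{S\setminus K}$, and $\pi$ all factor along $S=S_1\sqcup S_2$ and hence that $\pi_K$ restricts on each $S_i\setminus K_i$ to the corresponding factor's residue automorphism, is exactly the content that makes the ``obvious'' claim rigorous. One small point worth being explicit about: the definition of $\cA$ via axiom~(3) with $K=\emptyset$ is formally circular, so the induction really bottoms out only by reading axiom~(3) as a condition on nonempty $K$ (equivalently, the definition is well-founded because the rank strictly drops); you flag this correctly but it is the one place a reader might pause.
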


\newpage

\begin{thm}\label{thm:diagrams}
The elements $(\Gamma,J,\pi)$ of $\mathcal{A}$ with $\Gamma$ connected are as listed in Table~\ref{table:1}--Table~\ref{table:5} (with $\pi=\pi_{\theta}$), along with Table~\ref{table:6} below.

\begin{center}
\noindent\begin{tabular}{|l|}
\hline
$\Gamma=\mathsf{H}_3/\mathsf{H}_4$, $\pi=\mathrm{id}$\\
\hline\hline
\begin{tabular}{l}
\begin{tikzpicture}[scale=0.5,baseline=-0.5ex]
\node at (0,0.8) {};
\node at (0,-0.8) {};
\node at (0,0) (0) {};
\node [inner sep=0.8pt,outer sep=0.8pt] at (0,0) (0) {$\bullet$};
\node [inner sep=0.8pt,outer sep=0.8pt] at (1,0) (1) {$\bullet$};
\node [inner sep=0.8pt,outer sep=0.8pt] at (2,0) (2) {$\bullet$};
\draw (0,0)--(2,0);
\node at (1.5,0) [above] {$5$};
\end{tikzpicture}\quad
\begin{tikzpicture}[scale=0.5,baseline=-0.5ex]
\node at (0,0.8) {};
\node at (0,-0.8) {};
\node at (0,0) (0) {};
\node [inner sep=0.8pt,outer sep=0.8pt] at (0,0) (0) {$\bullet$};
\node [inner sep=0.8pt,outer sep=0.8pt] at (1,0) (1) {$\bullet$};
\node [inner sep=0.8pt,outer sep=0.8pt] at (2,0) (2) {$\bullet$};
\draw (0,0)--(2,0);
\node at (1.5,0) [above] {$5$};
\draw [line width=0.5pt,line cap=round,rounded corners] (1.north west)  rectangle (1.south east);
\end{tikzpicture}\quad
\begin{tikzpicture}[scale=0.5,baseline=-0.5ex]
\node at (0,0.8) {};
\node at (0,-0.8) {};
\node at (0,0) (0) {};
\node [inner sep=0.8pt,outer sep=0.8pt] at (0,0) (0) {$\bullet$};
\node [inner sep=0.8pt,outer sep=0.8pt] at (1,0) (1) {$\bullet$};
\node [inner sep=0.8pt,outer sep=0.8pt] at (2,0) (2) {$\bullet$};
\draw (0,0)--(2,0);
\node at (1.5,0) [above] {$5$};
\draw [line width=0.5pt,line cap=round,rounded corners] (0.north west)  rectangle (0.south east);
\draw [line width=0.5pt,line cap=round,rounded corners] (1.north west)  rectangle (1.south east);
\draw [line width=0.5pt,line cap=round,rounded corners] (2.north west)  rectangle (2.south east);
\end{tikzpicture}\quad
\begin{tikzpicture}[scale=0.5,baseline=-0.5ex]
\node at (0,0.8) {};
\node at (0,-0.8) {};
\node at (0,0) (0) {};
\node [inner sep=0.8pt,outer sep=0.8pt] at (-1,0) (-1) {$\bullet$};
\node [inner sep=0.8pt,outer sep=0.8pt] at (0,0) (0) {$\bullet$};
\node [inner sep=0.8pt,outer sep=0.8pt] at (1,0) (1) {$\bullet$};
\node [inner sep=0.8pt,outer sep=0.8pt] at (2,0) (2) {$\bullet$};
\draw (-1,0)--(2,0);
\node at (1.5,0) [above] {$5$};
\end{tikzpicture}\quad
\begin{tikzpicture}[scale=0.5,baseline=-0.5ex]
\node at (0,0.8) {};
\node at (0,-0.8) {};
\node at (0,0) (0) {};
\node [inner sep=0.8pt,outer sep=0.8pt] at (-1,0) (-1) {$\bullet$};
\node [inner sep=0.8pt,outer sep=0.8pt] at (0,0) (0) {$\bullet$};
\node [inner sep=0.8pt,outer sep=0.8pt] at (1,0) (1) {$\bullet$};
\node [inner sep=0.8pt,outer sep=0.8pt] at (2,0) (2) {$\bullet$};
\draw (-1,0)--(2,0);
\node at (1.5,0) [above] {$5$};
\draw [line width=0.5pt,line cap=round,rounded corners] (-1.north west)  rectangle (-1.south east);
\end{tikzpicture}\quad
\begin{tikzpicture}[scale=0.5,baseline=-0.5ex]
\node at (0,0.8) {};
\node at (0,-0.8) {};
\node at (0,0) (0) {};
\node [inner sep=0.8pt,outer sep=0.8pt] at (-1,0) (-1) {$\bullet$};
\node [inner sep=0.8pt,outer sep=0.8pt] at (0,0) (0) {$\bullet$};
\node [inner sep=0.8pt,outer sep=0.8pt] at (1,0) (1) {$\bullet$};
\node [inner sep=0.8pt,outer sep=0.8pt] at (2,0) (2) {$\bullet$};
\draw (-1,0)--(2,0);
\node at (1.5,0) [above] {$5$};
\draw [line width=0.5pt,line cap=round,rounded corners] (-1.north west)  rectangle (-1.south east);
\draw [line width=0.5pt,line cap=round,rounded corners] (0.north west)  rectangle (0.south east);
\draw [line width=0.5pt,line cap=round,rounded corners] (1.north west)  rectangle (1.south east);
\draw [line width=0.5pt,line cap=round,rounded corners] (2.north west)  rectangle (2.south east);
\end{tikzpicture}
\end{tabular}\\
\hline
\end{tabular}
\captionof{table}{Admissible diagrams for noncrystallographic types $\mathsf{H}_3$ and $\mathsf{H}_4$}\label{table:6}
\end{center}
\end{thm}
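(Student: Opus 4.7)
The plan is to proceed by induction on $|S|$, systematically invoking axiom~(3) to reduce to smaller admissible diagrams. The base cases $(\bullet, \emptyset, \id)$ and $(\bullet, \{s\}, \id)$ are given, and the inductive step splits into two halves: first checking that each diagram listed in Tables~\ref{table:1}--\ref{table:6} satisfies all three axioms (and in particular that the residue diagrams of listed diagrams are themselves listed, which ensures closure and hence that the ``forward'' direction of the classification is internally consistent), and second showing that no other $(\Gamma, J, \pi)$ with $\Gamma$ connected can be admissible.

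For the second half, the unifying strategy is as follows. Fix a $w_0 \circ \pi$-invariant piece $K \subseteq J$---most effectively a minimal such piece sitting near an end of $\Gamma$---and apply axiom~(3) to obtain an admissible residue diagram $(\Gamma_{S \backslash K},\, J \backslash K,\, \pi_K)$. The connected components of $\Gamma_{S \backslash K}$ are of strictly smaller rank, and by Lemma~\ref{lem:closed}(2) each component carries an admissible diagram, which by the induction hypothesis must appear in the tables. Running through all sensible choices of $K$ constrains the global shape of $J$ until only the listed patterns remain.

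I would organise the case analysis by Coxeter type. The classical types $\sA_n$, $\sB_n/\sC_n$, $\sD_n$ are handled by induction on $n$: the action of $w_0$ on $S$ is explicit (trivial except for $\sA_n$ with $n \geq 2$, $\sD_{2n+1}$, and $\sE_6$), and removing an encircled orbit of $J$ near an end of $\Gamma$ leaves a residue of the same classical type in lower rank (possibly together with an $\sA$-factor from the interior), which forces the ``striped'' patterns visible in Tables~\ref{table:1}--\ref{table:3}. The exceptional types $\sE_6$, $\sE_7$, $\sE_8$, $\sF_4$ offer only finitely many candidate $J$; the residues at the possible encircled vertices must already appear in the classical tables, which rapidly eliminates almost every subset. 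The triality diagrams for $\sD_4$ (Table~\ref{table:4}), the dihedral types $\mathsf{I}_2(m)$ (Table~\ref{table:5}), and the noncrystallographic types $\mathsf{H}_3, \mathsf{H}_4$ (Table~\ref{table:6}) are treated analogously, with the dihedral case serving as a convenient building block in the rank-$2$ residues of the larger diagrams.

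The main obstacle will be the $\sD_n$ and $\sE_6$ analyses with $\pi$ of order~$2$ (and the $\sD_4$ case with $\pi$ of order~$3$), where the interaction between $w_0$, $\pi$, and the ``bent'' diagram structure produces many a~priori possibilities that must each be individually verified or ruled out by checking the compatibility of all forced residues. A further bookkeeping subtlety is that a minimal $w_0 \circ \pi$-invariant piece can be either a singleton or a pair depending on the orbit structure of $w_0 \circ \pi$ on $S$; handling both uniformly is what makes the otherwise routine enumeration slightly delicate. Once all cases have been exhaustively enumerated and cross-checked against Tables~\ref{table:1}--\ref{table:6}, the theorem follows.
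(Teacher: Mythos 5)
Your proposal follows essentially the same route as the paper's proof: induct on rank, verify the forward inclusion by the closure properties of Lemma~\ref{lem:closed}, and establish completeness case-by-case by removing a $w_0\circ\pi$-invariant subset $K\subseteq J$ (typically near an end of the diagram) and using axiom~(3) together with the inductive hypothesis to force a contradiction with axiom~(1) whenever $\Gamma_{S\setminus K}$ has an empty component on which $\pi_K$ acts nontrivially. The obstacles you flag (the bent $\sD_n$ and $\sE_6$ diagrams, $\sD_4$ triality, and the singleton-versus-pair orbit bookkeeping) are precisely where the paper's case analysis concentrates its effort.
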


\begin{proof}
One must show two things: Firstly that each of the listed diagrams are elements of $\cA$, and secondly that the list is complete. The first task is a simple induction on rank, making use of Lemma~\ref{lem:closed}, and we omit the details. Thus we show that the list is complete. 

Consider type $\sA_n$, with $\pi=\id$. Minimal sets invariant under $w_0\circ\pi$ are of the form $\{i,n-i+1\}$ with $1\leq i\leq \lfloor n/2\rfloor$. Suppose that $J\neq \emptyset$, and let $i$ be minimal subject to $\{i,n-i+1\}\subseteq J$. Suppose that $i>1$. The diagram $\Gamma_{S\backslash \{i,n-i+1\}}$ is of type $\sA_{i-1}\times\sA_{i-1}\times \sA_k$ for some $k$, and $\pi_{\{i,n-i+1\}}$ interchanges the two $\sA_{i-1}$ components. Thus the restriction of $\pi_{\{i,n-i+1\}}$ to the $\sA_{i-1}\times\sA_{i-1}$ subdiagram is nontrivial, contradicting Axiom~(1). Thus $i=1$. Let $1\leq j\leq \lfloor n/2\rfloor$ be maximal subject to $\{j,n-j+1\}\subseteq J$. If there is an index $1<k<j$ such that $\{k,n-k+1\}\not\subseteq J$ then the diagram $\Gamma_{S\backslash J}$ contains an empty subdiagram of type $\sA_r\times\sA_r$ with $\pi_J$ interchanging the two components, a contradiction. Hence the result holds in this case.

Consider type $\sA_n$, with $\pi\neq\id$. Thus $n>1$ and $J\neq\emptyset$. For $n=2$ it is clear by Axioms (1) and (2) that the only possible admissible diagram is \begin{tikzpicture}[scale=0.5,baseline=-0.5ex]
\node at (1,0.3) {};
\node at (1,-0.3) {};
\node [inner sep=0.8pt,outer sep=0.8pt] at (1,0) (1) {$\bullet$};
\node [inner sep=0.8pt,outer sep=0.8pt] at (2,0) (2) {$\bullet$};
\draw (1,0)--(2,0);
\draw [line width=0.5pt,line cap=round,rounded corners] (1.north west)  rectangle (1.south east);
\draw [line width=0.5pt,line cap=round,rounded corners] (2.north west)  rectangle (2.south east);
\end{tikzpicture}, starting an induction. The diagram $\Gamma_{S\backslash J}$ is empty of type $\sA_{n_1}\times\cdots\times \sA_{n_k}$, and $\pi_J=w_{S\backslash J}$ acts on each $\sA_{n_i}$ by opposition. Hence Axiom (1) forces $n_i=1$ for all $i=1,\ldots,k$. Thus if $j$ is the maximal element of $J$ we have $j=n-1$ or $j=n$. Suppose that $n$ is odd. If $j=n$ then $\Gamma_{S\backslash\{j\}}$ has type $\sA_{n-1}$ with $n-1$ even, and by the induction hypothesis $J=\{1,\ldots,n-1\}\cup\{n\}$. If $j=n-1$ then by $\Gamma_{S\backslash \{j\}}$ has type $\sA_{n-2}\times \sA_1$ and $\pi_{\{j\}}$ acts as opposition on the $\sA_{n-2}$ component. Thus by Lemma~\ref{lem:closed} and the induction hypothesis either $J=\{1,2,\ldots,n-2\}\cup\{n-1\}$ or $J=\{2,4,\ldots,n-3\}\cup \{n-1\}$. The former case is impossible, because by the induction hypothesis $(\Gamma_{S\backslash\{1\}},J\backslash\{1\},\pi_{\{1\}})$ is not in $\cA$. Now suppose that $n$ is even. If $j=n$ then by the induction hypothesis $J=\{1,2,\ldots,n-1\}\cup\{n\}$ or $J=\{2,4,\ldots,n-2\}\cup\{n\}$. The latter case is impossible, since $(\Gamma_{\{2\}},J\backslash\{2\},\pi_{\{2\}})\notin\cA$. If $j=n-1$ then by the induction hypothesis $J=\{1,2,\ldots,n-2\}\cup\{n-1\}$, a contradiction since $(\Gamma_{S\backslash\{1\}},J\backslash\{1\},\pi_{\{1\}})\notin\cA$.

Consider type $\sB_n=\sC_n$, with $\pi=\mathrm{id}$. If $n=2$ then the possible admissible diagrams are
 \begin{tikzpicture}[scale=0.5,baseline=-0.5ex]
\node [inner sep=0.8pt,outer sep=0.8pt] at (1,0) (1) {$\bullet$};
\node [inner sep=0.8pt,outer sep=0.8pt] at (2,0) (2) {$\bullet$};
\draw (1,0.05)--(2,0.05);
\draw (1,-0.05)--(2,-0.05);
\end{tikzpicture},
\begin{tikzpicture}[scale=0.5,baseline=-0.5ex]
\node [inner sep=0.8pt,outer sep=0.8pt] at (1,0) (1) {$\bullet$};
\node [inner sep=0.8pt,outer sep=0.8pt] at (2,0) (2) {$\bullet$};
\draw (1,0.05)--(2,0.05);
\draw (1,-0.05)--(2,-0.05);
\draw [line width=0.5pt,line cap=round,rounded corners] (1.north west)  rectangle (1.south east);
\end{tikzpicture}, and
\begin{tikzpicture}[scale=0.5,baseline=-0.5ex]
\node [inner sep=0.8pt,outer sep=0.8pt] at (1,0) (1) {$\bullet$};
\node [inner sep=0.8pt,outer sep=0.8pt] at (2,0) (2) {$\bullet$};
\draw (1,0.05)--(2,0.05);
\draw (1,-0.05)--(2,-0.05);
\draw [line width=0.5pt,line cap=round,rounded corners] (1.north west)  rectangle (1.south east);
\draw [line width=0.5pt,line cap=round,rounded corners] (2.north west)  rectangle (2.south east);
\end{tikzpicture}, starting an induction. Suppose that $n\geq 3$ and that $J\neq \emptyset$. Let $1\leq i\leq n$ be minimal subject to $i\in J$. If $i>2$ then in the $\sA_{i-1}$ component of $\Gamma_{S\backslash\{i\}}$ we obtain an empty diagram with $\pi_{S\backslash\{i\}}$ acting nontrivially, a contradiction. Thus $i=1$ or $i=2$. Suppose first that $i=2$. Then $J$ contains no odd indices, for if $j\in J$ is odd, then in the $\sA_{j-1}$ component of $\Gamma_{S\backslash\{j\}}$ we must have a full diagram (by the $\sA_n$ analysis), contradicting $i=2$. If $2j$ is the maximal even index such that $2j\in J$ then every even index between $2$ and $2j$ is encircled, otherwise we obtain an empty $\sA_k$ diagram with $k>1$ with nontrivial diagram automorphism, a contradiction. Hence if $i=2$ we have $J=\{2,4,\ldots,2j\}$ for some $2j\leq n$. Now suppose that $i=1$. Let $j\geq 1$ be maximal subject to $j\in J$. Then in the $S\backslash\{j\}$ residue we obtain an $\sA_{j-1}$ diagram with nontrivial diagram automorphism and the first node encircled. Thus this subdiagram is full, and hence $J=\{1,2,\ldots,j\}$. 

Consider type $\sD_n$, with $\pi=(n-1,n)$. Suppose that either $n-1\in J$ or $n\in J$. Then $\{n-1,n\}\subseteq J$ because $J$ is $\pi$-invariant. If $n$ is odd then then consideration of the $\sA_{n-1}$ residue $\Gamma_{S\backslash\{n\}}$ shows that $J=\{1,2,\ldots,n\}$ and if $n$ is even then consideration of the $\sA_{n-2}$ residue $\Gamma_{S\backslash\{n-1,n\}}$ shows again that $J=\{1,2,\ldots,n\}$. Thus the diagram is full. 

Suppose now that $\{n-1,n\}\cap J=\emptyset$. Let $1\leq i\leq n-2$ be maximal subject to $i\in J$. If $i$ is even, then the induced diagram automorphism in the $\sD_{n-i+1}$ component of $\Gamma_{S\backslash\{i\}}$ is nontrivial yet the diagram is empty, a contradiction. Thus $i$ is odd, in which case the nontrivial diagram automorphism in the $\sA_{i-1}$ component of $\Gamma_{S\backslash\{i\}}$ forces $J=\{1,2,\ldots,i\}$.  

The $\sD_n$ case with $\pi=\id$ is similar to the $\sB_n$ case, using similar reasoning to the previous paragraph to show that if $J=\{1,2,\ldots,i\}$ then $i$ is necessarily even.

Consider type $\sD_4$, with $\pi=(1,3,4)$. Since $\pi\neq \id$ either $2\in J$ or $\{1,3,4\}\subseteq J$. In the first case, the residue is of type $\sA_1\times\sA_1\times\sA_1$ with $\pi_{\{2\}}$ cyclically permuting the components. Thus $\{1,3,4\}$ is also encircled. 

Consider type $\sE_6$, with $\pi=\id$. The minimal sets invariant under $w_0\circ\pi$ are $\{2\}$, $\{4\}$, $\{3,5\}$, and $\{1,6\}$. If $\{1,6\}\subseteq J$ then the diagram $\Gamma_{S\backslash \{1,6\}}$ is a $\sD_4$ with graph automorphism $(3,5)$ (in the induced labelling), and hence $2\in J$ by the above analysis of $\sD_n$. If $\{3,5\}\subseteq J$ then $\Gamma_{S\backslash\{3,5\}}$ is of type $\sA_2\times\sA_1\times\sA_1$ with the induced diagram automorphism acting nontrivially on the $\sA_2$ component and interchanging the two $\sA_1$ components, and hence $J=S$. If $4\in J$ then $\Gamma_{S\backslash\{4\}}$ has an $\sA_2\times\sA_2$ subdiagram in which the induced diagram automorphism acts by the permutation $(1,5)(3,6)$. In particular we have $\{1,3,5,6\}\cap J\neq \emptyset$. Since $J$ is $w_0$-invariant we have either $\{1,6\}\subseteq J$ or $\{3,5\}\subseteq J$, and $\pi_{\{4\}}$ invariance of $J\backslash\{4\}$ implies that $\{1,3,5,6\}\subseteq J$. In particular $\{3,5\}\subseteq J$, and so $J=S$ by the above argument.

The remaining cases are all similar. For example consider type $\sE_7$ with $\pi=\id$. Note first that if either $2\in J$ or $5\in J$ then necessarily $J=S$ and the diagram is full. For if $2\in J$ then the diagram $\Gamma_{S\backslash\{2\}}$ is of type $\sA_6$ with nontrivial diagram automorphism, and is hence full, and if $5\in J$ then the diagram $\Gamma_{S\backslash\{5\}}$ is of type $\sA_4\times\sA_2$ with nontrivial diagram automorphism on each component, and hence is full. Furthermore, note that if $6\in J$ then $\{1,6\}\subseteq  J$ (because in the residue we have a $\sD_5$ with nontrivial diagram automorphism), and if $7\in J$ then $\{1,6,7\}\subseteq J$ (because the $\sE_6$ residue has nontrivial diagram automorphism). Finally, if either $3\in J$ or $4\in J$ then $\{1,3,4,6\}\subseteq J$, for if $4\in J$ we look at the $\sA_2\times\sA_3$ component of the residue, and if $3\in J$ then the $\sA_5$ residue implies that $4\in J$.  
\end{proof}

\begin{remark} 
We note the following striking fact. The admissible diagrams listed in Table~\ref{table:1}--Table~\ref{table:5} are valid Tits diagrams (Tits indices) for Galois descent in groups of algebraic origin, with the following exceptions: The first $\sD_4$ diagram, the diagrams for $\sG_2=\mathsf{I}_2(6)$ with $\pi=\mathrm{id}$ and only one node circled, the diagram $\mathsf{I}_2(8)$ with $\pi\neq \mathrm{id}$, and obviously, due to lack of groups of algebraic origin, the cases $\mathsf{I}_2(m)$ with $m\neq 2,3,4,6,8$. The remaining diagrams are precisely the Tits diagrams for which Galois groups of order 2 exist. Moreover, we conjecture that in each of these cases there exists an involution with the given opposition diagram (but it is \emph{never} conjugate to any Galois involution). This provides a mysterious connection that warrants deeper investigation.    
\end{remark}

We can now prove Theorem~\ref{thm:main2}. 

\begin{proof}[Proof of Theorem~\ref{thm:main2}]
By Proposition~\ref{prop:conn} the opposition diagram of a capped automorphism is admissible. Now apply Theorem~\ref{thm:diagrams}.
\end{proof}

\subsection{Displacement}

We now show that the displacement of a capped automorphism can be computed from its opposition diagram, proving Theorem~\ref{thm:main3}. This fact is obviously false in general -- for example, a nondomestic duality of $\sA_2(2)$ has the same opposition diagram as the uncapped exceptional domestic duality from Example~\ref{ex:fano}. The former has displacement~$3$, while the latter has displacement~$2$. 

\begin{lemma}\label{lem:types}
If $\theta$ maps a type $J$ simplex $\sigma$ to an opposite simplex, then for all chambers $C$ containing $\sigma$ we have $\delta(C,C^{\theta})\in W_{S\backslash J}w_0$. 
\end{lemma}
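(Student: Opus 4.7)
The plan is to produce a chamber $C' \supset \sigma^{\theta}$ that is opposite $C$ and then exploit the fact that the Weyl distances from $C$ to chambers of a fixed residue lie in a single coset of the parabolic subgroup corresponding to that residue.

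First I would fix $C \supset \sigma$ and invoke the definition of opposition for simplices to obtain a chamber $C' \supset \sigma^{\theta}$ with $\delta(C,C') = w_0$. Next, observe that $C^{\theta} \supset \sigma^{\theta}$, so both $C'$ and $C^{\theta}$ are chambers of the residue $\Res(\sigma^{\theta})$, which is of type $S \setminus J^{\op}$ (since $\tau(\sigma^{\theta}) = J^{\op}$).

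The key step is the standard ``gate'' property: if $R$ is a $K$-residue and $A$ is any chamber, then for the projection chamber $A_* = \proj_R(A)$, every $B \in R$ satisfies $\delta(A,B) = \delta(A,A_*)\,\delta(A_*,B)$ with $\delta(A_*,B) \in W_K$. Consequently the map $B \mapsto \delta(A,B)$ from $R$ takes values in a single right coset of $W_K$; in particular, any two chambers $B, B' \in R$ satisfy $\delta(A,B)^{-1}\delta(A,B') \in W_K$. Applying this with $A = C$, $R = \Res(\sigma^{\theta})$, $K = S \setminus J^{\op}$, $B = C'$ and $B' = C^{\theta}$ gives
$$
\delta(C, C^{\theta}) \in \delta(C,C')\, W_{S \setminus J^{\op}} = w_0\, W_{S \setminus J^{\op}}.
$$

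Finally I would translate this coset into the form requested. Since $w_0 s w_0 = s^{\op}$ for every $s \in S$, conjugation by $w_0$ carries $W_{S\setminus J}$ onto $W_{(S\setminus J)^{\op}} = W_{S\setminus J^{\op}}$, so
$$
w_0\, W_{S\setminus J^{\op}} = w_0 \cdot w_0\, W_{S\setminus J}\, w_0 = W_{S\setminus J}\, w_0,
$$
using $w_0^2 = 1$. This yields $\delta(C, C^{\theta}) \in W_{S\setminus J}\, w_0$, as claimed. No step is really hard; the only place one must be careful is keeping straight which subset ($J$ versus $J^{\op}$) indexes the parabolic subgroup when moving between the two sides of $w_0$, but this is handled cleanly by the identity $w_0 W_{S\setminus J^{\op}} w_0 = W_{S\setminus J}$.
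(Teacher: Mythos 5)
Your argument is correct and rests on the same core tools as the paper's proof: the gate property of residues (the Weyl distance from a fixed chamber to the chambers of a $K$-residue lands in a single coset of $W_K$) and the conjugation identity $w_0 W_{S\setminus J} w_0 = W_{S\setminus J^{\op}}$. The only difference is in the choice of auxiliary chamber. The paper fixes an arbitrary opposite pair $A\supset\sigma$, $B\supset\sigma^{\theta}$ and lands in the double coset $W_{S\setminus J}\,w_0\,W_{(S\setminus J)^{\theta}}$, then collapses it using the conjugation identity. You instead invoke the definition of opposite simplices to obtain a chamber $C'\supset\sigma^{\theta}$ opposite to the given $C$, so only the residue $\Res(\sigma^{\theta})$ enters and you land immediately in the single coset $w_0 W_{S\setminus J^{\op}}$. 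This is a mild streamlining that avoids the double-coset bookkeeping; otherwise the two arguments are essentially identical.
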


\begin{proof}
By definition there exists a chamber $A$ containing $\sigma$ and a chamber $B$ containing $\sigma^{\theta}$ with $\delta(A,B)=w_0$. Since $C$ is contained in the $S\backslash J$ residue of $A$, and $C^{\theta}$ is contained in the $(S\backslash J)^{\theta}$ residue of $B$, we have $\delta(C,C^{\theta})\in W_{S\backslash J}w_0W_{(S\backslash J)^{\theta}}$. Since $\sigma$ is mapped to an opposite simplex we have $J^{\theta}=J^{\mathrm{op}}$, and thus $(S\backslash J)^{\theta}=(S\backslash J)^{\mathrm{op}}$. Therefore $w_0W_{S\backslash J}w_0=W_{(S\backslash J)^{\theta}}$ and the result follows. 
\end{proof}

The following theorem immediately implies Theorem~\ref{thm:main3}. 

\begin{thm}
Let $\theta$ be a capped automorphism with diagram $(\Gamma,J,\pi)$ and let $C$ be a chamber. Then $\ell(\delta(C,C^{\theta}))=\disp(\theta)$ if and only if $\delta(C,C^{\theta})=w_{S\backslash J}w_0$. In particular,
$$
\mathrm{disp}(\theta)=\ell(w_0)-\ell(w_{S\backslash J})=\mathrm{diam}(W)-\mathrm{diam}(W_{S\backslash J}).
$$
\end{thm}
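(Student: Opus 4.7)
The element $w:=w_{S\setminus J}w_0$ is the unique minimum-length element of the left coset $W_{S\setminus J}w_0$, with length $\ell(w)=\ell(w_0)-\ell(w_{S\setminus J})=\mathrm{diam}(W)-\mathrm{diam}(W_{S\setminus J})$. The theorem will follow once we establish that $\disp(\theta)=\ell(w)$ and that every chamber $C$ with $\ell(\delta(C,C^\theta))=\disp(\theta)$ satisfies $\delta(C,C^\theta)=w$.

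For the lower bound $\disp(\theta)\ge\ell(w)$ and the ``$\Leftarrow$'' direction, cappedness supplies a simplex $\sigma\in\Opp(\theta)$ of type $J$. By Lemma~2.8, $\delta(C,C^\theta)\in W_{S\setminus J}w_0$ for every $C\supseteq\sigma$. To identify this element as $w$ itself, I examine the induced automorphism $\theta_\sigma=\proj_\sigma\circ\theta$ on the thick spherical residue $\Res(\sigma)$ (of type $S\setminus J$). Any non-empty $\tau\in\Opp(\theta_\sigma)$ of type $K\subseteq S\setminus J$ would, via Proposition~\ref{prop:proj} together with the fact that opposite residue-simplices combine with $\sigma$ to give an opposite simplex of $\Delta$, yield a simplex of type $J\cup K$ in $\Opp(\theta)$; since $\Type(\theta)=J$, this forces $K=\emptyset$. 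Hence $\Opp(\theta_\sigma)$ has no non-empty simplex, and Theorem~\ref{thm:fund} forces $\theta_\sigma=\id$. Therefore $\proj_\sigma(C^\theta)=C$ for every $C\in\Res(\sigma)$, meaning $C$ is the chamber of $\Res(\sigma)$ closest to $C^\theta\in\Res(\sigma^\theta)$; by the standard projection formula this forces $\delta(C,C^\theta)$ to be the minimum-length element of $W_{S\setminus J}w_0$, namely $w$.

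For the upper bound $\disp(\theta)\le\ell(w)$ and the ``$\Rightarrow$'' direction, fix $\sigma$ as above, let $D$ be any chamber, and set $A=\proj_\sigma(D)\in\Res(\sigma)$, $u=\delta(A,D)$. The projection property gives $D_L(u)\subseteq J$ (no left descent lies in $S\setminus J$). Using $\theta$-equivariance of projection and $\theta_\sigma=\id$, one gets $A^\theta=\proj_\sigma^{-1}(A)$, so $\delta(A,A^\theta)=w$ and $\delta(A^\theta,D^\theta)=u^\pi$. The gallery $D\to A\to A^\theta\to D^\theta$ then has type word $u^{-1}\cdot w\cdot u^\pi$, so evaluating in $W$,
\[
\delta(D,D^\theta)=u^{-1}\cdot w_{S\setminus J}w_0\cdot u^\pi .
\]
A descent and cancellation analysis, using $D_L(u)\subseteq J$, $D_L(u^\pi)\subseteq J^\pi=J^{\op}$ (which holds because $J$ is $w_0\circ\pi$-invariant), together with the identity $D_R(w_{S\setminus J}w_0)=J^{\op}$ (obtained from $w_{S\setminus J}w_0=w_0\cdot w_{S\setminus J}^{\op}$ and the general right-descent formula for $w_0v$), then shows $\ell(u^{-1}\cdot w\cdot u^\pi)\le\ell(w)$, with equality only when the cancellations are maximal, forcing the product to equal $w_{S\setminus J}w_0$. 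This simultaneously gives the upper bound and the ``$\Rightarrow$'' direction.

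The main technical obstacle is the cancellation lemma $\ell(u^{-1}\cdot w_{S\setminus J}w_0\cdot u^\pi)\le\ell(w_{S\setminus J}w_0)$ for all $u$ with $D_L(u)\subseteq J$. I would prove it by induction on $\ell(u)$: write $u=su'$ with $s\in D_L(u)\subseteq J$, and show that the corresponding left-multiplication by $s^{-1}=s$ on $w_{S\setminus J}w_0$ and right-multiplication by $s^\pi\in J^{\op}$ on the product interact via matching descent sets so that the total length cannot exceed $\ell(w)$. This is the combinatorial heart of the proof, and once established it also pins down the equality case $\delta(D,D^\theta)=w_{S\setminus J}w_0$ by showing that maximal cancellation forces $u$ to act trivially on the product.
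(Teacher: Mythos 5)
Your lower-bound/$\Leftarrow$ part is essentially sound, and the observation that $\theta_\sigma=\mathrm{id}$ (forced by Theorem~\ref{thm:fund} once one notes that any non-empty simplex of $\Opp(\theta_\sigma)$ would, via Proposition~\ref{prop:proj}, give a simplex in $\Opp(\theta)$ of type strictly larger than $\Type(\theta)$) is a nice way to pin down $\delta(C,C^\theta)=w_{S\setminus J}w_0$ for $C\supseteq\sigma$. The paper proves the lower bound without this observation (it only needs $\delta(C,C^\theta)\in W_{S\setminus J}w_0$ from Lemma~\ref{lem:types}), so your route here is correct but heavier than necessary.

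The upper-bound/$\Rightarrow$ direction, however, has a genuine gap. The step ``the gallery $D\to A\to A^\theta\to D^\theta$ has type word $u^{-1}\cdot w\cdot u^\pi$, so evaluating in $W$, $\delta(D,D^\theta)=u^{-1}w_{S\setminus J}w_0\,u^\pi$'' is not valid. A concatenation of three minimal galleries is not a minimal gallery, and the Weyl distance between the endpoints is not the product in $W$ of the types of the pieces. A concrete counterexample: let $\theta$ be a non-trivial elation of a thick projective plane $\mathsf{PG}(2,\FF)$ with centre $\langle e_1\rangle$ and axis $\langle e_1,e_3\rangle$, so $J=\Type(\theta)=\{1,2\}$ and $w=w_0$. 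Take $\sigma=(\langle e_2\rangle,\langle e_2,e_3\rangle)\in\Opp(\theta)$ and $D=(\langle e_3\rangle,\langle e_1,e_3\rangle)$ (a fixed chamber). Then $A=\sigma$, $u=\delta(\sigma,D)$ has length $2$ with $D_L(u)\subseteq J$, and $u^{-1}w_0u^\pi=u^{-1}w_0u$ has length $1$, whereas $\delta(D,D^\theta)=1$. So the claimed identity in $W$ is false, and consequently both the upper bound and the $\Rightarrow$ direction remain unproved in your plan. (For what it is worth, the combinatorial estimate $\ell(u^{-1}w u^\pi)\le\ell(w)$ that you postpone is in fact true and has a two-line proof: $u^{-1}$ is $W_{S\setminus J}$-reduced on the right, so $\ell(u^{-1}w_{S\setminus J})=\ell(u)+\ell(w_{S\setminus J})$ and hence $\ell(u^{-1}w)=\ell(w_0)-\ell(u^{-1}w_{S\setminus J})=\ell(w)-\ell(u)$; then $\ell(u^{-1}wu^\pi)\le\ell(u^{-1}w)+\ell(u^\pi)=\ell(w)$. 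But this does not rescue the argument because the preceding identity is wrong.)

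The paper avoids this by citing the result of Abramenko--Brown (\cite[Lemma~2.4 and Theorem~4.2]{AB:09}): at maximal displacement, $\delta(C,C^\theta)$ is automatically of the form $w_{K'}w_0$ with $K'^\theta=K'^{\op}$. From there, $C$ contains a type-$(S\setminus K')$ simplex mapped to an opposite, so $S\setminus K'\subseteq J$, hence $K'\supseteq S\setminus J$ and $\ell(w_{K'}w_0)\le\ell(w_{S\setminus J}w_0)$; combined with the lower bound this forces $K'=S\setminus J$ and all equalities. This structural input about maximal displacement is the crux that your plan is missing, and you would need either to invoke it or to find a replacement; the gallery-concatenation argument as written cannot serve that role.
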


\begin{proof}
Let $K=S\backslash J$. Necessarily $J^{\theta}=J^{\op}$, and so $K^{\theta}=K^{\op}$. Since $\theta$ is capped there exists a simplex $\sigma$ of type $J$ in $\Opp(\theta)$. For any chamber $D$ containing $\sigma$ we have $\delta(D,D^{\theta})\in W_Kw_0$ by Lemma~\ref{lem:types}, and thus
$$
\disp(\theta)\geq \ell(w_0)-\ell(w_{K}).
$$ 

Now let $C$ be any chamber such that $\ell(\delta(C,C^{\theta}))$ is maximal. By the arguments of \cite[Lemma~2.4 and Theorem~4.2]{AB:09} we have $\delta(C,C^{\theta})=w_{K'}w_0$ for some $K'\subseteq S$ with $K'^{\theta}=K'^{\op}$. Hence the type $J'=S\backslash K'$ simplex of $C$ is mapped to an opposite simplex, and hence $J'\subseteq J$. Thus $K'\supseteq K$, and so $\ell(w_{K'})\geq \ell(w_K)$, and hence
$$
\disp(\theta)=\ell(\delta(C,C^{\theta}))=\ell(w_{K'}w_0)=\ell(w_0)-\ell(w_{K'})\leq \ell(w_0)-\ell(w_K)\leq \disp(\theta),
$$
and the result follows.
\end{proof}

For example, the possible displacements of capped collineations of $\sE_7$ buildings are $0$, $33$, $50$, $51$, $60$, and $63$, corresponding to the $6$ opposition diagrams stated in the introduction.

\section{Automorphisms of large classical buildings}\label{sec:3}

In this section we prove Theorem~\ref{thm:main} for classical buildings (types $\sA$, $\sB=\sC$, and $\sD$; note that since we use Coxeter graphs rather than Dynkin diagrams, we do not make a distinction between $\sB_n$ and $\sC_n$ buildings). Thus in this section we prove:

\begin{thm}
Every automorphism of a large classical building of rank at least $3$ is capped.
\end{thm}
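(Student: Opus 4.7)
The plan is to argue type by type, following the residue template worked out in Example~\ref{ex:E7}. Throughout, let $\theta$ be an automorphism of a large classical building $\Delta$ of rank at least $3$, and let $J=\Type(\theta)$. We must exhibit a type $J$ simplex in $\Opp(\theta)$, or equivalently, prove that $\theta$ is not $J$-domestic. For each $s\in J$ we may choose a type~$s$ vertex $v\in\Opp(\theta)$; by Proposition~\ref{prop:typemap} the induced automorphism $\theta_v$ on $\Res(v)$ has an explicitly computable diagram automorphism, and by Proposition~\ref{prop:proj} and Corollary~\ref{cor:proj} both opposition and domesticity descend to $\Res(v)$. In classical buildings of rank $\geq 3$ every vertex residue is a product of classical buildings of strictly smaller rank, so an induction on rank will reduce cappedness to a short list of irreducible configurations, with the rank~$2$ base cases handled by Theorem~\ref{thm:rank2}.

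For type $\sA_n$ the admissible diagrams listed in Table~\ref{table:1} are very restrictive: for a collineation $\Type(\theta)$ is a single symmetric pair $\{i,n+1-i\}$, and for a duality it is a full initial segment of such symmetric pairs (with one extra node allowed in the duality of odd rank). The strategy is to peel off the outer-most pair $\{1,n\}\subseteq J$ by taking a type-$1$ vertex $v\in\Opp(\theta)$ (whose existence comes from looking at any other type in $J$ and pushing to the boundary via the diagram), and noting that $\Res(v)$ is of type $\sA_{n-1}$ with $\theta_v$ acting as a duality on it. The induction hypothesis applied to $\theta_v$ yields a simplex of the required shape in $\Res(v)$, and Proposition~\ref{prop:proj} assembles it with $v$ to produce a type~$J$ simplex of $\Delta$ in $\Opp(\theta)$. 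The only new input required is the standard projective-space fact that in a thick projective space a vertex mapped to an opposite can always be completed to a point--hyperplane flag mapped to an opposite flag.

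For types $\sB_n=\sC_n$ and $\sD_n$ the building is a polar space, and the analysis extends and completes that of~\cite{TTM:12}. One reduces first to type-preserving automorphisms: a triality of $\sD_4$ is handled directly on the admissible diagram (Table~\ref{table:4}), and a duality of $\sD_n$ is handled by passing to the associated $\sB_{n-1}$ polar space, which is large. Given a type-preserving $\theta$, for each $i\in J$ with $i\le n-2$ (or $i\le n-3$ in the $\sD$ case) the residue of a type-$i$ vertex $v\in\Opp(\theta)$ splits as an $\sA_{i-1}$ building (on which $\theta_v$ is a duality, handled by the $\sA$ case) times a polar space of rank $n-i$ of the appropriate type (handled by induction); the resulting simplices in the two factors concatenate with $v$ to a type $\{1,\dots,i\}\cup K$ simplex in $\Opp(\theta)$ via Proposition~\ref{prop:proj}. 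This disposes of every node in $J$ except the ones at the ``oriflamme'' end of the diagram.

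The main obstacle, as in~\cite{TTM:12}, is the end of the diagram: type $n$ in $\sC_n$ and types $n-1, n$ in $\sD_n$ have residues that are products of type~$\sA$ buildings and no longer contain a smaller polar space, so residue induction no longer works. Two geometric lemmas are needed here. First, a polar-space extension lemma: if $\theta$ maps some singular subspace $U$ of dimension~$k$ to an opposite, and also maps some singular subspace of higher dimension containing a specific flag to an opposite, then $U$ itself can be extended inside the polar space to an opposite pair of higher dimension; this is the step where the largeness hypothesis is essential, as it is exactly what rules out the small exceptional domestic automorphisms of $\sC_3(2)$, $\sC_4(2)$, and the similar $\sD$-type counterexamples announced in the introduction for small buildings. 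Second, a ``last vertex'' lemma for the $\sD_n$ oriflamme: if $\theta$ is type-preserving and both $n-1$ and $n$ lie in $J$, then a pair of opposite maximal singular subspaces of the appropriate type can be built from the data already produced by induction, using the classical fact that in a large $\sD_n$ polar space two opposite $(n-2)$-spaces are contained in opposite pairs of maximal subspaces of both ruling families. Once these two lemmas are in hand, cappedness for all classical types follows by combining them with the inductive residue argument described above.
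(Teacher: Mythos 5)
Your overall residue-induction framework is the right skeleton, and you correctly identify that the ``end of the diagram'' for polar spaces is where the genuine difficulty lives. But there are two substantial gaps.

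First, the type $\sA_n$ analysis rests on an incorrect description of the admissible diagrams. You assert that for a duality $\Type(\theta)$ is ``a full initial segment of symmetric pairs'' and that the strategy is to ``peel off the outer-most pair $\{1,n\}\subseteq J$'', with the only new input being that ``a vertex mapped to an opposite can always be completed to a point--hyperplane flag mapped to an opposite flag.'' This last claim is false, and its failure is precisely the subtle case: a symplectic polarity of $\mathsf{PG}(2n-1,\FF)$ is point-domestic and hyperplane-domestic, yet it maps every nondegenerate even-dimensional subspace to an opposite. Its type is $\{2,4,\ldots,2n-2\}$, which contains neither $1$ nor $n$, so there is no ``outer-most pair'' to peel. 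The paper's argument does something structurally different: it shows (via Lemma~\ref{lem:An}) that the \emph{only} domestic dualities of large $\sA_n$ buildings are symplectic polarities, and then verifies cappedness of symplectic polarities directly by an explicit construction (Lemma~\ref{lem:sp}). Without this dichotomy, your $\sA_n$ induction does not close — and since the $\sA_n$ case feeds every other classical case through the $\sA_{i-1}$ residue factor, the gap propagates.

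Second, the two ``geometric lemmas'' you invoke for polar spaces are stated too loosely to carry the proof, and they are in fact the bulk of the technical content of this section. What is actually needed is the sharp statement of Proposition~\ref{prop:summary}: a $\{1,i\}$-domestic collineation of a large polar space is already $\{1\}$-domestic or $\{i\}$-domestic (with separate parts for $i<n$, $i=n$, and the $\sD_n$-oriflamme variants). Your ``extension lemma'' gestures in this direction but in a form that is neither equivalent nor obviously provable, and your ``last vertex'' lemma for $\sD_n$ is stated as if it were a classical fact, when the relevant statement (Proposition~\ref{prop:summary}(3)(b), that a $(n-1,n)$-domestic collineation is $(n-1)$- or $n$-domestic) requires several pages of analysis of mutual positions of maximal singular subspaces (Lemmas~\ref{lemma1Dn}--\ref{lemma4Dn}). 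Relatedly, the claimed reduction of a $\sD_n$ duality to ``the associated $\sB_{n-1}$ polar space'' is not what the paper does and is not obviously well-defined; the paper instead handles $\sD_n$ dualities directly by residue arguments in Theorem~\ref{thm:DD}. In short, the outline of the proof is recognisable, but the two ingredients you label as lemmas are where the real work is, and as stated they would not survive an attempt to prove them.
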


The case of polar spaces turns out to be particularly challenging, and requires a series of lemmas presented in Section~\ref{sec:lemmas}. We prove some of these results in greater generality than is strictly required for this paper -- for example, where possible we prove some results for small buildings. This additional generality never requires much additional work, and will be useful in future work on uncapped automorphisms.

\subsection{Buildings of type $\sA_n$}\label{sect:An}

Buildings of type $\sA_n$ play an important role in our proof techniques owing to their prevalence as residues of spherical buildings of arbitrary type. It is therefore a pleasant and useful fact that domesticity in buildings of type $\sA_n$ is a very well behaved phenomenon. In this section we show that all automorphisms of large $\sA_n$ buildings are capped. We also prove that every exceptional domestic duality of $\sA_n(2)$ is strongly exceptional domestic. 

Every thick building of type $\sA_n$ with $n>2$ is a projective space $\mathsf{PG}(n,\KK)$ over a division ring~$\KK$, where the type $i$ vertices of the building are the $(i-1)$-spaces of the projective space. Thus points have type~$1$, lines have type $2$, and so on. 

\begin{defn}
Let $(\cdot,\cdot)$ be a nondegenerate symplectic form on $\FF^{2n}$, with $\FF$ a field. Write $U^{\circ}=\{v\mid (u,v)=0\text{ for all $u\in U$}\}$. A duality $\theta$ of $\mathsf{PG}(2n-1,\FF)$ of the form $U^{\theta}=U^{\circ}$ is called a \textit{symplectic polarity}. 
\end{defn}

We recall the following lemma from \cite[Lemma~3.2]{TTM:11}. 

\begin{lemma}\label{lem:An}  If the projective space $\Delta=\mathsf{PG}(n,\KK)$ admits a duality $\theta$ for which all points are absolute (equivalently no type~$1$ vertex is mapped to an opposite), then $n$ is odd, $\KK$ is a field, and $\theta$ is a symplectic polarity.  
\end{lemma}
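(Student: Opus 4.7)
The plan is to translate the hypothesis into a condition on a sesquilinear form and then push it through standard algebra. By the fundamental theorem of projective geometry (applied to $n \geq 2$, noting that the rank~$2$ case is handled separately and the rank~$1$ case is trivial), every duality $\theta$ of $\mathsf{PG}(n,\KK)$ arises from a non-degenerate sesquilinear form $\beta \colon V \times V \to \KK$, where $V = \KK^{n+1}$, via $U^{\theta} = U^{\perp} = \{v \in V : \beta(u,v) = 0 \text{ for all } u \in U\}$, with an associated anti-automorphism $\sigma$ of $\KK$ satisfying $\beta(au, bv) = a\,\beta(u,v)\,b^{\sigma}$. A point $p = \langle v\rangle$ is opposite $p^{\theta}$ (the hyperplane $\langle v\rangle^{\perp}$) precisely when $v \notin \langle v\rangle^{\perp}$, i.e.\ when $\beta(v,v) \neq 0$. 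So our hypothesis reads $\beta(v,v) = 0$ for all $v \in V$.

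Next I would polarise: expanding $\beta(u+v, u+v) = 0$ using additivity in each variable yields $\beta(u,v) + \beta(v,u) = 0$, so $\beta$ is skew-symmetric in the sense that $\beta(v,u) = -\beta(u,v)$. This symmetry does not a priori commute with the sesquilinear scaling, so I would exploit the tension between the two. Fixing $u, v$ with $c := \beta(u,v) \neq 0$ (possible by non-degeneracy) and applying the identity to $(au, v)$ gives on one hand $\beta(v, au) = -\beta(au,v) = -ac$, and on the other hand $\beta(v, au) = \beta(v,u)\,a^{\sigma} = -c\,a^{\sigma}$. Hence $ac = c\,a^{\sigma}$, that is
\[
a^{\sigma} = c^{-1} a\, c \qquad \text{for all } a \in \KK.
\]

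The final step is to unpack what this forces. Since $\sigma$ is an \emph{anti}-automorphism, one has $(ab)^{\sigma} = b^{\sigma} a^{\sigma} = c^{-1}bc\cdot c^{-1}ac = c^{-1} ba\, c$, while the displayed formula gives $(ab)^{\sigma} = c^{-1}(ab)c$. Comparing yields $ab = ba$ for all $a,b \in \KK$, so $\KK$ is a field. Commutativity then collapses the formula to $a^{\sigma} = a$, so $\sigma = \mathrm{id}$ and $\beta$ is a bilinear form. The identity $\beta(v,v) = 0$ is precisely the defining property of an alternating (symplectic) form, so $\theta$ is a symplectic polarity. Finally, a non-degenerate alternating bilinear form exists on $V$ only when $\dim V = n+1$ is even, i.e.\ $n$ is odd, completing the proof.

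I expect no serious obstacle: the only delicate point is bookkeeping the side on which scalars act in the sesquilinear form, which determines whether the key identity reads $ac = c a^{\sigma}$ or $a^{\sigma} c = ca$. Either version gives the same conclusion via the same anti-automorphism/inner-automorphism clash, so the argument is robust to convention.
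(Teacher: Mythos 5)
Your argument is correct. Since the paper only cites this result (as Lemma~3.2 of the reference \cite{TTM:11}) without reproducing a proof, there is no internal proof to compare against; your self-contained algebraic derivation is the natural route. To summarise what you do: represent the duality by a nondegenerate $\sigma$-sesquilinear form via the fundamental theorem of projective geometry (valid for $n\ge 2$), translate ``all points absolute'' into $\beta(v,v)=0$, polarise to get $\beta(u,v)=-\beta(v,u)$, and then pit this symmetry against the sesquilinear scaling to show $\sigma$ is the inner automorphism $a\mapsto c^{-1}ac$; being simultaneously an anti-automorphism and an inner automorphism forces $\KK$ to be commutative and $\sigma=\mathrm{id}$, whence $\beta$ is a nondegenerate alternating bilinear form and $n+1$ is even. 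Each step checks out, including the characteristic-$2$ bookkeeping: you use genuine alternation ($\beta(v,v)=0$), not merely skew-symmetry, so no characteristic assumption is needed. One small thing worth making explicit if this were to be written up: the fundamental theorem is only invoked for $n\ge 2$, and indeed the lemma as used in the paper is applied in that range (it appears inside residue arguments where the $\sA_n$ residue has $n\ge 2$); your remark about $n=1$ being ``trivial'' should really say that the lemma is simply not asserted there, since for $n=1$ the identity duality has all points absolute over any division ring, so the conclusion ``$\KK$ is a field'' would fail. With that caveat on the range of $n$, the proof is complete and robust to the left/right sesquilinear convention, exactly as you note.
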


It is simple to show that symplectic polarities are capped. In fact we have the following stronger result.

\begin{lemma}\label{lem:sp}
If $\theta$ is a symplectic polarity of an $\sA_{2n-1}$ building $\Delta$ then $\theta$ is $\{i\}$-domestic for each odd~$i$, and each vertex mapped to an opposite vertex is contained in a type $\{2,4,\ldots,2n-2\}$ simplex mapped to an opposite simplex.
\end{lemma}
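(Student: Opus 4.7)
\textbf{Proof plan for Lemma \ref{lem:sp}.} Let $(\cdot,\cdot)$ be the nondegenerate alternating form on $V=\FF^{2n}$ underlying the polarity, so $U^{\theta}=U^{\circ}$ for every subspace $U$. A type $i$ vertex is an $i$-dimensional subspace $U$, and since two subspaces of complementary dimensions in $V$ are opposite as vertices of $\sA_{2n-1}$ precisely when their intersection is trivial, the vertex $U$ lies in $\Opp(\theta)$ if and only if $U\cap U^{\circ}=0$, i.e.\ if and only if the restriction of $(\cdot,\cdot)$ to $U$ is nondegenerate. A nondegenerate alternating form exists only on an even-dimensional space, so no subspace of odd dimension can satisfy this condition. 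This immediately yields $\{i\}$-domesticity for every odd $i$.

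For the second assertion I would proceed by building a symplectic basis adapted to $U$. Suppose $U$ is a type $2k$ vertex mapped onto an opposite. Then both $U$ and $U^{\circ}$ carry nondegenerate alternating forms of dimensions $2k$ and $2n-2k$, so each admits a symplectic basis; concatenating these gives a symplectic basis $e_1,f_1,\ldots,e_n,f_n$ of $V$ with
\[
U=\langle e_1,f_1,\ldots,e_k,f_k\rangle,\qquad U^{\circ}=\langle e_{k+1},f_{k+1},\ldots,e_n,f_n\rangle.
\]
Set $V_{2j}=\langle e_1,f_1,\ldots,e_j,f_j\rangle$ for $1\leq j\leq n-1$. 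This yields a flag $\sigma=(V_2\subset V_4\subset\cdots\subset V_{2n-2})$ of type $\{2,4,\ldots,2n-2\}$ containing $U=V_{2k}$, with image flag $\sigma^{\theta}=(V_{2n-2}^{\circ}\subset V_{2n-4}^{\circ}\subset\cdots\subset V_2^{\circ})$.

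To see that $\sigma$ and $\sigma^{\theta}$ are opposite it suffices, since their types are opposite, to exhibit opposite chambers $C\supseteq\sigma$ and $D\supseteq\sigma^{\theta}$. I would take $C$ to be the complete flag built from the ordered basis $(e_1,f_1,e_2,f_2,\ldots,e_n,f_n)$ by letting $C_j$ be the span of the first $j$ vectors, and $D$ to be the complete flag built from the reversed ordering $(f_n,e_n,f_{n-1},e_{n-1},\ldots,f_1,e_1)$. A direct check shows $C_{2j}=V_{2j}$ and $D_{2(n-j)}=V_{2j}^{\circ}$, so $\sigma\subseteq C$ and $\sigma^{\theta}\subseteq D$; and because $C_j$ and $D_{2n-j}$ are the spans of disjoint halves of a basis of $V$, one has $C_j\cap D_{2n-j}=0$ for every $j$, so $C$ and $D$ are opposite chambers.

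The argument is essentially a single construction once the correct symplectic basis adapted to $U$ is in place; the only step that requires any care is the verification that opposite chambers can be built inside the apartment determined by this basis, which reduces to the straightforward observation above about spans of the forward and reversed basis orderings.
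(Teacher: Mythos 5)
Your proof is correct and follows essentially the same route as the paper: identify a non-domestic vertex $U$ as a $2k$-dimensional subspace on which the form is nondegenerate, choose a symplectic basis adapted to $U$, and read off the flag $V_2\subset V_4\subset\cdots\subset V_{2n-2}$. Your decomposition via $V=U\oplus U^{\circ}$ and the explicit chamber pair $(C,D)$ actually supplies the verification of opposition that the paper leaves to the reader, and it quietly corrects a minor index slip in the paper (whose flag is written as running up to $U_{2n}=V$, which is not a vertex); otherwise the two proofs coincide.
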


\begin{proof}
Let $\Delta=\mathsf{PG}(2n-1,\FF)$. Spaces $U$ and $U'$ are opposite if and only if $U+U'=V$ and $U\cap U'=\{0\}$, where $V=\FF^{2n}$. Let $U$ be an $(i-1)$-space (that is, a type $i$ vertex). If $i$ is odd then the symplectic form $(\cdot,\cdot)$ defined by $\theta$ is necessarily degenerate when restricted to $U$, since $U$ has odd vector space dimension. Thus $U\cap U^{\circ}\neq\{0\}$, and so $U+U^{\theta}\neq V$, and hence $\theta$ is $\{i\}$-domestic.

Suppose that $i$ is even, and that the $(i-1)$-space $U$ is mapped to an opposite. Thus $U\cap U^{\circ}=\{0\}$, and so $(\cdot,\cdot)$ restricted to $U$ is nondegenerate. Hence there is a symplectic basis $e_1,\ldots,e_{i/2},f_1,\ldots,f_{i/2}$ of~$U$, and extending this to a symplectic basis $e_1,\ldots,e_{n},f_1,\ldots,f_{n}$ of $V$ we see that the flag $(U_2,U_4,\ldots,U_{2n})$ with $U_{2j}=\langle e_1,\ldots,e_j,f_1,\ldots,f_j\rangle$ is mapped to an opposite flag by~$\theta$. 
\end{proof}

Part of the following theorem is contained in \cite[Theorem~3.1]{TTM:11}, however there are some oversights there for the buildings~$\sA_n(2)$, and so we provide a proof. We extend the definition of small buildings to include $\sA_2$ buildings by declaring $\sA_2(2)$ ``small'', and all other thick $\sA_2$ buildings ``large''.

\begin{thm}\label{thm:Alarge}
Let $\theta$ be a domestic duality of a large building of type $\sA_n$ with $n\geq 2$. Then $n$ is odd and $\theta$ is a symplectic polarity. 
\end{thm}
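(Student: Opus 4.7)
The plan is to prove the contrapositive: if $\theta$ is a duality of a large $\sA_n$ building $\Delta=\mathsf{PG}(n,\KK)$ possessing at least one non-absolute point, then $\theta$ is not domestic. Granted this, Lemma~\ref{lem:An} immediately yields that any domestic duality must have $n$ odd and be a symplectic polarity. Represent $\theta$ as $U\mapsto U^{\perp_f}$ for a non-degenerate sesquilinear form $f$ on $V=\KK^{n+1}$ (via the fundamental theorem of projective geometry); a chamber $U_1\subset\cdots\subset U_n$ of $\Delta$ is mapped to an opposite chamber iff $V=U_i\oplus U_i^{\theta}$ for each $i$, equivalently iff the Gram matrix $(f(v_j,v_k))_{j,k\le i}$ is non-singular for each initial segment of a basis $v_1,\ldots,v_{n+1}$ of $V$ with $U_i=\langle v_1,\ldots,v_i\rangle$.

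Fix a non-absolute point $p=\langle u_1\rangle$, so $f(u_1,u_1)\ne 0$ and $V=p\oplus p^{\theta}$. The construction splits on whether the restricted form $g:=f|_{p^{\theta}}$ is alternating. When $g$ is not alternating, one chooses $v_1=u_1$ and builds the chamber through $p$ by picking $v_{i+1}\in U_i^{\theta}\subseteq p^{\theta}$ at each step with $f(v_{i+1},v_{i+1})\ne 0$ (so that the Gram matrix at the next level becomes block-triangular with non-zero diagonal). When $g$ is alternating, a direct calculation shows that the $2\times 2$ Gram determinant vanishes identically in $v_2$ once $v_1=u_1$ is fixed, so the chamber's type-$1$ vertex cannot be $p$; one instead takes $v_1=u_1+w_0$ for some $w_0\in p^{\theta}$ satisfying $\phi(w_0)\ne -f(u_1,u_1)$, where $\phi(w):=f(w,u_1)$, which ensures $f(v_1,v_1)\ne 0$, and extends by an analogous Gram--Schmidt-style procedure.

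At each extension step the set of admissible $v_{k+1}$ is the complement of the zero locus of a polynomial in the coordinates of $v_{k+1}$ of degree at most $2$ (the relevant Gram determinant), and a direct expansion of this polynomial in terms of the sesquilinear data $(f(u_1,u_1),\phi,g)$ verifies that it is not identically zero. A non-zero polynomial of degree at most $2$ has non-empty complement in any vector space over a field with at least three elements, so the construction succeeds whenever $|\KK|\ge 3$. The main obstacle is the alternating case: the general position argument is tightest there, and the restriction $|\KK|\ge 3$ is genuinely necessary, as $\sA_n(2)$ admits exceptional domestic dualities (studied in the companion paper~\cite{PVM:17b}) precisely in line with the failure of this argument over $\FF_2$.
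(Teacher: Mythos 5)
Your approach is genuinely different from the paper's: you attack the problem directly via a $\sigma$-sesquilinear form representing $\theta$ and a Gram--Schmidt-style chamber construction, whereas the paper runs an induction on $n$ through residues (using that $\theta_v$ on the residue of a non-domestic type-$n$ vertex is again a domestic duality of a large $\sA_{n-1}$ building), with the rank-$2$ case settled by the generalised-polygon result (Theorem~\ref{thm:rank2}) and the point-domestic endgame supplied by Lemma~\ref{lem:An}. The residue induction has the virtue of being essentially coordinate-free and of applying uniformly over an arbitrary underlying division ring.

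Your proposal has three concrete gaps. First, it silently assumes $\KK$ is a field: the determinant/Schwartz--Zippel argument (``a non-zero polynomial of degree at most $2$ has non-empty complement over a field with $\geq 3$ elements'') has no direct analogue over a non-commutative division ring, where there is no polynomial-valued determinant and where the Dieudonn\'e determinant lives in an abelianised multiplicative group rather than in $\KK$. The theorem, however, must hold for projective spaces over all division rings, and the paper's induction does so. Second, the degree estimate is already delicate when $\sigma$ is a non-trivial anti-automorphism: the Schur-complement expression $f(v_{k+1},v_{k+1})-c^{T}G_k^{-1}b$ is linear in $v_{k+1}$ and linear in $v_{k+1}^{\sigma}$, not a degree-$2$ polynomial in the coordinates of $v_{k+1}$ when $\sigma$ is, say, a Frobenius map; the counting argument therefore needs a different justification. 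Third, and most seriously, the heart of the matter --- verifying that the relevant expression is not identically zero, especially in the alternating case --- is stated but not carried out (``a direct expansion\ldots verifies that it is not identically zero'' and ``extends by an analogous procedure''). That verification is precisely where the argument must see why $\FF_2$ fails while $|\KK|\geq 3$ succeeds, so it cannot be waved away. Finally, for $n=2$ the building need not be coordinatised by a division ring at all (non-Desarguesian planes), so the sesquilinear framework does not exist; a separate argument, as in the paper's base case, is unavoidable there.
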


\begin{proof}
The base case $n=2$ is Theorem~\ref{thm:rank2}. Suppose that $n>2$ and that the result holds for all $k<n$. If $\theta$ is $\{n\}$-domestic then by the dual of Lemma~\ref{lem:An} $n$ is odd and $\theta$ is a symplectic polarity. So suppose that $\theta$ maps a type $n$ vertex $v$ to an opposite vertex. By Corollary~\ref{cor:proj} the induced automorphism $\theta_v$ of the $\sA_{n-1}$ building $\Res(v)$ is a domestic duality, and thus $n$ is even and $\theta_v$ is a symplectic polarity (by the induction hypothesis). Hence, by Lemma~\ref{lem:sp} and Proposition~\ref{prop:proj}, there is a type $\{2,n\}$ simplex of $\Delta$ mapped to an opposite simplex by $\theta$. If $v'$ is the type $2$ vertex of this simplex then $\theta_{v'}$ acts as a duality on the $\sA_{n-2}$ component of $\Res(v')$ mapping a type $n$ vertex to an opposite (in the induced labelling). Thus by the induction hypothesis $\theta_{v'}$ is not domestic, and hence there is a type $\{2,3,\ldots,n\}$ simplex of $\Delta$ mapped to an opposite by $\theta$. By considering the residue of the type $n$ vertex of this simplex we conclude that $\theta$ is not domestic, a contradiction. 
\end{proof}

\begin{cor}\label{cor:An1}
Every duality of a large building of type $\sA_n$ is capped. 
\end{cor}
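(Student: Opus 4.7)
The plan is to deduce cappedness from a simple dichotomy based on whether $\theta$ is domestic or not, using Theorem~\ref{thm:Alarge} and Lemma~\ref{lem:sp} as the main tools. First, if $\theta$ is \emph{not} domestic, then some chamber $C$ is mapped to an opposite chamber. Then $S=\Type(C)\subseteq\Type(\theta)\subseteq S$, forcing $\Type(\theta)=S$, and the chamber $C$ itself witnesses cappedness.

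The remaining case is when $\theta$ is a domestic duality of a large $\sA_n$ building. Here Theorem~\ref{thm:Alarge} gives that $n$ is odd (write $n=2m-1$) and $\theta$ is a symplectic polarity. By the first part of Lemma~\ref{lem:sp}, $\theta$ is $\{i\}$-domestic for every odd $i$, so every vertex of every simplex in $\Opp(\theta)$ has even type; consequently $\Type(\theta)\subseteq\{2,4,\ldots,2m-2\}$.

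On the other hand, by Theorem~\ref{thm:fund} the set $\Opp(\theta)$ is nonempty, so it contains some simplex and hence, by the previous paragraph, at least one even-type vertex $v\in\Opp(\theta)$. The second part of Lemma~\ref{lem:sp} then produces a simplex $\sigma$ of type $\{2,4,\ldots,2m-2\}$ containing $v$ with $\sigma\in\Opp(\theta)$. This forces $\{2,4,\ldots,2m-2\}\subseteq\Type(\theta)$, and combined with the previous inclusion yields $\Type(\theta)=\{2,4,\ldots,2m-2\}$. The simplex $\sigma$ is then the required type $\Type(\theta)$ simplex in $\Opp(\theta)$, so $\theta$ is capped.

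There is no real obstacle here: the entire content of the corollary is packed into Theorem~\ref{thm:Alarge} (which classifies domestic dualities as symplectic polarities) and Lemma~\ref{lem:sp} (which shows symplectic polarities are capped with a specific type). The proof is essentially a two-line bookkeeping argument once those two results are in hand.
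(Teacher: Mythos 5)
Your proposal is correct and follows the same two-case structure as the paper's proof: non-domestic dualities are trivially capped, and domestic dualities are symplectic polarities by Theorem~\ref{thm:Alarge}, which are then capped by Lemma~\ref{lem:sp}. You have simply unpacked the final step (why Lemma~\ref{lem:sp} yields cappedness, including the use of Theorem~\ref{thm:fund} to produce a nonempty $\Opp(\theta)$ and the identification $\Type(\theta)=\{2,4,\ldots,2m-2\}$), which the paper leaves implicit.
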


\begin{proof}
Every non-domestic duality is automatically capped, and by Theorem~\ref{thm:Alarge} every domestic duality is a symplectic polarity, and is hence capped by Lemma~\ref{lem:sp}.
\end{proof}

\begin{lemma}\label{lem:AnAn}
Let $n\geq 1$. No automorphism of a thick building of type $\sA_n\times\sA_n$ interchanging the two components and acting as an involution on the type set is domestic. 
\end{lemma}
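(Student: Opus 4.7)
The plan is to reduce the lemma to a transversality statement about flags in a single thick $\sA_n$ building, and then to establish that statement by a one-dimension-at-a-time construction.

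First I would exploit the product structure: since $\theta$ interchanges the two components, there are building isomorphisms $\alpha\colon\Delta_1\to\Delta_2$ and $\beta\colon\Delta_2\to\Delta_1$ with $\theta(C_1,C_2)=(\beta(C_2),\alpha(C_1))$. The involutivity of $\pi_\theta$ forces the type maps of $\alpha$ and $\beta$ to be mutually inverse, which in the $\sA_n$ setting means that $\alpha$ and $\beta$ are either both type-preserving or both type-reversing; in either case $\phi:=\alpha\beta$ is a type-preserving automorphism of $\Delta_2$. A chamber $(C_1,C_2)$ is mapped to an opposite by $\theta$ precisely when $C_1$ is opposite $\beta(C_2)$ in $\Delta_1$ and $C_2$ is opposite $\alpha(C_1)$ in $\Delta_2$. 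Applying the opposition-preserving isomorphism $\alpha$ to the first of these conditions and setting $A:=\alpha(C_1)$ reduces the lemma to the following claim: there exist chambers $A,C\in\Delta_2$ such that $A$ is opposite both $C$ and $\phi(C)$; pulling $A$ back by $\alpha$ then yields the required chamber of $\Opp(\theta)$.

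The heart of the argument is this claim, which I would prove in the stronger form: in any thick $\sA_n$ building, any two chambers admit a common opposite. The case $n=1$ is immediate from thickness, since a thick $\sA_1$ building has at least three vertices. For $n\geq 2$ the building is a projective space $\mathsf{PG}(n,\KK)$ and I would construct a common-opposite flag $A=(A_1\subset\cdots\subset A_n)$ for the two given flags $C,C'$ inductively, raising the $V$-dimension of $A_i$ by one at each stage. The invariant to maintain is that $A_{i-1}$ meets both $C_{n-i+2}$ and $C'_{n-i+2}$ trivially; it is preserved by taking $A_i=A_{i-1}+\langle v\rangle$ for any $v$ in the complement of the two hyperplanes $A_{i-1}+C_{n-i+1}$ and $A_{i-1}+C'_{n-i+1}$, and such a $v$ exists because a projective space over any division ring is never covered by two of its hyperplanes.

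The only subtle point is that the flag extension survive in the smallest field $\FF_2$, but there the complement of two hyperplanes of $\mathsf{PG}(n,\FF_2)$ still contains at least $2^{n-1}$ projective points, so no Fano-style obstruction arises. In particular the lemma holds for \emph{all} thick buildings of type $\sA_n\times\sA_n$, not just the large ones, matching the hypothesis as stated.
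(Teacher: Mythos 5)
Your proof is correct, but it follows a genuinely different route from the paper's. The paper argues by induction on $n$: it invokes the Leeb/Abramenko--Brown theorem (Theorem~\ref{thm:fund}) to produce a nonempty $\Opp(\theta)$, observes that the minimal $w_0\circ\theta$-stable type sets force a simplex of type $\{i,n-i+1,n+i,2n+1-i\}$ into $\Opp(\theta)$, and then passes to the residue (again a product interchanged by the induced automorphism) to close the induction via Proposition~\ref{prop:proj}. You instead untangle the product structure directly, reducing the lemma to the single-building statement that in a thick $\sA_n$ building any two chambers admit a common opposite chamber, which you then establish by building the flag one step at a time, at each step choosing a vector outside the union of the two hyperplanes $A_{i-1}+C_{n-i+1}$ and $A_{i-1}+C'_{n-i+1}$. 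This is a more elementary and self-contained argument: it avoids the Leeb/Abramenko--Brown theorem entirely, it works uniformly over any division ring (including $\FF_2$, as you note), and it actually yields the slightly stronger fact that one can prescribe one coordinate of the non-domestic chamber up to isomorphism. One small remark: the involutivity of $\pi_\theta$ plays no real role in your argument once the common-opposite claim is in hand (you use it only to note that $\phi=\alpha\beta$ is type-preserving, a fact your construction never needs), so you have in effect proved the lemma without that hypothesis; the paper does not use it either beyond ensuring the types line up for the stated residue.
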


\begin{proof}
If $n=1$ let $v$ be any vertex of the first component, and let $v'$ be a vertex of the second component with $v'\neq v^{\theta},v^{\theta^{-1}}$ (by thickness). Then the chamber $\{v,v'\}$ is mapped to an opposite chamber, starting an induction. Suppose that $n\geq 2$. By relabelling the types in the second component, we may assume that the type map of $\theta$ interchanges $i$ and $n+i$ for each $1\leq i\leq n$. Since $\theta$ is not trivial it must map some simplex to an opposite simplex. It follows that there is a simplex $\sigma$ of type $\{i,n-i+1,n+1,2n-i+1\}$ mapped to an opposite, for some $1\leq i\leq n/2$. The residue of $\sigma$ is a building of type $\mathsf{X}\times \mathsf{X}$ where $\mathsf{X}=\sA_{i-1}\times \sA_{n-2i}\times \sA_{i-1}$ (with some components of $\mathsf{X}$ empty if $i=1$ or $i= n/2$), and $\theta_{\sigma}$ interchanges the two type $\mathsf{X}$ components and acts as an involution on the types. Thus, by induction, there is a chamber of $\Res(\sigma)$ mapped to an opposite chamber by $\theta_{\sigma}$, and hence by Proposition~\ref{prop:proj} $\theta$ is not domestic on~$\Delta$. 
\end{proof}

\begin{lemma}\label{lem:Ancol}
Let $\Delta$ be a thick building of type $\sA_n$ and let $\theta$ be a domestic collineation. If there exists a simplex of type $\{i,n-i+1\}$ mapped to an opposite simplex by $\theta$ then there exists a simplex of type $\{j,n-j+1\mid 1\leq j\leq i\}$ mapped to an opposite simplex by $\theta$.
\end{lemma}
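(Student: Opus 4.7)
The plan is to take the given simplex $\sigma \in \mathrm{Opp}(\theta)$ of type $\{i, n-i+1\}$, pass to its residue $\mathrm{Res}(\sigma)$, and apply Lemma~\ref{lem:AnAn} to a suitable product subbuilding. First I would identify that $\mathrm{Res}(\sigma)$ decomposes as $B_1 \times B_2 \times B_3$, where its Coxeter diagram has (up to) three connected components of types $\sA_{i-1}$, $\sA_{n-2i}$, and $\sA_{i-1}$ supported on $S_1 = \{1,\ldots,i-1\}$, $S_2 = \{i+1,\ldots,n-i\}$, and $S_3 = \{n-i+2,\ldots,n\}$ respectively (with $B_2$ trivial when $n \le 2i$). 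Proposition~\ref{prop:typemap} tells us that the type-permutation induced by $\theta_\sigma$ on $S \setminus \{i, n-i+1\}$ is $w_{S \setminus \{i, n-i+1\}} \circ w_0$; a short computation shows this sends $k \in S_1$ to $k+(n-i+1) \in S_3$ and fixes $S_2$ pointwise, and is in particular an involution swapping the two $\sA_{i-1}$ components.

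Since the induced type-permutation preserves the partition $(S_1 \cup S_3) \sqcup S_2$ into unions of connected components, the automorphism $\theta_\sigma$ decomposes as a product $\theta_\sigma = \theta_{13} \times \theta_2$, where $\theta_{13}$ is an automorphism of the thick $\sA_{i-1} \times \sA_{i-1}$ building $B_1 \times B_3$ interchanging its two components and acting as an involution on the type set. Lemma~\ref{lem:AnAn} then supplies a chamber $\alpha$ of $B_1 \times B_3$ in $\mathrm{Opp}(\theta_{13})$ (the case $i=1$ being trivial, since $\sigma$ already witnesses the conclusion). Viewed as a simplex of $\mathrm{Res}(\sigma)$ whose $B_2$-part is empty, $\alpha$ remains opposite $\alpha^{\theta_\sigma}$ in $\mathrm{Res}(\sigma)$, because opposition in a product building reduces to opposition factor by factor and the empty $B_2$-part contributes trivially. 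By Proposition~\ref{prop:proj} we then have $\alpha$ opposite $\alpha^\theta$ in $\Delta$, and since $\alpha$ lies in $\mathrm{Res}(\sigma)$ (and hence contains $\sigma$), its type is $\{i,n-i+1\} \cup S_1 \cup S_3 = \{j, n-j+1 \mid 1 \le j \le i\}$, as required.

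The main technical step is the decomposition $\theta_\sigma = \theta_{13} \times \theta_2$, which rests on the general principle that an automorphism of a product of buildings whose induced type-permutation preserves a partition of the type set into unions of connected components splits as a product of automorphisms of the corresponding subproducts; here the relevant partition groups the two $\sA_{i-1}$ components (which get swapped) and separates the middle $\sA_{n-2i}$ component. Once this is in place, everything reduces cleanly to Lemma~\ref{lem:AnAn} and Proposition~\ref{prop:proj}, and I do not foresee further difficulties; the degenerate cases $2i \ge n$ are absorbed by the same argument, producing chambers in $\mathrm{Opp}(\theta)$ that are consistent with the stated conclusion.
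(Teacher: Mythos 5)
Your proposal is correct and follows essentially the same approach as the paper's proof: pass to $\Res(\sigma)$ of type $\sA_{i-1}\times\sA_{n-2i}\times\sA_{i-1}$, observe that $\theta_\sigma$ interchanges the two $\sA_{i-1}$ components, apply Lemma~\ref{lem:AnAn} to the $\sA_{i-1}\times\sA_{i-1}$ subbuilding, and conclude via Proposition~\ref{prop:proj}. You merely spell out the type-map computation and the product decomposition in more detail than the paper does.
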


\begin{proof}
If there exists a type $\{i,n-i+1\}$ simplex $\sigma$ mapped to an opposite, then the induced automorphism of the type $\sA_{i-1}\times \sA_{n-2i}\times \sA_{i-1}$ building $\Res(\sigma)$ interchanges the $\sA_{i-1}$ components, and is a collineation on the $\sA_{n-2i}$ component. Thus by Lemma~\ref{lem:AnAn} there is a chamber of the $\sA_{i-1}\times \sA_{i-1}$ subbuilding mapped to an opposite, and the result follows from Proposition~\ref{prop:proj}.
\end{proof}

\begin{cor}\label{cor:An2} Every collineation of a thick type $\sA_n$ building is capped.
\end{cor}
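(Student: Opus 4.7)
Let $\theta$ be a nontrivial collineation of a thick $\sA_n$ building (the trivial case being immediate), and set $J = \Type(\theta)$ with $S = \{1, \ldots, n\}$. Because $\pi_\theta = \mathrm{id}$, every simplex $\sigma \in \Opp(\theta)$ satisfies $\tau(\sigma) = \tau(\sigma^\theta) = \tau(\sigma)^{w_0}$, so $\tau(\sigma)$ is $w_0$-invariant. Consequently $J$ is a union of $w_0$-orbits on $S$: off-diagonal pairs $\{j, n-j+1\}$ with $j < n-j+1$, plus possibly the singleton $\{m\}$, which exists only when $n$ is odd, in which case $m = (n+1)/2$. If $\theta$ is not domestic then $\Opp(\theta)$ already contains a chamber, forcing $J = S$, so $\theta$ is capped; I may therefore assume $\theta$ is domestic.

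The main conceptual step is to rule out $m \in J$. If $m$ were in $J$, a type-$m$ vertex $v$ would lie in $\Opp(\theta)$, and $\Res(v)$ would be a thick building of type $\sA_{m-1} \times \sA_{m-1}$. A short computation with the type map $w_{S \setminus \{m\}} \circ w_0$ provided by Proposition~\ref{prop:typemap} shows that the induced automorphism $\theta_v$ interchanges the two $\sA_{m-1}$ components and acts as an involution on the type set. Lemma~\ref{lem:AnAn} then supplies a chamber of $\Res(v)$ mapped onto an opposite by $\theta_v$, and Proposition~\ref{prop:proj} lifts it to a chamber of $\Delta$ opposite its $\theta$-image, contradicting domesticity. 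Hence $m \notin J$.

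With $m \notin J$, the set $J$ is a nonempty (by Theorem~\ref{thm:fund}) union of off-diagonal pairs $\{j, n-j+1\}$. Choose $i$ maximal subject to $\{i, n-i+1\} \subseteq J$, and pick any type-$\{i, n-i+1\}$ simplex in $\Opp(\theta)$. Lemma~\ref{lem:Ancol} then yields a simplex $\sigma \in \Opp(\theta)$ of type $J' = \{1, \ldots, i\} \cup \{n-i+1, \ldots, n\}$. Plainly $J' \subseteq J$, and the maximality of $i$ together with $m \notin J$ gives the reverse inclusion, so $J = J'$ and $\sigma$ witnesses cappedness. The only non-routine ingredient is the exclusion of the central vertex in odd rank, and this reduces immediately to the $\sA_{m-1} \times \sA_{m-1}$ non-domesticity statement of Lemma~\ref{lem:AnAn}; the rest is a direct application of Lemma~\ref{lem:Ancol} plus the $w_0$-orbit bookkeeping.
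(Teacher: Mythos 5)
Your proof is correct and follows the same route as the paper's: reduce to the domestic case, then invoke Lemma~\ref{lem:Ancol} with $i$ maximal to produce a simplex of type $\{1,\ldots,i\}\cup\{n-i+1,\ldots,n\}$ in $\Opp(\theta)$ witnessing cappedness. The only difference is that you make explicit the exclusion of the central vertex $m=(n+1)/2$ in odd rank (via a direct appeal to Lemma~\ref{lem:AnAn} on the $\sA_{m-1}\times\sA_{m-1}$ residue), a step the paper's terse two-line proof leaves implicit by restricting the choice of $i$ to $1\leq i\leq n/2$; spelling this out is a genuine improvement in completeness, though not a different method.
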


\begin{proof} Suppose that $\theta$ is a nontrivial collineation. Let $1\leq i\leq n/2$ be maximal subject to there being a type $\{i,n-i+1\}$ simplex mapped to an opposite simplex by $\theta$. By Lemma~\ref{lem:Ancol} there is a type $\{j,n-j+1\mid 1\leq j\leq i\}$ simplex mapped to an opposite, and hence $\theta$ is capped. 
\end{proof}

Hence the proof of the main theorem for buildings of type $\sA_n$ is complete. We conclude this section with two results on dualities of small $\sA_n$ buildings.

\begin{thm}\label{thm:Asmall}
Let $\theta$ be a domestic duality of the small building $\Delta=\sA_n(2)$ with $n\geq 2$. Then either $\theta$ is an exceptional domestic duality or $n$ is odd and $\theta$ is a symplectic polarity. 
\end{thm}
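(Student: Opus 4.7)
The plan is to induct on $n$. The base case $n=2$ is the Fano plane $\sA_2(2)$; here any nontrivial duality has $\Type(\theta)$ invariant under the nontrivial diagram automorphism, hence $\Type(\theta)\in\{\emptyset,S\}$, and Theorem~\ref{thm:fund} forces $\Type(\theta)=S$, so a domestic duality is automatically exceptional domestic (matching Theorem~\ref{thm:rank2}(2)). For the inductive step, assume $n\geq 3$ and the theorem for all $\sA_k(2)$ with $2\leq k<n$. If $\theta$ is $\{n\}$-domestic, the dual of Lemma~\ref{lem:An} (as used in the proof of Theorem~\ref{thm:Alarge}) gives $n$ odd and $\theta$ a symplectic polarity, and we are done. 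Otherwise choose $v\in\Opp(\theta)$ of type $n$; by Corollary~\ref{cor:proj} the induced automorphism $\theta_v$ is a domestic duality of $\Res(v)=\sA_{n-1}(2)$.

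Applying the inductive hypothesis to $\theta_v$ yields two cases. In case (A), $\theta_v$ is exceptional domestic, so every vertex of $\Res(v)$ lies in $\Opp(\theta_v)$; Proposition~\ref{prop:proj} together with Lemma~\ref{lem:opbasic}(2) then shows that every type in $\{1,\dots,n-1\}$ is represented in $\Opp(\theta)$, and together with $v$ this gives $\Type(\theta)=S$, so $\theta$ is exceptional domestic. In case (B), $n-1$ is odd (so $n$ is even) and $\theta_v$ is a symplectic polarity on $\Res(v)$. By Lemma~\ref{lem:sp} applied in $\Res(v)$ there is a type-$\{2,4,\dots,n-2\}$ simplex of $\Res(v)$ in $\Opp(\theta_v)$, which lifts via Proposition~\ref{prop:proj} to a type-$\{2,4,\dots,n-2,n\}$ simplex of $\Delta$ in $\Opp(\theta)$. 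Let $v_2$ denote its type-$2$ vertex; then $\theta_{v_2}$ is a domestic automorphism of $\Res(v_2)=\sA_1(2)\times\sA_{n-2}(2)$ whose restriction $\phi$ to the $\sA_{n-2}(2)$ factor is a duality, and the main obstacle is to handle this product situation.

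I split case (B) according to whether $\phi$ is domestic. If $\phi$ is domestic on $\sA_{n-2}(2)$, the inductive hypothesis forces it to be exceptional domestic, since the symplectic alternative would require $n-2$ to be odd; pulling back via Proposition~\ref{prop:proj} and Lemma~\ref{lem:opbasic}(2) then exhibits a vertex of every type in $\{3,\dots,n\}$ in $\Opp(\theta)$. If $\phi$ is not domestic, then some chamber of $\sA_{n-2}(2)$ is mapped to an opposite by $\phi$; since opposition of a simplex supported in one factor of a product building decouples from the other factor, this chamber is also opposite in $\Res(v_2)$, and Proposition~\ref{prop:proj} lifts it to a type-$\{2,3,\dots,n\}$ simplex of $\Delta$ in $\Opp(\theta)$. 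In either sub-case, using Lemma~\ref{lem:opbasic}(2) (whose stability hypothesis is automatic here because $w_0\circ\pi_\theta=\id$ on $S$ for an $\sA_n$-duality) one extracts $\{2,3,\dots,n\}\subseteq\Type(\theta)$. Finally, the $\pi_\theta$-invariance of $\Type(\theta)$ under $i\mapsto n+1-i$ forces $1\in\Type(\theta)$, so $\Type(\theta)=S$ and $\theta$ is exceptional domestic.

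The delicate step is the non-domestic sub-case of (B): one must recognise that a chamber of a single factor of the product $\sA_1(2)\times\sA_{n-2}(2)$ serves as an opposite simplex of the product without requiring corresponding data in the other factor, so that Proposition~\ref{prop:proj} lifts it back to $\Delta$. This asymmetric use of the product structure is precisely what is needed to complete the argument in the critical even-$n$ case, where the symplectic alternative is unavailable and one must drive the type set $\Type(\theta)$ all the way up to $S$.
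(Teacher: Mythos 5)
Your proof is correct and follows the same inductive route as the paper: base case $n=2$ from Theorem~\ref{thm:rank2}, the $\{n\}$-domestic case via the dual of Lemma~\ref{lem:An}, then passing to the residue of a non-domestic type-$n$ vertex and, when $\theta_v$ is a symplectic polarity (forcing $n$ even), descending further through a type-$2$ vertex of a $\{2,n\}$-simplex in $\Opp(\theta)$. You spell out, more explicitly than the paper's ``it is clear'' and ``it follows'', why $\Type(\theta)=S$ in each branch — via product-decoupling of opposition in the $\sA_1\times\sA_{n-2}$ residue and the $\pi_\theta$-invariance of $\Type(\theta)$ (valid because $\sigma\in\Opp(\theta)$ implies $\sigma^{\theta^{-1}}\in\Opp(\theta)$) — both of which are correct and worth making explicit; the only slip is the phrase ``every vertex of $\Res(v)$ lies in $\Opp(\theta_v)$'', which should read ``a vertex of every type of $\Res(v)$ lies in $\Opp(\theta_v)$'' (and that is all your subsequent argument uses).
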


\begin{proof}
The base case $n=2$ is Theorem~\ref{thm:rank2}. Suppose that $n>2$ and that the results hold for all $k<n$. If $\theta$ is $\{n\}$-domestic then by the dual of Lemma~\ref{lem:An} $n$ is odd and $\theta$ is a symplectic polarity. So suppose that $\theta$ maps a type $n$ vertex $v$ to an opposite vertex. By Corollary~\ref{cor:proj} the induced automorphism $\theta_v$ of the $\sA_{n-1}$ building $\Res(v)$ is a domestic duality. Then either $\theta_v$ is exceptional domestic, in which case it is clear that $\theta$ is also exceptional domestic, or $n$ is even and $\theta_v$ is a symplectic polarity. Hence there is a type $\{2,n\}$ simplex of $\Delta$ mapped to an opposite simplex by $\theta$. If $v'$ is the type $2$ vertex of this simplex then $\theta_{v'}$ acts as a duality on the $\sA_{n-2}$ component of $\Res(v')$ mapping a type $n$ vertex to an opposite (in the induced labelling). Thus $\theta_{v'}$ is either not domestic, or $\theta_{v'}$ is exceptional domestic, and in either case it follows that $\theta$ is exceptional domestic.  
\end{proof}

\begin{thm}\label{thm:Astex}
Every exceptional domestic duality of an $\sA_n(2)$ building is strongly exceptional domestic. 
\end{thm}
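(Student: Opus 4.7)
The plan is to prove the theorem by induction on $n$. The base case $n=2$ is immediate, because in a rank~$2$ building the notions of exceptional domestic and strongly exceptional domestic coincide, as noted in the remark following their definition. For the inductive step I would fix $n \geq 3$, assume the statement for all smaller ranks, and let $\theta$ be an exceptional domestic duality of $\sA_n(2)$. Since $\Type(\theta)=S$ there exist a type~$1$ vertex $v_1 \in \Opp(\theta)$ and a type~$n$ vertex $v_n \in \Opp(\theta)$, and by Corollary~\ref{cor:proj} the induced automorphisms $\theta_{v_1}$ and $\theta_{v_n}$ on $\Res(v_1) \cong \Res(v_n) \cong \sA_{n-1}(2)$ are domestic dualities; by Theorem~\ref{thm:Asmall} each one is either a symplectic polarity or an exceptional domestic duality.

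The main step is to choose $v_1$ and $v_n$ so that both $\theta_{v_1}$ and $\theta_{v_n}$ are exceptional domestic (that is, not symplectic polarities). Granting this, the inductive hypothesis gives that each is strongly exceptional domestic on its residue, so for every internal cotype $s' \in \{1,\ldots,n-1\}$ there is a type $\{1,\ldots,n-1\}\setminus\{s'\}$ simplex of $\Res(v_1)$ (respectively $\Res(v_n)$) in $\Opp(\theta_{v_1})$ (respectively $\Opp(\theta_{v_n})$). By Proposition~\ref{prop:proj} these translate back to type $S \setminus \{s'+1\}$ panels of $\Delta$ in $\Opp(\theta)$ coming from $v_1$ (exhausting cotypes $s \in \{2,\ldots,n\}$) and type $S \setminus \{s'\}$ panels coming from $v_n$ (exhausting cotypes $s \in \{1,\ldots,n-1\}$). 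Together these cover every cotype in $S$, completing the inductive step.

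The hard part -- and the principal obstacle -- is ruling out the symplectic polarity alternative, which can only occur when $n-1$ is odd, so when $n$ is even. Translating into the bilinear form $\beta$ on $\FF_2^{n+1}$ with $\theta(U)=U^\perp$, a direct computation shows that $\theta_{v_1}$ is a symplectic polarity of $\Res(v_1)$ precisely when the restriction $\beta|_{v_1^\perp \times v_1^\perp}$ is alternating, i.e.\ when the quadratic form $q(x)=\beta(x,x)$ vanishes on the hyperplane $v_1^\perp$. Suppose, for contradiction, that this holds for every non-isotropic $v_1$ (equivalently, every type~$1$ vertex in $\Opp(\theta)$). In the symmetric case $\beta(u,w)=\beta(w,u)$ the functional $q$ is linear, so $N:=\{q=0\}$ is a hyperplane; the condition $v_1^\perp \subseteq N$ for every non-isotropic $v_1$ then forces the equality $v_1^\perp = N$ for every such $v_1$, and nondegeneracy of $\beta$ together with a cardinality count implies that either no non-isotropic vector exists (forcing $\theta$ itself to be a symplectic polarity and contradicting Lemma~\ref{lem:sp}) or exactly one does (impossible by cardinality for $n \geq 1$). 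The general asymmetric case will be handled by combining this type~$1$ analysis with the analogous ``type~$n$'' statement -- that $\theta_{v_n}$ being a symplectic polarity is equivalent to $v_n \subseteq N$ -- whose combination forces $\beta$ to be alternating on $\FF_2^{n+1}$, so that $\theta$ is a symplectic polarity of $\Delta$ and Lemma~\ref{lem:sp} again contradicts exceptional domesticity. Hence the favourable choice of $v_1$ and $v_n$ is always available, and the induction closes.
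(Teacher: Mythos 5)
Your proposal follows a genuinely different route from the paper's. The paper establishes a short geometric claim --- any duality of $\mathsf{PG}(n,2)$ with $n>2$ that maps a point to an opposite hyperplane also maps some incident point--hyperplane flag to an opposite --- and applies it to obtain a \emph{single} non-domestic flag $\{v_1,v_n\}\in\Opp(\theta)$. Both residue dualities then fail to be point-domestic for free (via Proposition~\ref{prop:proj}), so by Theorem~\ref{thm:Asmall} both are exceptional domestic, and the same cotype-gluing you describe closes the induction. You instead locate $v_1$ and $v_n$ \emph{independently}, by unwinding ``$\theta_{v_1}$ is a symplectic polarity'' into the bilinear-form statement $q|_{v_1^\perp}\equiv 0$; that translation is correct (as is the type-$n$ analogue $q|_{v_n}\equiv 0$), and your cardinality argument in the symmetric case is sound.

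The asymmetric case is, however, a genuine gap. When $\gamma(x,y):=\beta(x,y)+\beta(y,x)$ is nonzero, $q$ is not additive and $N=\{q=0\}$ is not a subspace, so the ``$v^\perp=N$'' mechanism evaporates, and the assertion that combining the type-$1$ and type-$n$ conditions ``forces $\beta$ to be alternating'' is stated without a mechanism. In fact the type-$1$ condition alone already yields a contradiction in both cases at once: suppose $q|_{v^\perp}=0$ for every non-isotropic $v$. Fix such a $v$, write $V=v^\perp\oplus\langle w\rangle$ and $\ell=\beta(v,\cdot)$; then one computes $q(x)=\ell(x)\bigl(q(w)+\gamma(x,w)\bigr)=\ell(x)\ell_2(x)$ for a linear form $\ell_2$ (namely $\ell_2=\gamma(\cdot,w)$ or $\ell+\gamma(\cdot,w)$ according to $q(w)$), so $Q=\{q=0\}$ is a union of at most two linear hyperplanes. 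Since over $\FF_2$ no hyperplane lies in the union of two others, $Q$ contains at most two linear hyperplanes; and by nondegeneracy $v\mapsto v^\perp$ is injective on non-isotropic vectors, so at most two non-isotropic vectors exist. But the factorization gives $|\{v:q(v)=1\}|\geq 2^{n-1}\geq 4$ for $n\geq 3$, a contradiction. The identical argument with left perps produces $v_n$. With this repair your approach is valid, though the paper's single-flag trick is shorter and avoids the symmetric/asymmetric case split.
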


\begin{proof}
We begin with the following claim: \textit{If a duality $\theta$ of $\sA_n(2)$ with $n>2$ is $\{1,n\}$-domestic then it is either $\{1\}$-domestic or $\{n\}$-domestic.} We show that, if a duality $\theta$ maps a point to an opposite, then it maps an incident point-hyperplane pair to an opposite. Thus let $p$ be a point mapped to an opposite, and set $H=p^\theta$. Also, set ${p'}=H^\theta$ and ${H'}^\theta=p$. The duality $\theta_H$ that maps a point $r$ of $H$ onto the $(n-2)$-space $r^\theta\cap H$ is point-domestic (if $r\in H$ were mapped to an opposite, then $\{r,H\}$ is mapped to an opposite).

Let $q$ be a point distinct from $p$ and not in $H\cup H'$. Then, since $p\notin H\cup H'$, the unique third point $r$ on the line $pq$ belongs to $H\cap H'$. As $\theta_H$ is point-domestic, $r^\theta$ contains $r$, and since $r\in H'$, $r^\theta$ also contains $p$. Since $p^\theta$ does not contain $p$, neither $q^\theta$ contains $p$. But both $p^\theta$ and $r^\theta$ contain $r$, hence $r\in q^\theta$. It follows that $q\notin q^\theta$, so every point not in $H\cup H'$ is mapped to an opposite. Dually, every hyperplane not containing either $p$ or $p'$ is mapped to an opposite. Since $n>2$, there are at least three such hyperplanes, and thus there exists a hyperplane $G$ mapped to an opposite distinct from $H$ and distinct from $H'$. Suppose that $G$ is contained in $H\cup H'$. Clearly $G$ is not contained in $H\cap H'$, so assume without loss that $G$ contains a point $x\in H\setminus H'$. If it also contains a point $x'\in H'\setminus H$, then the third point on the line $xx'$ is in $G$ but not in $H\cup H'$, a contradiction. So $G\subseteq H$ and $G=H$, a contradiction again.  Hence $G$ contains a point $r$ outside $H\cup H'$, and then $\{r,G\}$ is mapped to an opposite, hence the claim.

The result now follows by induction. If $n=2$ then the two notions coincide, so suppose that $n>2$. If $\theta$ is an exceptional domestic duality then by the claim there exists a type $\{1,n\}$ simplex mapped onto an opposite simplex. In the residue of the type $n$ vertex of this simplex we obtain a domestic duality of $\sA_{n-1}$ mapping a type $1$ vertex to an opposite. Hence this duality is exceptional domestic, and hence strongly exceptional domestic by the induction hypothesis. It follows, using Proposition~\ref{prop:proj}, that there exist panels of each cotype $1,2,\ldots,n-1$ of $\Delta$ mapped to opposites. A symmetric argument looking at the type $1$ vertex gives cotype $2,3,\ldots,n$ panels mapped to opposites, and hence $\theta$ is strongly exceptional domestic. 
\end{proof}

\subsection{Buildings of type $\sB_n$ and $\sD_n$}

In this section we show that all automorphisms of large buildings of type $\sB_n$ and $\sD_n$ are capped. The main additional ingredients to the residue arguments are listed in the following proposition. 

\begin{prop}\label{prop:summary} Let $n\geq 3$. 
\begin{compactenum}[$(1)$]
\item Let $\Delta$ be a large building of type $\sB_n$ or $\sD_{n+1}$ and let $i<n$. If $\theta$ is a $\{1,i\}$-domestic collineation then either $\theta$ is $\{1\}$-domestic or $\{i\}$-domestic.
\item Let $\Delta$ be building of type $\sB_n$ with thick projective space residues. If $\theta$ is a $\{1,n\}$-domestic collineation then $\theta$ is either $\{1\}$-domestic or $\{n\}$-domestic. 
\item Let $\Delta$ be a thick building of type $\sD_{n}$ with $n$ even and let $\theta$ be a collineation.
\begin{compactenum}[$(a)$]
\item If $\theta$ is $\{1,n\}$-domestic then $\theta$ is either $\{1\}$-domestic or $\{n\}$-domestic. 
\item If $\theta$ is $\{n-1,n\}$-domestic, then $\theta$ is either $\{n-1\}$-domestic or $\{n\}$-domestic.
\end{compactenum}
\item Let $\Delta$ be a large building of type $\sD_{n}$ with $n$ odd and let $\theta$ be a collineation. If $\theta$ is $\{1,n-1,n\}$-domestic then $\theta$ is either $\{1\}$-domestic or $\{n-1,n\}$-domestic.
\end{compactenum}
\end{prop}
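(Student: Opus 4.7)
My plan is to prove each of (1)--(4) by the contrapositive: fix $\sigma_2 \in \Opp(\theta)$ of type $J_2$ (assuming $\theta$ is not $J_2$-domestic), and show that the hypothesis of $(J_1\cup J_2)$-domesticity then forces $\theta$ to be $J_1$-domestic. The principal tools are Corollary~\ref{cor:proj} and Proposition~\ref{prop:proj} on induced residues, Proposition~\ref{prop:typemap} for tracking induced type maps, the $\sA_n$-duality results of Section~\ref{sect:An} (notably Theorem~\ref{thm:Alarge}, Lemma~\ref{lem:An}, and Lemma~\ref{lem:sp}), and a series of polar-space geometric lemmas to be established in Section~\ref{sec:lemmas}.

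The first step, common to all four parts, is a residue-plus-$\sA$-duality reduction: choose $\sigma_2$ of type $J_2$ so that $\Res(\sigma_2)$ has an irreducible component of type $\sA_k$ on which, by Proposition~\ref{prop:typemap}, the induced automorphism $\theta_{\sigma_2}$ acts as a duality. For (1) take $\sigma_2$ of type $\{i\}$, giving the $\sA_{i-1}$ factor of $\Res(\sigma_2)$, which is of type $\sA_{i-1}\times\sB_{n-i}$ (or $\sA_{i-1}\times\sD_{n-i+1}$ in the $\sD_{n+1}$ case); for (2), (3)(a), and (3)(b) take $\sigma_2$ of type $\{n\}$, giving $\Res(\sigma_2)$ of type $\sA_{n-1}$; for (4) take $\sigma_2$ of type $\{n-1,n\}$, giving $\Res(\sigma_2)$ of type $\sA_{n-2}$. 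Combining Proposition~\ref{prop:proj} with $(J_1\cup J_2)$-domesticity forces this induced $\sA_k$-duality to be $\{1\}$-domestic, whence Lemma~\ref{lem:An} constrains the parity of $k$ and identifies the duality as a symplectic polarity---yielding the desired conclusion at once when the parity is incompatible, and reducing each remaining case to a \emph{symplectic-polarity residue} configuration.

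The main obstacle is to upgrade the symplectic-polarity conclusion on $\Res(\sigma_2)$, for a single $\sigma_2$, into global $J_1$-domesticity of $\theta$ on all of $\Delta$. Pure residue techniques do not suffice here, because a type-$J_1$ simplex of $\Delta$ in $\Opp(\theta)$ need not lie inside $\Res(\sigma_2)$ and is therefore invisible to $\theta_{\sigma_2}$. The plan is to treat this step by a direct polar-space analysis: given a hypothetical type-$J_1$ simplex $\tau \in \Opp(\theta)$ outside $\Res(\sigma_2)$, study the perp-intersection of $\tau$ with $\sigma_2$ together with its $\theta$-image, and use the thick line and plane geometry available in a polar space of rank $\geq 3$ to produce a contradictory flag in $\Opp(\theta)$. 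The resulting polar-space lemmas form the content of Section~\ref{sec:lemmas} and naturally extend~\cite{TTM:12}. The largeness hypothesis enters precisely at this geometric step, because the analogous statements fail in the small polar spaces $\sB_n(2)$, $\sB_n(2,4)$, and $\sD_n(2)$ treated in the companion paper~\cite{PVM:17b}.
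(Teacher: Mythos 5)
Your first step---passing to the residue of $\sigma_2$, identifying an $\sA_k$-duality, and using the parity argument via Lemma~\ref{lem:An}---is correct and matches Lemma~\ref{ieven} in the paper. But the mechanism you propose for the global step (fix a single $\sigma_2$, and for a hypothetical non-domestic $\tau$ "study the perp-intersection of $\tau$ with $\sigma_2$") is not how the argument actually runs, and it is hard to see how to make it work: $\tau$ can be far from $\sigma_2$, and a pairwise perp-analysis does not obviously yield a contradiction. For part~(1) the paper instead proceeds by induction on $i$ (Lemma~\ref{1-i}), and the engine is a \emph{fixed-point} argument: under the combined domesticity hypotheses one proves (Lemma~\ref{basicnewlemma}) that \emph{every} $(i+1)$-space contains a $\theta$-fixed point, hence $\theta$ is $j$-domestic for all $j\geq i+1$, contradicting non-$J_2$-domesticity. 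This requires showing that the upper residue map $\theta_{\supseteq U}$ is the identity for a non-domestic $i$-space $U$ (via Fact~\ref{fact1}), that lower dualities of adjacent non-domestic $i$-spaces sharing a fixed point are conjugate (Lemma~\ref{lemmaconjugate}), and technical flag-moving lemmas (Lemmas~\ref{flagtypeline} and~\ref{flagtypepoint}) allowing one to find non-domestic subflags avoiding prescribed subspaces. None of this is captured by a perp-intersection sketch.

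Parts (2)--(4) need additional case-specific ideas absent from your outline. Part~(2) depends on classifying the geometric hyperplane of $\theta$-fixed points in the rank-$2$ upper residue $U^{\perp}\cap(U^\theta)^\perp$---ovoids, point-perps, and full subquadrangles must be treated separately (Lemma~\ref{lemma11-n})---and then on Lemma~\ref{lemma21-n}. Part~(3)(a) translates to the associated non-thick polar space and invokes part~(2). Part~(3)(b) uses \emph{triality} to obtain the rank-$4$ base case from part~(3)(a), followed by a genuinely separate induction showing that two non-domestic maximal singular subspaces of different types must meet in a point (Lemmas~\ref{lemma1Dn}, \ref{lemma3Dn}, \ref{lemma4Dn}). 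Part~(4) reduces to part~(1) via the correspondence Lemma~\ref{lem:containedB} between $(n-1,(n-1)')$-domesticity of the oriflamme complex and $(n-2)$-domesticity of the underlying non-thick polar space. As written, your proposal correctly locates where the hard work lies but leaves the decisive ideas---inductions on dimension, the global fixed-point machinery, and the triality trick---unspecified, so it is a plausible research direction rather than a proof.
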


Proposition~\ref{prop:summary} is a natural extension of the analysis of domesticity in polar spaces initiated in~\cite{TTM:12}. The proof of the proposition is somewhat involved, and so we temporarily postpone the proof and continue with the main theorem.

\begin{thm}\label{thm:Bn1} Every collineation of a large building of type $\sB_n$ with $n\geq 3$ is capped. 
\end{thm}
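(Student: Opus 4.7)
The plan is to analyse $\theta$ according to the largest index $m=\max J$ in $J:=\Type(\theta)$, leveraging three key inputs: Theorem~\ref{thm:Alarge} (domestic dualities of large type~$\sA$ buildings are symplectic polarities), Lemma~\ref{lem:sp} (symplectic polarities of $\sA_{2k-1}$ are $\{i\}$-domestic for every odd~$i$, and vertices in $\Opp(\cdot)$ extend to even-indexed simplices), and Proposition~\ref{prop:summary}(1)--(2) (contrapositively, if $1\in J$ and $i\in J$ then there is a $\{1,i\}$-simplex in $\Opp(\theta)$). We may assume $\theta$ is nontrivial, so $J\neq\emptyset$ by Theorem~\ref{thm:fund}; if $m=1$ then $J=\{1\}$ and a type~$1$ vertex in $\Opp(\theta)$ exists by definition, so we assume $m\geq 2$.

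A preliminary observation simplifies the split into cases: \emph{if $J$ contains an odd index $k>1$ then $1\in J$}. Indeed, taking a type~$k$ vertex $v\in\Opp(\theta)$, Proposition~\ref{prop:typemap} shows that $\theta_v$ acts as a duality on the $\sA_{k-1}$ component of $\Res(v)$; since $k-1$ is even and $\Delta$ has no Fano residues this component is large, so Theorem~\ref{thm:Alarge} forces $\theta_v$ to be nondomestic on it (no symplectic polarity of even rank exists), and lifting a chamber via Proposition~\ref{prop:proj} produces a type $\{1,\ldots,k\}$ simplex in $\Opp(\theta)$.

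The main argument now splits into three cases. If $m$ is odd, the preliminary observation applied with $k=m$ directly produces a type $\{1,\ldots,m\}$-simplex in $\Opp(\theta)$, and $\max J=m$ forces $J=\{1,\ldots,m\}$, so this simplex is the required one. If $m$ is even and $1\in J$, then Proposition~\ref{prop:summary}(1) (if $m<n$) or (2) (if $m=n$) yields a type $\{1,m\}$-simplex $\sigma$ in $\Opp(\theta)$---which is already of type $J$ when $m=2$. For $m\geq 4$, let $v$ be the type~$m$ vertex of $\sigma$; then $\theta_v$ is a duality on the large $\sA_{m-1}$ component of $\Res(v)$ (here $m-1$ is odd, so $\sA_{m-1}$ is large of rank $\geq 3$) which is \emph{not} $\{1\}$-domestic since the other vertex of $\sigma$ lies in $\Opp(\theta_v)$ by Proposition~\ref{prop:proj}. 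Because symplectic polarities are $\{1\}$-domestic by Lemma~\ref{lem:sp}, Theorem~\ref{thm:Alarge} now forces $\theta_v$ to be nondomestic on this component, and lifting a chamber gives a type $\{1,\ldots,m\}=J$ simplex in $\Opp(\theta)$.

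Finally, if $m$ is even and $1\notin J$, take a type~$m$ vertex $v\in\Opp(\theta)$. Nondomesticity of $\theta_v$ on the $\sA_{m-1}$ component would place $1$ in $J$ by the preliminary lifting argument, a contradiction, so by Theorem~\ref{thm:Alarge} $\theta_v$ acts as a symplectic polarity there. Being nontrivial, Theorem~\ref{thm:fund} supplies some vertex in $\Opp(\theta_v)$ on this component, necessarily of even type since the polarity is $\{i\}$-domestic for odd $i$; by Lemma~\ref{lem:sp} it extends to a type $\{2,4,\ldots,m-2\}$-simplex in $\Opp(\theta_v)$, which together with $v$ gives a type $\{2,4,\ldots,m\}$-simplex of $\Delta$ in $\Opp(\theta)$. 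The contrapositive of the preliminary observation gives $J\subseteq\{2,4,\ldots\}$, so $J=\{2,4,\ldots,m\}$, completing the proof. The main obstacle in executing this plan is really the proof of Proposition~\ref{prop:summary}, which is why a substantial amount of polar-space geometry is developed in Section~\ref{sec:lemmas}; once that proposition is in hand, the argument above is a short and uniform residue argument across all $n\geq 3$.
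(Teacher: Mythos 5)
Your proof takes essentially the same route as the paper: induct on $m=\max J$, use Theorem~\ref{thm:Alarge} on the $\sA_{m-1}$ residue to detect symplectic polarities, and call on Proposition~\ref{prop:summary}(1)/(2) to merge a type $1$ and a type $m$ vertex into a $\{1,m\}$-flag in $\Opp(\theta)$, after which a second residue pass yields a full $\{1,\ldots,m\}$-simplex. Isolating the preliminary observation (``an odd index $k>1$ in $J$ forces $\{1,\ldots,k\}\subseteq J$'') is a tidy reorganisation, but the ingredients and the three-way split on $m$ are the same as in the paper.

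One small oversight: the final case ``$m$ even and $1\notin J$'' as written applies Theorem~\ref{thm:Alarge} and Theorem~\ref{thm:fund} to the $\sA_{m-1}$ component of $\Res(v)$, which is vacuous when $m=2$ (an $\sA_1$ component has no dualities and the induced map on it is trivial, so $\Opp(\theta_v)$ restricted to it is empty). You should simply dispose of $m=2$, $1\notin J$ directly: there $J=\{2\}$ and a type $2$ vertex lies in $\Opp(\theta)$ by definition of $m$, so $\theta$ is capped. Once that trivial subcase is set aside, your argument for $m\geq 4$ goes through as stated.
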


\begin{proof}
Let $\theta$ be a nontrivial collineation, and let $J=\Type(\theta)$. Let $j\in J$ be maximal, and let $v$ be a vertex of type $j$ mapped to an opposite vertex by $\theta$. If $j=1$ then $\theta$ is obviously capped. If $j=2$ and $J=\{2\}$ then $\theta$ is obviously capped, and if $j=2$ and $J=\{1,2\}$ then by Proposition~\ref{prop:summary}(1) $\theta$ is capped. So assume that $j\geq 3$. Then $\theta_v$ acts on the type $\sA_{j-1}$ component of $\Res(v)$ as a duality. Suppose that $j$ is odd. Then by Theorem~\ref{thm:Alarge} $\theta_v$ is not domestic on the $\sA_{j-1}$ component of $\Res(v)$ and so $\theta_v$ maps a type $\{1,2,\ldots,j-1\}$ simplex $\sigma$ of $\Res(v)$ to an opposite. Thus by Proposition~\ref{prop:proj} the simplex $\sigma\cup\{v\}$ is mapped to an opposite simplex by~$\theta$, and so $J=\{1,2,\ldots,j\}$ and $\theta$ is capped. If $j$ is even then either $\theta_v$ is non-domestic, in which case $J=\{1,2,\ldots,j\}$ and we are done as before, or $\theta_v$ is symplectic polarity, in which case it follows from Lemma~\ref{lem:sp} and Proposition~\ref{prop:proj} that there is a type $\{2,4,\ldots,j-2,j\}$ simplex mapped to an opposite by~$\theta$. If $J=\{2,4,\ldots,j-2,j\}$ then $\theta$ is capped. Otherwise  there exists $j'\in J$ odd, and then by the argument above there is a type $1$ vertex mapped to an opposite. Hence by Proposition~\ref{prop:summary}~(1) or (2) (in the cases $j<n$ or $j=n$) we have a type $\{1,j\}$ simplex mapped to an opposite simplex by~$\theta$. If $v'$ is the type $j$ vertex of this simplex then $\theta_{v'}$ acts as a duality on the $\sA_{j-1}$ component of $\Res(v')$ mapping a point to an opposite point, and thus by Theorem~\ref{thm:Alarge} and Proposition~\ref{prop:proj} we have $J=\{1,2,\ldots,j\}$ and $\theta$ is capped. 
\end{proof}

\begin{thm}\label{thm:Dn1} Every collineation of a large building of type $\sD_n$ is capped. 
\end{thm}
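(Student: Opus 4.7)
The plan is to mirror the proof of Theorem~\ref{thm:Bn1}, using residue arguments based on Propositions~\ref{prop:typemap} and~\ref{prop:proj} together with Theorem~\ref{thm:Alarge} and Lemma~\ref{lem:sp}, while accommodating the two fork nodes $n-1, n$ of the $\sD_n$ diagram via the relevant parts of Proposition~\ref{prop:summary}. Let $\theta$ be a nontrivial collineation and set $J = \Type(\theta)$; I will split into two cases according to whether $J \cap \{n-1, n\}$ is empty.

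In the first case, $J \subseteq \{1, \ldots, n-2\}$ and I will take $j = \max J$. The subcases $j \in \{1, 2\}$ are immediate (trivially for $j = 1$, and via Proposition~\ref{prop:summary}(1) for $j = 2$). Assuming $j \geq 3$, I pick a type-$j$ vertex $v \in \Opp(\theta)$; by Proposition~\ref{prop:typemap} the induced automorphism $\theta_v$ acts as a duality on the $\sA_{j-1}$ factor of $\Res(v)$. When $j$ is odd, $\sA_{j-1}$ has even rank and Theorem~\ref{thm:Alarge} will force $\theta_v$ to be non-domestic on this factor, so Proposition~\ref{prop:proj} will yield a type-$\{1, \ldots, j\}$ simplex in $\Opp(\theta)$ and the case closes. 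When $j$ is even, either $\theta_v$ is non-domestic on $\sA_{j-1}$ (same conclusion) or it is a symplectic polarity, and Lemma~\ref{lem:sp} will then produce a type-$\{2, 4, \ldots, j\}$ simplex in $\Opp(\theta)$; if $J$ also contains an odd index, the preceding subcase will also yield a type-$1$ vertex in $\Opp(\theta)$, and Proposition~\ref{prop:summary}(1) (applicable since $j \leq n-2$) will upgrade to a type-$\{1, j\}$ simplex whose type-$j$ residue hosts a non-point-domestic duality on $\sA_{j-1}$, forcing a full type-$\{1, \ldots, j\}$ simplex by Theorem~\ref{thm:Alarge}.

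In the second case, $J \cap \{n-1, n\} \neq \emptyset$, and I will split further by the parity of $n$. If $n$ is odd, $w_0 \circ \theta$ swaps $n-1$ and $n$, so Lemma~\ref{lem:opbasic} gives $\{n-1, n\} \subseteq J$ and the existence of a type-$\{n-1, n\}$ simplex $\sigma \in \Opp(\theta)$. The residue $\Res(\sigma)$ has type $\sA_{n-2}$ with $\theta_\sigma$ a duality, so Theorem~\ref{thm:Alarge} and Lemma~\ref{lem:sp} will yield either a chamber of $\Delta$ in $\Opp(\theta)$ (whence $J = S$) or a type-$\{2, 4, \ldots, n-3, n-1, n\}$ simplex; any remaining odd index in $J$ will combine with a type-$1$ vertex (from the first case) via Proposition~\ref{prop:summary}(4) to produce a type-$\{1, n-1, n\}$ simplex, whose residue is $\sA_{n-3}$ of even rank and hence admits no domestic duality, forcing $J = S$. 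If $n$ is even the two fork nodes are independent orbits, and I will run the analogous residue analysis at a type-$n$ (respectively type-$(n-1)$) vertex and combine the outputs via Proposition~\ref{prop:summary}(3)(a) and (b) to produce a type-$J$ simplex in each remaining configuration.

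The main obstacle will be the bookkeeping when $J$ contains a mix of even and odd indices together with one or both of the fork nodes: each such configuration requires chaining several applications of Proposition~\ref{prop:summary} with residue passages through $\sA_{j-1}$, $\sA_{n-2}$, or $\sA_{n-3}$ factors, and one must check in each case that the $\sA$-residue invoked has the correct parity for Theorem~\ref{thm:Alarge} and Lemma~\ref{lem:sp} to produce the desired simplex. Essentially this is a finite verification against the admissible diagrams listed in Table~\ref{table:3}, organised by the two main cases above.
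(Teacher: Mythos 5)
Your proposal is correct and follows essentially the same line of reasoning as the paper's proof: reduce to the $\sB_n$-style residue argument when $J$ avoids the fork nodes, and otherwise split on the parity of $n$ to locate a type-$\{n-1,n\}$ simplex (for $n$ odd) or a type-$n$ vertex (for $n$ even), then use the induced duality on the $\sA_{n-2}$ or $\sA_{n-1}$ residue together with Proposition~\ref{prop:summary}(3)(a), (3)(b) or (4) to propagate cappedness. The only substantive difference is organizational — the paper defines $j=\max J$ and splits on $j\leq n-2$ versus $j\in\{n-1,n\}$ rather than on whether $J\cap\{n-1,n\}$ is empty, which is an equivalent decomposition — and your outline of the $n$-even fork subcase is somewhat less detailed than the paper's, but the required ingredients are all correctly identified.
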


\begin{proof}
Let $\theta$ be a nontrivial collineation, and let $J=\Type(\theta)$. Let $j\in J$ be maximal. If $j\leq n-2$ then the argument from Theorem~\ref{thm:Bn1} implies that $\theta$ is capped. So suppose that $j=n-1$ or $j=n$. By symmetry we may suppose that $j=n$. Thus if $n$ is even there is a type $n$ vertex $v$ mapped to an opposite vertex, and if $n$ is odd then there is a type $\{n-1,n\}$ simplex $\sigma$ mapped to an opposite simplex. We consider each case below.

Suppose that $n$ is even. Then $\theta_v$ is a duality of the type $\sA_{n-1}$ building $\Res(v)$, and thus by Theorem~\ref{thm:Alarge} and Proposition~\ref{prop:proj} there is a type $\{2,4,\ldots,n-2,n\}$ simplex mapped to an opposite by~$\theta$. If $J=\{2,4,\ldots,n-2,n\}$ then we are done. If there exists $j'\in J$ with $j'$ odd and $j'\leq n-2$ then an argument as in the proof of Theorem~\ref{thm:Bn1} implies that $1\in J$, and thus by Proposition~\ref{prop:summary}(3)(a) there is a type $\{1,n\}$ simplex mapped to an opposite. If $v'$ is the type $n$ vertex of this simplex then $\theta_{v'}$ is a duality of the $\sA_{n-1}$ building $\Res(v')$ mapping a type~$1$ vertex to an opposite. Thus $\theta_{v'}$ is not domestic, and so $\theta$ is not domestic (by Proposition~\ref{prop:proj}), and hence $\theta$ is capped. If $n-1\in J$ then by Proposition~\ref{prop:summary}(3)(b) there is a type $\{n-1,n\}$ simplex $\sigma'$ mapped to an opposite. Then $\theta_{\sigma'}$ is a duality of the $\sA_{n-2}$ building $\Res(\sigma')$, and hence by Theorem~\ref{thm:Alarge} and Proposition~\ref{prop:proj} $\theta$ is not domestic, and hence is capped. 

Suppose that $n$ is odd. Then $\Res(\sigma)$ is an $\sA_{n-2}$ building and it follows that there is a type $\{2,4,\ldots,n-3,n-1,n\}$ simplex mapped to an opposite. If $J=\{2,4,\ldots,n-3,n-1,n\}$ then we are done. Otherwise there is $j'\in J$ with $j'\leq n-2$ and $j'$ odd then as above we have $1\in J$. Hence by Proposition~\ref{prop:summary}(4) there is a type $\{1,n-1,n\}$ simplex mapped to an opposite, and it follows that $\theta$ is not domestic, and hence is capped.
\end{proof}

\begin{thm}\label{thm:DD} Every duality of a large building of type $\sD_n$ is capped. 
\end{thm}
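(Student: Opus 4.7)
The plan is to argue by induction on $n$, closely following the strategy of Theorem~\ref{thm:Dn1}. Fix a duality $\theta$ of a large $\sD_n$ building $\Delta$ and set $J=\Type(\theta)$. The crucial initial input is Proposition~\ref{prop:DDual}: no duality of a thick $\sD_n$ building is $\{1\}$-domestic, so there exists a type~$1$ vertex in $\Opp(\theta)$, giving $1\in J$. Moreover $J$ is stable under $\pi_{\theta}$ (since $\Opp(\theta)$ is $\theta$-invariant), so $n-1\in J$ if and only if $n\in J$. The base case $n=3$ reduces to type $\sA_3$ and is handled by Corollary~\ref{cor:An1}.

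Let $j=\max J$. In the case $j\leq n-2$, I pick a type $j$ vertex $v\in\Opp(\theta)$; its residue is of type $\sA_{j-1}\times\sD_{n-j}$. A computation with Proposition~\ref{prop:typemap} shows $\theta_v$ acts as a duality on the $\sA_{j-1}$ component (when $j\geq 3$) and as either a duality or a collineation on the $\sD_{n-j}$ component, depending on the parities of $n$ and $j$. Theorem~\ref{thm:Alarge} together with Lemma~\ref{lem:sp} control the $\sA_{j-1}$ component, while induction (or Theorem~\ref{thm:Dn1} in the case $\theta_v$ restricts to a collineation on the $\sD$-factor) controls the $\sD_{n-j}$ component. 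Pulling back via Proposition~\ref{prop:proj} and combining with $1\in J$, using Proposition~\ref{prop:summary}~$(1)$ to lift pairs of the form $\{1,i\}$ when necessary, I produce a simplex of type~$J$ in $\Opp(\theta)$.

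In the case $j\in\{n-1,n\}$, the $\pi_{\theta}$-invariance of $J$ forces both $n-1$ and $n$ to lie in $J$, so there is a type $\{n-1,n\}$ simplex $\sigma\in\Opp(\theta)$ whose residue is the large $\sA_{n-2}$ building. Proposition~\ref{prop:typemap} shows $\theta_{\sigma}$ is a duality. Theorem~\ref{thm:Alarge} then gives either a chamber of $\Res(\sigma)$ mapped to an opposite chamber, in which case $J=S$ and $\theta$ is capped immediately, or a symplectic polarity $\theta_{\sigma}$, which via Lemma~\ref{lem:sp} yields a type $\{2,4,\ldots,n-2\}$ subflag of $\Res(\sigma)$ mapped to an opposite, and hence a type $\{2,4,\ldots,n-2,n-1,n\}$ simplex in $\Opp(\theta)$. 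The remaining task is to adjoin a type~$1$ vertex into a common $J$-simplex, which I do by applying Proposition~\ref{prop:summary}~$(3)$ or $(4)$ (according to the parity of~$n$) to the appropriate collineations induced on smaller residues.

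The main obstacle is this symplectic-polarity subcase of the $j\in\{n-1,n\}$ situation, where the partial flag extracted from the $\sA_{n-2}$ residue lies only in even types and must be reconciled with $1\in J$ and the $\{n-1,n\}$ pair. Here the polar-space machinery developed in Section~\ref{sec:lemmas} (which underlies Proposition~\ref{prop:DDual} and the duality-oriented parts of Proposition~\ref{prop:summary}) supplies the needed geometric lifting, certifying that $1$, the even-type flag, and $\{n-1,n\}$ can all be realized in a single simplex of $\Opp(\theta)$, and thereby closing the proof.
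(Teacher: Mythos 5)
The key parity observation is missing, and it makes your proof both longer and wrong in places. When $\theta$ is a duality and $v$ is a type~$j$ vertex in $\Opp(\theta)$ with $j\le n-2$, the paper notes (via Example~\ref{ex:D}) that $\theta_v$ acts as a duality on the $\sD_{n-j}$ component if and only if $j$ is \emph{even}, irrespective of the parity of~$n$ --- not ``depending on the parities of $n$ and $j$'' as you say. That forces $j$ to be odd (else Theorem~\ref{thm:fund} applied to the $\sD_{n-j}$ factor contradicts maximality of~$j$), hence $j-1$ is \emph{even}, and by Theorem~\ref{thm:Alarge} a duality of a large $\sA_{j-1}$ building with $j-1$ even cannot be domestic. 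So the $\sA_{j-1}$ residue immediately hands you a full chamber, giving $J=\{1,\dots,j\}$ with a type-$J$ simplex in $\Opp(\theta)$. No appeal to Proposition~\ref{prop:DDual}, Lemma~\ref{lem:sp}, or Proposition~\ref{prop:summary} is needed. In particular, Proposition~\ref{prop:summary}(1) is about collineations and does not apply to your duality~$\theta$ directly; and Proposition~\ref{prop:DDual} is proved \emph{after} Theorem~\ref{thm:DD} in the paper by ``the argument of Theorem~\ref{thm:DD},'' so invoking it here is at best an exposition hazard.

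Your treatment of $j\in\{n-1,n\}$ has two genuine errors. First, the inference ``$\{n-1,n\}\subseteq J$, hence there is a type $\{n-1,n\}$ simplex in $\Opp(\theta)$'' is exactly a cappedness-type conclusion you have not yet established; it is valid only when $n$ is even (because then $w_0\circ\theta=(n-1,n)$ and Lemma~\ref{lem:opbasic}(1) forces any simplex in $\Opp(\theta)$ whose type meets $\{n-1,n\}$ to contain both), whereas for $n$ odd, $w_0\circ\theta=\mathrm{id}$ and a single type~$n$ vertex can lie in $\Opp(\theta)$ with no companion type~$(n-1)$ vertex. The paper therefore splits: for $n$ odd it takes a type~$n$ vertex $v'$ (residue $\sA_{n-1}$, $n-1$ even), and for $n$ even a type~$\{n-1,n\}$ simplex $\sigma$ (residue $\sA_{n-2}$, $n-2$ even). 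Second, in both subcases the $\sA$-residue has \emph{even} rank, so by Theorem~\ref{thm:Alarge} the induced duality cannot be a symplectic polarity and hence cannot be domestic. Your ``symplectic-polarity subcase'' never arises; the ``main obstacle'' you describe and then resolve by appeal to the polar-space machinery of Section~\ref{sec:lemmas} is a phantom, and the hand-wave does not close a real gap because there is no real subcase to close. The correct proof here is one line: the residue duality is non-domestic, so $\theta$ is non-domestic and hence trivially capped.
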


\begin{proof}
Let $\theta$ be a duality, and let $J=\Type(\theta)$. Let $j\in J$ be maximal. Suppose that $j\leq n-2$, and let $v$ be a type $j$ vertex mapped to an opposite. If $j$ is even then $\theta_v$ is a duality of the $\sD_{n-j}$ component of $\Res(v)$ (note that this is true irrespective of the parity of $n$, see Example~\ref{ex:D}), and hence maps some simplex of this residue to an opposite, contradicting maximality of $j$ (using Proposition~\ref{prop:proj}). Thus $j$ is odd. Since $\theta_v$ is a duality on the $\sA_{j-1}$ component of $\Res(v)$ it follows from Theorem~\ref{thm:Alarge} and Proposition~\ref{prop:proj} that there is a type $\{1,2,\ldots,j\}$ simplex mapped to an opposite by~$\theta$, and hence $J=\{1,2,\ldots,j\}$ and $\theta$ is capped. 

If $j=n-1$ or $j=n$ then by symmetry we may assume that $j=n$. If $n$ is odd then there is a type $n$ vertex $v'$ mapped to an opposite vertex by $\theta$, and considering the induced duality of the $\sA_{n-1}$ building $\Res(v')$ and applying Theorem~\ref{thm:Alarge} and Proposition~\ref{prop:proj} we see that $\theta$ is not domestic, and hence is capped. If $n$ is even then there is a type $\{n-1,n\}$ simplex mapped to an opposite by $\theta$, and again we conclude that $\theta$ is not domestic, and hence capped. 
\end{proof}

A similar argument to Theorem~\ref{thm:DD} proves the following useful fact, which we record for later use.

\begin{prop}\label{prop:DDual}
No duality of a thick building of type $\sD_n$ is $\{1\}$-domestic.
\end{prop}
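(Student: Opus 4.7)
The plan is an induction on $n\geq 3$, closely paralleling the proof of Theorem~\ref{thm:DD} but tailored to produce a type-$1$ vertex of $\Opp(\theta)$ rather than a full $\Type(\theta)$-simplex. Throughout I assume, for contradiction, that $\theta$ is a duality of a thick $\sD_n$ building which is $\{1\}$-domestic (the singleton $\{1\}$ is $w_0\circ\theta$-stable for either parity of $n$, so the notion is well-defined). By Theorem~\ref{thm:fund}, $\Opp(\theta)\neq\emptyset$, and the argument splits according to whether some vertex lies in $\Opp(\theta)$.

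First suppose some vertex is in $\Opp(\theta)$, and let $j$ be maximal with a type-$j$ vertex $v\in\Opp(\theta)$; by assumption $j\geq 2$. Proposition~\ref{prop:typemap} (cf.\ Example~\ref{ex:D}) describes $\theta_v$ on $\Res(v)$: when $2\leq j\leq n-2$ one has $\Res(v)=\sA_{j-1}\times\sD_{n-j}$, with $\theta_v$ a duality on the $\sA_{j-1}$ component and a duality ($j$ even) or collineation ($j$ odd) on the $\sD_{n-j}$ component; when $j\in\{n-1,n\}$---which forces $n$ odd, since these singletons are $w_0\circ\theta$-stable only then---one has $\Res(v)=\sA_{n-1}$ with $\theta_v$ the standard duality. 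If $j$ is even with $2\leq j\leq n-2$, then $\theta_v$ restricts to a duality on $\sD_{n-j}$; the induction hypothesis (when $n-j\geq 3$), or Lemma~\ref{lem:AnAn} (when $n-j=2$, since the $(n-1,n)$-swap interchanges the two $\sA_1$ factors of $\sD_2$), supplies a type-$1$ vertex of the $\sD_{n-j}$-residue in $\Opp(\theta_v)$, i.e.\ a type-$(j+1)$ vertex of $\Delta$ in $\Opp(\theta)$ by Proposition~\ref{prop:proj}, contradicting maximality of $j$.

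The remaining possibilities are $j$ odd with $3\leq j\leq n-2$, or $j\in\{n-1,n\}$ with $n$ odd. In each, the relevant $\sA$-component of $\Res(v)$ has \emph{even} projective dimension---namely $j-1$ in the former subcase and $n-1$ in the latter. By Lemma~\ref{lem:An}, a duality of a projective space of even dimension cannot be $\{1\}$-domestic (otherwise it would be a symplectic polarity, which requires odd dimension), so some type-$1$ vertex of the $\sA$-component of $\Res(v)$ lies in $\Opp(\theta_v)$. Since type~$1$ of this $\sA$-component retains type~$1$ in $\Delta$, Proposition~\ref{prop:proj} promotes it to a type-$1$ vertex of $\Delta$ in $\Opp(\theta)$, contradicting $\{1\}$-domesticity. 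This argument also handles the base $n=3$, where only the latter subcase arises (with $j\in\{2,3\}$ and $\sA$-component $\sA_2$ of dimension~$2$).

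Finally, the case with no vertex in $\Opp(\theta)$: this forces $n$ to be even (otherwise $w_0\circ\theta=\mathrm{id}$ on types and Lemma~\ref{lem:opbasic} would place a vertex of every $\sigma\in\Opp(\theta)$ into $\Opp(\theta)$) and every $\sigma\in\Opp(\theta)$ to have type $\{n-1,n\}$ (the only nonempty $w_0\circ\theta$-stable subset contained in $\{n-1,n\}$, with any type-$k$ vertex for $k\leq n-2$ otherwise appearing in $\Opp(\theta)$). Taking such $\sigma$, $\theta_\sigma$ is a duality of $\Res(\sigma)=\sA_{n-2}$ with $n-2$ even, and Lemma~\ref{lem:An} again provides a type-$1$ vertex in $\Opp(\theta_\sigma)$, contradicting $\{1\}$-domesticity via Proposition~\ref{prop:proj}. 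The main technical obstacle is the careful bookkeeping of type labels through residues---especially distinguishing ``type~$1$ of the $\sA_{j-1}$-component'' (which inherits type~$1$ of $\Delta$) from ``type~$1$ of the $\sD_{n-j}$-component'' (corresponding to type~$j+1$ of $\Delta$)---which dictates the key case split on the parity of $j$.
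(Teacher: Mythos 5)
Your overall strategy---take $j$ maximal with a type-$j$ vertex $v\in\Opp(\theta)$, descend to $\Res(v)$, and use Lemma~\ref{lem:An} on the $\sA$-components---is sound in spirit and closely parallels the paper's argument (which instead takes $j$ maximal in $\Type(\theta)$). However, there is a genuine error in the sub-case $j$ even with $n-j=2$, i.e.\ $j=n-2$ with $n$ even. You assert that Lemma~\ref{lem:AnAn} ``supplies a type-$1$ vertex of the $\sD_{n-j}$-residue in $\Opp(\theta_v)$, i.e.\ a type-$(n-1)$ vertex of $\Delta$ in $\Opp(\theta)$.'' This is false. Lemma~\ref{lem:AnAn} only produces a \emph{chamber} of the $\sA_1\times\sA_1$ component, and in fact no individual vertex of that component can lie in $\Opp(\theta_v)$: the induced map interchanges the two $\sA_1$ factors while $w_0$ of $\sA_1\times\sA_1$ is the identity on types, so a vertex and its image always have mismatched types. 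Correspondingly, for $n$ even the singleton $\{n-1\}$ is not stable under $w_0\circ\theta$, so a type-$(n-1)$ vertex of $\Delta$ can \emph{never} lie in $\Opp(\theta)$---the contradiction you invoke is impossible, not merely unestablished, and the case is left open.

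The repair is available with your own tools. Lemma~\ref{lem:AnAn} together with Proposition~\ref{prop:proj} yields a type $\{n-1,n\}$ simplex $\sigma$ of $\Delta$ in $\Opp(\theta)$; its residue $\Res(\sigma)$ is a projective space of (even) dimension $n-2$ on which $\theta_\sigma$ acts as a duality, so Lemma~\ref{lem:An} forces a type-$1$ vertex into $\Opp(\theta)$, contradicting $\{1\}$-domesticity---exactly the computation you already carry out in your final ``no vertex in $\Opp(\theta)$'' case. The paper sidesteps this bookkeeping entirely by letting $j$ be maximal in $\Type(\theta)$ rather than maximal among types realised by a single vertex: then the type $\{n-1,n\}$ simplex already witnesses $n\in\Type(\theta)$ and contradicts maximality without ever needing a type-$(n-1)$ or type-$n$ vertex. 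Your choice of ``maximal vertex type'' is what creates the trouble here, and it is worth noting that the same subtlety is implicit in your even-$j$ induction step: for $n-j\geq 3$ the induction correctly produces a type-$1$ vertex of $\sD_{n-j}$, which lands on type $j+1\leq n-2$ and is therefore a legitimate vertex type of $\Delta$; it is precisely at $n-j=2$ that this breaks.
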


\begin{proof}
Let $J=\Type(\theta)$. Let $j\in J$ be maximal. If $j\leq n-2$ then the argument of Theorem~\ref{thm:DD} shows that $j$ is odd, and if $v$ is a type $j$ vertex mapped to an opposite then Theorem~\ref{thm:Asmall} implies that $\theta_v$ is either non-domestic or exceptional domestic on the $\sA_{j-1}$ component of $\Res(v)$. In either case $1\in J$ (using Proposition~\ref{prop:proj}). Similar arguments to Theorem~\ref{thm:DD} show that if $j=n-1$ or $j=n$ then $1\in J$ and hence the result. 
\end{proof}

The final case to consider is trialities of $\sD_4$. 

\begin{thm}\label{thm:DTri} Every triality of a thick building of type $\sD_4$ is capped. 
\end{thm}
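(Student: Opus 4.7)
The plan is to leverage that $w_0$ acts trivially on the Coxeter graph of $\sD_4$, so by Lemma~\ref{lem:opbasic}(1) the type of any simplex in $\Opp(\theta)$ must be $\pi_\theta$-invariant. Since $\pi_\theta$ has order $3$, fixing node $2$ and cycling $\{1,3,4\}$, the only candidate types are $\emptyset$, $\{2\}$, $\{1,3,4\}$, and $S$. It therefore suffices to establish the dichotomy: either $\theta$ maps a chamber to an opposite chamber (so $\Type(\theta)=S$), or $\theta$ is $\{2\}$-domestic and maps a type $\{1,3,4\}$ simplex to an opposite (so $\Type(\theta)=\{1,3,4\}$). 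In either case $\theta$ is capped.

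The key step is to show that if any type $2$ vertex $v$ lies in $\Opp(\theta)$, then $\theta$ is not domestic. The residue $\Res(v)$ has type $\sA_1\times\sA_1\times\sA_1$, and since both $w_0$ and $w_{\{1,3,4\}}$ act trivially on the diagram, Proposition~\ref{prop:typemap} tells us that $\theta_v$ induces a $3$-cycle on the three $\sA_1$ components. A chamber of $\Res(v)$ is a triple $(p_1,p_3,p_4)$ of vertices (one from each component), and $\theta_v$ acts on such triples by $(p_1,p_3,p_4)\mapsto(\alpha(p_4),\beta(p_1),\gamma(p_3))$ for certain bijections $\alpha,\beta,\gamma$ between the components. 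Thickness gives at least three vertices in each component, so I can choose $p_1$ freely, then $p_3\neq\beta(p_1)$, and then $p_4\notin\{\gamma(p_3),\alpha^{-1}(p_1)\}$, producing a chamber of $\Res(v)$ mapped to an opposite by $\theta_v$. Proposition~\ref{prop:proj} then lifts this chamber to a chamber of $\Delta$ in $\Opp(\theta)$.

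Given this key step the proof concludes immediately: if $\theta$ is not $\{2\}$-domestic, then the previous paragraph gives a chamber in $\Opp(\theta)$; otherwise $\theta$ is $\{2\}$-domestic, and since $\theta$ is nontrivial, Theorem~\ref{thm:fund} produces a nonempty simplex in $\Opp(\theta)$, whose type is $\pi_\theta$-invariant and disjoint from $\{2\}$, forcing it to be $\{1,3,4\}$. Such a simplex witnesses $\Type(\theta)=\{1,3,4\}$. I do not anticipate a serious obstacle; the entire argument rests on the straightforward chamber-counting inside the $\sA_1\times\sA_1\times\sA_1$ residue of a type $2$ vertex, made possible by thickness.
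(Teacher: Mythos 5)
Your proof is correct and follows essentially the same route as the paper: consider whether a type $2$ vertex lies in $\Opp(\theta)$, use the residue argument in the thick $\sA_1\times\sA_1\times\sA_1$ residue (with the triality cyclically permuting components) to conclude $\theta$ is not domestic in that case, and otherwise invoke Theorem~\ref{thm:fund} to produce a type $\{1,3,4\}$ simplex in $\Opp(\theta)$. The only difference is that you spell out the elementary chamber-counting that the paper dismisses with the word ``clearly'', and you make explicit the $w_0\circ\theta$-invariance bookkeeping that the paper calls ``obvious''.
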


\begin{proof}
Let $\theta$ be a triality and let $J=\Type(\theta)$. Suppose that $2\in J$, and let $v$ be a type $2$ vertex mapped to an opposite vertex. Then $\theta_v$ is an automorphism of a thick $\sA_1\times\sA_1\times\sA_1$ building cyclically permuting the components, and so clearly $\theta_v$ is not domestic. Hence $\theta$ is not domestic, and hence is capped. If $2\notin J$ then $J=\{1,3,4\}$ and $\theta$ is obviously capped. 
\end{proof}

\begin{cor}
Every automorphism of a large building of type $\sB_n$ or $\sD_n$ with $n\geq 3$ is capped. 
\end{cor}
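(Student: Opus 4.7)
The plan is simply to combine Theorems~\ref{thm:Bn1}, \ref{thm:Dn1}, \ref{thm:DD}, and \ref{thm:DTri} via a case analysis on the diagram automorphism $\pi_\theta$ induced by $\theta$ on the Coxeter graph $\Gamma$. Every automorphism $\theta$ of a spherical building induces such a $\pi_\theta \in \mathrm{Aut}(\Gamma)$, and the statement that $\theta$ is capped can be read off once the possible orders of $\pi_\theta$ are enumerated.

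First I would dispatch the $\sB_n$ case. The Coxeter graph $\sB_n$ admits no nontrivial diagram automorphisms, since the unique double bond is not preserved by any vertex permutation other than the identity. Hence every automorphism of a thick $\sB_n$ building is a collineation, and the $\sB_n$ part of the corollary is exactly Theorem~\ref{thm:Bn1}.

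Next I would handle $\sD_n$. Here $\mathrm{Aut}(\Gamma) = \mathbb{Z}/2\mathbb{Z}$ for $n \geq 5$ (the nontrivial element swaps the two ``tail'' nodes $n-1$ and $n$), while $\mathrm{Aut}(\Gamma) \cong S_3$ for $n = 4$ (permuting the three outer nodes). Consequently $\mathrm{ord}(\pi_\theta) \in \{1, 2\}$ for $n \geq 5$, while for $n = 4$ the value $3$ is also possible. The three cases $\mathrm{ord}(\pi_\theta) = 1, 2, 3$ are then covered by Theorems~\ref{thm:Dn1}, \ref{thm:DD}, and \ref{thm:DTri} respectively (noting that Theorem~\ref{thm:DTri} is stated for thick $\sD_4$ buildings, which includes all large $\sD_4$ buildings).

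There is no genuine obstacle: every automorphism of a large $\sB_n$ or $\sD_n$ building (for $n \geq 3$) falls into exactly one of the four categories already treated, so the corollary follows by direct assembly of the four theorems.
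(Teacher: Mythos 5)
Your proposal is correct and follows exactly the same route as the paper: the paper's proof of this corollary is the single sentence ``This follows from Theorems~\ref{thm:Bn1}, \ref{thm:Dn1}, \ref{thm:DD}, and~\ref{thm:DTri}.'' You have merely made explicit the (correct) observation that these four theorems exhaust all cases because $\mathrm{Aut}(\Gamma)$ is trivial for $\sB_n$ and is $\ZZ/2\ZZ$ (resp.\ $S_3$) for $\sD_n$ with $n\geq 5$ (resp.\ $n=4$).
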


\begin{proof}
This follows from Theorems~\ref{thm:Bn1}, \ref{thm:Dn1}, \ref{thm:DD}, and~\ref{thm:DTri}. 
\end{proof}

\subsection{Proof of Proposition~\ref{prop:summary}}\label{sec:lemmas}

Of course it now remains to prove Proposition~\ref{prop:summary}. For this purpose it is much better to work in the geometric language of polar spaces; a brief summary can be found in \cite[Section~2]{TTM:12}, see also \cite[Chapter~7]{DeB:16}. 

We will always assume a polar space to have thick lines, i.e., every line contains at least three points. We call a polar space of rank $n$ \emph{thick} if every $(n-2)$-dimensional singular subspace is contained in at least three maximal singular subspaces (of dimension $n-1$), and non-thick otherwise. In the non-thick case every $(n-2)$-dimensional singular subspace is contained in exactly two maximal singular subspaces, and there are two types of maximal singular subspaces (members of the same type intersect each other in subspaces of even codimension). The so-called \emph{oriflamme complex} of such a polar space is then a thick building of type $\mathsf{D}_n$. Conversely, every thick building of type $\mathsf{D}_n$ gives rise to a non-thick polar space of rank $n$. Most results we will prove in this section hold for thick and non-thick polar spaces, and we will draw conclusions for buildings of types $\mathsf{B}_n$ and $\mathsf{D}_n$ from these results. 

In the language of polar spaces it is convenient to use ``(projective) dimension as a singular subspace'' rather then ``type of a vertex'', and therefore there is a shift in indexing. That is, ``$i$-domesticity'' in the polar space indexing will mean $\{i+1\}$-domestic in the building indexing. Similarly, $(i,i+1)$-domestic in the polar space means $\{i-1,i\}$-domestic in the building indexing. There is an exception for polar spaces of type $\sD_n$, where there are two types of maximal singular subspaces. These both have projective dimension $n-1$, and we will call them $(n-1)'$-spaces and $(n-1)$-spaces (corresponding to vertices of types $n-1$ and $n$ in the building). The expressions \textit{point-domestic}, \textit{line-domestic}, and so on, have the obvious meanings. 

%

If $U$ is an $i$-space of a polar space we speak of the ``lower residue'', consisting of all spaces contained in $U$ and the ``upper residue'', consisting of all spaces which contain~$U$. We write $\theta_{\subseteq U}$ and $\theta_{\supseteq U}$ for the induced automorphisms of the lower and upper residue, respectively. Moreover, it is convenient at times to use terminology like ``$U$ is domestic'' and ``$U$ is non-domestic'' as short hand for ``$\theta$ does not map $U$ to an opposite'' and ``$U$ is mapped to an opposite by $\theta$''.

We begin by recalling some facts from~\cite{TTM:12}. However the proof of Theorem~3.2 of \cite{TTM:12} makes use of the wrong fact that if a plane $\pi$ of a polar space $\Delta$ is mapped to an opposite plane by $\theta$ then the duality $\theta_{\subseteq \pi}$ is not domestic (this may fail if the plane is Fano). There are two ways in which we can revise this result: The first is to assume that the planes of $\Delta$ are not Fano planes, and the second is to assume that $\theta$ is plane-domestic. Hence the two revisions below.

\begin{fact}[Lemma 3.1 of \cite{TTM:12}]\label{fact1}
A collineation of a polar space $\Delta$ which is both point-domestic and line-domestic is the identity.
\end{fact}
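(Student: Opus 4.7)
Suppose for contradiction that $\theta \neq \mathrm{id}$. By Theorem~\ref{thm:fund} we have $\mathrm{Opp}(\theta) \neq \emptyset$, and since $\theta$ is point- and line-domestic we have $\mathrm{Type}(\theta) \subseteq \{3, 4, \ldots, n\}$ in building indexing. Applying Lemma~\ref{lem:opbasic}(2) I extract from $\mathrm{Opp}(\theta)$ a minimal-type simplex $\sigma$: a single vertex $U$ of type $j \geq 3$ whenever the singleton $\{j\}$ is $w_0$-stable, or (in the $\sD_n$-with-$n$-odd sub-case when $\mathrm{Type}(\theta) \subseteq \{n-1, n\}$) a pair $\sigma = \{U_{n-1}, U_n\}$ of type $\{n-1, n\}$. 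The rank-at-least-$3$ setting of the paper ensures these minimal simplices are genuinely non-trivial.

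Next I analyse the induced automorphism $\theta_\sigma$ on $\mathrm{Res}(\sigma)$. By Proposition~\ref{prop:typemap} the induced type map $w_{S \setminus \tau(\sigma)} \circ w_0 \circ \pi_\theta$ restricts on the ``lower'' residue factor -- a thick projective space of type $\sA_{j-1}$ (vertex case) or $\sA_{n-2}$ (pair case) -- to the opposition map of that $\sA$-diagram, so $\theta_\sigma$ restricts there to a nontrivial duality of a projective space of rank at least $2$. By Theorem~\ref{thm:fund} this restricted duality has nonempty $\mathrm{Opp}$, and the classification of dualities of $\sA$-buildings via Theorems~\ref{thm:Alarge} and~\ref{thm:Asmall} together with Lemma~\ref{lem:sp} places every such duality into one of three classes: non-domestic, symplectic polarity, or exceptional domestic on some $\sA_k(2)$. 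In each class a type-$1$ or type-$2$ vertex of the residue necessarily lies in $\mathrm{Opp}$: non-domestic dualities map an entire chamber; symplectic polarities have $\mathrm{Opp}$ contained in type-$\{2, 4, \ldots\}$ simplices by Lemma~\ref{lem:sp}, producing a type-$2$ vertex in $\mathrm{Opp}$ via Lemma~\ref{lem:opbasic}(2); and exceptional domestic dualities satisfy $\mathrm{Type} = S$ by definition, so Lemma~\ref{lem:opbasic}(2) (using the computation $w_0 \circ \pi_\theta = \mathrm{id}$ on the $\sA_k$ types) forces both a type-$1$ and a type-$2$ vertex into $\mathrm{Opp}$.

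Finally I transfer this vertex back to $\Delta$: its union with $\sigma$ is a simplex of type $\{1\} \cup \tau(\sigma)$ or $\{2\} \cup \tau(\sigma)$ which, by Proposition~\ref{prop:proj}, lies in $\mathrm{Opp}(\theta)$; a last application of Lemma~\ref{lem:opbasic}(2) with the $w_0$-stable singleton $\{1\}$ or $\{2\}$ (stability always holds since types $1, 2$ are distinct from the possibly-swapped pair $n-1, n$) then yields a point or line of $\Delta$ in $\mathrm{Opp}(\theta)$, directly contradicting point- or line-domesticity and forcing $\theta = \mathrm{id}$. The hard part of the argument is the classification bookkeeping in the middle step: one must verify that every duality of an $\sA$-building -- large or small, domestic or not -- necessarily places a point or line of the underlying projective space in $\mathrm{Opp}$, which is precisely why the combination of Theorem~\ref{thm:Alarge}, Theorem~\ref{thm:Asmall}, and Lemma~\ref{lem:sp} is needed. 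The $\sD_n$-with-$n$-odd subtlety is absorbed by using the pair $\{U_{n-1}, U_n\}$ in place of a single vertex and does not alter the main flow of the argument.
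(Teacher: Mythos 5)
The paper itself gives no proof of Fact~\ref{fact1}; it is quoted verbatim from Lemma~3.1 of~\cite{TTM:12}, where it is established by a direct geometric argument inside the polar space. Your proof is therefore a genuinely different, self-contained route. The structure is sound, and it is worth stressing why it is not circular: the $\sA_n$ classification you lean on (Theorems~\ref{thm:Alarge} and~\ref{thm:Asmall}, Lemma~\ref{lem:sp}) is proved in Section~\ref{sect:An} entirely independently of the polar-space facts, so invoking it here is legitimate. The middle step is also correct: a nontrivial duality of a thick projective space has nonempty $\Opp$ by Theorem~\ref{thm:fund}, and in each of the three classes (non-domestic, symplectic polarity, exceptional domestic on $\sA_k(2)$) it is indeed forced to have a type-$1$ or type-$2$ vertex in $\Opp$ --- for the symplectic polarity via Lemma~\ref{lem:sp} plus Lemma~\ref{lem:opbasic}(2), and for the exceptional domestic case because $\Type = S$ and every singleton is $w_0\pi$-stable. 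Transferring back through Proposition~\ref{prop:proj} and Lemma~\ref{lem:opbasic}(2) then contradicts point- or line-domesticity.

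One imprecision is worth flagging. You describe the ``pair'' sub-case as arising only for ``$\sD_n$ with $n$ odd''. That is the characterisation when $\pi_\theta = \mathrm{id}$, but a collineation of a non-thick polar space can induce a swap of the two oriflamme nodes $n-1$ and $n$, in which case $\pi_\theta \neq \mathrm{id}$. What actually determines whether singletons $\{n-1\},\{n\}$ are admissible types is whether $w_0\circ\pi_\theta$ swaps $n-1$ and $n$, and this happens precisely when $n$ is odd with $\pi_\theta = \mathrm{id}$ \emph{or} $n$ is even with $\pi_\theta$ the swap. The downstream computation is unaffected --- both $w_0$ and $\pi_\theta$ fix the types $1,\ldots,n-2$, so the induced map on the $\sA$-factor is still the opposition --- but the case split should be stated in terms of $w_0\circ\pi_\theta$ rather than the parity of $n$ alone. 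You should also note explicitly that the rank-$2$ and $\sD_3$ edge cases are handled at the outset: rank $2$ is immediate from Theorem~\ref{thm:fund} since $\Type(\theta)=\emptyset$, and $\sD_3$ needs a short separate check because the induced duality on an $\sA_1$ residue factor has trivial type map and the Leeb--Abramenko--Brown argument does not by itself produce a vertex of the $\sA$-factor in $\Opp$.

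What the two approaches buy: the argument in~\cite{TTM:12} works intrinsically with the polar space axioms and does not require the $\sA_n$ classification; your building-theoretic route is more uniform, reduces a polar-space assertion to a well-understood projective-space phenomenon, and illustrates the residue machinery that the paper deploys throughout Sections~\ref{sec:3}--\ref{sec:4}. The trade-off is that your proof carries the hidden cost of the $\sA_n$ classification (including the exceptional domestic dualities of $\sA_n(2)$), whereas the original proof does not.
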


\begin{fact}[Revision of Theorem~3.2 of \cite{TTM:12}]\label{fact2}
A collineation of a polar space $\Delta$ of rank at least~$3$ without Fano plane residues which is $(\mbox{point},\mbox{line})$-domestic is either point-domestic or line-domestic.
\end{fact}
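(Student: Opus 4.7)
The plan is to argue by contradiction: suppose $\theta$ is $(\mathrm{point},\mathrm{line})$-domestic yet is neither point-domestic nor line-domestic, and produce an incident flag $(q,M)$ of $\Delta$ with both $q$ opposite $q^\theta$ and $M$ opposite $M^\theta$, contradicting the hypothesis.

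First I establish that $\theta$ is plane-domestic. If some plane $\pi$ were mapped to an opposite plane $\pi^\theta$, then by Propositions~\ref{prop:typemap} and~\ref{prop:proj} the induced map $\theta_\pi$ restricts on the lower residue of $\pi$ to a duality of the projective plane $\pi$. Since $\pi$ is not a Fano plane by hypothesis, Theorem~\ref{thm:rank2} forces this duality of the generalised $3$-gon $\pi$ to be non-domestic; hence some incident flag of $\pi$ is mapped to an opposite flag of $\pi$, and Proposition~\ref{prop:proj} lifts this to a flag of $\Delta$ mapped to an opposite, contradicting the hypothesis.

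Next, let $p$ be a non-domestic point (existing by assumption). Any line $M\ni p$ is automatically domestic (else $(p,M)$ is a forbidden flag), and any plane through $p$ is domestic by the first step. Passing to the rank $(n-1)$ polar space $\Res(p)$ via Proposition~\ref{prop:proj}, the induced collineation $\theta_p$ is both point- and line-domestic, so Fact~\ref{fact1} forces $\theta_p=\mathrm{id}$. Geometrically this rigidity reads: every line $M$ through $p$ meets $M^\theta$ in the unique point of $M^\theta$ collinear with $p$, and every plane $\pi\ni p$ meets $\pi^\theta$ in the line $\pi^\theta\cap p^\perp$, which is contained in $\pi$.

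Finally, let $L$ be a non-domestic line; every point of $L$ is necessarily domestic (else an incident flag through a point of $L$ would violate the hypothesis). I split into the three geometric configurations of $p$ relative to $L$: either $L\subseteq p^\perp$ (so that $p$ and $L$ span a plane $\langle p,L\rangle$), or $L\cap p^\perp$ is a single point, or $L\cap p^\perp=\emptyset$ ($p$ opposite $L$ in the polar-space sense). In each case I combine the rigidity of $\theta_p$ with the non-domesticity of $L$ to locate an incident flag $(q,M)$ of $\Delta$ mapped by $\theta$ to an opposite flag, completing the contradiction. The main obstacle lies in the last case $L\cap p^\perp=\emptyset$: no line of $\Delta$ joins $p$ directly to a point of $L$, so I expect the argument to proceed via a careful projection argument along $L$, or by inductive descent in the polar subspace $p^\perp$, in order to locate the required flag.
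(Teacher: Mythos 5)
Your Steps~1 and~2 correctly capture the mechanism behind the paper's revision, and your Step~1 is precisely the repair the paper describes: the only error in the proof of Theorem~3.2 of \cite{TTM:12} is the unjustified use of the claim that no duality of a projective plane is domestic (which fails for the Fano plane), and assuming no Fano plane residues together with Theorem~\ref{thm:rank2} restores plane-domesticity. Step~2 (reducing to $\Res(p)$ via Proposition~\ref{prop:proj} and concluding $\theta_p = \mathrm{id}$ by Fact~\ref{fact1}) is a correct and standard deduction, and is indeed a tool from \cite{TTM:12}. Note, however, that the paper supplies no free-standing proof of this Fact: it simply cites Theorem~3.2 of \cite{TTM:12} and observes that the Fano issue is avoided under the new hypothesis.

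The genuine gap is your Step~3: you announce a case split on the position of $L$ relative to $p$ and assert that you can ``locate an incident flag $(q,M)$ mapped to an opposite'', but you carry out none of the three cases, and you explicitly flag the far case $L\cap p^\perp=\emptyset$ as an obstacle you do not resolve. This is exactly where the bulk of the geometry in the \cite{TTM:12} argument lives. The identity of $\theta_p$ constrains $\theta$ only in the residue of a single non-domestic point; transporting that rigidity to a non-domestic line $L$, especially one far from $p$, requires a nontrivial chaining or projection argument (compare the delicate arguments in Lemmas~\ref{lemmaconjugate} and~\ref{basicnewlemma} of this paper for analogous tasks), and nothing of the sort appears here. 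So your proposal correctly diagnoses where the Fano hypothesis enters, but does not reconstruct the argument being revised.
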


\begin{fact}[Revision of Theorem~3.2 of \cite{TTM:12}]\label{fact3}
A collineation of a polar space $\Delta$ of rank at least $3$ which is both $(\mbox{point},\mbox{line})$-domestic and plane-domestic is either point-domestic or line-domestic.
\end{fact}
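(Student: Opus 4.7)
The plan is to argue by contradiction, adapting the structure of the (flawed) proof of Theorem~3.2 in~\cite{TTM:12} so that the plane-domesticity hypothesis supplies the final contradiction that was previously obtained from excluding Fano plane residues. I would assume $\theta$ is $(\mathrm{point},\mathrm{line})$-domestic and plane-domestic, yet neither point- nor line-domestic, and then pick a non-domestic point $p$ and a non-domestic line $L_0$.

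The first step I would carry out is the rigidity observation that no non-domestic point lies on any non-domestic line. For $q\in L$ with $L^{\theta}$ opposite $L$, the natural bijection $L\leftrightarrow L^{\theta}$ induced by the polar form (pair each point with its unique collinear partner on the opposite line) shows that the flag $\{q,L\}$ is opposite $\{q^{\theta},L^{\theta}\}$ exactly when $q$ is opposite $q^{\theta}$. Hence a non-domestic point on a non-domestic line would violate $(\mathrm{point},\mathrm{line})$-domesticity; in particular $p\notin L_0$, and every point of $L_0$ is domestic.

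Next I would analyse the induced collineation $\theta_p$ on the rank-$(n-1)$ polar space $\mathrm{Res}(p)$. By Proposition~\ref{prop:proj}, plane-domesticity of $\theta$ transfers to line-domesticity of $\theta_p$, while the previous step yields point-domesticity of $\theta_p$ (no line through $p$ can be non-domestic). Fact~\ref{fact1} then forces $\theta_p=\mathrm{id}$, which translates into strong geometric rigidity through $p$: every line $L\ni p$ satisfies $L\cap L^{\theta}\neq\emptyset$, every plane $\pi\ni p$ meets $\pi^{\theta}$ in a line, and similarly for higher-dimensional singular subspaces through $p$. A symmetric analysis of $\theta_{L_0}$ on $\mathrm{Res}(L_0)$, of type $\mathsf{A}_1\times\mathsf{B}_{n-2}$, shows that $\theta$ restricted to $L_0$ is precisely the natural polar bijection $L_0\to L_0^{\theta}$, and that $\theta_{L_0}$ is point-domestic on the $\mathsf{B}_{n-2}$ factor.

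The heart of the argument, and the main obstacle, is a case analysis on the relative position of $L_0$ and $p^{\perp}$, with three subcases: (i) $L_0\subseteq p^{\perp}$; (ii) $|L_0\cap p^{\perp}|=1$; (iii) $L_0\cap p^{\perp}=\emptyset$. Case~(i) I expect to be short: the plane $\pi:=\langle p,L_0\rangle$ is fixed by $\theta_p$, forcing $L_0\subseteq\pi^{\theta}$, but $\pi^{\theta}$ also contains $L_0^{\theta}$, and a singular plane cannot contain two opposite lines. Cases~(ii) and~(iii) are the genuine difficulty: they will require delicate polar-space constructions---of the flavour of Claims~3.3--3.5 of~\cite{TTM:12}---that exploit $\theta_p=\mathrm{id}$ together with the fixed behaviour of $\theta$ on $L_0$ to manufacture a singular plane $\pi$ of $\Delta$ mapped to an opposite plane. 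In~\cite{TTM:12} such a plane was then ruled out by appealing to the (incorrect) non-domesticity of $\theta_{\subseteq\pi}$ on a non-Fano plane; in our setting the plane-domesticity hypothesis rules out this plane directly, supplying the required contradiction.
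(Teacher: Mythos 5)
Your proposal matches the paper's approach exactly: the paper does not spell out a proof of Fact~\ref{fact3} but simply observes (in the paragraph preceding Facts~\ref{fact2} and~\ref{fact3}) that the one problematic step in the proof of Theorem~3.2 of \cite{TTM:12} --- the appeal to non-domesticity of the induced duality on a non-domestic plane $\pi$, which can fail when $\pi$ is Fano --- is harmless once one assumes $\theta$ is plane-domestic, since then the construction of such a $\pi$ is already the sought contradiction. Your outline reconstructs that proof (including the reduction $\theta_p=\mathrm{id}$ and the case analysis on $L_0$ versus $p^\perp$) in more detail than the paper bothers to give, but uses the plane-domesticity hypothesis at precisely the same point and in precisely the same way.
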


\begin{fact}[Theorem 6.1 of \cite{TTM:12}]\label{fact4}
A collineation of a polar space $\Delta$ of rank~$n\geq 3$ which is both $i$-domestic and $(i+1)$-domestic, $0\leq i<n-1$, fixes at least one point in every $i$-space. 
\end{fact}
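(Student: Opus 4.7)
The plan is induction on $i$, with Fact~\ref{fact1} supplying the base case: when $i=0$ the hypothesis that $\theta$ is simultaneously point- and line-domestic forces $\theta = \mathrm{id}$, so every point is fixed.

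For the inductive step, suppose the result holds for all smaller values, let $\theta$ be $i$-domestic and $(i+1)$-domestic with $1 \leq i \leq n-2$, and fix an $i$-space $U$. Observe first that any fixed point $p \in U$ automatically satisfies $p = p^{\theta} \in U^{\theta} \subseteq (U^{\theta})^{\perp}$, so the search is confined to the subspace $U_0 := U \cap (U^{\theta})^{\perp}$; by $i$-domesticity $U$ is not opposite $U^{\theta}$, so $U_0$ is a non-empty singular subspace of $U$, and it suffices to locate a fixed point in $U_0$. The approach is to combine the two domesticity hypotheses to build a $\theta$-invariant singular subspace $Z \subseteq U_0$, and then argue either directly (if $\dim Z = 0$, so $Z$ is already a fixed point) or inductively (if $\dim Z > 0$) that $Z$ contains a fixed point. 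The construction of $Z$ uses $(i+1)$-domesticity applied to the $(i+1)$-spaces through $U$ in an apartment $\Sigma$ containing both $U$ and $U^{\theta}$: each such $W \supset U$ contributes a non-empty $W \cap (W^{\theta})^{\perp}$, and intersecting these constraints over all $(i+1)$-spaces of $\Sigma$ through $U$ produces a configuration that $\theta$ must permute into itself, yielding $Z$. Once $Z$ is in hand, the induced collineation on the upper residue $\mathrm{Res}(Z)$ (a polar space of rank $n - d - 1$, where $d = \dim Z$) inherits the relevant domesticity properties through Proposition~\ref{prop:proj}, and the induction hypothesis applied in this lower-rank polar space---or, if $\dim Z = i$ so that $\theta$ already stabilises $U$, a direct application of the base case on $\mathrm{PG}(Z)$---finishes the argument.

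The principal obstacle is the construction of $Z$. Because $\theta$ does not in general stabilise $U$, one cannot restrict $\theta$ to $\mathrm{PG}(U)$ and apply a fixed-point theorem on the lower residue directly; instead one must work inside an apartment and use the polar-space combinatorics to convert the bare non-emptiness given by the two domesticity hypotheses into a genuine $\theta$-invariant sub-configuration. This rigidification is the computational heart of the proof, and it is precisely why \emph{both} dimensions $i$ and $i+1$ enter the hypothesis: $i$-domesticity alone gives only the non-empty $U_0$, while $(i+1)$-domesticity supplies the extra incidence constraints that force $\theta$-invariance. The rank restriction $i+1 \leq n-1$ reflects the need for enough ``room'' to perform this apartment construction and also ensures that the upper residue $\mathrm{Res}(Z)$ is itself a genuine polar space on which the inductive hypothesis can be invoked.
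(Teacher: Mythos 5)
This statement is quoted as Fact~\ref{fact4} from Theorem~6.1 of \cite{TTM:12}; the present paper supplies no proof of it, so there is no internal argument to compare against. Evaluating your proposal on its own terms, it has two serious gaps.

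First, the construction of the $\theta$-invariant singular subspace $Z\subseteq U_0$, which you yourself identify as ``the computational heart of the proof,'' is not carried out. Intersecting the sets $W\cap(W^\theta)^\perp$ over the $(i+1)$-spaces $W$ through $U$ in a fixed apartment produces a subspace of $U$, but there is no reason this subspace is $\theta$-invariant: $\theta$ does not stabilise $U$, does not permute the $(i+1)$-spaces through $U$, and need not stabilise the chosen apartment. ``A configuration that $\theta$ must permute into itself'' is asserted, not established, and it is precisely here that the whole argument lives.

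Second, even granting $Z$, the induction is broken. Proposition~\ref{prop:proj} concerns simplices in $\Opp(\theta)$, i.e., those mapped to \emph{opposites}; it does not apply to a $\theta$-invariant $Z$, and in that case one should use the ordinary induced collineation of $\mathrm{Res}(Z)$. But domesticity does not transfer to the residue in the naive way: if $V\supset Z$ has dimension $i$, then both $V$ and $V^\theta$ contain $Z\ne\emptyset$, so they are never opposite in $\Delta$ and the $i$-domesticity hypothesis says nothing about them, whereas $V$ and $V^\theta$ can perfectly well be opposite in $\mathrm{Res}(Z)$ (there opposition means $V\cap(V^\theta)^\perp=Z$, not $=\emptyset$). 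You would need to identify and prove the correct domesticity indices for the residue collineation, and this is unaddressed. Finally, in the case $\dim Z=i$ you appeal to ``the base case on $\mathrm{PG}(Z)$,'' but Fact~\ref{fact1} is a statement about polar spaces, not projective spaces, and a collineation of a projective space can be fixed-point free; some use of the ambient polar-space structure is still required. As written the proposal does not constitute a proof.
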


\subsubsection{Proof of Proposition~\ref{prop:summary}(1)}

Recall that we allow non-thick polar spaces, however we always assume polar spaces to have thick lines.

\begin{lemma}\label{ieven}
Let $\theta$ be a collineation of a polar space $\Delta$ of rank $n\geq 3$. If $\theta$ is $(0,i)$-domestic with $2\leq i<n$ and $i$ even, then $\theta$ is $i$-domestic. 
\end{lemma}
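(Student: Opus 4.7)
The plan is to argue by contradiction using a residue reduction. Suppose $\theta$ were $(0,i)$-domestic with $i$ even, $2\leq i<n$, yet $\theta$ mapped some $i$-space $U$ to an opposite $i$-space $U^{\theta}$. The aim is to show that the induced duality on the projective space $\mathsf{PG}(U)$ is point-domestic, and then invoke Lemma~\ref{lem:An} to obtain a parity contradiction.

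First I would pass to the building setting. The $i$-space $U$ is a type $i+1$ vertex $v_U\in\Opp(\theta)$, and by Proposition~\ref{prop:typemap} the induced automorphism $\theta_{v_U}$ of $\Res(v_U)$ acts on the lower-residue $\sA_i$ component, namely the projective space $\mathsf{PG}(U)\cong\mathsf{PG}(i,\KK)$, as a duality (the longest element of $W_{\sA_i}$ contributes the nontrivial opposition, while the upper polar-space component contributes only a collineation). Geometrically this duality sends a point $p\in U$ to the hyperplane $(p^{\theta})^{\perp}\cap U$ of $U$, but its mere existence can be read off from the type-map calculation.

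Next I would translate $(0,i)$-domesticity into a property of this induced duality. By Proposition~\ref{prop:proj}, for any point $p\in U$ the flag $\{p,U\}$ is mapped to an opposite flag by $\theta$ if and only if $p$, viewed as a type $1$ vertex of $\Res(v_U)$, is mapped by $\theta_{v_U}$ to an opposite vertex, i.e.\ to an opposite hyperplane of $\mathsf{PG}(U)$. The assumption that $\theta$ is $(0,i)$-domestic therefore says that no such $p$ exists, so the induced duality on $\mathsf{PG}(i,\KK)$ is point-domestic.

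Finally, Lemma~\ref{lem:An} states that a point-domestic duality of $\mathsf{PG}(i,\KK)$ (with $i\geq 2$) is necessarily a symplectic polarity, which forces $i$ to be odd. This contradicts the hypothesis and completes the proof. I do not anticipate any serious obstacle here: the argument is essentially a one-line residue plus Lemma~\ref{lem:An}, and the hypotheses $i\geq 2$ and $i<n$ are used exactly to ensure that the lower residue is a projective space of dimension at least $2$ on which a nontrivial duality acts (with the minor caveat that for $i=n-1$ in a $\sD_n$ polar space one interprets ``$i$-space'' in the appropriate component-wise sense, but the residue computation is unchanged).
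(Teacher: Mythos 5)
Your argument is correct and is essentially identical to the paper's: pass to the lower residue of a non-domestic $i$-space $U$, note that $(0,i)$-domesticity makes the induced duality $\theta_{\subseteq U}$ of $\mathsf{PG}(U)\cong\mathsf{PG}(i,\KK)$ point-domestic, and invoke Lemma~\ref{lem:An} to force $i$ odd, a contradiction. The only difference is that you spell out the residue bookkeeping (Propositions~\ref{prop:typemap} and~\ref{prop:proj}) that the paper leaves implicit.
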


\begin{proof}
Suppose $\theta$ is not $i$-domestic and let $U$ be an $i$-space mapped onto an opposite. Then $\theta_{\subseteq U}$  is a point-domestic duality of $U$. Hence, as $i\geq 2$, Lemma~\ref{lem:An} implies that $\theta_{\subseteq U}$ is a symplectic polarity, contradicting $i$ being even. 
\end{proof}

\begin{lemma}
Let $\theta$ be a collineation of a polar space $\Delta$ of rank $n\geq 3$. If $\theta$ is $(0,i)$-domestic with $3\leq i<n-1$ and $i$ odd then $\theta$ is $(i+\ell)$-domestic for all odd positive $\ell<n-i$. 
\end{lemma}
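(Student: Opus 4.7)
The plan is a contradiction argument. Suppose $\theta$ is not $(i+\ell)$-domestic for some odd $\ell$ with $0<\ell<n-i$, and let $U$ be an $(i+\ell)$-space of $\Delta$ mapped by $\theta$ onto an opposite. Since $i$ and $\ell$ are both odd, $\dim U = i+\ell$ is even. By Proposition~\ref{prop:typemap}, $\theta_{\subseteq U}$ is a duality of the projective space $U$, regarded as an $\sA_{i+\ell}$-building. The idea is to force $\theta_{\subseteq U}$ to be a symplectic polarity, which by Lemma~\ref{lem:sp} requires $\dim U$ odd, yielding the desired parity contradiction.

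The first step is to show that $\theta_{\subseteq U}$ is $\{1,i+1\}$-domestic in the $\sA_{i+\ell}$-indexing on $U$, where points of $U$ have type $1$ and $i$-spaces of $U$ have type $i+1$. If some incident point--$i$-space pair $\{p,V\}$ of $U$ were mapped onto an opposite by $\theta_{\subseteq U}$, then applying Proposition~\ref{prop:proj} to the simplex $\{p,V,U\}\in\Res(U)$ (using that $U\in\Opp(\theta)$) would give $\{p,V,U\}\in\Opp(\theta)$ in $\Delta$, and therefore its subsimplex $\{p,V\}$ would also be in $\Opp(\theta)$, violating $(0,i)$-domesticity of $\theta$. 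Since $\theta_{\subseteq U}$ is a duality, $w_0\circ\theta_{\subseteq U}$ is trivial on the type set, every subset of types is stable, and Lemma~\ref{lem:Jdom} upgrades $\{1,i+1\}$-domesticity to full domesticity of $\theta_{\subseteq U}$.

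If $U$ is a large projective space, Theorem~\ref{thm:Alarge} immediately forces $i+\ell$ to be odd, contradicting the parity. If instead $U\cong\sA_{i+\ell}(2)$, Theorem~\ref{thm:Asmall} yields either the symplectic conclusion (again contradicting the parity) or that $\theta_{\subseteq U}$ is exceptional domestic. The main obstacle is eliminating this last case. Here the plan is to invoke Theorem~\ref{thm:Astex} to promote exceptional domesticity to strong exceptional domesticity, producing, for any $j\in\{2,\ldots,i+\ell\}\setminus\{i+1\}$ (feasible since $i+\ell\geq 4$ and $i\geq 3$), a cotype-$j$ panel of $U$ mapped onto an opposite by $\theta_{\subseteq U}$. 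This panel has type containing $\{1,i+1\}$, so by Lemma~\ref{lem:opbasic} its type-$\{1,i+1\}$ subsimplex is also mapped onto an opposite, contradicting the $\{1,i+1\}$-domesticity established above. All possibilities therefore yield contradictions, so $\theta$ is $(i+\ell)$-domestic for every odd positive $\ell<n-i$.
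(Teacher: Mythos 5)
Your proof is correct and follows the same route as the paper's: deduce that $\theta_{\subseteq U}$ is a domestic duality of an even-dimensional projective space, invoke Theorems~\ref{thm:Alarge} and~\ref{thm:Asmall} to force it to be exceptional domestic, then use Theorem~\ref{thm:Astex} to exhibit a non-domestic $\{0,i\}$-flag inside $U$, contradicting $(0,i)$-domesticity. The only difference is that you spell out the domesticity of $\theta_{\subseteq U}$ and the large/small case split more explicitly, whereas the paper leaves these implicit.
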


\begin{proof}
Suppose $\theta$ is not $(i+\ell)$-domestic for some odd positive $\ell<n-i$, and let $U$ be an $(i+\ell)$-space mapped onto an opposite. Then $\theta_{\subseteq U}$ is a domestic duality of the projective space $U$. Since $\dim U=i+\ell$ is even, this duality is exceptional domestic (by Theorem~\ref{thm:Asmall}). However, by Theorem~\ref{thm:Astex}, every exceptional domestic duality of a projective space is strongly exceptional domestic, contradicting the $(0,i)$-domesticity of $\theta$ and the fact that $i+\ell\geq 4$. 
\end{proof}

\begin{lemma}\label{lemmaconjugate} 
Let $\theta$ be a collineation of a polar space of rank $n\geq 3$. Let $0\leq i<n-1$. Suppose $U$ and $U'$ are two adjacent non-domestic $i$-spaces (adjacent means that they are contained in a common $(i+1)$-space $W$), and suppose that $W$ contains a fixed point $p$ for $\theta$. Then the dualities $\theta_U$ and $\theta_{U'}$ are conjugate. In particular, they are either both domestic or both non-domestic. 
\end{lemma}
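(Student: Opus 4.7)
My plan is to exhibit an explicit projective isomorphism $\phi:U\to U'$ arising from perspective projection through the fixed point $p$, and then verify by a short perpendicularity chase that $\phi$ conjugates $\theta_U$ into $\theta_{U'}$. Once this is established, the "both domestic or both non-domestic" conclusion is immediate, since domesticity of a duality of a projective space is manifestly conjugation invariant.

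First I would argue that $p$ cannot lie in $U$ or in $U'$. Opposite $i$-spaces in a polar space have empty intersection (they correspond to singular subspaces in antipodal Weyl chambers of a common apartment); since $p=p^{\theta}$, the assumption $p\in U$ would place $p$ in $U\cap U^{\theta}=\emptyset$, which is impossible. Hence $p\in W\setminus(U\cup U')$, and because $U$ and $U'$ are distinct hyperplanes of the $(i+1)$-space $W$, projection from $p$ provides a well-defined projective isomorphism $\phi:U\to U'$ sending $q\in U$ to the unique point where the line $\langle p,q\rangle\subseteq W$ meets $U'$.

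The central step is to establish the intertwining identity $\phi\circ\theta_U=\theta_{U'}\circ\phi$ using the standard geometric description of the induced duality, $\theta_U(q)=q^{\theta\perp}\cap U$, and analogously for $\theta_{U'}$. For this I would exploit the singularity of $W$: any two points of $W$ are collinear, so $p\perp q$, $p\perp q'$ (with $q'=\phi(q)$); since $\theta$ is a collineation fixing $p$, we also obtain $p\perp q^{\theta}$ and $p\perp q'^{\theta}$, and collinearity of $p,q,q'$ forces $q'^{\theta}\in\langle p,q^{\theta}\rangle$. Now for any $r\in q^{\theta\perp}\cap U$, the point $\phi(r)\in W$ satisfies $\phi(r)\perp p$ (both in $W$) and $\phi(r)\perp q^{\theta}$ (since $\langle p,r\rangle\subseteq q^{\theta\perp}$), whence $\phi(r)\perp\langle p,q^{\theta}\rangle$ and in particular $\phi(r)\in q'^{\theta\perp}\cap U'$. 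This yields $\phi(q^{\theta\perp}\cap U)\subseteq q'^{\theta\perp}\cap U'$, and as both sides are hyperplanes of $U'$ and $\phi$ is a projective isomorphism, dimension matching promotes the containment to equality.

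The main obstacle is really confined to the preliminary step, namely the disjointness of opposite non-maximal singular subspaces, which is standard but worth pinning down explicitly; thereafter everything reduces to incidence manipulations inside the singular subspace $W$, using only that $\theta$ is a collineation fixing $p$. In particular the argument is insensitive to whether the polar space is thick or thin, so the oriflamme (type $\sD_n$) situation is handled on the same footing, and no case analysis on the position of $p$ relative to $U\cap U'$ is needed.
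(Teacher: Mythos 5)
Your proof is correct and follows essentially the same approach as the paper's: construct the perspectivity $\phi:U\to U'$ through the fixed point $p$, and establish the intertwining identity $\phi\circ\theta_U=\theta_{U'}\circ\phi$ by a perpendicularity chase inside the singular subspace $W$ using $p^\theta=p$. You are in fact slightly more careful than the printed proof in verifying $p\notin U\cup U'$ (the paper only records $p\notin U\cap U'$, although the stronger fact is needed for $\phi$ to be an isomorphism onto $U'$ and follows by the same disjointness argument you give).
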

\begin{proof}
By the assumptions we know that $p\notin U\cap U'$. Consider the isomorphism $\varphi:U\rightarrow U'$ mapping $u\in U$ onto $\<p,u\>\cap U'$. Then we show that $u^{\theta_U\varphi}=u^{\varphi\theta_{U'}}$, which will prove the lemma. 

Now, $u^{\theta_U}=\proj_U u^\theta=\proj_Wu^\theta\cap U$, whereas similarly $u^{\varphi\theta_{U'}}=\proj_Wu^{\varphi\theta}\cap U'$. Since $u,u^\varphi$ and $p$ are collinear, and since $p^\theta=p$, also $u^\theta, u^{\varphi\theta}$ and $p$ are collinear. Hence, since $p\in W$, $$\proj_Wu^\theta=(u^\theta)^{\perp}\cap W=(u^{\varphi\theta})^\perp\cap W=\proj_Wu^{\varphi\theta},$$ and this set contains $p$. Hence $(\proj_Wu^\theta\cap U)^\varphi=\proj_Wu^{\varphi\theta}\cap U'$. This also holds if $u=u'\in U\cap U'$. Hence the lemma is proved.
\end{proof}

\begin{lemma}\label{flagtypeline}
Let $\Delta$ be a projective space of dimension $n\geq 2$ over some division ring $\mathbb{K}$, and let $\theta$ be a duality of $\Delta$. Let $0\leq \ell\leq n-1$, with $|\mathbb{K}|>2$ if $(\ell,n)=(0,2)$, and suppose that there exists some non-domestic flag $(V_i)_{0\leq i\leq\ell}$ for $\theta$, with $V_{i-1}\subseteq V_{i}$, for all $i=1,2,\ldots,\ell$, and $\dim V_i=i$, for all $i\in\{0,1,\ldots,\ell\}$. Let $L$ be any line of $\Delta$. Then  there exists some non-domestic flag $(U_i)_{0\leq i\leq\ell}$ for $\theta$, with $U_{i-1}\subseteq U_{i}$, for all $i=1,2,\ldots,\ell$, $\dim U_i=i$, for all $i\in\{0,1,\ldots,\ell\}$, and $L$ not incident with any $U_i$, $0\leq i\leq\ell$.
\end{lemma}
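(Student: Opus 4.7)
The plan is to proceed by induction on $\ell$, with the technical heart being the base case $\ell=0$.

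\textbf{Base case} $\ell=0$. Here I must locate a non-domestic point of $\Delta$ off~$L$. Writing the duality $\theta$ via a sesquilinear form $f$, non-domesticity of $V_0=[v_0]$ amounts to $f(v_0,v_0)\neq 0$. If $V_0\notin L$, take $U_0:=V_0$; otherwise $V_0\in L$. The idea is to select a vector $w$ with $[w]\notin L$ (which exists since $n\geq 2$) and look for a non-domestic point on the line $M=\langle V_0,[w]\rangle$ distinct from $V_0$; since $M\cap L=V_0$, any such point is automatically off $L$. The non-domestic points on $M$ are characterized by the non-vanishing of $g(t):=f(v_0+tw,v_0+tw)$, a polynomial (in $t$ and, if $f$ is genuinely semilinear, in $\sigma(t)$) of total degree at most~$2$ satisfying $g(0)\neq 0$, so $g$ has at most two zeros in $\mathbb{K}$. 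For $|\mathbb{K}|\geq 3$ there exists $t\neq 0$ with $g(t)\neq 0$, giving the desired $U_0$. For $|\mathbb{K}|=2$ (hence $n\geq 3$ by hypothesis) I must instead arrange $g(1)\neq 0$: exploiting the linearity in characteristic two of the map $B(v):=f(v_0,v)+f(v,v_0)$, the set of admissible $w$ is the union of $\{w:f(w,w)\neq 0\}$ with $\{w:f(w,w)=0,\ B(w)=0\}$, and a dimension count in the hyperplane $\ker B$ (using $n\geq 3$) guarantees a choice of $w$ off $L$ in the complement of the ``bad'' locus, so $U_0:=[v_0+w]$ (or $U_0:=[w]$ when the first option already holds) works.

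\textbf{Inductive step.} Assume the lemma holds for all flags of length less than~$\ell$. Given $(V_0,\ldots,V_\ell)$ non-domestic and a line $L$, my strategy is two-stage: first replace $V_\ell$ by a suitable non-domestic $U_\ell$ with $L\not\subseteq U_\ell$, then construct the lower part of the flag inside $U_\ell$. For the first stage, if $L\not\subseteq V_\ell$ take $U_\ell:=V_\ell$; otherwise pick an $(\ell-1)$-subspace $W$ of $V_\ell$ with $L\not\subseteq W$, and in the upper residue of $W$ (which by Proposition~\ref{prop:typemap} is a projective space of dimension $n-\ell$ carrying an induced duality $\theta_{\supseteq W}$) apply the base-case argument to produce a non-domestic point, corresponding to a non-domestic $\ell$-space $U_\ell\supset W$ with $L\not\subseteq U_\ell$. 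For the second stage, note that $L\cap U_\ell$ has projective dimension at most~$0$, so within $U_\ell$ the obstruction reduces to avoiding at most a single point. The induced duality $\theta_{\subseteq U_\ell}$ on $U_\ell\cong\mathsf{PG}(\ell,\mathbb{K})$ has the property that a subspace of $U_\ell$ is non-domestic for $\theta_{\subseteq U_\ell}$ if and only if it is non-domestic for $\theta$, so applying the inductive hypothesis inside $U_\ell$ (in the mild point-avoiding variant, which follows from the same base-case argument applied at each successive level of the flag) yields a non-domestic flag $(U_0,\ldots,U_{\ell-1})$ in $U_\ell$ avoiding the bad point; concatenation with $U_\ell$ gives the desired flag.

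\textbf{Main obstacle.} The hard part is the base case over $\mathbb{F}_2$, where only the single value $t=1$ is available: here the linearity of $B$ in characteristic two combines delicately with the hypothesis $n\geq 3$ to produce a suitable $w$. This is also precisely why the hypothesis $|\mathbb{K}|>2$ is unavoidable in the excluded case $(\ell,n)=(0,2)$: as Example~\ref{ex:fano} illustrates via the exceptional domestic duality of the Fano plane, the conclusion genuinely fails for $\sA_2(2)$.
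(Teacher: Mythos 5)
Your approach is genuinely different from the paper's: you parametrize the duality by a $\sigma$-sesquilinear form $f$ and argue via root-counting for the function $g(t)=f(v_0+tw,v_0+tw)$, whereas the paper's proof is entirely synthetic (working with residues, the induced duality $\theta_{V_1}$, the map $\varphi$ on $p^\theta$, etc.). The synthetic route has the advantage of applying uniformly over arbitrary division rings; your coordinate route is more concrete but runs into trouble precisely there.

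The base case has a genuine gap. The lemma concerns an arbitrary duality, not a polarity, so by the Fundamental Theorem $\theta$ is induced by a nondegenerate $\sigma$-sesquilinear form where $\sigma$ is an arbitrary antiautomorphism of $\mathbb{K}$. With a right vector space convention, $g(t)=f(v_0,v_0)+f(v_0,w)t+\sigma(t)f(w,v_0)+\sigma(t)f(w,w)t$, and the claim that this ``has at most two zeros'' simply fails when $\sigma\neq\mathrm{id}$. A concrete counterexample: take $\mathbb{K}=\mathbb{F}_{16}$, $\sigma(t)=t^4$, and arrange $f(v_0,v_0)=1$, $f(v_0,w)=1$, $f(w,v_0)=1$, $f(w,w)=0$ on a $2$-dimensional subspace (extendable to a nondegenerate form on $V$). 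Then $g(t)=1+t+t^4$, whose zero set is a coset of $\ker(t\mapsto t+t^4)=\mathbb{F}_4$, hence has exactly $4$ zeros, not $\leq 2$. Over a noncommutative division ring the situation is worse still (a degree-$2$ polynomial can have infinitely many roots), so the degree/root-count dichotomy you rely on is unavailable. The lemma's conclusion still holds in these cases, but your proof does not establish it; you need either a $\sigma$-aware counting (treating $t\mapsto f(v_0,w)t+\sigma(t)f(w,v_0)$ as an additive map and estimating fibers), or the paper's synthetic argument, which sidesteps coordinates entirely.

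There are two further issues worth flagging. In the inductive step, when $L\subseteq V_\ell$ you choose an $(\ell-1)$-subspace $W\subseteq V_\ell$ with $L\not\subseteq W$ and work in $\Res(W)$ with the induced duality $\theta_{\supseteq W}$; but $\theta_{\supseteq W}$ is only defined when $W$ is mapped to an opposite, which your choice of $W$ does not guarantee (you select $W$ solely for the condition $L\not\subseteq W$). You would need to descend through the given flag, as the paper does (taking $i$ maximal with $L\not\subseteq V_i$ and modifying from $V_i$ upward), so that the residue argument is applied at a vertex already known to be non-domestic. Finally, your $\mathbb{F}_2$ case is only sketched; over $\mathbb{F}_2$ the antiautomorphism is trivially the identity and $g$ is a genuine quadratic, so the dimension-count you invoke is plausible, but it needs to be written out since this is exactly the delicate subcase the hypothesis $|\mathbb{K}|>2$ for $(\ell,n)=(0,2)$ is designed to exclude.
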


\begin{proof}
Suppose $i\leq\ell$ is maximal with respect to the property that $V_i$ does not contain $L$. Then $i$ is well defined since $V_0$ does not contain $L$. Also, if $i=\ell\geq 1$, then we only need to show that, in case $V_0\in L$, we can select a second non-domestic point $U_0\in V_1$  distinct from $V_0$. But this follows from the fact that $\theta_{V_1}$ is not the identity (as $V_0$ is not fixed) and hence has at least two non-fixed points $V_0$ and $U_0$. So we may assume that $0\leq i\leq \ell-1$ or $i=\ell=0$. First suppose $0\leq i\leq \ell-1$. We claim that there exists a non-domestic $(i+1)$-space $U_{i+1}$, with $V_i\leq U_{i+1}\leq U_{i+2}$ (if $i=\ell-1$, then $U_{i+2}=\Delta$), with $L$ not contained in $U_{i+1}$.

Indeed,  suppose first $i\leq \ell-2$. The spaces $V_{i+2}$ and $V_i^\theta$ intersect in a line $M$. Then $M\cap V_{i+1}$ is a point $p$ and $M\cap V_{i+1}^\theta$ is some point $p'$. Since $V_{i+1}$ is opposite $V_{i+1}^\theta$, we have $p\neq p'$. Set $U_{i+1}=\<V_i,p'\>$. Then $U_{i+1}^\theta$ intersects $M$ in a point distinct from $p'$ and is hence opposite $U_{i+1}$. Thus, since $\<V_i,L\>=V_{i+1}$, $L$ is not contained in $U_{i+1}$ and the claim follows.

Next, suppose $i=\ell-1$. If $\ell=n-1$, then putting $V_{i+2}=\Delta$, we can copy the argument of the previous paragraph. So we may assume that $\ell\leq n-2$. The duality in $V_{\ell-1}^\theta$ induced by projection onto $V_{\ell-1}$ followed by $\theta$ has a non-domestic point $V_\ell\cap V_{\ell-1}^\theta$. If there is any other non-domestic point, say $q$, then $U_{\ell}=\<V_{\ell-1},q\>$ does the job. 

Hence assume that a duality $\varphi$ of a projective space $\Gamma$ has at least one non-domestic point~$p$. We assert that there is a second one. If $p\neq p^{\varphi^2}$, then $p^{\varphi^2}$ is a second non-domestic point. So suppose $p=p^{\varphi^2}$, that is, $p^{\varphi^{-1}}=p^\varphi$. If $\dim\Gamma=1$, then the assertion is trivial since non-domesticity is the same as non-fixed, and one cannot fix everything except precisely one point. Hence $\dim\Gamma\geq 2$ and we can consider a line $K$ through $p$. Since $p\notin p^\varphi$, we deduce $K\not\subseteq K^\varphi$. It follows that for at most one point $r\in K$ the property $r\in r^\varphi$ holds (and then $r=K\cap K^\varphi$). Hence all points on $K$ except for this possible $r$ are non-domestic. This completes the proof of the claim. 

If $\ell>0$, then an obvious induction argument now completes the proof of the lemma.

Finally, assume that   $i=\ell=0$. Then we have to prove that not all non-domestic points of $\theta$ are contained in the line $L$, given that $L$ contains at least one such, say $p$, and assuming $|\mathbb{K}|>2$ if $n=2$. Let $U=p^\theta$ and consider the duality $\varphi$ defined on $U$ by first projecting on $p$ and then applying $\theta$. Since above we have shown that a duality is either point-domestic or has at least two non-domestic points, we may assume that every point of $U$ is domestic. But if $U^\theta=p$ then, as above, every line through $p$ contains at least two non-domestic points, and we are done. Hence we may assume that $U^\theta=r\in L\setminus(U\cup\{p\})$. Let $L'$ be a line through $p$ distinct from $L$. Since $L'^\theta\subseteq U$, there is exactly one point $s$ on $L'$ such that $L'\subseteq s^{\perp}$. If $|\mathbb{K}|>2$, then it follows that there is a point $t\in L'\setminus(U\cup\{p,s\})$, and it is easy to see that $t$ is non-domestic. Hence we may assume that $|\mathbb{K}|=2$, and consequently $n\geq 3$. Suppose $L=\{p,q,r\}\neq L'=\{p,q',r'\}$, with $q,q'\in U$ and $q'\in q^\theta$ (this is possible since $n>2$). Since $p$ and $r$ are the only non-domestic points for $\theta$, we deduce $p^{\theta^{-1}}=r^\theta$. 
Since $q'\in q^\theta\cap p^\theta$, we have $q'\in r^\theta=p^{\theta^{-1}}$, and so $p\in q'^\theta$. 
Since $q'\in q'^\theta$, we then have $r'\in q'^\theta$. Since also $r'$ is domestic, we have $r'\in r'^\theta$. Then $r'\in q'^\theta\cap r'^\theta=L'^\theta\subseteq p^\theta$, which is a contradiction as $r'\notin U=p^\theta$. This completes the proof of the lemma.
\end{proof}

A flag $(V_i)_{0\leq i\leq\ell}$, with $V_{i-1}\subseteq V_{i}$, for all $i=1,2,\ldots,\ell$, and $\dim V_i=i$, for all $i\in\{0,1,\ldots,\ell\}$, will be referred to as a \emph{flag of type $[0,\ell]$}. 

An almost identical proof as the previous one shows the following lemma (the reason why the exception for $|\mathbb{K}|=2$ and $(\ell,n)=(0,2)$ is not turning up is because we know that every duality of the Fano plane contains at least two non-domestic points).

\begin{lemma}\label{flagtypepoint}
Let $\Delta$ be a projective space of dimension $n\geq 2$ over some division ring $\mathbb{K}$, and let $\theta$ be a duality of $\Delta$. Let $0\leq \ell\leq n-1$ and suppose that there exists some non-domestic flag of type $[0,\ell]$. Let $x$ be any point of $\Delta$. Then  there exists some non-domestic flag of type $[0,\ell]$ no element of which contains $x$.\qed
\end{lemma}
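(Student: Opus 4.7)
The plan is to adapt the proof of Lemma~\ref{flagtypeline} line by line, replacing the line $L$ with the point $x$. First I would let $i$ be maximal with $x \notin V_i$ (with the convention that $i = -1$ in the degenerate situation $V_0 = x$), and then iteratively replace one element of the flag at a time so as to increase $i$ by at least one per step, until $i = \ell$ and the resulting flag $(U_j)_{0 \leq j \leq \ell}$ satisfies $x \notin U_j$ for all~$j$. If $i = \ell$ from the start, then $U_j = V_j$ already works.

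The heart of the argument is the same as in Lemma~\ref{flagtypeline}: for each $-1 \leq i \leq \ell-1$, I would produce a non-domestic $(i+1)$-space $U_{i+1}$ with $V_i \subseteq U_{i+1} \subseteq V_{i+2}$ (taking $V_{i+2} = \Delta$ when $i+1 = \ell$) and $x \notin U_{i+1}$. The constructions of $U_{i+1}$ are identical to those in Lemma~\ref{flagtypeline}; only the avoidance condition is new. In the main construction one obtains $U_{i+1} = \langle V_i, p'\rangle$ where $p' = M \cap V_{i+1}^\theta$ and $M = V_{i+2} \cap V_i^\theta$ is a line. If $x \in U_{i+1}$ then, since $x \notin V_i$ by the choice of $i$ and $x \in V_{i+1}$ by maximality of $i$, we would have $V_{i+1} = \langle V_i, x\rangle \subseteq U_{i+1}$, forcing $p' \in V_{i+1}$; but $p' \in V_{i+1}^\theta$ and $V_{i+1} \cap V_{i+1}^\theta = \emptyset$ by non-domesticity of $V_{i+1}$, a contradiction. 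In the subsidiary construction for $i = \ell - 1$ with $\ell \leq n-2$, one finds a second non-domestic point $q$ of the duality $\varphi$ induced on $V_{\ell-1}^\theta$, distinct from $p = V_\ell \cap V_{\ell-1}^\theta$, and sets $U_\ell = \langle V_{\ell-1}, q\rangle$; then $q \notin V_\ell$ because $V_\ell \cap V_{\ell-1}^\theta = \{p\}$, so $U_\ell \neq V_\ell = \langle V_{\ell-1}, x\rangle$ and hence $x \notin U_\ell$.

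The degenerate base case $V_0 = x$ (that is, $i = -1$) is handled exactly as in Lemma~\ref{flagtypeline}: for $\ell \geq 1$, the induced automorphism of $V_1$ has $V_0$ as a non-fixed point and therefore has at least two non-fixed points, yielding a replacement $U_0 \in V_1$ with $U_0 \neq x$; for $\ell = 0$, one invokes the claim established inside the proof of Lemma~\ref{flagtypeline} that a duality with at least one non-domestic point has at least two. This also explains why no Fano plane exception is needed this time: avoiding a single point only ever requires a second non-domestic point, which is always available, whereas avoiding a line could fail in the Fano plane if all non-domestic points happened to lie on that line. I do not anticipate any genuine obstacle beyond this bookkeeping; the main verification is simply that the non-domestic spaces constructed in Lemma~\ref{flagtypeline} automatically miss $x$ once the relations among $V_i$, $V_{i+1}$ and $V_{i+1}^\theta$ are spelled out.
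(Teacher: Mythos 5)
Your proof is correct and is exactly the line-by-line adaptation of Lemma~\ref{flagtypeline} that the paper's one-sentence proof intends: the index $i$ with the degenerate value $-1$, the avoidance argument $x\in U_{i+1}\Rightarrow V_{i+1}=\langle V_i,x\rangle\subseteq U_{i+1}\Rightarrow p'\in V_{i+1}$ (contradiction), the subsidiary $i=\ell-1$ case via a second non-domestic point of $\varphi$, and the base cases via a second non-fixed point of $\theta_{V_1}$ (resp.\ a second non-domestic point of $\theta$) all match. You have also correctly pinpointed why the $|\mathbb{K}|=2$, $(\ell,n)=(0,2)$ exception of Lemma~\ref{flagtypeline} disappears here, namely that avoiding a point only ever requires a second non-domestic point, which is guaranteed by the claim already proved inside Lemma~\ref{flagtypeline}.
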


\begin{lemma} \label{basicnewlemma}
Let $\theta$ be a collineation of a polar space $\Delta$ of rank $n\geq 3$. Let $0\leq i<n-2$. Suppose that there exists an $i$-space $U_0$ mapped onto an opposite. Let either $\theta$ be $(i,i+1)$- and $(i,i+2)$-domestic, or suppose there is an integer $\ell$,  $0\leq \ell\leq i-1$ such that $U_0$ contains a non-domestic flag of type $[0,\ell]$ and, whenever some $i$-space $U$ is mapped onto an opposite, and contains a non-domestic flag of type $[0,\ell]$ for $\theta_{\subseteq U}$, then $\theta_{\supseteq U}$ is the identity. Then every $(i+1)$-space has a fixed point. In particular, $\theta$ is $j$-domestic for every $j\geq i+1$.
\end{lemma}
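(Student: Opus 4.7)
The plan is to reduce everything to Fact~\ref{fact4}: once it is shown that $\theta$ is both $(i+1)$-domestic and $(i+2)$-domestic, Fact~\ref{fact4} (applied with $i$ there replaced by $i+1$, which is permitted since $i+1<n-1$ by hypothesis) immediately gives a fixed point in every $(i+1)$-space. The ``in particular'' clause is then automatic: any non-domestic $j$-space $V$ with $j\geq i+1$ would contain an $(i+1)$-space, hence a fixed point $p$, and $p\in V\cap V^{\theta}$ would contradict that $V$ and $V^{\theta}$ are opposite simplices.

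The first step is to unify the two cases by showing $\theta_{\supseteq U_0}=\id$ on the upper residue of $U_0$, which is a polar space of rank $n-i-1\geq 2$. In case~(b) this is the hypothesis. In case~(a), since $U_0$ is non-domestic, the $(i,i+1)$-domesticity of $\theta$ forces every $(i+1)$-space containing $U_0$ to be domestic, which by Proposition~\ref{prop:proj} is precisely point-domesticity of $\theta_{\supseteq U_0}$; similarly $(i,i+2)$-domesticity yields line-domesticity, and Fact~\ref{fact1} then gives $\theta_{\supseteq U_0}=\id$. An immediate consequence is that every space strictly containing $U_0$ is domestic in $\Delta$.

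To prove $(i+1)$-domesticity, I argue by contradiction: suppose $W^{*}$ is a non-domestic $(i+1)$-space. Necessarily $U_0\not\subset W^{*}$, since otherwise $W^{*}$ would strictly contain $U_0$ and hence be domestic. The aim is then to produce another non-domestic $i$-space $U_1\subset W^{*}$ satisfying the relevant hypothesis (the flag condition in case~(b), which is automatic in case~(a) since the hypothesis is global). Such a $U_1$ together with $W^{*}$ forms a non-domestic $(i,i+1)$-flag, contradicting $(i,i+1)$-domesticity in case~(a), or forcing $W^{*}$ to be domestic via $\theta_{\supseteq U_1}=\id$ in case~(b). Constructing $U_1$ relies on Lemmas~\ref{flagtypeline} and~\ref{flagtypepoint} to reposition the non-domestic flag of type $[0,\ell]$ within $U_0$ so as to avoid specified points and lines attached to $W^{*}$, and on Lemma~\ref{lemmaconjugate}, combined with the abundance of fixed points arising from $\theta_{\supseteq U_0}=\id$, to transfer non-domesticity along chains of adjacent $i$-spaces towards $W^{*}$. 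An entirely analogous argument using $(i,i+2)$-domesticity (case~(a)) or the flag hypothesis applied to a transported $U_1$ (case~(b)) establishes $(i+2)$-domesticity.

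The main obstacle is this propagation step: moving a non-domestic $i$-space, with its prescribed flag structure intact, into an arbitrary $(i+1)$-space $W^{*}$ requires delicate positional control, and this is exactly what the flag-movement lemmas are designed to provide, while Lemma~\ref{lemmaconjugate} supplies the combinatorial glue linking successive $i$-spaces via fixed points. Once the propagation is completed and $(i+1)$- and $(i+2)$-domesticity are in hand, the proof closes by invoking Fact~\ref{fact4} at shifted index and deducing the ``in particular'' statement as noted above.
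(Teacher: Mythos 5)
The reduction at the start — establish $(i+1)$- and $(i+2)$-domesticity, then invoke Fact~\ref{fact4} with $i$ shifted to $i+1$ — is logically sound, and the deduction that $\theta_{\supseteq U_0}=\id$ (Fact~\ref{fact1} in case~(a), hypothesis in case~(b)) is exactly how the paper opens its proof. However, the central portion of your argument rests on a misconception that leaves a genuine gap.

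You write that the propagation to an arbitrary $(i+1)$-space exploits ``the abundance of fixed points arising from $\theta_{\supseteq U_0}=\id$.'' But $\theta_{\supseteq U_0}$ is by definition $\proj_{U_0}\circ\theta$, so $\theta_{\supseteq U_0}=\id$ says only that $\proj_{U_0}(W^\theta)=W$ for every $W\supsetneq U_0$; it does \emph{not} say $W^\theta=W$, nor that $W$ contains a fixed point, nor even that $W\cap W^\theta\neq\emptyset$. All one gets for free is that each $W\supsetneq U_0$ is domestic. Passing from ``$\theta_{\supseteq U_0}=\id$'' to ``every $(i+1)$-space through $U_0$ contains a fixed point'' is the actual analytic core of the lemma: the paper devotes several paragraphs to it, introducing the point $p=Z\cap Z^\theta$, the lines $L,M,N$, and separate arguments depending on whether $M\cap N\neq\emptyset$, together with a delicate special case for $(|\mathbb{K}|,\ell,i)=(2,0,2)$ treated by explicit computation in the symplectic, hermitian and hyperbolic families. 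Your proposal omits all of this and treats the fixed points as given.

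This omission is not cosmetic, because the propagation step you gesture at cannot run without those fixed points: Lemma~\ref{lemmaconjugate} requires the $(i+1)$-space linking two adjacent non-domestic $i$-spaces to contain a fixed point before one may conclude the induced dualities are conjugate, and the paper's inductive increase of $\dim(U\cap Z)$ uses precisely that the intermediate space $Z'=\langle U,x\rangle$ has a (unique) fixed point $p$, which lets one choose $U'\supseteq Z\cap Z'$ avoiding $p$ and still non-domestic. Without first proving that $(i+1)$-spaces through a suitable non-domestic $i$-space have fixed points, there is no way to seed the chain, so the claim that one can ``transfer non-domesticity towards $W^*$'' remains unsupported. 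In short, the route through Fact~\ref{fact4} would only become available after re-proving essentially the whole of the paper's argument; the proposal as written skips the decisive middle step and therefore does not constitute a proof.
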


\begin{proof}
By the hypothesis, $\theta_{\supseteq U_0}$ is both $(i+1)$-domestic and $(i+2)$-domestic, and hence is the identity by Fact~\ref{fact1}. Thus every singular subspace $W$ containing $U_0$ is mapped onto the unique singular subspace through $U_0^\theta$ containing all points of  $W$ collinear to all points of $U_0^\theta$.

First we show that 
every $(i+1)$-space containing $U_0$ contains a fixed point. Suppose for a contradiction that some $(i+1)$-space $Z$ containing $U_0$ contains no fixed point. 
Let $\{p\}=Z\cap Z^\theta$. 
Then, clearly, $p\perp p^\theta$. Let $L$ be the preimage of the line $pp^\theta$. Then $L$ is contained in $Z$ and contains $p$. Let $W$ be an $(i+2)$-space containing $Z$ and let $M$ be the preimage of the line $N=W\cap W^\theta$. 

We first claim that under the assumption that if $|\mathbb{K}|=2$ (where $\mathbb{K}$ is the division ring of the projective space residues) then $i>2$, the lines $M$ and $N$ intersect nontrivially. Indeed, suppose not. Let $K$ be the line obtained by intersecting $U_0$ with the subspace of $W$  generated by $N$ and~$M$.

By Lemma~\ref{flagtypeline}, we can select an $(i-1)$-space $V_0$ in $U_0$ not containing $K$, such that, if $\theta$ is not both $(i,i+1)$-domestic and $(i,i+2)$-domestic, then $V_0$ contains a non-domestic flag of type $[0,\ell]$. Now we can easily choose an $i$-space $U'$ in $W$ containing $V_0$ and disjoint from $M\cup N$. Without loss of generality, we may assume that $V_0$ is also disjoint from $L$, as we can otherwise interchange the roles of $Z$ and another $(i+1)$-space in $W$ containing $U_0$. Since $U'$ does not intersect $M$, the image $U'^\theta$ does not intersect $W$. Also, the subspace of $W$ spanned by $U'$ and $N$ is $W$. Hence a point $a$ of $U'^\theta$ collinear with all of $U'$ is collinear with all of $W$, hence with all of $U_0$. Hence all of $U_0$ is collinear with all of the plane spanned by $N$ and $a$, and this plane intersects $U_0^\theta$ in some point $b$. Then $b\in U_0^\theta$ is collinear with all of $U_0$, contradicting the fact that $U_0$ and $U_0^\theta$ are opposite. So, we have verified that $U'^\theta$ is opposite $U'$. By the choice of $V_0$, our assumptions imply that $\theta_{\supseteq U'}$ is the identity. Consequently, the subspace $Z'$ generated by $U'$ and $p$ is mapped onto the subspace $Z'^\theta$ generated by $V_0^\theta$ and $p$. Since $Z'$ contains $p$, the subspace $Z'^\theta$ contains $p^\theta$. And since $Z'^\theta$ also contains $p$, it contains $L^\theta$. So $Z'$ contains $L$, contradicting our choice of $Z'$.  Hence $p$ is fixed. So $p\in M\cap N$ and our claim is proved. 

We next claim that, if $N$ and $M$ intersect non-trivially, then $N=M$ and so $p=p^\theta$. Indeed, suppose $q=N\cap M$. We may assume $p\neq q$. Then clearly $q$ is a fixed point for $\theta$, as the subspace spanned by $U_0$ and $q$ is mapped onto the one spanned by $U_0^\theta$ and $q$, and the intersection of the former with $M$ (being $\{q\}$) is mapped onto the intersection with $M^\theta=N$ (also being $\{q\}$). Now by Lemma~\ref{flagtypepoint}, we can select an $(i-1)$-space $V_0$ in $U_0$ disjoint from $L$, such that, if $\theta$ is not both $(i,i+1)$-domestic and $(i,i+2)$-domestic, then $V_0$ contains a non-domestic flag of type $[0,\ell]$. Then we choose an $i$-space $U'$ through $V_0$ not containing $p$. Completely similar to the previous paragraph one shows that $U'$ is opposite $U'^\theta$, that $\theta_{\supseteq U'}$ is the identity, and that this leads to $p$ being fixed. Hence the claim. 

This already shows that, if $(|\mathbb{K}|,\ell,i)\neq (2,0,2)$, then every $(i+1)$-space through $U_0$ contains a fixed point. 

Now suppose $(|\mathbb{K}|,\ell,i)= (2,0,2)$. Observe that, by connectivity of the polar space $U_0^\perp\cap (U_0^{\theta})^\perp$, and by our previous claim, it suffices to show the existence of at least one $(i+1)$-space (that is, a $3$-space) containing $U_0$ and containing a fixed point for $\theta$. Note that the rank of $\Delta$ is at least $i+3=5$, and $U_0$ is a Fano plane. There are three cases. 
\begin{itemize} 
\item Either $\Delta$ is a symplectic polar space $\sB_n(2)$ with $n\geq 5$. Then $\Delta$ is embedded, in a standard way, into a projective space of dimension $d$ at least $9$. The perp of a plane (the subspaces consisting of all points collinear to that pane) has dimension $d-3$. Hence $U_0^\perp\cap (U_0^\theta)^\perp$ has dimension at least $d-6$. Then $U_0^\perp\cap(U_0^\theta)^\perp\cap(U_0^{\theta^2})^\perp$ has dimension at least $d-9\geq 0$. Hence there exists a point $x\in U_0^\perp\cap(U_0^\theta)^\perp\cap(U_0^{\theta^2})^\perp$. Let $Z$ be the singular subspace of dimension $i+1$ spanned by $U_0$ and $x$. Then, as before, $Z$ is mapped onto the $(i+1)$-space spanned by $U_0^\theta$ and $x$. Likewise, $Z^\theta$ is mapped onto the $(i+1)$-space spanned by $U_0^{\theta^2}$ and~$x$. It follows that $x^\theta=(Z\cap Z^\theta)^\theta=Z^\theta\cap Z^{\theta^2}=x$. Hence $x$ is fixed, and the assertion follows. 
\item Or $\Delta$ is the $\mathsf{B}_n(2,4)$ polar space, with $n\geq 5$. Then $\Delta$ can be represented as a
quadric in $(2n+1)$-dimensional projective space, with $2n+1\geq 11$. Similarly as above, the intersection of the perp of three planes is now a subspace $\pi$ of dimension at least $(2n+1)-9\geq 2$. Since no plane of the ambient projective space is disjoint with a quadric, we again obtain a fixed point in some $(i+1)$-space $Z$ containing $U_0$. By our previous arguments, this implies the assertion. 
\item Or $\Delta$ is the non-thick polar space associated to $\sD_n(2)$ with $n\geq 5$. Then $\Delta$ can be represented as a quadric in $(2n-1)$-dimensional  projective space. Thus if $n\geq 6$ an argument similar to the previous bullet point applies. For the case $n=5$ we have verified the lemma by direct computation and we omit the details.
\end{itemize}

Next we show that every $(i+1)$-space contains a fixed point. To that aim, we note that, if some $i$-subspace $U$ is mapped to an opposite, and if $\theta_{\subseteq U}$ is conjugate to $\theta_{\subseteq U_0}$, then these dualities have the same properties; in particular, if $U_0$ contains a flag of type $[0,\ell]$ mapped to an opposite for $\theta_{\subseteq U_0}$, then so does $U$ for $\theta_{\subseteq U}$, and the above part of the proof implies that every $(i+1)$-space containing $U$ has a fixed point. 

We assume for a contradiction that some $(i+1)$-space $Z$ has no fixed point. 
Let $k$ be maximal with respect to the property that there exists an $i$-space $U$ which is mapped onto an opposite, such that $\theta_{\subseteq U}$ is conjugate to $\theta_{\subseteq U_0}$, and which intersects $Z$ in a $k$-space. If $k=i$, then the foregoing paragraph leads to a contradiction. Now suppose $k<i$ (note that we allow $k$ to equal $-1$). Since $\dim Z>\dim U$, we find a point $x\in Z\setminus U$ collinear to all points of $U$. Let $Z'$ be the $(i+1)$-space spanned by $U$ and $x$. By the previous paragraph, $Z'$ has a unique fixed point $p$ (namely, $Z'\cap {Z'}^\theta$), and it is not contained in $Z$. In $Z'$, we find at least one $i$-subspace $U'$ containing $Z\cap Z'$ and avoiding $p$. Now $U'$ is mapped onto an opposite, $\theta_{\subseteq U'}$ is conjugate to $\theta_{\subseteq U}$ by Lemma~\ref{lemmaconjugate}, and $\dim(U'\cap Z)=\dim(U\cap Z)+1$, contradicting the maximality of $k$. Hence $Z$ contains a fixed point after all.    
\end{proof}

We now mention the special cases of Lemma~\ref{basicnewlemma} for $(i,i+1)$- and $(i,i+2)$-domestic collineations. After that, we prepare a more serious application, namely, for $\ell=0$. An application for $\ell=i-1$ will be presented in~\cite{PVM:17b} when we study automorphisms of small buildings. 

\begin{cor}\label{ii+1ii+2}
Let $\theta$ be a collineation of a polar space $\Delta$ of rank $n\geq 3$. Suppose that $\theta$ is both $(i,i+1)$-domestic and $(i,i+2)$-domestic for some $0\leq i\leq n-3$. Then either $\theta$ is $i$-domestic, or every $(i+1)$-space has at least one fixed point. In particular, if $\theta$ is not $i$-domestic, then $\theta$ is $j$-domestic for all $j$ with $i+1\leq j\leq n-1$.\qed
\end{cor}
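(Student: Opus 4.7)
The plan is to deduce the corollary as a near-immediate consequence of Lemma~\ref{basicnewlemma}. I will split into two cases based on whether $\theta$ is $i$-domestic.

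First I would dispatch the trivial case: if $\theta$ is $i$-domestic, the first alternative of the dichotomy already holds and there is nothing further to prove. So I may assume $\theta$ is not $i$-domestic, i.e.\ there exists at least one $i$-space $U_0$ mapped to an opposite by~$\theta$. With this $U_0$ in hand, I verify the hypotheses of Lemma~\ref{basicnewlemma} in the first of its two alternative forms: we assume $0\leq i\leq n-3$ (so $i<n-2$ as required), and by hypothesis $\theta$ is both $(i,i+1)$- and $(i,i+2)$-domestic. Applying the lemma then yields that every $(i{+}1)$-space of $\Delta$ contains a fixed point of $\theta$, establishing the second alternative of the dichotomy.

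For the ``in particular'' statement, I note that Lemma~\ref{basicnewlemma} itself concludes that $\theta$ is $j$-domestic for every $j\geq i+1$; for a rank $n$ polar space this gives $j$-domesticity for all $i+1\leq j\leq n-1$, which is the claimed range. Alternatively (and for the reader's convenience), this implication can be seen directly from the fixed-point conclusion: if a $j$-space $V$ with $j\geq i+1$ were mapped to an opposite $V^\theta$, then $V$ and $V^\theta$ would be disjoint singular subspaces, contradicting the fact that any $(i{+}1)$-dimensional subspace of $V$ contains a fixed point $p$, which would automatically lie in $V\cap V^\theta$.

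There is essentially no obstacle; all the substantive work has already been carried out in Lemma~\ref{basicnewlemma}. The only point requiring care is matching the parameter ranges ($0\leq i\leq n-3$ in the corollary versus $0\leq i<n-2$ in the lemma, which coincide) and confirming that we are invoking the first of the two alternative hypothesis sets in the lemma (the $(i,i+1)$- and $(i,i+2)$-domesticity one) rather than the more delicate flag-of-type-$[0,\ell]$ condition, which is reserved for the small-building applications in \cite{PVM:17b}.
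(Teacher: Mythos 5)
Your proof is correct and matches the paper's intended argument: the paper marks the corollary with \qed and states explicitly that it is the special case of Lemma~\ref{basicnewlemma} for $(i,i+1)$- and $(i,i+2)$-domestic collineations, which is exactly the two-case split (trivial if $i$-domestic, otherwise invoke the lemma) that you carry out. Your observation that $0\leq i\leq n-3$ coincides with $0\leq i<n-2$, and your alternative direct derivation of the ``in particular'' clause from the fixed-point conclusion, are both correct and add clarity.
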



\begin{lemma}\label{i-1dom}
Let $\theta$ be a collineation of a polar space $\Delta$ of rank $n\geq 3$. If $\theta$ is $(0,i)$-domestic, $3\leq i<n-1$ and $i$ odd, but not $i$-domestic, then $\theta$ is $(i-1)$-domestic. 
\end{lemma}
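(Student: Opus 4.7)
The argument would proceed by contradiction: assume that some $(i-1)$-space $V$ of $\Delta$ is mapped to an opposite. The goal is to violate the hypothesis that $\theta$ is not $i$-domestic by applying Lemma~\ref{basicnewlemma} with its parameter $i$ replaced by $i-1$.

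The first step is to locate an opposite point inside $V$. Consider $\theta_{\subseteq V}$, a duality of the projective space $V$ of (projective) dimension $i-1$, which is \emph{even} since $i$ is odd. By Lemma~\ref{lem:An}, a point-domestic duality of an even-dimensional projective space cannot exist (a symplectic polarity requires odd projective dimension). Hence $\theta_{\subseteq V}$ fails to be point-domestic, so by Proposition~\ref{prop:proj} some point $p \in V$ is mapped by $\theta$ to an opposite point of~$\Delta$.

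The second step is the key sub-claim: \emph{whenever $V'$ is an $(i-1)$-space mapped to an opposite and some $p' \in V'$ is mapped to an opposite, then $\theta_{\supseteq V'} = \mathrm{id}$}. To prove this, observe that $\theta_{\supseteq V'}$ is a collineation of the upper-residue polar space of $V'$, which has rank $n-i \geq 2$ since $i \leq n-2$. It is point-domestic: if some $i$-space $W \supseteq V'$ were mapped to an opposite, then $p' \in V' \subset W$ together with $W$ would form a $(\text{point}, i\text{-space})$ flag mapped to an opposite, contradicting $(0,i)$-domesticity. It is line-domestic because the preceding lemma (with $\ell = 1$, valid since $i < n-1$) gives that $\theta$ is $(i+1)$-domestic, so no $(i+1)$-space through $V'$ is mapped to an opposite. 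Now Fact~\ref{fact1} forces $\theta_{\supseteq V'} = \mathrm{id}$.

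With both pieces in place, Lemma~\ref{basicnewlemma} can be applied with its $i$ replaced by $i-1$ (valid since $0 \leq i-1 < n-2$), choosing $\ell = 0$ in its second hypothesis option. The assumption is met because $V$ contains the non-domestic $[0,0]$-flag $\{p\}$ for $\theta_{\subseteq V}$, and the sub-claim ensures that $\theta_{\supseteq V'} = \mathrm{id}$ for every $(i-1)$-space $V'$ mapped to an opposite that contains such a flag. The conclusion of Lemma~\ref{basicnewlemma} yields $j$-domesticity of $\theta$ for every $j \geq i$, in particular $i$-domesticity, contradicting the standing hypothesis. The main delicate point in the plan is recognising that Lemma~\ref{basicnewlemma} applies in this specific configuration with the minimal value $\ell=0$; once this is spotted, the remaining verifications are routine applications of Proposition~\ref{prop:proj}, the preceding lemma, and Fact~\ref{fact1}.
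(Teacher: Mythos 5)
Your proof is correct, and it funnels through the same central mechanism as the paper's — namely Lemma~\ref{basicnewlemma} — but it enters that lemma via its second alternative (the $\ell=0$ flag condition) whereas the paper enters via the first alternative, packaged as Corollary~\ref{ii+1ii+2}. Concretely, the paper observes that for any non-domestic $(i-1)$-space $U$, the even-dimensional duality $\theta_{\subseteq U}$ has a non-domestic point, so $(0,i)$-domesticity forces $(i-1,i)$-domesticity; combined with $(i+1)$-domesticity from the preceding unnamed lemma this gives $(i-1,i+1)$-domesticity, and Corollary~\ref{ii+1ii+2} (applied with its $i$ replaced by $i-1$) then yields $i$-domesticity, a contradiction. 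You instead verify the $\ell=0$ hypothesis of Lemma~\ref{basicnewlemma} directly: you show that $\theta_{\supseteq V'}$ is point-domestic (from $(0,i)$-domesticity and the non-domestic point) and line-domestic (from $(i+1)$-domesticity), so Fact~\ref{fact1} gives $\theta_{\supseteq V'}=\mathrm{id}$. Both routes are sound, and in fact the proof of Lemma~\ref{basicnewlemma}'s first option begins by deriving exactly the same conclusion ($\theta_{\supseteq U_0}=\mathrm{id}$ via Fact~\ref{fact1}), so the two paths merge immediately; the paper's version is slightly more economical in citation, while yours makes the underlying mechanism more explicit. One small stylistic observation: you explicitly invoke the unnamed preceding lemma with $\ell=1$ to get $(i+1)$-domesticity, which is cleaner than the paper's re-derivation of this fact via Theorem~\ref{thm:Astex}.
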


\begin{proof}
Suppose for a contradiction that some $(i-1)$-space $U$ is mapped onto an opposite. Since $i-1$ is even, the duality $\theta_{\subseteq U}$ in $U$ is not point-domestic. Hence $\{0,i\}$-domesticity of $\theta$ implies that no $i$-space containing $U$  is mapped onto a opposite. This means that $\theta$ is $(i-1,i)$-domestic. Since projective spaces of even dimension either do not admit domestic dualities, or, by Theorem~\ref{thm:Astex}, only strongly exceptional domestic ones, $\theta$ is $(i+1)$-domestic and hence $(i-1,i+1)$-domestic, Lemma~\ref{ii+1ii+2} implies that $\theta$ is $i$-domestic, a contradiction. 
\end{proof}

\begin{lemma}\label{induction}
Let $\theta$ be a collineation of a polar space $\Delta$ of rank $n\geq 3$. If $\theta$ is $(0,i)$-domestic for some $3\leq i<n-1$ with $i$ odd, but not $i$-domestic, then $\theta$ is $(0,i-2)$-domestic. 
\end{lemma}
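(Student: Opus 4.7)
The plan is to argue by contradiction, using Lemma~\ref{i-1dom} to upgrade the hypothesis and then reducing to Lemma~\ref{basicnewlemma} applied to an $(i{-}2)$-space mapped to an opposite.

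First, by Lemma~\ref{i-1dom}, the hypotheses imply that $\theta$ is $(i{-}1)$-domestic. Suppose for a contradiction that $\theta$ is not $(0, i{-}2)$-domestic: there exist a point $p$ and an $(i{-}2)$-space $V$ with $p \in V$ such that the flag $\{p, V\}$ is mapped to an opposite flag. In particular $V$ is mapped to an opposite $(i{-}2)$-space, and $V$ contains the non-domestic point $p$, which is the required non-domestic flag of type $[0,0]$ inside $U_0 := V$. Since $3 \leq i < n-1$ we have $0 \leq i{-}2 < n{-}2$, so the numerical hypotheses of Lemma~\ref{basicnewlemma} (with $i' = i-2$ and $\ell = 0$) are met.

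The nontrivial point is to verify the ``identity'' hypothesis of Lemma~\ref{basicnewlemma}: whenever some $(i{-}2)$-space $U$ is mapped onto an opposite and contains a non-domestic point $p'$, then $\theta_{\supseteq U}$ is the identity. Observe that $\theta_{\supseteq U}$ is a collineation of the upper residue of $U$, which is a polar space of rank $n - i + 1 \geq 3$. Since $\theta$ is $(i{-}1)$-domestic, no $(i{-}1)$-space containing $U$ is mapped to an opposite, so $\theta_{\supseteq U}$ is point-domestic on this upper residue. Moreover, if some $i$-space $U^\ast \supseteq U$ were mapped onto an opposite, then the pair $\{p', U^\ast\}$ would be a $(0,i)$-flag mapped onto an opposite (since $p' \in U \subseteq U^\ast$ is mapped to an opposite point), contradicting the assumed $(0,i)$-domesticity of $\theta$. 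Hence $\theta_{\supseteq U}$ is also line-domestic, and by Fact~\ref{fact1} it is the identity.

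Lemma~\ref{basicnewlemma} then yields that $\theta$ is $j$-domestic for every $j \geq i-1$; in particular $\theta$ is $i$-domestic, contradicting the hypothesis. This completes the proof sketch. The main obstacle lies in the verification of the ``identity'' condition for $\theta_{\supseteq U}$, but it follows cleanly: $(i{-}1)$-domesticity supplies point-domesticity upstairs, while $(0,i)$-domesticity—applied to any point of $U$ that is mapped to an opposite—supplies line-domesticity upstairs, allowing Fact~\ref{fact1} to finish the argument.
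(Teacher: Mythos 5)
Your proof is correct and follows essentially the same route as the paper's: invoke Lemma~\ref{i-1dom} to get $(i-1)$-domesticity, then apply Lemma~\ref{basicnewlemma} with $\ell=0$ and $i$ replaced by $i-2$ to conclude that every $(i-1)$-space (hence every $i$-space) contains a fixed point, contradicting the assumption that $\theta$ is not $i$-domestic. You simply make explicit what the paper states tersely, namely the verification that the ``identity'' hypothesis of Lemma~\ref{basicnewlemma} holds: $(i-1)$-domesticity gives point-domesticity of $\theta_{\supseteq U}$, the $(0,i)$-domesticity (together with the elementary fact that if two faces of a common chamber are each mapped to opposites then so is their union, which follows from Lemma~\ref{lem:types} and its converse) gives line-domesticity, and Fact~\ref{fact1} then forces the identity.
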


\begin{proof}

Suppose for a contradiction that $\theta$ maps some $\{p,U\}$ to an opposite, with $U$ an $(i-2)$-space and $p$ a point in $U$. Then $(0,i)$-domesticity of $\theta$ implies that no $i$-space containing $U$ is mapped onto an opposite. Also, by Lemma~\ref{i-1dom}, $\theta$ is $(i-1)$-domestic. Hence $U$ satisfies the conditions of Lemma~\ref{basicnewlemma} for $\ell=0$ and $i$ replaced by $i-2$. Hence every $i$-space contains fixed points, which implies that $\theta$ is $i$-domestic, a contradiction to our hypotheses.
\end{proof}

We are now ready to prove Proposition~\ref{prop:summary}(1), in the following formulation:

\begin{lemma}\label{1-i}
Let $\theta$ be a collineation of a polar space $\Delta$ of rank $n\geq 3$, assumed to be thick if $n=3$. If $\theta$ is $(0,i)$-domestic for some $1\leq i<n-1$, and if the planes of $\Delta$ are not Fano planes, then $\theta$ is either point-domestic or $i$-domestic. 
\end{lemma}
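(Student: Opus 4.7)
I argue by induction on $i$. The base case $i=1$ is exactly Fact~\ref{fact2}, which makes essential use of the no-Fano-plane-residue hypothesis. When $i$ is even, Lemma~\ref{ieven} immediately produces the desired $i$-domesticity. So assume henceforth that $i$ is odd with $3\leq i\leq n-2$ and that the statement holds at all smaller indices.

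Suppose for contradiction that $\theta$ is neither point-domestic nor $i$-domestic. By Lemma~\ref{induction}, $\theta$ is $(0,i-2)$-domestic, so the inductive hypothesis at index $i-2$ gives that $\theta$ is either point-domestic or $(i-2)$-domestic; ruling out the former, $\theta$ must be $(i-2)$-domestic. Lemma~\ref{i-1dom} then yields $(i-1)$-domesticity. Applying Fact~\ref{fact4} with parameter $i-2$, I conclude that $\theta$ fixes at least one point in every $(i-2)$-space of $\Delta$.

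To finish, pick an $i$-space $U_0$ mapped by $\theta$ to an opposite. Extending $U_0$ and $U_0^{\theta}$ to opposite chambers produces opposite maximal singular subspaces $M\supseteq U_0$ and $M'\supseteq U_0^{\theta}$; since opposite maximal singular subspaces of a polar space are disjoint, $U_0\cap U_0^{\theta}\subseteq M\cap M'=\emptyset$. Any fixed point of $\theta$ inside $U_0$ would simultaneously lie in $U_0^{\theta}$, so $U_0$ contains no fixed point of $\theta$. But $i\geq 3$ ensures that $U_0$ contains $(i-2)$-subspaces, each of which must contain a fixed point of $\theta$ by the previous paragraph. This is the required contradiction, completing the induction.

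\textbf{Anticipated main obstacle.} The delicate point is locating the right fixed-point principle. A direct application of Lemma~\ref{basicnewlemma} is blocked: for a non-domestic $i$-space $U_0$, the $(0,i)$-domesticity together with Lemma~\ref{lem:An} forces $\theta_{\subseteq U_0}$ to be a symplectic polarity of $U_0$, which is point-domestic and therefore admits no non-domestic flag of type $[0,\ell]$ inside $U_0$, defeating condition~(b) of that lemma. Routing instead through $(i-2)$- and $(i-1)$-domesticity (obtained via the inductive hypothesis and Lemma~\ref{i-1dom}) and then invoking Fact~\ref{fact4}, combined with the geometric fact that opposite singular subspaces of a polar space are disjoint, yields a clean contradiction. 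The no-Fano-plane-residue hypothesis is used only indirectly, through Fact~\ref{fact2} and through the appeals to Lemma~\ref{i-1dom} and Lemma~\ref{induction}, which themselves rely on Theorem~\ref{thm:Alarge} to rule out domestic dualities of even-dimensional projective-space residues.
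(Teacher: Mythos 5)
Your proof is correct and follows essentially the same route as the paper: base case $i=1$ via Fact~\ref{fact2}, even $i$ handled by Lemma~\ref{ieven}, and for odd $i\geq 3$ the chain Lemma~\ref{induction} $\to$ inductive hypothesis at $i-2$ $\to$ Lemma~\ref{i-1dom} $\to$ Fact~\ref{fact4}, concluding that every $(i-2)$-space has a fixed point and hence no $i$-space can be non-domestic. The only difference is that you spell out the last implication (opposite $i$-spaces are disjoint, so a fixed point in an $(i-2)$-subspace of $U_0$ would obstruct $U_0\cap U_0^{\theta}=\emptyset$), which the paper compresses into one sentence.
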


\begin{proof}
We argue by induction on $i$. If $i=1$, then this is Fact~\ref{fact2}. Now suppose $i>1$. Suppose that $\theta$ is not $i$-domestic. Then we have to show that $\theta$ is point-domestic. Lemma~\ref{ieven} implies that $i$ is odd. So Lemma~\ref{induction} implies that $\theta$ is $(0,i-2)$-domestic. Induction implies that $\theta$ is either point-domestic, in which case we are done, or $(i-2)$-domestic. Lemma~\ref{i-1dom} implies that $\theta$ is $(i-1)$-domestic. Fact~\ref{fact4} now implies that every $(i-2)$-space has at least one fixed point, and hence $\theta$ is $j$-domestic for $i-2\leq j\leq n-1$. In particular $\theta$ is $i$-domestic, a contradiction. Hence $\theta$ is point-domestic after all. 
\end{proof}

\begin{cor}
Let $\theta$ be a collineation of a polar space $\Delta$ of rank $n\geq 3$, assumed to be thick if $n=3$. If $\theta$ is $(0,i)$-domestic for some $1\leq i<n-1$, and also plane-domestic, then $\theta$ is either point-domestic or $i$-domestic. 
\end{cor}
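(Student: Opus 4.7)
The plan is to mirror the inductive proof of Lemma~\ref{1-i} on $i$, changing only the base case to exploit plane-domesticity in place of the non-Fano hypothesis.

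For the base case $i=1$, the collineation $\theta$ is (point,line)-domestic and plane-domestic by hypothesis, so Fact~\ref{fact3} applies directly to yield that $\theta$ is point-domestic or line-domestic, which is the desired conclusion.

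For the inductive step, suppose the statement holds for all $1\leq i'<i$, and assume $\theta$ is $(0,i)$-domestic, plane-domestic, and not $i$-domestic; we show $\theta$ is point-domestic. The argument now proceeds identically to Lemma~\ref{1-i}: Lemma~\ref{ieven} forces $i$ to be odd; Lemma~\ref{induction} then gives that $\theta$ is $(0,i-2)$-domestic; the induction hypothesis (applied with plane-domesticity, which is a global property preserved from the ambient $\theta$) yields either that $\theta$ is point-domestic (in which case we are done) or that $\theta$ is $(i-2)$-domestic. In the latter case, Lemma~\ref{i-1dom} yields that $\theta$ is also $(i-1)$-domestic, and Fact~\ref{fact4} then forces $\theta$ to fix a point in every $(i-2)$-space and hence to be $j$-domestic for all $i-2\leq j\leq n-1$, in particular $i$-domestic -- a contradiction.

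The substitution goes through cleanly because the non-Fano hypothesis in Lemma~\ref{1-i} is invoked \emph{only} to apply Fact~\ref{fact2} at the base case. None of the intermediate ingredients used in the induction step (Lemmas~\ref{ieven}, \ref{induction}, \ref{i-1dom}, \ref{basicnewlemma}, or Fact~\ref{fact4}) requires any restriction on plane residues. Consequently the only real ``work'' in the proof is recognising this structural point; there is no genuine obstacle, since the Fano-specific case analysis already present inside Lemma~\ref{basicnewlemma} (in particular the case $(|\mathbb{K}|,\ell,i)=(2,0,2)$) has been handled once and for all in that lemma's statement.
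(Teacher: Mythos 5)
Your proposal is correct and takes essentially the same route as the paper: the paper's proof likewise observes that the non-Fano hypothesis in Lemma~\ref{1-i} enters only at the base case via Fact~\ref{fact2}, and that Fact~\ref{fact3} replaces it under the plane-domesticity hypothesis. You have merely spelled out the resulting induction more explicitly than the paper does.
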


\begin{proof}
In view of Lemma~\ref{1-i}, we only need to show this for polar spaces having Fano planes. In the proof of Lemma~\ref{1-i}, there is only one place where we need the planes not to be Fano planes, and that is in the first step, the base of the induction. However, Fact~\ref{fact3} states that the base is valid if $\theta$ is plane-domestic.
\end{proof}

\subsubsection{Proof of Proposition~\ref{prop:summary}(4)}

We now turn to Proposition~\ref{prop:summary}(4). The following lemma allows us to translate $((n-1)',(n-1))$-domesticity in the oriflamme complex (a type $\sD_n$ building) to $(n-2)$-domesticity in the associated non-thick polar space (a type $\sB_n$ building). 

\begin{lemma}\label{lem:containedB}
Let $\Delta$ be a non-thick polar space of rank $n$ with $n$ odd. A collineation $\theta$ of $\Delta$ is $(n-1,(n-1)')$-domestic (on the oriflamme complex) if and only if $\theta$ is $(n-2)$-domestic (on~$\Delta$).  
\end{lemma}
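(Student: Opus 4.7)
My plan is to prove this equivalence by establishing a $\theta$-equivariant bijection between $(n-2)$-dimensional singular subspaces of $\Delta$ and the $\{(n-1)', (n-1)\}$-simplices of the oriflamme complex, and then to show that this bijection respects opposition.

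\emph{Step 1 (the bijection).} Since $\Delta$ is non-thick, every $(n-2)$-dimensional singular subspace $U$ is contained in exactly two maximal singular subspaces $W_U, W'_U$, one of each oriflamme type. The assignment $U \mapsto \sigma_U := \{W_U, W'_U\}$ is then a $\theta$-equivariant bijection between $(n-2)$-spaces of $\Delta$ and the $\{(n-1)', (n-1)\}$-simplices of the oriflamme complex. The lemma therefore follows by taking contrapositives from the claim that $U$ is opposite $U^\theta$ as $(n-2)$-spaces in $\Delta$ if and only if $\sigma_U$ is opposite $\sigma_{U^\theta}$ as simplices in the oriflamme complex.

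\emph{Step 2 (the key structural fact).} Given opposite $(n-2)$-spaces $U, U'$ in $\Delta$, consider the perpendicular plane $P := (U+U')^\perp$. A short linear-algebra argument using non-degeneracy of the polar form together with the opposition condition shows that $P$ has vector dimension $2$ and carries a non-degenerate hyperbolic form with exactly two isotropic directions $\ell_1, \ell_2$. The two maximal singulars through $U$ are then $U \oplus \ell_1$ and $U \oplus \ell_2$, and similarly through $U'$ they are $U' \oplus \ell_1$ and $U' \oplus \ell_2$; moreover, the oriflamme type of $U \oplus \ell_i$ (and of $U' \oplus \ell_i$) depends only on $i$. Direct computation of intersections gives
\begin{equation*}
(U \oplus \ell_i) \cap (U' \oplus \ell_j) = \emptyset \text{ projectively} \quad \Longleftrightarrow \quad i \neq j,
\end{equation*}
which is equivalent to saying the two max singulars have different oriflamme types. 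In particular, if $W \supset U$ and $V \supset U'$ are opposite maximal singulars (necessarily of different oriflamme types), then the ``other'' max singulars $W^* \supset U$ and $V^* \supset U'$ are also disjoint.

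\emph{Step 3 (equivalence of opposition).} For the forward direction I take opposite $\sB_n$-chambers containing $U, U^\theta$ with top faces $W, V$ of opposite oriflamme types; the corresponding oriflamme chambers are obtained by adjoining the ``other'' max singulars $W^*, V^*$ through $U, U^\theta$. The lower-dimensional faces are opposite by inheritance, and Step 2 yields $W^* \cap V^* = \emptyset$, so these oriflamme chambers are opposite and witness that $\sigma_U$ is opposite $\sigma_{U^\theta}$. For the converse I take opposite oriflamme chambers through $\sigma_U, \sigma_{U^\theta}$ and extract $\sB_n$-chambers by selecting one maximal singular from each simplex according to the $w_0$-pairing; the oriflamme opposition gives $W \cap V = \emptyset$ and $W^* \cap V^* = \emptyset$, and using $(U^\theta)^\perp = V \cup V^*$ set-theoretically in $\Delta$ (by non-thickness) together with $U \subset W \cap W^*$ one obtains $U \cap (U^\theta)^\perp = \emptyset$, so $U$ is opposite $U^\theta$. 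The main technical content lies in Step 2; once this structural fact is in hand, the rest is routine bookkeeping.
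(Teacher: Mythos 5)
Your argument is correct, but it takes a noticeably different route from the paper's in one direction. Step~1 and the converse of Step~3 (oriflamme-opposition $\Rightarrow$ $\Delta$-opposition, via $(U^\theta)^\perp = V\cup V^*$ and $U\subset W\cap W^*$) coincide with the paper's. The difference lies in the direction $\Delta$-opposition $\Rightarrow$ oriflamme-opposition: you set up a vector-space model and analyse the $2$-dimensional hyperbolic complement $P=(U+U')^\perp$, extracting the explicit pairing $\{U\oplus\ell_1,U\oplus\ell_2\}\leftrightarrow\{U'\oplus\ell_1,U'\oplus\ell_2\}$ by isotropic direction. The paper instead argues purely synthetically via the parity of intersection dimensions: since $n$ is odd and $\theta$ is a collineation, two maximal singular subspaces of different oriflamme types meet in a subspace of odd projective dimension, so a nonempty intersection $U\cap V^\theta$ would contain a line, which (meeting the hyperplanes $W,W^\theta$) contradicts $W$ opposite $W^\theta$. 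Your linear-algebra argument proves the same underlying fact (disjoint maximals through opposite $(n-2)$-spaces pair off by oriflamme type) but in a coordinate-dependent way; the paper's version is shorter and works axiomatically, without invoking an embedding. Two small remarks: in Step~2, the claim that the oriflamme type of $U\oplus\ell_i$ (resp.\ $U'\oplus\ell_i$) depends only on $i$ is exactly where $n$ odd enters (via the codimension-parity rule for same/different types), and it would help to flag that. And in Step~3 you implicitly use that two chambers (or two type-$\{n-1,n\}$ simplices) whose vertices are pairwise opposite are themselves opposite — a true fact, following from $\bigcap_{t\in J} W_{S\setminus\{t\}}=W_{S\setminus J}$, but worth stating, especially as you also quietly rely on the opposition of $(i-1)$-spaces ($i\le n-2$) being the same notion in the $\sB_n$ and $\sD_n$ buildings.
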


\begin{proof}
Suppose that $(U,V)$ is an incident $(n-1,(n-1)')$ pair of subspaces of the oriflamme complex, and that $(U,V)^{\theta}$ is opposite $(U,V)$. Let $W=U\cap V$ be the associated $(n-2)$-space of $\Delta$. Then $U$ is opposite $V^{\theta}$ and $V$ is opposite $U^{\theta}$. Thus $U$ and $V^{\theta}$ are disjoint, and so $W$ and $W^{\theta}$ are disjoint. If there is a point $p$ of $W^{\theta}$ collinear with all points of $W$ then $\langle p,W\rangle$ is either $U$ or $V$, contradicting the fact that $U$ is opposite $V^{\theta}$ and $V$ is opposite $U^{\theta}$. Thus $W$ and $W^{\theta}$ are opposite in $\Delta$.

Conversely, suppose that $W$ is an $(n-2)$-space of $\Delta$ mapped onto an opposite space by $\theta$. 
In particular, $W$ and $W^{\theta}$ are disjoint. Let $(U,V)$ be the $n-1$ and $(n-1)'$-spaces associated to~$W$. Thus $U\cap V=W$. Since $n$ is odd and $\theta$ is a collineation the projective dimensions of $U\cap V^{\theta}$ and $V\cap U^{\theta}$ are odd. Since $W$ and $W^{\theta}$ are disjoint this forces $U$ and $V^{\theta}$ to be disjoint, and $V$ and $U^{\theta}$ to be disjoint, and hence $(U,V)$ is opposite $(U,V)^{\theta}$ in the oriflamme complex.
\end{proof}

We can now prove Proposition~\ref{prop:summary}(4), which, in the present language, reads as follows. 

\begin{lemma}
Every $(0,n-1,(n-1)')$-domestic collineation $\theta$ of an oriflamme complex of odd rank~$n$ with no Fano plane residues is either point-domestic or $(n-1,(n-1)')$-domestic.
\end{lemma}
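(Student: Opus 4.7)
The plan is to reduce the statement to the polar-space result Lemma~\ref{1-i} (which is Proposition~\ref{prop:summary}(1) in polar-space language) applied with $i=n-2$. Concretely, let $\Delta^{\sharp}$ denote the non-thick polar space of rank $n$ associated to the given oriflamme complex, and let $\theta$ act on $\Delta^{\sharp}$ in the natural way. Under the oriflamme correspondence, points of the oriflamme complex are exactly the points of $\Delta^{\sharp}$, and an incident pair $(U,V)$ consisting of an $(n-1)$-space and an $(n-1)'$-space corresponds to the $(n-2)$-space $W:=U\cap V$ of $\Delta^{\sharp}$. A point $p$ lies in both $U$ and $V$ precisely when $p\in W$.

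The first step is to unpack the domesticity conditions under this dictionary. By Lemma~\ref{lem:containedB}, the pair $(U,V)$ is opposite $(U,V)^{\theta}$ in the oriflamme complex if and only if $W$ is opposite $W^{\theta}$ in $\Delta^{\sharp}$. Combined with the point-correspondence, this yields the two equivalences
\[
\theta\text{ is }(0,n-1,(n-1)')\text{-domestic on the oriflamme complex}\ \iff\ \theta\text{ is }(0,n-2)\text{-domestic on }\Delta^{\sharp},
\]
\[
\theta\text{ is }(n-1,(n-1)')\text{-domestic on the oriflamme complex}\ \iff\ \theta\text{ is }(n-2)\text{-domestic on }\Delta^{\sharp},
\]
while point-domesticity is obviously preserved in both directions.

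The second step applies Lemma~\ref{1-i} to $\Delta^{\sharp}$. Since the oriflamme complex has no Fano plane residues, neither does the associated polar space $\Delta^{\sharp}$, so in particular the planes of $\Delta^{\sharp}$ are not Fano planes. Since $n$ is odd with $n\geq 5$ (smaller odd values of $n$, where $\sD_n$ collapses to $\sA_3$, are handled directly or are vacuous), we have $1\leq n-2<n-1$, so Lemma~\ref{1-i} applies with $i=n-2$. By our first-step assumption $\theta$ is $(0,n-2)$-domestic on $\Delta^{\sharp}$, and the lemma concludes that $\theta$ is either point-domestic or $(n-2)$-domestic on $\Delta^{\sharp}$. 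Translating back via the equivalences of the first step gives the desired dichotomy on the oriflamme complex.

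The main obstacle is not in the argument itself but in making sure that the machinery of Section~\ref{sec:lemmas} genuinely covers the non-thick polar space $\Delta^{\sharp}$ — this is why Lemma~\ref{1-i} and its supporting lemmas were formulated without a thickness assumption on $\Delta$ at the top rank (thickness being only needed in the rank $3$ base case, which we avoid since $n\geq 5$). All the real work has been done in proving Lemma~\ref{1-i}; what remains here is simply to record the oriflamme-to-polar-space translation.
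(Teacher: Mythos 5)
Your proof is correct and follows essentially the same route as the paper: translate to the associated non-thick polar space via Lemma~\ref{lem:containedB} and invoke Lemma~\ref{1-i} with $i=n-2$, then translate back. The only difference is that you are slightly more explicit about the dictionary between flags of type $\{0,n-1,(n-1)'\}$ in the oriflamme complex and flags of type $\{0,n-2\}$ in the polar space, and you explicitly note the rank restriction $n\geq 5$ needed because Lemma~\ref{1-i} requires thickness in the rank-$3$ case (while $\Delta^{\sharp}$ is non-thick), a point the paper leaves implicit.
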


\begin{proof}
By Lemma~\ref{lem:containedB} we see that $\theta$ is $(0,n-2)$-domestic in the associated non-thick polar space. Thus by Lemma~\ref{1-i} is follows that $\theta$ is either point-domestic or $(n-2)$-domestic, hence the result.
\end{proof}

\subsubsection{Proof of Proposition~\ref{prop:summary}(2)}

Let $\Delta$ be a polar space of rank $n\geq 3$ (recall that we only assume thick lines). By an  \textit{oppomorphism} of $\Delta$ we shall mean
\begin{compactenum}[$\bullet$]
\item any collineation in the thick case, and
\item if $\Delta$ is non-thick, a collineation of $\Delta$ that induces an oppomorphism in the associated thick building of type $\sD_n$. 
\end{compactenum}
The main result of this subsection is Lemma~\ref{lemma1-n} below, from which Proposition~\ref{prop:summary}(2) is immediate.

\begin{lemma}\label{lemma1-n}
Let $\theta$ be an oppomorphism of a polar space $\Delta$ of rank $n\geq 3$. If $\theta$ is $(0,n-1)$-domestic then $\theta$ is either $(n-1)$-domestic, or point-domestic. 
\end{lemma}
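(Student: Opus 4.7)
The plan is to prove the contrapositive: assuming $\theta$ is $(0,n-1)$-domestic and not $(n-1)$-domestic, show that $\theta$ is point-domestic. First I would fix a non-domestic maximal singular subspace $U$, guaranteed by the failure of $(n-1)$-domesticity. The $(0,n-1)$-domesticity forces every point of $U$ to be absolute for the induced duality $\theta_{\subseteq U}\colon U\to U$, since otherwise a non-absolute point $p\in U$ would give $(p,U)$ as a non-domestic flag of type $(0,n-1)$. By Lemma~\ref{lem:An}, $\theta_{\subseteq U}$ must therefore be a symplectic polarity of $U\cong\mathsf{PG}(n-1)$, which requires $n-1$ odd, hence $n$ even. (When $n$ is odd no such $U$ can exist, so $\theta$ is immediately $(n-1)$-domestic.)

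A key observation is that every point of $U\cup U^\theta$ is already domestic in $\Delta$: for $p\in U$ this is the absolute property $p\perp p^\theta$, while for $p\in U^\theta$ the point $q=p^{\theta^{-1}}\in U$ satisfies $q\perp q^\theta=p$, and applying $\theta$ yields $p\perp p^\theta$. The same argument applies to every non-domestic maximal singular subspace and its image. Thus to establish point-domesticity, it suffices to show that every point of $\Delta$ lies in some non-domestic maximal singular subspace.

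For this I would proceed by induction on $n$. By Lemma~\ref{lem:sp}, the symplectic polarity $\theta_{\subseteq U}$ yields non-domestic lines $L\subset U$ (type $2$ being even). Consider $\Res(L)\cong\sA_1\times\Delta_L$, where $\Delta_L$ is a polar space of rank $n-2$, and check that the induced oppomorphism $\theta_L|_{\Delta_L}$ is $(0,n-3)$-domestic: a non-domestic flag $(\pi,W)$ through $L$ with $\pi$ a plane and $W$ a maximal singular subspace would force $\theta_{\subseteq W}$ to be a symplectic polarity, which by Lemma~\ref{lem:sp} is $\{3\}$-domestic (since $3$ is odd), contradicting the non-domesticity of $\pi\subset W$. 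The inductive hypothesis then gives $\theta_L|_{\Delta_L}$ either $(n-3)$-domestic (impossible since $U\supset L$ is non-domestic) or point-domestic, yielding that no plane through a non-domestic line $L\subset U$ is non-domestic in $\Delta$. An analogous argument applied via Fact~\ref{fact2} to the generalized-quadrangle component of $\Res(V)$ for non-domestic $(n-3)$-spaces $V\subset U$ (which also exist by Lemma~\ref{lem:sp}) shows that no $(n-2)$-space through such $V$ is non-domestic.

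The hard part will be upgrading these local constraints to a global statement. For a hypothetical non-domestic point $q\notin U\cup U^\theta$, I would set $H=q^\perp\cap U$ (a hyperplane of $U$) and $Z=\langle q,H\rangle$ (a maximal singular subspace through $q$), note that $Z$ is forced to be domestic by $(0,n-1)$-domesticity applied to the flag $(q,Z)$, and combine this with the residue consequences above. The goal is to arrange the hypotheses of Lemma~\ref{basicnewlemma} at index $i=n-3$ with an appropriately chosen flag parameter (using the non-domestic $(n-3)$-spaces $V\subset U$ and the structural constraints just derived) so that its conclusion delivers $(n-1)$-domesticity, contradicting the existence of $U$. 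Executing this last step cleanly---in both the thick and non-thick cases, and without assuming any Fano plane residues---is the main technical obstacle.
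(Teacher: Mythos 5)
Your opening observations are correct and align with the paper's setup: since $\{p,U\}$ is mapped to an opposite iff both $p$ and $U$ are (when $U$ is a non-domestic maximal singular subspace, Proposition~\ref{prop:proj} reduces opposition of the flag to $p\not\perp p^\theta$), the $(0,n-1)$-domesticity forces $\theta_{\subseteq U}$ to have all points absolute, hence to be a symplectic polarity, so $n$ is even (the paper handles $n$ odd via Lemma~\ref{ieven} equivalently). Your observation that all points of $U\cup U^\theta$ are then domestic is a nice remark the paper does not make.

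However, from that point on the plan has a genuine gap. First, the reduction ``it suffices to show every point lies in a non-domestic maximal singular subspace'' is a strictly stronger statement than what is needed, you never prove it, and there is no reason to expect it to hold; the paper certainly does not establish it. Second, the proposed induction has several problems: the base case is $n=4$ (the smallest even rank), for which $\Delta_L$ has rank $2$ and falls outside the scope of the lemma; the conclusion you draw from the inductive hypothesis — ``no plane through a non-domestic line $L\subset U$ is non-domestic in $\Delta$'' — overstates what point-domesticity of $\theta_L|_{\Delta_L}$ actually gives, which is domesticity of the \emph{flags} $\{L,\pi\}$, not of the planes $\pi$ themselves (one direction of Lemma~\ref{lem:opbasic}(2) does not reverse); and the appeal to Fact~\ref{fact2} is misplaced, since that statement concerns polar spaces of rank $\geq 3$ without Fano residues, not a generalized-quadrangle component, and the present lemma deliberately carries no Fano hypothesis. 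Third, and most importantly, you explicitly acknowledge that arranging the hypotheses of Lemma~\ref{basicnewlemma} is ``the main technical obstacle'' and leave it unresolved — but that is precisely the content of the lemma. The paper closes this gap with three substantial intermediate results: Lemma~\ref{lemmaaffine} (non-domestic points of a non-domestic projective duality are not confined to a hyperplane, over a field with more than two elements), Lemma~\ref{lemma11-n} (for a non-domestic flag $\{p,U\}$ with $U$ an $(n-3)$-space, $\theta$ fixes a geometric hyperplane of the residual generalized quadrangle pointwise), and Lemma~\ref{lemma21-n} (deducing $(0,n-3)$- and $(n-2)$-domesticity). Only after these does the paper invoke Lemma~\ref{basicnewlemma} with $\ell=0$ at a maximal index $j\leq n-4$ admitting a non-domestic $j$-space with a non-domestic point. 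None of this machinery appears in your plan, so the proof as written is incomplete.
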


Note that Lemma~\ref{ieven} takes care of the case when $n$ is odd. So we may assume that $n\geq 4$ is even. Before proving Lemma~\ref{lemma1-n} we require three additional lemmas. 

\begin{lemma}\label{lemmaaffine}
Let $\Delta$ be a projective space of dimension $n\geq 2$ over some division ring $\mathbb{K}$, with $|\mathbb{K}|>2$, and let $\theta$ be a non-domestic duality of $\Delta$. Then the set of non-domestic points is not contained in a hyperplane of $\Delta$. 
\end{lemma}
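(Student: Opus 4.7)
The plan is to argue by contradiction. First I would suppose the set of non-domestic points is contained in some hyperplane $H$, and pick a non-domestic point $p$ (which exists because $\theta$ is non-domestic: the point vertex of any non-domestic chamber $(V_1, \ldots, V_n)$ satisfies $V_1 \cap V_1^{\theta} = 0$, so $V_1$ is a non-domestic point); then $p \in H$ by hypothesis. Since $H$ is proper, I can choose a point $q \notin H$; by the contradiction hypothesis $q$ is absolute, i.e., $q \in q^\theta$. Form the line $L = \langle p, q \rangle$: since $q \notin H$, $L$ is not contained in $H$, so $L \cap H$ is a single point, which must be $p$. Consequently, every point of $L$ other than $p$ lies outside $H$ and is therefore absolute, so $p$ is the unique non-absolute point on $L$.

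To derive a contradiction, I would use the following structural fact: for any $\sigma$-sesquilinear form on a $2$-dimensional vector space over a division ring $\mathbb{K}$ with $|\mathbb{K}| > 2$, the set of non-isotropic projective points cannot be a singleton. To prove this, represent $\theta$ by a $\sigma$-sesquilinear form $B$ on the underlying vector space (so $[v]$ is absolute iff $B(v, v) = 0$), and choose a basis $\{u, w\}$ of the $2$-dimensional subspace $\widehat{L}$ corresponding to $L$ with $[u] = p$. The hypothesis yields $B(u, u) \neq 0$, $B(w, w) = 0$, and $B(u + w\lambda, u + w\lambda) = 0$ for every $\lambda \in \mathbb{K}^*$. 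Expanding the last equation and using $B(w, w) = 0$ gives
\[
B(u,u) + B(u,w)\lambda + \sigma(\lambda)B(w,u) = 0 \quad\text{for all } \lambda \in \mathbb{K}^*.
\]
Setting $\lambda = 1$ to express $B(w, u)$ in terms of $B(u, u)$ and $B(u, w)$ and then specialising $\lambda$ to an element of the fixed subring $\mathbb{K}^\sigma \setminus \{0, 1\}$ collapses the second term and forces $B(u, u) = 0$, the required contradiction.

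The hard part will be the final algebraic step in the exceptional case $|\mathbb{K}^\sigma| = 2$, which arises essentially only for Hermitian forms on $\mathbb{P}^1(\mathbb{F}_4)$, where no $\lambda \in \mathbb{K}^\sigma$ distinct from $0$ and $1$ is available. In this case one verifies the claim by direct enumeration of the five points of the projective line: solving the resulting linear system for the coefficients of $B$ and using the relation $1 + \omega + \omega^2 = 0$ in $\mathbb{F}_4$, one again forces $B(u, u) = 0$, completing the contradiction in all cases.
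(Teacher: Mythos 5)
Your reduction to a single line is a genuinely different — and in some ways slicker — route than the paper's: the paper proves the base case $n=2$ by an incidence-geometric argument involving the image line $L^\theta$ and an auxiliary line $M$, then climbs the dimension by induction via residues, whereas you go straight to the line $\langle p,q\rangle$ for \emph{any} $n\geq 2$ and finish with coordinates. That reduction is sound (the existence of a non-domestic point, $L\cap H=\{p\}$, and the fact that isotropy along $L$ depends only on the restriction of the sesquilinear form to $\widehat L$, are all fine), and the expansion $B(u,u)+B(u,w)\lambda+\sigma(\lambda)B(w,u)=0$ for all $\lambda\in\mathbb K^\times$ is correct.

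The gap is in the last algebraic step. After substituting $B(w,u)=-B(u,u)-B(u,w)$ you get
\[
(1-\sigma(\lambda))\,B(u,u)\;+\;\bigl(B(u,w)\lambda-\sigma(\lambda)B(u,w)\bigr)=0,
\]
and taking $\lambda\in\mathbb K^\sigma\setminus\{0,1\}$ does \emph{not} kill the second bracket in general: for $\sigma(\lambda)=\lambda$ it equals the commutator $[B(u,w),\lambda]$, which vanishes only if $\lambda$ commutes with $B(u,w)$. Over a noncommutative division ring you therefore need $\lambda$ central, i.e.\ $\lambda\in Z(\mathbb K)^\sigma\setminus\{0,1\}$, and you then have to argue this set is nonempty. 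Moreover, even in the commutative case your exceptional analysis is incomplete: $\mathbb K^\sigma=\mathbb F_2$ occurs for \emph{every} $\mathbb F_{2^n}$ with $\sigma$ a generator of the Galois group (and for some infinite fields of characteristic~$2$ as well), not just for Hermitian $\sigma$ on $\mathbb F_4$, so ``direct enumeration of five points'' does not cover the remaining exceptional cases. There is a clean fix that avoids $\mathbb K^\sigma$ altogether: subtract the identity at $\lambda$ from the identity at $\mu$ to get $B(u,w)(\lambda-\mu)+\sigma(\lambda-\mu)B(w,u)=0$ for all $\lambda,\mu\neq 0$; since $|\mathbb K|\geq 3$, every $\nu\in\mathbb K$ is a difference of two nonzero elements, so $B(u,w)\nu+\sigma(\nu)B(w,u)=0$ for all $\nu\in\mathbb K$, and substituting this back into the original identity gives $B(u,u)=0$ outright, in all characteristics and with no commutativity assumption. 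With that repair your proposal becomes a complete alternative to the paper's inductive argument.
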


\begin{proof}
We give an inductive proof on $n$. First let $n=2$. Suppose that the set of non-domestic points is contained in the line $L$. Then every line not incident with $L^{\theta}$ is domestic. Let $M\neq L$ be such a line. Then it is easy to see that all points of $M\setminus\{M^{\theta}, M^{\theta^{-1}}\}$ are non-domestic, contradicting the fact that only $M\cap L$ is possibly non-domestic and $|M\setminus\{M^{\theta}, M^{\theta^{-1}}\}|\geq 2$. 

Now let $n\geq 3$. Suppose that the set of non-domestic points is contained in the hyperplane $H$. Select a non-domestic chamber $C$ and let $p\in H\cap C$. Then $\theta_p$ is a non-domestic duality in the residue of $p$, and so not all non-domestic lines through $p$ can be contained in $H$, by the induction hypothesis. Hence there exists some non-domestic line $L\ni p$, with $L\notin H$. All points of $L\setminus\{p\}$ are domestic, hence satisfy $q^\theta=\<L^\theta,q\>$, which implies by exhaustion that $p^\theta=\<L^\theta,p\>$, contradicting the fact that $p$ is non-domestic. 
\end{proof}

\begin{lemma}\label{lemma11-n}
Let $\theta$ be an oppomorphism of a polar space $\Delta$ of rank $n\geq 3$. Suppose that $n$ is even, and that $\theta$ is $(0,n-1)$-domestic. Suppose $\theta$ maps some flag $\{p,U\}$ to an opposite, where $p$ is a point and $U$ an $(n-3)$-space. Then $\theta$ fixes pointwise either a geometric hyperplane of the rank 2 polar space $U^\perp\cap (U^\theta)^\perp$, or $\theta$ fixes every point of this rank~$2$ polar space.
\end{lemma}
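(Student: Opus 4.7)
The plan is to show that every line of the rank-$2$ polar space $\Xi = U^\perp\cap (U^\theta)^\perp$ meets the $\theta$-fixed point set; once this is established, the fixed points of $\theta$ on $\Xi$ automatically form either a geometric hyperplane of $\Xi$ or all of $\Xi$. The whole argument rests on translating $(0,n-1)$-domesticity into a rigid combinatorial constraint on the $(n-1)$-spaces through $U$, and then reading it off inside the line-geometry of $\Xi$.

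First I would identify the lines of $\Xi$ with the sets $V\cap\Xi$, where $V$ ranges over the $(n-1)$-spaces containing $U$. Indeed, since $V\subseteq U^\perp$, we have $V\cap\Xi = V\cap (U^\theta)^\perp$, and because $U$ and $U^\theta$ are opposite one has $U\cap (U^\theta)^\perp = \emptyset$; hence the $n-2$ independent points of $U^\theta$ impose $n-2$ independent hyperplane conditions on the projective $(n-1)$-space $V$, cutting it down to a projective line. Conversely every line of the generalised quadrangle $\Xi$ arises this way, since each point of the residue of $U$ (an $(n-2)$-space through $U$) meets $\Xi$ in a single point and each line of the residue (an $(n-1)$-space through $U$) meets $\Xi$ in a projective line.

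Next I would exploit $(0,n-1)$-domesticity. Any $(n-1)$-space $V\supseteq U$ contains $p$, so the flag $\{p,V\}$ cannot be mapped to an opposite flag. Since $p$ is already opposite $p^\theta$, this forces $V$ to be non-opposite $V^\theta$, and thus $V\cap V^\theta\neq\emptyset$. Any $q\in V\cap V^\theta$ lies in $U^\perp\cap (U^\theta)^\perp=\Xi$, and is collinear to $p^\theta\in V^\theta$, hence $V\cap V^\theta\subseteq (V\cap\Xi)\cap (p^\theta)^\perp$. Because $(p^\theta)^\perp\cap V$ is a projective hyperplane of $V$, this intersection is either the whole line $L:=V\cap\Xi$ or a single point $q_V\in L$. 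Applying the symmetric analysis with $\theta^{-1}$ in place of $\theta$ produces an analogous non-empty intersection $V\cap V^{\theta^{-1}}$ inside the companion line $L^-:=V\cap (U^{\theta^{-1}})^\perp$, and the map $q\mapsto q^{\theta^{-1}}$ realises a bijection from $V\cap V^\theta$ onto $V\cap V^{\theta^{-1}}$ (using that $q\in V^\theta$ forces $q^{\theta^{-1}}\in V$).

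The main step, and the main obstacle, is to upgrade these non-empty intersections to genuine $\theta$-fixed points of $\Xi$. In the generic case $V\cap V^\theta=\{q_V\}$ and $V\cap V^{\theta^{-1}}=\{q_V^-\}$, the bijection above collapses to $q_V^{\theta^{-1}}=q_V^-$, so $q_V$ is $\theta$-fixed precisely when $q_V=q_V^-$; the coincidence of these two uniquely determined points is forced by the simultaneous constraints that both $q_V$ and $q_V^-$ lie on singular subspaces of $V$ cut out by the three $(n-3)$-spaces $U$, $U^\theta$, $U^{\theta^{-1}}$, and by the opposition of $p,p^\theta$ and of $p,p^{\theta^{-1}}$ (which force the hyperplane-line intersections that produced $q_V$ and $q_V^-$ to coincide). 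In the degenerate case $V\cap V^\theta=L$, one deduces $L\subseteq V^\theta$, and combining with the $\theta^{-1}$-version yields $L\subseteq V^{\theta^{-1}}$ as well, so $L$ is $\theta$-invariant as a set; an elementary analysis of the induced collineation of the projective line $L$ then produces at least one $\theta$-fixed point on~$L$. Either way, every line of $\Xi$ contains a $\theta$-fixed point, completing the proof. The delicate part is the rigidity argument in the generic case, where one must rule out the a priori possibility that $q_V$ and $q_V^-$ are distinct by using all of the geometric constraints inherited from the flag $\{p,U\}$ being mapped to an opposite flag.
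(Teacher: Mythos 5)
Your approach is quite different from the paper's, which first passes to the induced collineation $\theta_{\supseteq U}$ on the residue of $U$, invokes line-domesticity and the structure theory of line-domestic collineations of generalized quadrangles from \cite{TTM:12b} to identify the geometric hyperplane $H$ of $\theta_{\supseteq U}$-fixed elements, and then carries out the real work of promoting $\theta_{\supseteq U}$-fixedness in the residue to genuine $\theta$-fixedness in $\Delta$. You instead try to manufacture a $\theta$-fixed point directly on each line of $\Xi$. This is a legitimate thing to attempt, and your observation that $(0,n-1)$-domesticity forces $V\cap V^\theta\neq\emptyset$ for each $(n-1)$-space $V$ through $U$ is correct, but the proposal breaks down at each of its three subsequent steps.

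First, the opening reduction is unjustified: the set $F$ of $\theta$-fixed points lying in $\Xi$ need not be a subspace of $\Xi$ (if $x,y\in F$ span a line $L$ then $L^\theta=L$, but $L$ need not be fixed pointwise), so ``$F$ meets every line'' does not yield ``$F$ is a geometric hyperplane or all of $\Xi$''. Second, the hyperplane cut that is supposed to isolate $q_V$ is vacuous: since $p\in U$ one has $p^\theta\in U^\theta$, hence $(U^\theta)^\perp\subseteq(p^\theta)^\perp$, while $L=V\cap\Xi=V\cap(U^\theta)^\perp$ (because $V\subseteq U^\perp$); so $L\subseteq(p^\theta)^\perp$ in every case. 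The dichotomy you draw between a ``generic'' case (a single point $q_V$) and a ``degenerate'' case (the whole line) therefore does not exist, and you have no control on $V\cap V^\theta$ beyond nonemptiness. Third, even granting a unique $q_V$, the membership $q_V\in V\cap V^\theta$ says only $q_V^{\theta^{-1}}\in V$, which is far from $q_V^\theta=q_V$; the rigidity claim that ``$q_V=q_V^-$ is forced by the simultaneous constraints'' is an assertion rather than an argument. The paper's proof shows that precisely this step is the delicate one, requiring the structure of $H$ from \cite{TTM:12b}, auxiliary non-domestic $(n-4)$-spaces produced by Lemma~\ref{lemmaaffine}, and a separate treatment of the ovoid case with special handling of the small orders $(2,2)$ and $(2,4)$.
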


\begin{proof}
By $(0,n-1)$-domesticity, $\theta_{\supseteq U}$ is line-domestic. Hence, if $\Delta$ is non-thick, then $\theta_{\supseteq U}$ is clearly the identity. If $\Delta$ is thick, then $\theta_{\supseteq U}$ is either the identity, or pointwise fixes a proper geometric hyperplane with the following property: If $\theta_{\supseteq U}$ fixes a line, then all points of that line belong to that geometric hyperplane (see \cite{TTM:12b}). We note that there are three kinds of geometric hyperplanes of a generalized quadrangle: the set of points collinear to a certain point (in which case every fixed point is contained in a pointwise fixed line), the set of points of a large full subquadrangle (where the adjective ``full'' means that the subquadrangle is a subspace and the adjective  ``large'' just means that it is a geometric hyperplane; also in this case every fixed point is contained in a pointwise fixed line), and the set of points of an ovoid (i.e., a geometric hyperplane without lines; here no point is contained in a fixed line). In the next paragraph we will consider the case where no fixed point is contained in a fixed line. Hence this only applies to the case of an ovoid. Since in a generalized quadrangle of order $(2,2)$ no nontrivial collineation fixes an ovoid pointwise, and since a generalized quadrangle of order $(2,4)$ does not admit ovoids (see \cite[Theorem~1.8.3]{PT:09}), we may suppose that the underlying field has at least $3$ elements. 

Now let $H$  be the set of points of $U^\perp\cap (U^\theta)^\perp$ that corresponds to the set of fixed points of $\theta_{\supseteq U}$. Let $x\in H$. We first assume that $\theta_{\supseteq U}$ does not fix any $(n-1)$-space containing $x$ (hence $H$ is an ovoid). Let $Z$ be such an $(n-1)$-space, and let $W$ be the singular $(n-2)$-subspace containing $U$ and $x$. Suppose for a contradiction that $x':=x^\theta\neq x$. Then also $x'':=x^{\theta^{-1}}\ne x$.  Since $\theta_{\subseteq U}$ is not point-domestic (as $p$ is non-domestic), and the underlying field has at least $3$ elements, by (the dual form of) Lemma~\ref{lemmaaffine} we can find a non-domestic $(n-4)$-space $V\subseteq U$ disjoint from the line $\<x,x''\>$, i.e.~$V$ is not through the point $u:=\<x,x''\>\cap U$ (note that $\dim U>2$ or $U$ is a line, in which case $U$ contains at least two non-domestic points one of which must be different from $u$). Since $n-4$ is even, $V$ contains a non-domestic point $p_0$. Let $W'\neq W$ be an $(n-2)$-space in $Z$ containing $U$. Since $U$ does not contain $x''$, nor will $W'$ contain it. Since $W'\neq W$, the former does not contain $x$ either (but $u\in W'$). Hence $W'^\theta$ does not contain $x$, and if some point of it is collinear with all points of $W'$, then it is collinear with all points of $Z$, hence belongs to $Z$, hence coincides with $x$, a contradiction. So $W'$ is non-domestic. Hence there is a second non-domestic $(n-3)$-space $U_0$, $V\subseteq U_0\subseteq W'$ apart from $U$. Since $u\in U$ and $u\notin V$, we deduce $u\notin U_0$. Note $p_0\in U_0$. Hence $\theta_{\supseteq U_0}$ is line domestic, but does not fix the line corresponding to $Z$. Since $Z\cap Z^\theta=\{x\}$, the only $(n-2)$-space  through $U_0$ that is possibly fixed by  $\theta_{\supseteq U_0}$ is the one containing $x$, call it $W_0$. Hence $x\in W_0^\theta$, so $x''\in W_0$. But then $xx''\subseteq W_0$ and so $xx''$ intersects $U_0\subseteq W'$, necessarily in $u$, a contradiction as we already noted above that $u\notin U_0$. This contradiction shows that $x=x'=x''$. 

Now we forget the previous notation and suppose that  $\theta_{\supseteq U}$  fixes some $(n-1)$-space $Z$ containing $x$. Since $Z$ is fixed under $\theta_{\supseteq U_0}$, for every non-domestic $(n-3)$-space $U_0$ in $Z$, every $(n-2)$-space containing $U_0$ and contained in $Z$ will be fixed by $\theta_{\supseteq U_0}$ as soon as $U_0$ contains a non-domestic point. Following the third paragraph of the proof of Lemma~\ref{basicnewlemma} we obtain that $\theta$ fixes~$x$. 

We conclude from the two previous paragraphs that $H$ is pointwise fixed by $\theta$. 
\end{proof}

\begin{lemma}\label{lemma21-n}
Let $\theta$ be an oppomorphism of a polar space $\Delta$ of rank $n\geq 3$. Suppose that $n$ is even, and that $\theta$ is $(0,n-1)$-domestic. If $\theta$ is not $(n-1)$-domestic then $\theta$ is $(0,n-3)$-domestic and $(n-2)$-domestic.
\end{lemma}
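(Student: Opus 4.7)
Assume throughout that $\theta$ is $(0,n-1)$-domestic and not $(n-1)$-domestic, and fix an $(n-1)$-space $M$ with $M^{\theta}$ opposite $M$. By $(0,n-1)$-domesticity, the induced duality $\theta_{\subseteq M}$ of the projective space $M\cong \mathsf{PG}(n-1,\mathbb{K})$ is point-domestic, hence by Lemma~\ref{lem:An} is a symplectic polarity; in particular $n-1$ is odd (consistent with $n$ even, $n\geq 4$), $\mathbb{K}$ is a field, and by (the structure behind) Lemma~\ref{lem:sp} the non-domestic subspaces of $M$ under $\theta_{\subseteq M}$ are precisely those of odd projective dimension, each of which is non-domestic for $\theta$ in $\Delta$ as well. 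The plan is to run the two conclusions of the lemma from this rich supply of non-domestic odd-dimensional subspaces sitting inside $M$, combined with Lemma~\ref{lemma11-n} and Fact~\ref{fact1}.

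\textbf{Step 1: $\theta$ is $(0,n-3)$-domestic.} Suppose, for contradiction, that some flag $\{p,V\}$ with $p$ a point on an $(n-3)$-space $V$ is mapped to an opposite. Apply Lemma~\ref{lemma11-n}: $\theta$ pointwise fixes either a proper geometric hyperplane $H$ of the rank-$2$ polar space $\Gamma_V:=V^\perp\cap(V^\theta)^\perp$, or all of $\Gamma_V$. Points of $\Gamma_V$ correspond to $(n-2)$-spaces through $V$ and lines to $(n-1)$-spaces through $V$. The key observation is that if $x\in\Gamma_V$ is fixed, then the $(n-1)$-space $U\supseteq V$ corresponding to any line of $\Gamma_V$ through $x$ satisfies $x\in U\cap U^\theta$ (write $U=\langle V,\ell\rangle$ with $\ell\ni x$ and use $x=x^\theta\in U^\theta=\langle V^\theta,\ell\rangle$), so such a $U$ is $\theta$-domestic. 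Since every line of a generalized quadrangle meets each geometric hyperplane, this forces every $(n-1)$-space through $V$ to be domestic. One then extends: using the symplectic polarity on $M$, choose $V'\subset M$ a non-domestic $(n-3)$-space and a non-domestic point $p'\in V'$ (both exist since all odd-dimensional subspaces of $M$ are non-domestic), and compare the resulting fixed-point structure against the presence of the non-domestic $M\supset V'$. This contradiction yields $(0,n-3)$-domesticity.

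\textbf{Step 2: $\theta$ is $(n-2)$-domestic.} Suppose, for contradiction, that an $(n-2)$-space $W$ is mapped to an opposite. Then $\theta_{\supseteq W}$ is a nontrivial automorphism of a rank-$1$ residue, and at least one $(n-1)$-space $U\supseteq W$ is non-domestic. By $(0,n-1)$-domesticity every point of $U$, and in particular every point of $W$, is domestic. Combining this with Step~1 (so $\theta$ is also $(0,n-3)$-domestic) and Fact~\ref{fact4} applied to $\theta_{\subseteq U}$ (a symplectic polarity, yielding fixed points in $(n-3)$-subspaces of $U$ contained in $W$), one locates inside $W$ a non-domestic $(n-4)$-flag extending to a non-domestic $(0,n-3)$-flag in $U$, contradicting Step~1. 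Thus no such $W$ exists.

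\textbf{Main obstacle.} The genuine difficulty is Step~1: extracting enough structural information from the single conclusion of Lemma~\ref{lemma11-n} (pointwise fixing of a hyperplane of $\Gamma_V$) to rule out the given non-domestic $(n-1)$-space $M$. The three shapes of geometric hyperplanes of a generalized quadrangle (perp of a point, full subquadrangle, ovoid) must each be reconciled with the symplectic polarity on $M$, and the ovoid case in particular — where no $(n-1)$-space through $V$ need be fixed — is what forces the more delicate comparison between the fixed substructure of $\theta_{\supseteq V}$ and the non-domestic subspace lattice of $M$ produced by the symplectic polarity. Once this is negotiated, Step~2 becomes a clean consequence of Step~1 via Fact~\ref{fact4}.
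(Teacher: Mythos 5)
Your Step 1 breaks at the very point where you hand the problem back to the symplectic polarity on $M$. You write that you will ``choose $V'\subset M$ a non-domestic $(n-3)$-space and a non-domestic point $p'\in V'$ (both exist since all odd-dimensional subspaces of $M$ are non-domestic).'' But $\theta_{\subseteq M}$ is a symplectic polarity precisely because it is \emph{point-domestic} (that is what Lemma~\ref{lem:An} gives you), so \emph{no} point of $M$ is mapped to an opposite; there is no non-domestic $(0,n-3)$-flag contained in $M$ at all. (A point has projective dimension $0$, which is even, and symplectic polarities are $i$-domestic for every even projective dimension $i$ — your own parenthetical ``odd projective dimension'' already excludes points.) The idea of locating such a flag inside $M$ and then contradicting $M$'s non-domesticity via Lemma~\ref{lemma11-n} is therefore a non-starter, and the paper does something genuinely different: it assumes a non-domestic $(0,n-3)$-flag $\{p,U\}$ exists \emph{somewhere} and runs a maximality argument on $k=\dim(Z\cap U)$ over all non-domestic $(n-1)$-spaces $Z$ and all non-domestic $(n-3)$-spaces $U$ carrying a non-domestic point. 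Lemma~\ref{lemma11-n} supplies a fixed point in an auxiliary $(n-1)$-space $Z'\supseteq U$, and Lemma~\ref{lemmaconjugate} transports the non-domestic point from $U$ to a new $(n-3)$-space $U'$ meeting $Z$ in one dimension more, contradicting maximality. That machinery — which never needs a non-domestic $(0,n-3)$-flag to live inside any given non-domestic $(n-1)$-space — is the ingredient your sketch is missing.

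Step 2 also has problems. You assert without justification that $\theta_{\supseteq W}$ is nontrivial and that some $(n-1)$-space through $W$ is non-domestic (neither is automatic), and you invoke Fact~\ref{fact4} ``applied to $\theta_{\subseteq U}$,'' but Fact~\ref{fact4} is a statement about collineations of polar spaces, not about dualities of projective spaces, so this application is a category error. The paper's argument for $(n-2)$-domesticity is one line and doesn't use any of this: if an $(n-2)$-space $W$ is non-domestic then $\theta_{\subseteq W}$ is a duality of $\mathsf{PG}(n-2,\KK)$ with $n-2$ even, hence cannot be a symplectic polarity, hence is not point-domestic; by Theorem~\ref{thm:Asmall} and Theorem~\ref{thm:Astex} this duality produces a non-domestic $\{0,n-3\}$-flag inside $W$, contradicting the first half of the lemma.
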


\begin{proof}
Suppose for a contradiction that $\theta$ is not $(0,n-3)$-domestic. Let $Z$ be a non-domestic $(n-1)$-space chosen in such a way that amongst all non-domestic $(n-1)$-spaces, $k:=\dim (Z\cap U)$, with $U$ a non-domestic $(n-3)$-space containing a non-domestic point, is maximal. Note that $k$ is well defined by the assumption that $\theta$ is not $(0,n-3)$-domestic. Suppose $k<n-3$. The following is a slight variation of the last paragraph of the proof of Lemma~\ref{basicnewlemma}. Let $Z'$ be the unique $(n-1)$-space containing $U$ and intersecting $Z$ in a $(k+2)$-space. Lemma~\ref{lemma11-n} implies that $Z'$ contains some fixed point $z$. Since $Z$ is non-domestic we have $z\notin Z$. Let $W$ be the $(n-2)$-space generated by $U$ and $z$, and let $U'$ be any $(n-3)$-space in $W$ not through $z$ and intersecting $Z$ in the $(k+1)$-space $W\cap Z$. Then one easily verifies that $U'$ is non-domestic, and by Lemma~\ref{lemmaconjugate}, $U'$ contains a non-domestic point. This contradicts the maximality of $k$. Hence $k=n-3$ and $Z$ contains a fixed point by Lemma~\ref{lemma11-n}. Hence $Z$ is domestic.

These contradictions show that $\theta$ is $(0,n-3)$-domestic. Suppose now an $(n-2)$-space $V$ is non-domestic. Then, since $n-2$ is even, $V$ contains a $\{0,n-3\}$-flag mapped to an opposite, a contradiction. Hence $\theta$ is $(n-2)$-domestic. 
\end{proof}

We can now prove Lemma~\ref{lemma1-n}, and hence Proposition~\ref{prop:summary}(2).

\begin{proof}[Proof of Lemma~\ref{lemma1-n}]
Recall that we may assume $n\geq 4$ is even. Let $\theta$ be $(0,n-1)$-domestic, and suppose that $\theta$ is neither point-domestic nor $(n-1)$-domestic.  By Lemma~\ref{lemma21-n} the automorphism $\theta_{\supseteq p}$ is $i$-domestic for $i=n-3,n-2,n-1$. We take the natural number $j\leq n-4$ maximal with respect to the property that there exists a non-domestic $j$-space containing a non-domestic point (we allow $j=0$). Then Lemma~\ref{basicnewlemma} for $\ell=0$ and $i=j$ shows that 
every $(j+1)$-space contains a fixed point. Since $j+1\leq n-3$, this implies that every $(n-1)$-space contains a fixed point, contradicting the assumption that $\theta$ is not $(n-1)$-domestic. 
%
%
%
\end{proof}

\subsubsection{Proof of Proposition~\ref{prop:summary}(3)(a)}

In the polar space language Proposition~\ref{prop:summary}(3)(a) reads as follows.

\begin{lemma}\label{lemma1-nD}
Let $\Delta$ be the oriflamme complex of a non-thick polar space of rank $n\geq 4$ with $n$ even. Every $(0,n-1)$-domestic collineation of $\Delta$ is either point-domestic or $(n-1)$-domestic.
\end{lemma}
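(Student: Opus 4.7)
The plan is to transfer the proof of Lemma~\ref{lemma1-n} to the oriflamme setting. Since $n$ is even, opposition in the $\sD_n$ building preserves all types, including the two maximal singular subspace types; fix one of these, call it $+$, and interpret $(n-1)_+$-domesticity as ``no $(n-1)_+$-space is mapped to a disjoint $(n-1)_+$-space''. A collineation of the oriflamme complex preserves the $\pm$ split, which will be used throughout. Note also that if we establish the conclusion for one type, the other is obtained by the diagram symmetry swapping $(n-1)$ and $(n-1)'$, so it suffices to work with one fixed choice of $+$.

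First I would establish the oriflamme analogue of Lemma~\ref{lemma11-n}: if $\theta$ is $(0,n-1)_+$-domestic and maps some flag $\{p,U\}$ (with $p$ a point in an $(n-3)$-space $U$) to an opposite flag, then $\theta_{\supseteq U}$ fixes pointwise either a geometric hyperplane of the rank $2$ polar space $U^\perp \cap (U^\theta)^\perp$, or all of it. The argument follows Lemma~\ref{lemma11-n} almost verbatim, with the modification that the $(n-1)_+$-spaces through $U$ correspond to lines of only one of the two rulings in the residue quadrangle; the hypothesis forces this ``half-line-domesticity'', which together with the classification of domestic dualities of generalised quadrangles in~\cite{TTM:12b} still yields a pointwise-fixed geometric hyperplane (the partial line-domesticity is still strong enough to enforce the ovoidal or subquadrangular structure). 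I would then prove the oriflamme analogue of Lemma~\ref{lemma21-n}: if $\theta$ is $(0,n-1)_+$-domestic but not $(n-1)_+$-domestic, then $\theta$ is $(0,n-3)$-domestic and $(n-2)$-domestic. This is the same maximality/conjugation argument as for Lemma~\ref{lemma21-n}, using Lemma~\ref{lemmaconjugate} to slide non-domestic $(n-3)$-spaces into a common non-domestic $(n-1)_+$-space.

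The main argument then copies the structure of the proof of Lemma~\ref{lemma1-n}: assuming for contradiction that $\theta$ is neither point-domestic nor $(n-1)_+$-domestic, the analogue of Lemma~\ref{lemma21-n} above gives the required $(0,n-3)$- and $(n-2)$-domesticity. Taking $j\leq n-4$ maximal with respect to the existence of a non-domestic $j$-space containing a non-domestic point and invoking Lemma~\ref{basicnewlemma} with $\ell=0$ and $i=j$ (applied to the $+$-type variant), every $(j+1)$-space contains a fixed point, so a fortiori every $(n-1)_+$-space does, contradicting non-$(n-1)_+$-domesticity.

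The main obstacle lies in honestly verifying the oriflamme type restriction throughout the adapted version of Lemma~\ref{basicnewlemma}: in each place where a non-domestic $(n-1)$-space is constructed (intersections of perps, replacements of one maximal by another sharing an $(n-2)$-space, etc.), one must check that the construction can be carried out so as to produce a space of oriflamme type $+$ rather than $-$. This amounts to parity bookkeeping on intersection dimensions of singular subspaces. I expect the small-field subcases to be the most delicate: for the oriflamme complex of $\sD_n(2)$ the ambient-dimension count that produces a common perp-fixed point (the third bullet in the proof of Lemma~\ref{basicnewlemma}) must be re-examined with $n$ even, and potentially small rank $n=4$ handled separately, possibly by direct inspection or by leveraging Theorem~\ref{thm:DTri}.
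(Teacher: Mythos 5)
The paper's proof is a four-line reduction, not a wholesale transfer of the machinery, and your proposal misses the key observation that makes this possible. Lemma~\ref{lemma1-n} is already stated for \emph{oppomorphisms} of (possibly non-thick) polar spaces, so it already covers collineations of the oriflamme complex; the only mismatch is in the hypothesis, since $(0,n-1)$-domesticity in the oriflamme complex a priori only constrains one of the two families of maximals. The paper closes this gap directly: if some flag $\{p,U\}$ with $U$ of type $(n-1)'$ were non-domestic, then $\theta_{U}$ would be a duality of the odd-dimensional projective space $U$ that is not point-domestic, and such a duality always possesses a non-domestic point--hyperplane flag (either it is non-domestic, and one takes the $\{1,n-1\}$-subflag of a non-domestic chamber, or it is domestic but not point-domestic, hence exceptional domestic and then strongly exceptional domestic by Theorem~\ref{thm:Astex}). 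Lifting via Proposition~\ref{prop:proj} yields a non-domestic $(0,n-1)$-flag in $\Delta$, contradicting the hypothesis. Hence $\theta$ is $(0,(n-1)')$-domestic as well, so it is $(0,n-1)$-domestic as an oppomorphism of the underlying non-thick polar space, and Lemma~\ref{lemma1-n} applies verbatim.

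Your route, by contrast, would have to reprove Lemmas~\ref{lemma11-n}, \ref{lemma21-n} and \ref{basicnewlemma} under the genuinely weaker hypothesis of one-sided $(n-1)_{+}$-domesticity, and the step you dismiss as ``almost verbatim'' actually breaks. In the non-thick case the proof of Lemma~\ref{lemma11-n} concludes, from line-domesticity of $\theta_{\supseteq U}$, that $\theta_{\supseteq U}$ is the identity; but $\theta_{\supseteq U}$ is a (type-preserving) collineation of a grid, and one-sided line-domesticity only forces it to fix every line of \emph{one} ruling, which is not enough: a grid collineation fixing every line of one ruling can freely permute the other ruling. Likewise, your claim that partial line-domesticity ``still yields a pointwise-fixed geometric hyperplane'' via the thick generalized-quadrangle classification is not applicable here, since the residue is a grid, not a thick quadrangle. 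Patching these holes would amount to proving precisely the upgrade from $(0,n-1)_{+}$- to $(0,n-1)$-domesticity, i.e.\ the paper's entire argument, so you would be better served by starting from that reduction.
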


\begin{proof}
Suppose some point-$(n-1)'$-space pair $(p,U)$ is mapped to an opposite. Then, since $\theta_{U}$ is not $0$-domestic, there exists some non-domestic $(n-1)$-space $U'$ incident with $p$ and $U$. This contradicts the $(0,n-1)$-domesticity of $\theta$. Hence $\theta$ is also $(0,(n-1)')$-domestic. Consequently $\theta$ is $(0,n-1)$-domestic as an oppomorphism of the associated non-thick polar space. The assertion now directly follows from Lemma~\ref{lemma1-n}.
\end{proof}

\subsubsection{Proof of Proposition~\ref{prop:summary}(3)(b)}

We now turn to Proposition~\ref{prop:summary}(3)(b), which reads as follows.

\begin{lemma}\label{lemman'n}
Let $\Delta$ be the oriflamme complex of a non-thick polar space of rank~$2n\geq 4$. If a collineation $\theta$ of $\Delta$ is $(2n-1,(2n-1)')$-domestic, then it is either $(2n-1)$-domestic or $(2n-1)'$-domestic.  
\end{lemma}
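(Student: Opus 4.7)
I plan to argue by contradiction: assume $\theta$ is not $(2n-1)$-domestic (the hypothesis that $\theta$ is also not $(2n-1)'$-domestic will not actually be used, so the proof will establish the stronger fact that $(2n-1,(2n-1)')$-domesticity forces $(2n-1)$-domesticity; by the obvious symmetry it will also force $(2n-1)'$-domesticity, giving the lemma). Let $U_0$ be a non-domestic $(2n-1)$-space. The plan is to contradict $(2n-2)$-domesticity by routing through the induced duality on $\Res(U_0)$, followed by Lemma~\ref{lemma1-nD} and Lemma~\ref{ieven}.

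The central step is to analyse the induced duality $\theta_{U_0}$ on the $(2n-1)$-dimensional projective space $\Res(U_0)$. For each hyperplane $H$ of $U_0$ the unique incident $(2n-1)'$-space $V(H)$ through $H$ gives an incident flag $\{U_0,V(H)\}$; by $(2n-1,(2n-1)')$-domesticity this flag is not mapped to an opposite, and since $U_0\cap U_0^\theta=\emptyset$ already holds, the obstruction must be $V(H)\cap V(H)^\theta\neq\emptyset$. Via Proposition~\ref{prop:proj} this is equivalent to $\theta_{U_0}(H)\in H$, so $\theta_{U_0}$ is hyperplane-domestic. Since $\dim U_0=2n-1$ is odd, the dual of Lemma~\ref{lem:An} forces $\theta_{U_0}$ to be a symplectic polarity of $U_0$. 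In particular every point $p\in U_0$ is absolute: $p\in \theta_{U_0}(p)=(p^\theta)^\perp\cap U_0$, so $p\perp p^\theta$, i.e., every point of $U_0$ is point-domestic.

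The rest is a cascade. If $(p,U)$ were a non-domestic $\{0,2n-1\}$-flag, then $U$ would be a non-domestic $(2n-1)$-space and the previous paragraph (now applied to $U$) would force the incident point $p\in U$ to be point-domestic, a contradiction; hence $\theta$ is $(0,2n-1)$-domestic. Lemma~\ref{lemma1-nD} (Proposition~\ref{prop:summary}(3)(a)) then forces $\theta$ to be point-domestic, since by assumption $\theta$ is not $(2n-1)$-domestic. Being point-domestic makes $\theta$ trivially $(0,2n-2)$-domestic, and since $2n-2$ is even with $2\leq 2n-2<2n$, Lemma~\ref{ieven} yields that $\theta$ is $(2n-2)$-domestic. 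But for any $(2n-2)$-subspace $W$ of $U_0$ one has $W^\theta\subseteq U_0^\theta$ and hence $W\cap W^\theta\subseteq U_0\cap U_0^\theta=\emptyset$, so $W$ is non-domestic — contradicting $(2n-2)$-domesticity. The one conceptually non-trivial step is the identification of $\theta_{U_0}$ as a symplectic polarity via Lemma~\ref{lem:An}; everything else is a short chain of already-established polar-space domesticity results.
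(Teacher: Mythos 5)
Your proposal contains a fatal error, and in fact the ``stronger fact'' you set out to prove is false. The opposition diagram $\{2,4,\ldots,2n-2,2n\}$ appears in Table~\ref{table:3} for $\mathsf{D}_{2n}$ and is realised by actual collineations; such a $\theta$ is $(2n-1)'$-domestic (hence $(2n-1,(2n-1)')$-domestic) yet not $(2n-1)$-domestic. So $(2n-1,(2n-1)')$-domesticity \emph{cannot} force $(2n-1)$-domesticity by itself, and any argument reaching that conclusion must have a gap.

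The gap is in your final step. For a $(2n-2)$-dimensional singular subspace $W$ of a rank $2n$ polar space, ``$W$ opposite $W^\theta$'' is \emph{not} the same as $W\cap W^\theta=\emptyset$; it requires $W\cap(W^\theta)^\perp=\emptyset$ as well. Indeed, your own first paragraph already shows $W$ is domestic. You established that $\theta_{U_0}$ is a symplectic polarity, which is exactly the statement that for every hyperplane $W$ of $U_0$ the incident $(2n-1)'$-space $V(W)$ satisfies $V(W)\cap V(W)^\theta\neq\emptyset$. Since $V(W)\cap V(W)^\theta$ is a singular subspace of odd projective dimension (the intersection of two $(2n-1)'$-spaces has even codimension in a $(2n-1)$-space, hence odd dimension), it is at least a line, and so meets the hyperplane $W$ of $V(W)$. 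As $W^{\theta\perp}=U_0^\theta\cup V(W)^\theta$, this gives $W\cap W^{\theta\perp}\neq\emptyset$, so $W$ is domestic and no contradiction arises. Everything you derived up to and including $(2n-2)$-domesticity is correct and, notably, consistent with the diagram $\{2,4,\ldots,2n-2,2n\}$.

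The paper's proof therefore necessarily uses the assumption that there are non-domestic maximal singular subspaces of \emph{both} types $U$ and $U'$: it shows (after an inductive reduction, using the symplectic polarity structure on both $\theta_U$ and $\theta_{U'}$) that any such pair must meet in a single point (Lemma~\ref{lemma3Dn}), and then constructs from a non-domestic point $p\in U\cap U'$ and a non-domestic line another non-domestic $(2n-1)$-space intersecting $U'$ in at least a line, violating the single-point property. To repair your argument you would need to introduce a non-domestic $(2n-1)'$-space and compare it with $U_0$ rather than trying to force a contradiction from $(2n-2)$-domesticity alone.
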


Note that by triality, Lemma~\ref{lemma1-nD} for $n=4$ proves Lemma~\ref{lemman'n} for $n=2$. Hence we can give an inductive proof of Lemma~\ref{lemman'n}, where the initial step $n=2$ is already done. So we may suppose that $n>2$ and that the lemma holds for ranks strictly smaller than $2n$. Let $\theta$ be a $(2n-1,(2n-1)')$-domestic collineation.

\begin{lemma}\label{lemma1Dn}
If $U$ and $U'$ are non-domestic $(2n-1)$-spaces and $(2n-1)'$-spaces, respectively, then $U\cap U'$ is a totally singular subspace for both symplectic polarities $\theta_U$ and $\theta_{U'}$.
\end{lemma}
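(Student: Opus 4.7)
The proof splits into two parts: (i) showing $\theta_U$ and $\theta_{U'}$ are symplectic polarities; (ii) showing $S := U \cap U'$ is totally singular for both, which reduces to the single statement $S \perp S^\theta$ in $\Delta$.

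For (i), the hypothesis $(2n-1,(2n-1)')$-domesticity forces $\theta$ to be domestic, since any chamber mapped to an opposite chamber (of the same type, as $w_0$ acts trivially on the types of $\sD_{2n}$) contains a $\{(2n-1),(2n-1)'\}$-subflag mapped to opposite. Hence Corollary~\ref{cor:proj} makes $\theta_U$ a domestic duality of $\Res(U)$, which is a large $\sA_{2n-1}$ projective space (the no-Fano-plane-residue assumption descends to plane residues of $U$). Theorem~\ref{thm:Alarge} together with Lemma~\ref{lem:An} pins $\theta_U$ down as a symplectic polarity (note that $2n-1$ is odd, as required); the argument for $\theta_{U'}$ is symmetric.

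For (ii), I first reformulate the totally singular condition. Using $\theta_U = \proj_U \circ \theta$ together with the standard polar-space projection formula $\proj_U(y) = U \cap y^\perp$ for a singular point $y \notin U$, one obtains $\theta_U(p) = U \cap (p^\theta)^\perp$ for every $p \in U$. Thus $S \subseteq U$ is totally singular for $\theta_U$ iff $p \perp q^\theta$ for all $p, q \in S$, equivalently $S \perp S^\theta$ in $\Delta$ (where $S^\theta = U^\theta \cap U'^\theta$). The alternating symmetry of the symplectic form makes this same statement equivalent to $S$ being totally singular for $\theta_{U'}$, so both halves of the lemma reduce to proving $S \perp S^\theta$. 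To establish this, I exploit the $(2n-1,(2n-1)')$-domesticity hyperplane-by-hyperplane: for each hyperplane $W \subset U$, the unique $(2n-1)'$-space $V'_W$ through $W$ gives a flag $\{U, V'_W\}$ not mapped to opposite, and since $U \cap U^\theta = \emptyset$ this collapses to $W \cap V'^\theta_W \neq \emptyset$. Specialising to hyperplanes $W \supseteq S$ of $U$ gives $V'_W \supseteq S$ and $V'^\theta_W \supseteq S^\theta$; as $W$ varies over the $(2n-2-\dim S)$-dimensional family of hyperplanes through $S$, the forced incidence points in $W \cap V'^\theta_W$ glue, via the symplectic-pole identity of $\theta_U$ (the pole of every hyperplane of $U$ lies on the hyperplane, since $\theta_U$ is symplectic), into $S^\theta \subseteq S^\perp$.

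The main obstacle is this final gluing step: translating the pointwise hyperplane incidences into the global perpendicularity $S \perp S^\theta$. One needs to identify precisely which point of $V'^\theta_W$ is forced into $W$---I expect this to be exactly the image under $\theta$ of the $\theta_{U'}$-pole on $U'$, which is where the symmetric role of $U'$ and $\theta_{U'}$ enters the argument---and then to use that the family of hyperplanes $W \supseteq S$ is sufficiently rich for these points to span the orthogonal complement of $S^\theta$ in $U$. The dimension bound $\dim S \leq n-1$, necessary for $S$ to be totally singular under the symplectic form on the $2n$-dimensional vector space $U$, should emerge naturally in the process; in particular, a quick dimension count already rules out $\dim S = 2n-2$, since then $\{U, U'\}$ would itself be a $\{(2n-1),(2n-1)'\}$-flag mapped to opposite, contradicting the hypothesis.
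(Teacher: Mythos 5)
Your proposal takes a fundamentally different route from the paper and is incomplete --- you yourself flag the ``gluing step'' as the ``main obstacle'', and indeed it is not filled in. There is also an earlier gap: from the non-opposition of the flag $\{U, V'_W\}$ together with $U \cap U^\theta = \emptyset$ what one actually deduces is $V'_W \cap V'^\theta_W \neq \emptyset$; there is no reason the common point should lie in $W$, so the passage to $W \cap V'^\theta_W \neq \emptyset$ is not justified.

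The structural point you have missed is that Lemma~\ref{lemma1Dn} sits inside a proof of Lemma~\ref{lemman'n} by induction on $n$ (this is set up in the paragraph immediately preceding the lemma, with the base case $n=2$ obtained via triality from Lemma~\ref{lemma1-nD}). The paper's proof is then a single residue argument: if a line $L \subseteq U \cap U'$ were not totally singular for the symplectic polarity $\theta_U$, then $L \cap L^{\theta_U} = \emptyset$, so $L$ is non-domestic in $\Delta$ by Proposition~\ref{prop:proj}. The $\sD_{2(n-1)}$-component of $\mathrm{Res}(L)$ then carries a collineation $\theta_L$ which is $(2n-3,(2n-3)')$-domestic (a $\{(2n-3),(2n-3)'\}$-flag through $L$ mapped to opposite would lift, by Proposition~\ref{prop:proj} and Lemma~\ref{lem:opbasic}, to a $\{(2n-1),(2n-1)'\}$-flag mapped to opposite in $\Delta$), while $U/L$ and $U'/L$ are both non-domestic there --- contradicting the induction hypothesis. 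Your hyperplane-by-hyperplane argument does not exploit this induction at all, which is precisely why it gets stuck; by contrast the inductive proof needs no gluing and no dimension count.

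Your part (i) is a reasonable observation, but note that Proposition~\ref{prop:summary}(3)(b) is stated for thick buildings, not necessarily large ones, so the appeal to ``no Fano plane residues'' overreaches; for ground field $\FF_2$ one would additionally have to rule out the exceptional domestic duality (via Theorems~\ref{thm:Asmall} and~\ref{thm:Astex}), which your sketch elides.
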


\begin{proof}
If $U\cap U'$ is a point, then there is nothing to prove. So suppose $U\cap U'$ has at least dimension $2$. Let for a contradiction $L$ be a line in $U\cap U'$ which is not totally singular for $\theta_U$. Then $L$ is non-domestic and the induction hypothesis shows that $\theta_U$ cannot map both $U$ and $U'$ to opposites, a contradiction.  
\end{proof}

In particular, Lemma~\ref{lemma1Dn} implies that the dimension of the intersection of two non-domestic maximal singular subspaces of distinct type is at most $n-1$. In fact we can say much more:




\begin{lemma}\label{lemma3Dn}
If $U$ and $U'$ are non-domestic $(2n-1)$-spaces and $(2n-1)'$-spaces, respectively, then $U\cap U'$ is a point.
\end{lemma}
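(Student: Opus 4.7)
The plan is by contradiction. Suppose $\dim W \ge 2$ where $W = U \cap U'$; since $U, U'$ have opposite types in the oriflamme, $\dim W$ is necessarily even, so $\dim W \in \{2, 4, \ldots, 2n-2\}$. The extremal case $\dim W = 2n-2$ is immediate: here $W$ is a common hyperplane of $U$ and $U'$, so $\{U, U'\}$ is an incident flag of types $\{2n-1, 2n\}$ in the oriflamme, and both non-domesticities force this flag to be opposite to its $\theta$-image, directly contradicting the $(2n-1,(2n-1)')$-domesticity of $\theta$.

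For $2 \le \dim W \le 2n-4$, the plan is to exploit the symplectic polarity structure provided by Lemma~\ref{lemma1Dn}. Let $\omega_U$ denote the symplectic form on $U$ associated to $\theta_U$. The crucial observation, from the description $q^{\theta_U} = (q^\theta)^\perp \cap U$, is the identification $\omega_U(p, q) = 0 \Leftrightarrow p \perp q^\theta$ for all $p, q \in U$. Applied to $p, q \in W$, which is totally isotropic in $\omega_U$ by Lemma~\ref{lemma1Dn}, this gives $W \perp W^\theta$ in the ambient polar space. Since also $W \cap W^\theta \subseteq U \cap U^\theta = \emptyset$, the span $W + W^\theta$ is a totally singular subspace of projective dimension $2 \dim W + 1$, forcing $\dim W \le n - 1$.

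The case $\dim W = n-1$ (which can occur only when $n$ is odd, since $\dim W$ must be even) is then ruled out by a type argument: $W + W^\theta$ has projective dimension $2n-1$ and is hence a maximal singular subspace, and from $(W + W^\theta) \cap U = W$ (which follows since a point of $W+W^\theta$ lying in $U$ must have its $W^\theta$-component in $U\cap U^\theta=\emptyset$), one sees that $W$ has odd codimension $n$ in $U$, so $W + W^\theta$ has type opposite to $U$. The symmetric computation with $U'$ shows that $W + W^\theta$ also has type opposite to $U'$, which is impossible since $U$ and $U'$ already carry opposite types and only two types are available.

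For the remaining values of $\dim W$ the plan is induction on $n$ via a residue argument using the induction hypothesis of Lemma~\ref{lemman'n} at rank $2n-2$. The main obstacle is that the natural choice of residue base---a line $L \subset W$---is never in $\Opp(\theta)$: by the identification above every such $L$ is totally isotropic in $\omega_U$, hence $L \perp L^\theta$, so Proposition~\ref{prop:proj} does not apply. The hard technical step is therefore to work with a hyperbolic line $L \subset U$ meeting $W$ in a single point (non-domestic by the hyperbolic/opposition computation, since then $L \cap L^{\perp_{\omega_U}} = \emptyset$), and to carefully track the projection of $U'$ into $\Res(L)$ so as to produce two non-domestic maximal singular subspaces of opposite types in an oriflamme complex of rank $2n-2$ whose configuration violates the induction hypothesis.
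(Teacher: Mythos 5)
Your proposal does not constitute a proof. The ``immediate'' extremal case rests on a false implication: you assert that $U$ being opposite $U^\theta$ and $U'$ being opposite $U'^\theta$ ``forces'' the flag $\{U,U'\}$ to be opposite its image. The implication runs the other way. By Proposition~\ref{prop:proj}, $\{U,U'\}\in\Opp(\theta)$ if and only if the hyperplane of $\Res(U)$ determined by $U'$ is mapped to an opposite point by $\theta_U$; but $\theta_U$ is a symplectic polarity of $\Res(U)\cong\mathsf{PG}(2n-1,\KK)$ (Lemma~\ref{lem:An}, a consequence of the assumed $(2n-1,(2n-1)')$-domesticity), and a symplectic polarity never maps a hyperplane onto an opposite point. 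Thus $\{U,U'\}$ is \emph{never} in $\Opp(\theta)$, and no contradiction can be obtained from that direction. The case $\dim W=2n-2$ (in your notation $W=U\cap U'$) is in any case vacuous, but for the reason your next paragraph supplies: Lemma~\ref{lemma1Dn} makes $W$ totally isotropic in the $2n$-dimensional symplectic space $(U,\omega_U)$, so $\dim W\le n-1<2n-2$; your derivation via $W\perp W^\theta$ and $W+W^\theta$ singular just rephrases that bound.

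The genuine gap is the rest. After the (correct) type-counting argument removes $\dim W=n-1$, the range $2\le\dim W\le n-2$ is the heart of the lemma, and you defer it to a residue induction you do not carry out: you pass to a hyperbolic line $L\subset U$ meeting $W$ in a point, note that $U$ is a non-domestic maximal singular subspace through $L$ of one type, and then need a non-domestic maximal singular subspace through $L$ of the \emph{other} type---exactly the pair that the induction hypothesis (Lemma~\ref{lemman'n} at rank $2n-2$) forbids. Since $U'\not\supseteq L$, that subspace has to be manufactured from $\proj_L(U')$, and that manufacture---the entire content of the argument---is left as ``the hard technical step''. By contrast, the paper's proof handles all $\dim(U\cap U')=2i\ge 2$ at once with no case split: inside $U$ pick a $(2i+1)$-space $W_1\supseteq V:=U\cap U'$ which is not isotropic for $\omega_U$, set $Y=W_1^\perp\cap U'$ (a hyperplane of $U'$) and $T=W_1^{\perp_{\omega_U}}\cap V$, choose a non-domestic $(2n-3)$-space $Z\subseteq Y$ not containing $T$, take a second non-domestic $(2n-1)'$-space $U''\supseteq Z$, and verify that $U\cap U''$ is a $2i$-space in $W_1$ generating $W_1$ together with $T$, hence not totally isotropic for $\omega_U$, contradicting Lemma~\ref{lemma1Dn} applied to $(U,U'')$.
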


\begin{proof}
Suppose for a contradiction that $U\cap U'$ is a subspace $V$ of dimension $2i$, with $i\geq 1$. Then there is a $(2i+1)$-space $W$ of $U$ containing $V$ and not being contained in $V^{\perp'}$, where $\perp'$ denotes collinearity in the symplectic polar space induced in $U$ by $\theta_U$. Set $T=W^{\perp'}\cap V$. Then $\dim T=2i-1$, and since $i\geq 1$, we see that $T\neq\emptyset$.   Let $Y$ be the unique $(2n-2)$-space in $U'$ all of whose points are collinear to each point of $W$ (and note that, reciprocally,  $W$ is the set of points of $U$ collinear to each point of $Y$). The domestic $(2n-3)$-spaces of $Y$ are precisely those that contain the point $Y^{\theta_{U'}}\in Y$. Hence we can select a non-domestic $(2n-3)$-space $Z\subseteq Y$ not containing $T$. Then $Z\cap V$ is $(2i-1)$-dimensional. Since $U'\supseteq Z$ is non-domestic, there is some other non-domestic $(2n-1)'$-space $U''\supseteq Z$. Since $V\cap Z\subseteq U\cap U''$ is $(2i-1)$-dimensional, we have $\dim (U\cap U'')\geq 2i$. Since $Z$ contains an $(2n-2i-3)$-dimensional subspace disjoint from $V$, we have $\dim (U\cap U'')\leq (2n-1)-(2n-2i-3)-1=2i+1$. We conclude $\dim(U\cap U'')=2i$. A point of $U\cap U''$ is collinear to all points of $Z\cup V$, hence to all points of $Y$, hence it belongs to $W$. Obviously, if $T\subseteq U''$, then $U'=U''$. So $U\cap U''$ is a $2i$-space in $W$ not containing $T$. If $U\cap U''$ were totally singular for $\theta_U$, then, since all points of the singular subspace $T$ are collinear for the symplectic polarity $\theta_U$ with all points of the singular subspace $U\cap U''$, the subspace $W$, which is generated by $U\cap U''$ and $T$ (because $i>0$), would be singular for $\theta_U$, a contradiction.  Hence $U\cap U''$ is not totally singular for $\theta_U$, contradicting Lemma~\ref{lemma1Dn}.
\end{proof}

\begin{lemma}\label{lemma4Dn}
Let $U$ be a non-domestic $(2n-1)$-space and suppose that $L$ is a non-domestic line in~$U$. Let $U_0$ be the $(2n-1)$-space containing $L$ and intersecting $U^\theta$ in a $(2n-3)$-space, say $W$. Then $U_0$ is non-domestic.
\end{lemma}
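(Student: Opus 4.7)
My plan is to recognize the subspace $U_0$ of the statement as exactly the image of $U$ under the induced automorphism $\theta_L=\proj_L\circ\theta$ of the residue $\Res(L)$, and then to conclude by two applications of Proposition~\ref{prop:proj}.

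First I pin down $U_0$ geometrically. For any maximal singular subspace $V$ containing $L$ we have $V\subseteq L^{\perp}$, and hence $V\cap U^\theta\subseteq L^\perp\cap U^\theta$. Since $U$ and $U^\theta$ are opposite, the map $u\mapsto u^{\perp}\cap U^\theta$ is a duality between the projective spaces $U$ and $U^\theta$, and so it sends the line $L$ to a subspace of $U^\theta$ of projective dimension $2n-3$. Thus $\dim(V\cap U^\theta)\leq 2n-3$, with equality forcing $V=\langle L,L^\perp\cap U^\theta\rangle$; this gives both the existence and the uniqueness of $U_0$, together with the explicit description $U_0=\langle L,W\rangle$ with $W=L^\perp\cap U^\theta$. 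Since the codimension of $W$ in $U_0$ is $2$, $U_0$ lies in the same family as~$U^\theta$, and hence as~$U$.

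The key observation is that this $U_0$ is nothing other than $\proj_L(U^\theta)$, viewed as a vertex of $\Res(L)$: both are characterised as the maximal singular subspace through $L$ whose intersection with $U^\theta$ is as large as possible. Since $L$ is non-domestic, $\theta_L$ is a genuine automorphism of the building $\Res(L)$, and we obtain the clean identity $U^{\theta_L}=U_0$ in $\Res(L)$.

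The remainder is formal. By Proposition~\ref{prop:proj} applied to the non-domestic simplex $L$ with $\beta=U$, the hypothesis that $U$ is non-domestic in $\Delta$ translates to $U$ being opposite $U^{\theta_L}=U_0$ in the building $\Res(L)$. As $\theta_L$ is an automorphism of $\Res(L)$ it preserves the opposition relation, so applying $\theta_L$ to this relation shows that $U_0=U^{\theta_L}$ is opposite $U_0^{\theta_L}$ in~$\Res(L)$. A second application of Proposition~\ref{prop:proj}, now with $\beta=U_0$, lifts this back to $\Delta$ and concludes that $U_0$ is opposite $U_0^\theta$ in $\Delta$, i.e.~$U_0$ is non-domestic. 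The only genuinely substantive step is the geometric identification $U_0=\proj_L(U^\theta)$; once this is in place, the argument is a direct application of the residue/transfer machinery used throughout the paper, and no appeal to any inductive hypothesis on the rank is needed.
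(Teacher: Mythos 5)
Your proof is correct and is genuinely different from the paper's. The paper argues directly by contradiction in the polar space: assuming $U_0$ and $U_0^\theta$ share a point $z$, it observes that $z$ is collinear with all of $L^\theta\cup W$ (whence $z\in U^\theta$, since $L^\theta$ and $W$ generate $U^\theta$), deduces $z^{\theta^{-1}}\in U\cap U_0=L$, and obtains a contradiction from $z\in L^\theta\cap W=\emptyset$. You instead identify $U_0$ with the building-theoretic projection $\proj_L(U^\theta)$ and then apply Proposition~\ref{prop:proj} twice, sandwiched around the observation that the automorphism $\theta_L$ of $\Res(L)$ preserves opposition. This isolates a clean general principle worth recording (if both $\alpha$ and $\beta\supseteq\alpha$ lie in $\Opp(\theta)$ then so does $\beta^{\theta_\alpha}$, regarded as a simplex of $\Delta$) and avoids the point-chasing entirely. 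The one place where the write-up is thin is the assertion that $\proj_L(U^\theta)$ is characterised as the maximal singular subspace through $L$ maximising its intersection with $U^\theta$. This is true and is the polar-space incarnation of the gate property, but it carries essentially all the geometric content of the lemma, and your argument silently relies on a subsidiary fact that should be made explicit: since $\theta$ is a collineation and $w_0$ acts trivially for $\sD_{2n}$ while $w_{S\setminus\{2\}}$ also acts trivially on $\{1,3,\dots,2n\}$ ($\sD_{2n-2}$ has even rank), the induced type map of $\theta_L$ on $\Res(L)$ is the identity, which is what guarantees that $\proj_L(U^\theta)$ is a single type-$2n$ vertex rather than a larger flag. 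Adding a sentence verifying the projection formula, e.g.\ by passing to an apartment containing $L$ and $U^\theta$, would close the only real gap; everything else is a correct and rather elegant application of the residue transfer machinery. By comparison, the paper's proof is more elementary and self-contained at the cost of being a little more opaque.
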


\begin{proof}
Suppose for a contradiction that $U_0\cap U_0^\theta$ contains a point $z$. Then $z$ is collinear to all points of $L^\theta\cup W$. Since $L$ is non-domestic, $L^\theta$ and $W$ are disjoint and so they generate $U^\theta$. Hence $z\in U^\theta$, and so $z^{\theta^{-1}}\in U$. But as $z\in U_0^\theta$. we see that $z^{\theta^{-1}}\in   U_0$. Clearly, $U_0\cap U=L$. Hence $z\in L^\theta$. But $L^\theta\cap (U_0\cap U^\theta) = L^\theta\cap W=\emptyset$, a contradiction.
\end{proof}

We are now ready to finish the proof of Lemma~\ref{lemman'n}.

\begin{proof}[Proof of Lemma~\ref{lemman'n}] Suppose for a contradiction that $\theta$ maps a $(2n-1)$-space $U$ to an opposite, and also a $(2n-1)'$-space $U'$. By Lemma~\ref{lemma3Dn}, $x=U\cap U'$ and $y=U^\theta\cap U'$ are points. Note that $x^\theta\neq y$. Hence there is a line $L$ in $(y^\perp\cap U)\setminus ((x^\theta)^\perp\setminus\{x\})$, with $x\in L$. Since $L$ is not in $(x^\theta)^\perp$, it is non-domestic, and so is $W:=L^\perp\cap U^\theta$.  Lemma~\ref{lemma4Dn} implies that the $(2n-1)$-space $U_0$ containing $L\cup W$ is non-domestic. But it intersects $U'$ is at least a line $xy$, contradicting Lemma~\ref{lemma3Dn}. 
\end{proof}

The proof of Proposition~\ref{prop:summary} is now complete.

\section{Automorphisms of large exceptional buildings}\label{sec:4}

In this section we conclude the proof of Theorem~\ref{thm:main} by proving cappedness of automorphisms of large buildings of exceptional types $\sE_6$, $\sE_7$, $\sE_8$, and $\sF_4$. We point out that in \cite{HVM:12}, where domestic dualities of $\sE_6$ buildings are investigated, the exceptional behaviour of the small $\sE_6(2)$ building is overlooked. In particular, the results \cite[Main Result~2.2 and Corollary~2.3]{HVM:12} hold for all large $\sE_6$ buildings, but fail for the $\sE_6(2)$ building. 

We first prove some elementary lemmas.

\begin{lemma}\label{lem:F41}
No duality of a thick $\sF_4$ building is domestic.
\end{lemma}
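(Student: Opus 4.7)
The plan is to argue by contradiction. Since the longest element $w_0$ of the Weyl group of $\sF_4$ acts trivially on the Coxeter graph, opposite simplices have the same type, and so by Lemma~\ref{lem:opbasic}(1) every simplex in $\Opp(\theta)$ has type stable under $\pi_\theta$. The duality $\pi_\theta$ swaps $\{1,4\}$ with $\{2,3\}$, so the $\pi_\theta$-stable subsets of $S=\{1,2,3,4\}$ are only $\emptyset$, $\{1,4\}$, $\{2,3\}$, and $S$ itself. Suppose $\theta$ is a domestic duality. Then Theorem~\ref{thm:fund} ensures $\Opp(\theta)\neq\emptyset$ while domesticity excludes chambers (type $S$ simplices), so $\Opp(\theta)$ must contain a simplex $\sigma$ of type $\{1,4\}$ or of type $\{2,3\}$.

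The next step is to apply Corollary~\ref{cor:proj} to $\theta_\sigma$ on $\Res(\sigma)$ in each case. If $\sigma$ has type $\{1,4\}$ then $\Res(\sigma)$ is a thick generalised quadrangle (type $\sB_2$); since $w_0$ and $w_{\{2,3\}}$ both act trivially on their type sets, Proposition~\ref{prop:typemap} shows that $\theta_\sigma$ acts as $\pi_\theta|_{\{2,3\}}$ on types and is therefore a duality, which by Theorem~\ref{thm:rank2} is not domestic. If $\sigma$ has type $\{2,3\}$ then $\Res(\sigma)$ is a thick building of type $\sA_1\times \sA_1$ and, again using Proposition~\ref{prop:typemap} together with the fact that $w_{\{1,4\}}$ acts trivially on types, $\theta_\sigma$ interchanges the two $\sA_1$ components and acts as an involution on the type set, so Lemma~\ref{lem:AnAn} (with $n=1$) forces $\theta_\sigma$ to be non-domestic. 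In either case Corollary~\ref{cor:proj} is contradicted, whence $\theta$ cannot be domestic.

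There is no substantial obstacle in this plan: the severe restriction on admissible types of simplices in $\Opp(\theta)$ (a consequence of $w_0$ being trivial for $\sF_4$) forces the residue analysis into rank $2$, where Theorem~\ref{thm:rank2} and Lemma~\ref{lem:AnAn} close the argument. The one small routine check is that the longest elements of $W(\sB_2)$ and $W(\sA_1\times \sA_1)$ act trivially on their respective type sets, but this is immediate from the fact that in both cases $w_0$ is central in the Weyl group.
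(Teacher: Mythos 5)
Your argument is correct and is essentially the same as the paper's: both reduce to the observation that a simplex in $\Opp(\theta)$ must have type $\{1,4\}$ or $\{2,3\}$, and then apply the rank-2 results (Theorem~\ref{thm:rank2} for the $\sB_2$ residue, Lemma~\ref{lem:AnAn} for the $\sA_1\times\sA_1$ residue). You simply make explicit the type-restriction step that the paper leaves implicit.
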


\begin{proof}
Suppose that $\theta$ is a domestic duality. Since $\theta$ maps some simplex to an opposite there is either a type $\{1,4\}$ simplex $\sigma_1$ mapped to an opposite simplex, or a type $\{2,3\}$ simplex $\sigma_2$ mapped to an opposite. In the former case, $\theta_{\sigma_1}$ is a domestic duality of a $\sB_2$ building, a contradiction (see Theorem~\ref{thm:rank2}). In the latter case, $\theta_{\sigma_2}$ is a domestic automorphism of an $\sA_1\times\sA_1$ building interchanging the components, a contradiction.
\end{proof}

\begin{lemma}\label{lem:exceptional} Let $\theta$ be a domestic automorphism of a large building $\Delta$ of type $\sX$. 
\begin{compactenum}[$(1)$]
\item If $\sX=\sF_4$ and $\theta$ is a collineation then $\theta$ is $\{i\}$-domestic for all $i\in \{2,3\}$. 
\item If $\sX=\sE_6$ and $\theta$ is a duality then $\theta$ is $\{i\}$-domestic for all $i\in\{2,3,4,5\}$. 
\item If $\sX=\sE_6$ and $\theta$ is a collineation then $\theta$ is $J$-domestic for all $J\in\{\{4\},\{3,5\}\}$. 
\item If $\sX=\sE_7$ then $\theta$ is $\{i\}$-domestic for all $i\in \{2,5\}$.
\item If $\sX=\sE_8$ then $\theta$ is $\{i\}$-domestic for all $i\in \{2,3,4,5\}$. 
\end{compactenum}
\end{lemma}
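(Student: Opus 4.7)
The plan is to prove each of the five parts by the uniform residue technique illustrated in Example~\ref{ex:E7}. In each case we suppose that $\theta$ is domestic but not $J$-domestic for some $J$ in the given list, pick a type-$J$ simplex $\sigma\in\Opp(\theta)$, compute the diagram automorphism induced on $\Res(\sigma)$ via Proposition~\ref{prop:typemap} (that is, $w_{S\backslash J}\circ w_0\circ\pi_\theta$), identify the residue as a product of smaller classical or rank-$2$ components, and invoke the results of Section~\ref{sec:1} (Theorem~\ref{thm:rank2}) and Section~\ref{sect:An} (Theorems~\ref{thm:Alarge} and~\ref{thm:Astex}, Lemmas~\ref{lem:sp} and~\ref{lem:AnAn}) to produce a chamber of $\Res(\sigma)$ mapped to an opposite chamber by $\theta_\sigma$; this chamber, together with $\sigma$, gives a chamber of $\Delta$ mapped to an opposite (Proposition~\ref{prop:proj}), contradicting domesticity.

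The straightforward cases are~(1),~(3)--$J=\{3,5\}$,~(4), and the bulk of~(5). For an $\sF_4$ collineation a type-$2$ or type-$3$ vertex has residue $\sA_1\times\sA_2$ on which $\theta_v$ is a duality of the $\sA_2$ factor; since $\Delta$ is large this projective plane is not a Fano plane and no such duality is domestic. For $J=\{3,5\}$ in an $\sE_6$ collineation the residue is $\sA_2\times\sA_1\times\sA_1$ with $\theta_\sigma$ interchanging the two $\sA_1$ factors, and Lemma~\ref{lem:AnAn} supplies the needed chamber. For an $\sE_7$ automorphism (always a collineation, as $w_0=-1$) the residues of type-$2$ and type-$5$ vertices are $\sA_6$ and $\sA_4\times\sA_2$; on each component $\theta_v$ acts as a duality, and since none of $\sA_6$, $\sA_4$, $\sA_2$ is of the form $\sA_{2n-1}$ (or is non-large for $\sA_2$), no domestic duality exists there. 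The $\sE_8$ cases $i\in\{3,4,5\}$ produce residues $\sA_1\times\sA_6$, $\sA_2\times\sA_1\times\sA_4$, and $\sA_4\times\sA_3$ respectively, and each contains an $\sA_k$ component of even index on which $\theta_v$ acts as a duality, finishing these cases.

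The delicate cases, and the main obstacle, are those where the induced action on the residue could be a symplectic polarity and thus domestic; these are $J=\{2\}$ in part~(2), $J=\{4\}$ in parts~(2)--(3), and $J=\{2\}$ in part~(5). For part~(2) with $i=2$ the residue is $\sA_5$ and $\theta_v$ is a duality (the E_6 Dynkin involution composes with $w_{\sA_5}$-opposition to the identity, leaving pure opposition), so if $\theta_v$ is not already non-domestic then Theorem~\ref{thm:Alarge} forces it to be a symplectic polarity and Lemma~\ref{lem:sp} provides a type-$\{2,3,5\}$ simplex in $\Opp(\theta)$; examining the type-$5$ vertex of this simplex gives residue $\sA_4\times\sA_1$ with $\theta$ inducing a duality on $\sA_4$, and the absence of a symplectic polarity on $\sA_4$ finishes this case. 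The case $i=3$ in part~(2) and $i=5$ (respectively $i=4$) follow by the diagram symmetry $1\leftrightarrow 6$, $3\leftrightarrow 5$ of $\sE_6$. For part~(5) with $i=2$ the same two-step argument applies: if the $\sA_7$-duality $\theta_v$ is a symplectic polarity then Lemma~\ref{lem:sp} yields a type-$\{2,3,5,7\}$ simplex in $\Opp(\theta)$, and the type-$3$ vertex of this simplex has residue $\sA_1\times\sA_6$ on which $\theta$ induces a duality of $\sA_6$, which cannot be domestic. For $J=\{4\}$ in parts~(2) and~(3) the residue is $\sA_2\times\sA_2\times\sA_1$, and in both cases (duality and collineation) the computation of $w_{S\backslash\{4\}}\circ w_0\circ\pi_\theta$ shows that $\theta_v$ interchanges the two $\sA_2$ factors, so Lemma~\ref{lem:AnAn} provides the required chamber. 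The whole proof is a matter of bookkeeping the node identifications under the successive diagram automorphisms, with the symplectic-polarity detour being the only non-routine step.
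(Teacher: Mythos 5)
Your general strategy---find a chamber of $\Res(\sigma)$ mapped to an opposite chamber and apply Proposition~\ref{prop:proj}---works only when the induced automorphism $\theta_\sigma$ cannot possibly be domestic on \emph{any} factor of $\Res(\sigma)$.  In most of the cases here this fails, and the proof requires an extra ``drop-down'' step that you are missing.  The typical obstruction is an isolated $\sA_1$ factor in the residue on which the induced type map is the identity: Corollary~\ref{cor:proj} only tells you that $\theta_\sigma$ is domestic as an automorphism of the \emph{whole} product residue, and this is perfectly compatible with $\theta_\sigma$ fixing every vertex of the $\sA_1$ factor while being non-domestic on the other factors.  Concretely, in part~(1) the residue of a type-$2$ vertex is $\sA_1\times\sA_2$ and a non-domestic duality on the $\sA_2$ factor only gives a type $\{2,3,4\}$ simplex in $\Opp(\theta)$; nothing prevents $\theta_v$ from fixing every type-$1$ vertex.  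The paper's proof then drops node $2$ and looks at the $\sA_2$ residue $\{1,2\}$ of the type $\{3,4\}$ subsimplex, where Proposition~\ref{prop:typemap} produces a duality which must be domestic (Corollary~\ref{cor:proj}) of a large $\sA_2$, a contradiction.  The same issue recurs in your treatment of part~(2) with $i=2$ (after the symplectic-polarity detour the residue $\sA_4\times\sA_1$ still has an $\sA_1$ factor, node~$6$), in part~(3) with $J=\{4\}$ (the $\sA_1$ factor $\{2\}$ is fixed), and throughout part~(5) ($i=3$ and $i=4$ have $\sA_1$ factors; $i=5$ has an $\sA_3$ factor on which the duality could be a symplectic polarity).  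In each of these cases ``no domestic duality on the even-rank $\sA_k$ factor'' produces at best a partial simplex in $\Opp(\theta)$, not the chamber your plan demands; one must then pass to a subsimplex whose residue carries a forced non-domestic automorphism, exactly as the paper does.

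Beyond the structural gap, two specific claims are wrong.  First, for part~(2), $J=\{4\}$: since $\theta$ is a duality, $\pi_\theta=w_0$ on the $\sE_6$ graph, so $w_0\circ\pi_\theta=\mathrm{id}$ and the induced type map on $\Res(v)$ is $w_{\{1,2,3,5,6\}}$, which is $1\leftrightarrow 3$, $5\leftrightarrow 6$ with $2$ fixed --- a duality on each $\sA_2$ factor \emph{separately}, not an interchange of the two $\sA_2$ factors.  Lemma~\ref{lem:AnAn} does not apply; the interchange occurs only in the collineation case of part~(3).  Second, your proposed reduction of $i=3,5$ in part~(2) to $i=2$ ``by the diagram symmetry'' does not work: the $\sE_6$ diagram automorphism swaps $3\leftrightarrow 5$ and $1\leftrightarrow 6$, so $i=3$ and $i=5$ are symmetric to each other, but neither is symmetric to $i=2$ or $i=4$.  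The paper's order is the opposite of yours: it establishes $i=3,5$ first by the drop-down argument, and then reduces $i=2$ (via the $\sA_5$ residue, which forces a type-$3$ vertex in $\Opp(\theta)$ whether $\theta_v$ is a symplectic polarity or not) and $i=4$ (via the $\sA_2\times\sA_2\times\sA_1$ residue) to the $i=3$ case.  Case~(4) and the $J=\{3,5\}$ case of~(3) are the only places where your direct chamber-production genuinely succeeds, and even for $J=\{3,5\}$ you should mention that the duality on the $\{2,4\}$-factor $\sA_2$ of $\Res(\sigma)$ is also forced non-domestic by largeness --- Lemma~\ref{lem:AnAn} alone handles only the $\sA_1\times\sA_1$ piece.
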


\begin{proof} Throughout the proof we repeatedly use Proposition~\ref{prop:proj} without reference.

(1) If there exists a type $2$ vertex $v$ mapped to an opposite, then $\theta_v$ is an automorphism of an $\sA_1\times\sA_2$ building acting as a duality on the $\sA_2$ component. Hence there is a type $\{2,3,4\}$ simplex mapped to an opposite by~$\theta$. If $\sigma$ is the type $\{3,4\}$ subsimplex of this simplex then $\theta_{\sigma}$ is a domestic duality of a large $\sA_2$ building, a contradiction. The argument for type $3$ vertices is dual.

(2) Suppose that there exists a type $3$ vertex $v$ mapped to an opposite vertex. Then $\theta_v$ acts as a duality on the $\sA_4$ component of $\Res(v)$, and hence by Theorem~\ref{thm:Alarge} there is a type $\{2,3,4,5,6\}$ simplex of $\Delta$ mapped to an opposite by $\theta$. If $\sigma$ is the type $\{2,4,5,6\}$ subsimplex of this simplex then $\theta_{\sigma}$ is a domestic duality of a large $\sA_2$ building, a contradiction. The argument for type $5$ vertices is dual.

It there is a type $2$ vertex mapped to an opposite, then considering the type $\sA_5$ residue we see that there is a type $3$ vertex mapped to an opposite (because the induced duality is either a symplectic polarity, or is not domestic), contradicting the previous paragraph. If there is a type $4$ vertex  $v$ mapped to an opposite, then $\theta_v$ acts as a duality on each of the $\sA_2$ components of $\Res(v)$, and hence there is a type $3$ vertex mapped to an opposite, a contradiction. 

(3) Suppose that there exists a type $4$ vertex $v$ mapped to an opposite by $\theta$. Then $\theta_v$ is an automorphism of an $\sA_2\times\sA_1\times\sA_2$ building interchanging the two $\sA_2$ components, and hence by Lemma~\ref{lem:AnAn} there is a type $\{1,3,4,5,6\}$ simplex of $\Delta$ mapped to an opposite by $\theta$. If $\sigma$ is the type $\{1,3,5,6\}$ subsimplex of this simplex then $\theta_{\sigma}$ is a domestic duality of a large $\sA_2$ building, a contradiction.

Suppose that there exists a type $\{3,5\}$ simplex $\sigma$ mapped to an opposite simplex. Then $\theta_{\sigma}$ is an automorphism of an $\sA_1\times \sA_1\times\sA_1$ building interchanging two of the components. Therefore there is a type $\{1,3,5,6\}$ simplex $\sigma'$ mapped to an opposite. Then $\theta_{\sigma'}$ is a domestic duality of a large $\sA_2$ building, a contradiction.

(4) See Example~\ref{ex:E7}.
%

(5) If there exists a type $2$ vertex~$v$ mapped to an opposite then $\theta_v$ is a symplectic polarity of an $\sA_7$ building, and hence there exists a type $\{2,3,5,7\}$ simplex mapped to an opposite by~$\theta$. If $v'$ is the type $3$ vertex of this simplex then $\theta_{v'}$ is an automorphism of an $\sA_1\times \sA_6$ building acting as a duality on the large $\sA_6$ component, and hence there is a type $\{2,3,4,5,6,7,8\}$ simplex mapped to an opposite by $\theta$. If $\sigma$ is the type $\{2,4,5,6,7,8\}$ subsimplex of this simplex, we see that $\theta_{\sigma}$ is a domestic duality of a large $\sA_2$ building, a contradiction. 

The remaining cases are similar: If there is a type $3$ or $5$ vertex mapped to an opposite then by considering the residue we see that there is a type $2$ vertex mapped to an opposite, contradicting the previous paragraph. If there is a type $4$ vertex mapped to an opposite then in the residue we see that there is a type $3$ vertex mapped to an opposite, contradicting the previous sentence. 
\end{proof}

To prove cappedness of automorphisms in large $\sF_4$ and $\sE_7$ buildings we require two additional, and  nontrivial, facts. Note that the second statement below applies also to small $\sE_7$ buildings.

\begin{prop}\label{prop:summary2}\leavevmode
\begin{compactenum}[$(1)$]
\item Let $\theta$ be a collineation of a large $\sF_4$ building. If $\theta$ is $\{1,4\}$-domestic then $\theta$ is either $\{1\}$-domestic or $\{4\}$-domestic. 
\item Let $\theta$ be a collineation of a thick $\sE_7$ building. If $\theta$ is $\{3,7\}$-domestic then $\theta$ is either $\{3\}$-domestic or $\{7\}$-domestic. 
\end{compactenum}
\end{prop}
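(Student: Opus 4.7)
Both parts are proved by contradiction: we assume the conclusion fails and construct a forbidden simplex in $\Opp(\theta)$.

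For part~(1), suppose $\theta$ is $\{1,4\}$-domestic but neither $\{1\}$- nor $\{4\}$-domestic, and pick $v_1,v_4\in\Opp(\theta)$. Each residue $\Res(v_i)$ is a thick rank-$3$ polar space of type $\sC_3$ with thick projective plane residues. By Proposition~\ref{prop:typemap} the induced maps $\theta_{v_i}$ are collineations, and Proposition~\ref{prop:proj} combined with $\{1,4\}$-domesticity forces each to be point-domestic in the polar-space sense (after matching $\sF_4$-labels with the $\sC_3$-polar-space types in each residue). By Theorem~\ref{thm:Bn1} each $\theta_{v_i}$ is capped, and inspection of Table~\ref{table:2} subject to point-domesticity leaves only the empty and the $\{2\}$-type admissible diagrams. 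A case analysis combining Fact~\ref{fact1}, Lemma~\ref{basicnewlemma}, and the $\sF_4$-incidence geometry linking type-$1$ and type-$4$ vertices through a shared apartment then produces a chamber of $\Delta$ incident with $v_1$ whose $\{1,4\}$-face lies in $\Opp(\theta)$, giving the required contradiction.

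For part~(2), suppose $\theta$ is $\{3,7\}$-domestic but neither $\{3\}$- nor $\{7\}$-domestic, and pick $v_3,v_7\in\Opp(\theta)$. I first analyse $\Res(v_7)\cong\sE_6$: by Proposition~\ref{prop:typemap} $\theta_{v_7}$ is a duality which must be domestic (otherwise Proposition~\ref{prop:proj} lifts an opposite chamber of $\Res(v_7)$ to a chamber of $\Delta$ through $v_7$ in $\Opp(\theta)$, giving the forbidden $\{3,7\}$-simplex). By Lemma~\ref{lem:exceptional}(2) we have $\Type(\theta_{v_7})\subseteq\{1,6\}$; Theorem~\ref{thm:fund} together with $w_0$-stability of $\Type$ then force $\Type(\theta_{v_7})=\{1,6\}$. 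Applying Proposition~\ref{prop:DDual} inside the $\sD_5$ residue of any type-$1$ vertex of $\Opp(\theta_{v_7})$ produces a type-$\{1,6\}$ simplex in $\Opp(\theta_{v_7})$, lifting to a type-$\{1,6,7\}$ simplex in $\Opp(\theta)$. Symmetrically, in $\Res(v_3)\cong\sA_1\times\sA_5$ the induced duality on the $\sA_5$-factor is hyperplane-domestic (as $\sE_7$-type~$7$ matches $\sA_5$-hyperplanes), hence a symplectic polarity by Lemma~\ref{lem:An}, and Lemma~\ref{lem:sp} yields a type-$\{3,4,6\}$ simplex in $\Opp(\theta)$. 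Since $\theta$ is domestic (else a chamber mapped to an opposite contains a $\{3,7\}$-simplex in $\Opp(\theta)$), Lemma~\ref{lem:exceptional}(4) gives $\Type(\theta)\subseteq\{1,3,4,6,7\}$.

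I would next analyse $\Res(v_4)\cong\sA_2\times\sA_1\times\sA_3$ at the type-$4$ vertex $v_4$ of the $\{3,4,6\}$-simplex; $\theta_{v_4}$ induces dualities on both the $\sA_2$- and $\sA_3$-factors by Proposition~\ref{prop:typemap}. By Theorem~\ref{thm:Alarge}, no large $\sA_2$ duality is domestic (symplectic polarities require odd projective dimension), so the $\sA_2$-component is non-domestic, producing a type-$\{1,3\}$ chamber of $\Res(v_4)$ in $\Opp(\theta_{v_4})$ and hence a type-$\{1,3,4\}$ simplex in $\Opp(\theta)$. If the $\sA_3$-duality is likewise non-domestic, the product structure of $\Res(v_4)$ combines these into a type-$\{1,3,4,5,6,7\}$ simplex in $\Opp(\theta)$ whose $\{3,7\}$-face provides the contradiction. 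In the remaining branch the $\sA_3$-duality is a symplectic polarity, and one must eliminate this configuration by further residue analyses at type-$1$ and type-$6$ vertices of $\Opp(\theta)$ (with respective residues $\sD_6$ and $\sD_5\times\sA_1$), exploiting Theorems~\ref{thm:Dn1} and~\ref{thm:DD}, Proposition~\ref{prop:DDual}, and Proposition~\ref{prop:summary}(3) and~(4) in the resulting $\sD_n$-duality analysis. The main obstacle is precisely this elimination of the symplectic-polarity branch: the admissible $\sD_5$-duality diagrams in Table~\ref{table:3} include the $\{1\}$-only configuration, so residue arguments alone do not close the gap, and a global $\sE_7$-geometric argument is required to extract a $\{3,7\}$-simplex from the accumulated data $\Type(\theta)=\{1,3,4,6,7\}$, as the introduction anticipates.
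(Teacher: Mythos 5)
Both parts of your proposal fail at precisely the point the paper warns about in its introduction: this proposition is the one place where residue arguments alone are insufficient and a direct geometric argument is required.

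For Part~(1), your residue analysis does correctly establish that for a non-domestic type-$1$ vertex $v_1$, the induced collineation $\theta_{v_1}$ of the $\mathsf{C}_3$ residue must be the identity --- this agrees with the paper's Lemma~\ref{lemma1F4}. But the claimed ``case analysis combining Fact~\ref{fact1}, Lemma~\ref{basicnewlemma}, and the $\mathsf{F}_4$-incidence geometry'' that supposedly produces a $\{1,4\}$-simplex in $\Opp(\theta)$ is not carried out, and it cannot be carried out by quoting residue facts: those tools only tell you about the structure of $\Res(v_1)$, whereas the conclusion you need is about symplecta that are \emph{far} from $v_1$. The paper's actual proof (Lemmas~\ref{lemma2F4}, \ref{lemma3F4}, and the final lemma of that subsection) works in the metasymplectic space, analyses how $\theta$ acts on planes and symplecta at varying distance from a non-domestic point $p$, and shows that no symplecton can be non-domestic. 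None of that is recoverable from the polar-space machinery of Section~\ref{sec:lemmas}.

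For Part~(2), you are explicit about where you get stuck, and that honesty is the correct assessment: the residue analysis is fine as far as it goes (that $\theta_{v_7}$ is a symplectic polarity of the $\mathsf{E}_6$ residue matches Lemma~\ref{lemma1E7}, and the restriction of $\Type(\theta)$ to $\{1,3,4,6,7\}$ reproduces the bookkeeping of Example~\ref{ex:E7}), but it leaves the $\{1,3,4,6,7\}$ diagram alive. Eliminating it is the substance of the proof, and the paper does this in the parapolar space $\mathsf{E}_{7,7}$ via Lemmas~\ref{lemma2E7}--\ref{lemma9E7}, tracking the positions of a non-domestic point $p$ relative to symplecta and $5$-spaces, distinguishing lines of first and second kind with respect to the symplectic polarity $\theta_p$, and ultimately showing via Lemmas~\ref{lemma5E7} and~\ref{lemma9E7} that no $5$-space can be non-domestic. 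A further structural remark: your argument picks $v_3,v_7\in\Opp(\theta)$ independently, but there is no reason these should sit in a common configuration; the paper's argument is cleaner, fixing only a non-domestic point $p$ and showing \emph{every} $5$-space is domestic.
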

We temporarily postpone the proof of Proposition~\ref{prop:summary2}, and first show how the main theorem readily follows. 

\begin{thm}
Every automorphism of a large building of type $\sE_n$ or $\sF_4$ is capped.  
\end{thm}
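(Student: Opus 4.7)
The overall plan is a type-by-type reduction that runs in parallel with the $\sE_7$ argument already carried out in Example~\ref{ex:E7}. For each $\sX\in\{\sE_6,\sE_7,\sE_8,\sF_4\}$, let $\theta$ be a nontrivial automorphism of a large $\sX$-building and let $J=\Type(\theta)$. If $\theta$ is not domestic then $J=S$ and any chamber in $\Opp(\theta)$ realizes $J$, so we may assume $\theta$ is domestic throughout. The general strategy is: first use Lemma~\ref{lem:exceptional} to drastically restrict $J$, and then combine Propositions~\ref{prop:typemap} and~\ref{prop:proj} with the cappedness results already established for classical residues (Theorems~\ref{thm:Alarge},~\ref{thm:Bn1},~\ref{thm:Dn1},~\ref{thm:DD}) to promote each admissible $J$ to an actual type-$J$ simplex in $\Opp(\theta)$. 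The $\sE_7$ case is already done in Example~\ref{ex:E7}, where the a priori possibility $J=\{1,3,4,6,7\}$ is ruled out by the application of Proposition~\ref{prop:summary2}(2) together with Lemma~\ref{lem:exceptional}(4).

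For $\sF_4$ the duality case is immediate from Lemma~\ref{lem:F41}. For collineations, Lemma~\ref{lem:exceptional}(1) gives $J\subseteq\{1,4\}$; the cases $J=\{1\}$ and $J=\{4\}$ are trivially capped, and the only non-immediate case $J=\{1,4\}$ is settled at once by the contrapositive of Proposition~\ref{prop:summary2}(1). For $\sE_8$ only collineations arise, since the Coxeter diagram has no nontrivial symmetry, and Lemma~\ref{lem:exceptional}(5) forces $J\subseteq\{1,6,7,8\}$. Taking residues of vertices of types $8$, $7$, $6$, and $1$ yields buildings of types $\sE_7$, $\sE_6\times\sA_1$, $\sD_5\times\sA_2$, and $\sD_7$ respectively, and on each of these the induced automorphism is capped by the theorem at hand (the $\sE_7$ case just proved, the $\sE_6$ case treated below, and the classical theorems). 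Combining these via Proposition~\ref{prop:proj} -- in particular Proposition~\ref{prop:DDual} applied to the $\sD_7$-residue of a non-domestic type-$1$ vertex produces a type-$8$ vertex opposite its image, so the pair $\{1,8\}$ is jointly realizable -- promotes each admissible $J\subseteq\{1,6,7,8\}$ to a genuine type-$J$ simplex of $\Opp(\theta)$.

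For $\sE_6$, treat collineations and dualities separately. For a duality $\pi_\theta=w_0$, so $w_0\circ\pi_\theta=1$ and the types of simplices in $\Opp(\theta)$ are automatically $\pi_\theta$-closed; Lemma~\ref{lem:exceptional}(2) forces $J\subseteq\{1,6\}$, and the nontrivial case $J=\{1,6\}$ is settled by taking the $\sD_5$-residue of a type-$1$ vertex in $\Opp(\theta)$ and invoking Proposition~\ref{prop:DDual} to obtain a type-$6$ vertex opposite its image, completing the desired $\{1,6\}$-simplex. For a collineation, Lemma~\ref{lem:exceptional}(3) forces $J\subseteq\{1,2,6\}$; the only non-immediate case is $J=\{1,2,6\}$, which is handled by taking a $\{1,6\}$-simplex in $\Opp(\theta)$ (guaranteed by the $w_0$-invariance of types in $\Opp(\theta)$ combined with $1\in J$), whose $\sD_4$-residue carries the nontrivial duality swapping the leaves labeled $3$ and $5$, and then Proposition~\ref{prop:DDual} yields a type-$2$ vertex opposite its image, thereby enlarging the flag to a $\{1,2,6\}$-simplex in $\Opp(\theta)$.

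The real analytic obstacles have been absorbed into Lemma~\ref{lem:exceptional} and Proposition~\ref{prop:summary2}; what remains above is systematic residue bookkeeping, and the only subtlety in the execution is the careful identification via Proposition~\ref{prop:typemap} of the precise diagram-automorphism induced on each residue (particularly in the $\sE_8$ cases, where one must iterate descents into residues of different types to realize each possible $J$).
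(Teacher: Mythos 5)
Your proof follows essentially the same route as the paper's: reduce via Lemma~\ref{lem:exceptional} to a short list of possible types $J$, then realize a type-$J$ simplex in $\Opp(\theta)$ by residue descent (Propositions~\ref{prop:typemap} and~\ref{prop:proj}) into classical sub-buildings, with Proposition~\ref{prop:summary2} supplying the two nontrivial jointly-realizable pairs in $\sF_4$ and $\sE_7$. The only departures are cosmetic (e.g.\ descending into the $\sD_5$-residue of a type-$1$ rather than a type-$6$ vertex in the $\sE_6$ duality case, which is symmetric), and the $\sE_8$ residue bookkeeping is left at the same level of detail as in the paper.
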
 

\begin{proof}
Every non-domestic automorphism is automatically capped, and hence it suffices to consider only nontrivial domestic automorphisms~$\theta$. Let $J$ be the union of all $J'\subseteq S$ such that there is a type $J'$ simplex mapped to an opposite simplex by~$\theta$.

Let $\Delta$ be a large building of type $\sF_4$. By Lemma~\ref{lem:F41} we may assume that $\theta$ is a nontrivial domestic collineation, and thus by Lemma~\ref{lem:exceptional} $\theta$ is $\{i\}$-domestic for $i=2,3$. Thus $J=\{1\}$, $\{4\}$, or $\{1,4\}$. In the first two cases $\theta$ is trivially capped, and if $J=\{1,4\}$ then Proposition~\ref{prop:summary2}$(1)$ implies that there is a type $\{1,4\}$ simplex mapped to an opposite, and so $\theta$ is capped. 

Let $\theta$ be a domestic duality of a large $\sE_6$ building. By Lemma~\ref{lem:exceptional} $\theta$ is $\{i\}$-domestic for $i\in  \{2,3,4,5\}$. If $\theta$ maps a type $6$ vertex $v$ to an opposite then $\theta_v$ is a duality of a $\sD_5$ building, and hence maps a type $1$ vertex to an opposite by Proposition~\ref{prop:DDual}. It follows that $J=\{1,6\}$, and $\theta$ is capped. 

Let $\theta$ be a nontrivial domestic collineation of a large $\sE_6$ building. By Lemma~\ref{lem:exceptional} $\theta$ is $\{4\}$-domestic and $\{3,5\}$-domestic. If $J=\{2\}$ then $\theta$ is trivially capped. If $\{1,6\}\subseteq J$ then there is a type $\{1,6\}$ simplex $\sigma$ mapped to an opposite simplex. Then $\theta_{\sigma}$ is a duality of the type $\sD_4$ building $\Res(\sigma)$, and hence by Proposition~\ref{prop:DDual} $J=\{2,1,6\}$ and $\theta$ is capped. 

See Example~\ref{ex:E7} for the details for collineations of large $\sE_7$ buildings. Note that Proposition~\ref{prop:summary2}(2) is invoked at this stage.

Let $\theta$ be a nontrivial domestic collineation of a large $\sE_8$ building. By Lemma~\ref{lem:exceptional} we have $J\subseteq \{1,6,7,8\}$. Basic residue arguments show that if $1\in J$ then $\{1,8\}\subseteq J$, and if either $6\in J$ or $7\in J$ then $J=\{1,6,7,8\}$. Thus $J=\{8\}$, $\{1,8\}$, or $\{1,6,7,8\}$, and the residue arguments show that in each case $\theta$ is capped. 
\end{proof}

\subsection{Proof of Proposition~\ref{prop:summary2}}

It remains to prove Proposition~\ref{prop:summary2}. 

\subsubsection{Proof of Proposition~\ref{prop:summary2}(1)}

We now prove Proposition~\ref{prop:summary2}(1). In fact it is useful to prove the following slightly stronger version, which will also be useful for small buildings in later work (by Lemma~\ref{lem:exceptional} the $\{i\}$-domesticity assumption for $i=2,3$ is superfluous for large $\sF_4$ buildings). 

\begin{lemma}\label{lem:F42}
Let $\theta$ be a collineation of a thick $\sF_4$ building and suppose that $\theta$ is $\{i\}$-domestic for $i=2,3$. If $\theta$ is $\{1,4\}$-domestic then $\theta$ is either $\{1\}$-domestic or $\{4\}$-domestic. 
\end{lemma}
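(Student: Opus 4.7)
The proof proceeds by contradiction. Suppose $\theta$ is neither $\{1\}$-domestic nor $\{4\}$-domestic, and pick $v_1\in\Opp(\theta)$ of type~$1$ and $v_4\in\Opp(\theta)$ of type~$4$. The goal is to produce a type~$\{1,4\}$ simplex in $\Opp(\theta)$, contradicting the $\{1,4\}$-domesticity hypothesis.

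The first step is to set up the residue analysis at~$v_1$. The residue $\Res(v_1)$ is a polar space of type $\sC_3$, and by Proposition~\ref{prop:typemap} the type map of $\theta_{v_1}$ is $w_{\{2,3,4\}}\circ w_0$. Both $w_0$ of $\sF_4$ and $w_{\{2,3,4\}}$ of $\sC_3$ act trivially on types (being central in their respective Coxeter groups), so $\theta_{v_1}$ is a collineation. Proposition~\ref{prop:proj} translates the $\{2\}$- and $\{3\}$-domesticity of~$\theta$ into plane- and line-domesticity of~$\theta_{v_1}$ in the polar-space interpretation of $\Res(v_1)$ (with types~$4$, $3$, $2$ giving points, lines and planes respectively). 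By Fact~\ref{fact4} applied with $i=1$ and $n=3$, $\theta_{v_1}$ then fixes a point on every line of $\Res(v_1)$; in particular it has at least one fixed ``point'', i.e.\ a type-$4$ vertex of $\Delta$ incident to~$v_1$. A symmetric analysis on the $\sB_3$ residue of~$v_4$ produces a line- and plane-domestic collineation $\theta_{v_4}$ fixing a point on every line of $\Res(v_4)$.

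The decisive step is to combine these residue fixed-point conclusions with the global opposition data in order to exhibit an opposite $\{1,4\}$-simplex in~$\Delta$. Given a type-$4$ vertex $v_4'$ fixed by~$\theta_{v_1}$, one has $\proj_{v_1}(v_4'^\theta)=v_4'$, which places $v_4'^\theta$ in a very specific position relative to~$v_1$ and~$v_1^\theta$. The plan is then to pass to the $\sB_2$ residue $\Res(\{v_1,v_4'\})$ (a generalised quadrangle), where $\theta_{\{v_1,v_4'\}}$ is a collineation, and to use the symmetric fixed-point data from~$\theta_{v_4}$ to arrange matters so that this induced automorphism is nontrivial and maps some chamber of the quadrangle to an opposite chamber, by Theorem~\ref{thm:rank2}. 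Lifting via Proposition~\ref{prop:proj} would then deliver a chamber $A\supseteq\{v_1,v_4'\}$ and a chamber $B\supseteq\{v_1^\theta,v_4'^\theta\}$ that are opposite in~$\Delta$, realising $\{v_1,v_4'\}$ as an opposite $\{1,4\}$-simplex and so contradicting the hypothesis.

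The main obstacle is precisely this bridging step. Residue-level fixed-point information does not by itself deliver simplex-level opposition in~$\Delta$: opposite simplices demand a specifically cooperating chamber pair lying in a common apartment, and a vertex fixed by~$\theta_{v_1}$ need not itself lie in $\Opp(\theta)$. Overcoming the gap will require invoking the full strength of the ``fixed point on every line'' conclusion (not merely the existence of some fixed point) together with the parallel data from~$\theta_{v_4}$, and navigating the metasymplectic geometry of~$\sF_4$ to locate an incident pair $(v_1',v_4')$ with matching chamber-opposition behaviour. This is the step where the proof is genuinely subtle, paralleling the $\sE_7$ analogue Proposition~\ref{prop:summary2}(2) which Example~\ref{ex:E7} singles out as highly nontrivial.
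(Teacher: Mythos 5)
Your setup is on the right track but there is a genuine gap, and you acknowledge it yourself: the ``decisive step'' paragraph is an outline of intent, not an argument, so the proof is not complete.

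Two more specific comments. First, you are not extracting the full strength of the hypotheses in the residue at $v_1$. You derive from $\{2\}$- and $\{3\}$-domesticity that $\theta_{v_1}$ is line- and plane-domestic, and then apply Fact~\ref{fact4} to conclude it fixes a point on every line. But the $\{1,4\}$-domesticity hypothesis (which you never actually feed into the residue) additionally gives that $\theta_{v_1}$ is point-domestic, and since a nontrivial automorphism of a thick spherical building must map some simplex to an opposite (Theorem~\ref{thm:fund}), an automorphism that is simultaneously point-, line- and plane-domestic in a $\sC_3$ residue is forced to be the \emph{identity}. This is exactly the paper's Lemma~\ref{lemma1F4}, and it is a much more useful starting point than ``fixes a point on every line.'' Second, your plan to descend to the $\sB_2$ residue $\Res(\{v_1,v_4'\})$ and apply Theorem~\ref{thm:rank2} requires $\{v_1,v_4'\}\in\Opp(\theta)$ in order for $\theta_{\{v_1,v_4'\}}$ to even be defined; but exhibiting such a simplex is precisely the contradiction you are trying to reach, so this step is circular as stated.

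The paper's proof takes a different, geometry-first route. Rather than hunting for an opposite $\{1,4\}$-pair, it directly shows that, under the stated hypotheses together with a non-domestic point $p$, \emph{no} symplecton (type $4$ vertex) can be non-domestic, by working inside the metasymplectic space of $\Delta$. After establishing that $\theta_p$ is the identity (Lemma~\ref{lemma1F4}), it tracks, for any plane $\pi$ or symplecton $\Sigma$ through $p$, exactly which points are domestic and how the domestic ones are displaced (Lemmas~\ref{lemma2F4} and~\ref{lemma3F4}): in a plane $\pi$ through $p$ the unique line $L=\proj_\pi p^\theta$ consists of points sent to collinear points while everything off $L$ is non-domestic, and in a symplecton $\Sigma$ through $p$ the non-domestic points are precisely $\Sigma\setminus p_\Sigma^\perp$ where $p_\Sigma$ is a fixed point. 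The final lemma then rules out a non-domestic symplecton $\Sigma$ by a case split on the ``close''/``far'' position of $p$ relative to $\Sigma$, using these explicit displacement descriptions. Your proposal never gets to this detailed control of where points go; it is this local-displacement bookkeeping, rather than a residue-lifting argument, that closes the gap.
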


Recall that $\sF_4$ buildings are metasymplectic spaces, with vertices of types $1,2,3,4$ being the points, lines, planes, and symplecta of the space (see, for example, Chapter 18 of \cite{Shu:10}). Hence throughout this section we assume that $\theta$ is line-domestic and plane-domestic, and that $p$ is a point mapped onto an opposite point. We will show that no symplecton is mapped onto an opposite symplecton.

\begin{lemma}\label{lemma1F4}
The map $\theta_p$ is the identity. 
\end{lemma}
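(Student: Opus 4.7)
The plan is to reduce the statement to Fact~\ref{fact1} via the projection technique of Proposition~\ref{prop:proj}. First I would observe that, since removing node~$1$ from the $\sF_4$ Coxeter graph leaves a $\sC_3$ graph, the residue $\Res(p)$ is a thick building of type $\sC_3$, i.e.\ a thick polar space of rank~$3$. Under the correspondence between residues and the metasymplectic structure of $\Delta$, the points, lines and planes of the polar space $\Res(p)$ correspond respectively to the lines, planes and symplecta of $\Delta$ that contain $p$.

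Next I would check that $\theta_p$ acts as a collineation on $\Res(p)$. By Proposition~\ref{prop:typemap} the induced type map on $\{2,3,4\}$ is $w_{\{2,3,4\}}\circ w_0\circ\theta$; since $\theta$ is a collineation and the longest elements of $\sF_4$ and $\sC_3$ both act trivially on their respective type sets, this composition is the identity and $\theta_p$ is type preserving.

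Now I would translate the hypotheses on $\theta$ into hypotheses on $\theta_p$. Because $p\in\Opp(\theta)$, Proposition~\ref{prop:proj} says that a simplex $\beta\in\Res(p)$ is mapped to an opposite by $\theta_p$ inside $\Res(p)$ if and only if $\beta$ is mapped to an opposite by $\theta$ inside $\Delta$. Thus $\{2\}$-domesticity of $\theta$ yields point-domesticity of $\theta_p$ on $\Res(p)$, and $\{3\}$-domesticity of $\theta$ yields line-domesticity of $\theta_p$.

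Finally, Fact~\ref{fact1} asserts that any collineation of a polar space that is both point-domestic and line-domestic must be the identity, and applying this to $\theta_p$ gives the result. There is essentially no obstacle; once the residue has been correctly identified and the type map verified, the lemma is a one-line application of Proposition~\ref{prop:proj} and Fact~\ref{fact1}.
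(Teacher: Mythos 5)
There is a concrete gap in the identification of types in the residue. Removing node~$1$ from the $\sF_4$ Coxeter graph $1-2=3-4$ leaves the graph $2=3-4$, with the double bond between $2$ and $3$. Matching this with the standard $\sC_3$ labelling $1'-2'=3'$ (where $1'$ is the points and $3'$ the maximal singular subspaces of the polar space, cf.\ the indexing convention recalled at the start of Section~\ref{sec:lemmas}) forces $4\leftrightarrow 1'$, $3\leftrightarrow 2'$, $2\leftrightarrow 3'$. In other words, \emph{points} of the polar space $\Res(p)$ are the symplecta through $p$, \emph{lines} are the planes through $p$, and the \emph{planes} (maximal singular subspaces) of $\Res(p)$ are the lines of $\Delta$ through $p$ --- the reverse of what you claim. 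The graph $\sC_3$ has no symmetry exchanging its two ends, so this matters.

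Consequently $\{2\}$-domesticity of $\theta$ gives only \emph{plane}-domesticity of $\theta_p$, not point-domesticity, and together with $\{3\}$-domesticity you have line- and plane-domesticity of $\theta_p$, which is \emph{not} what Fact~\ref{fact1} requires (Fact~\ref{fact1} needs point- and line-domesticity, and since a rank~$\geq 3$ polar space is not self-dual, the dual statement is not available). To obtain point-domesticity of $\theta_p$ you must use the $\{1,4\}$-domesticity hypothesis, which your proposal never invokes: if a symplecton $\Sigma\ni p$ were mapped to an opposite by $\theta_p$, then via Proposition~\ref{prop:proj} the flag $\{p,\Sigma\}$ would be a type $\{1,4\}$ simplex in $\Opp(\theta)$, contradicting $\{1,4\}$-domesticity. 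This is precisely the step the paper's proof supplies; once you add it, your route through Fact~\ref{fact1} (using point- and line-domesticity) is a perfectly valid alternative to the paper's appeal to Theorem~\ref{thm:fund} with all three vertex types domestic.
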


\begin{proof}
We assume that no lines or planes are mapped to an opposite, and the assumption of $\{1,4\}$-domesticity says that every symplecton containing $p$ is domestic. Thus $\theta_p$ has only domestic elements, and hence is the identity. 
\end{proof}

\begin{lemma}\label{lemma2F4}
Let $\pi$ be a plane containing $p$, and let $L$ be the projection of $p^\theta$ onto $\pi$. Then $L^\theta$ is the projection of $p$ onto $\pi^\theta$, every point $x$ of $L$ is mapped onto the unique point on $L^\theta$ that is collinear to $x$, and no point of $\pi\setminus L$ is domestic.
\end{lemma}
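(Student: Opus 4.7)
The plan is to drive the entire argument from the identity $\theta_p = \id$ established in Lemma~\ref{lemma1F4}. First I convert it into a form directly usable on simplices: since $p \in \Opp(\theta)$, the projection $\proj_p : \Res(p^\theta) \to \Res(p)$ is a type-preserving isomorphism with inverse $\proj_{p^\theta}$ (by \cite[Theorem~3.28]{Tit:74}). Rearranging $\proj_p \circ \theta = \id$ on $\Res(p)$ yields the crucial identity
\[
\sigma^{\theta} \;=\; \proj_{p^{\theta}}(\sigma) \qquad \text{for every simplex } \sigma \in \Res(p),
\]
which explicitly determines $\pi^{\theta}$, as well as $m^{\theta}$ for every line $m$ of $\Delta$ through $p$ and $\xi^{\theta}$ for every symplecton $\xi$ of $\Delta$ through $p$.

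For part~(a), I would apply $\theta$ to the defining relation $L = \proj_{\pi}(p^{\theta})$ and use compatibility of projection with building automorphisms to obtain $L^{\theta} = \proj_{\pi^{\theta}}(p^{\theta^{2}})$. The displayed identity gives $\pi^{\theta} = \proj_{p^{\theta}}(\pi)$, and a short projection chase (using that both $p$ and $p^{\theta^{2}}$ are opposite $p^{\theta}$) would show they have the same projection onto $\pi^{\theta}$, so $L^{\theta} = \proj_{\pi^{\theta}}(p)$. For part~(b), given $x \in L$ I would consider the line $m = \<p,x\>$ of $\pi$; then $m \in \Res(p)$, so $m^{\theta} = \proj_{p^{\theta}}(m)$ is identified explicitly, and $x^{\theta}$ is pinned down in the intersection $m^{\theta} \cap L^{\theta}$. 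The collinearity of $x$ and $x^{\theta}$ then follows from the fact that $\proj_{p^{\theta}}$ is an isomorphism of the polar spaces $\Res(p)$ and $\Res(p^{\theta})$ sending the incident pair $(p,m)$ to $(p^{\theta}, m^{\theta})$, while uniqueness is automatic from the fact that a line of the metasymplectic space contains at most one point collinear with a given external point.

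Part~(c) is the principal obstacle. Arguing by contradiction, I would suppose some $y \in \pi \setminus L$ is domestic, so $y$ and $y^{\theta}$ share a line, plane, or symplecton. The strategy is to exploit that $y \notin L = \proj_{\pi}(p^{\theta})$ places $y$ in a generic position relative to $p^{\theta}$, then combine this with the explicit knowledge of $\pi^{\theta}$, $L^{\theta}$, and the action of $\theta$ on points of $L$ from parts~(a) and~(b) to produce either a line or plane through $p$ mapped to an opposite (contradicting line- and plane-domesticity) or a point-symplecton flag $\{p,\xi\}$ mapped to an opposite for a suitable symplecton $\xi \supset \pi$ (contradicting $\{1,4\}$-domesticity). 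The main technical difficulty is orchestrating this case analysis cleanly while carefully tracking the metasymplectic incidences between $y$, $y^{\theta}$, $L$, $L^{\theta}$, $p$, and $p^{\theta}$.
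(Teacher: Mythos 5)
Your strategy---deriving all three conclusions directly from the identity $\theta_p=\mathrm{id}$, recast as $\sigma^\theta=\proj_{p^\theta}(\sigma)$ for $\sigma\in\Res(p)$, and chasing projections---is genuinely different from the paper's. The paper argues by contradiction: assuming $L^\theta\neq L':=\proj_{\pi^\theta}(p)$, it picks $q=L\cap {L'}^{\theta^{-1}}$ and a point $r\in pq\setminus\{p,q\}$, shows that $r$ is a \emph{second} non-domestic point with $\theta_r=\mathrm{id}$, and then triangulates $u^\theta=(pu)^\theta\cap(ru)^\theta$ for a generic $u\in L$, forcing $L^\theta=L'$ and delivering the remaining assertions along the way. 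The construction of this auxiliary base point with trivial residual action is the engine of the paper's proof, and it has no counterpart in your plan.

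There are two real gaps. In part~(a), the equality $\proj_{\pi^\theta}(p)=\proj_{\pi^\theta}(p^{\theta^2})$ does not follow merely from both $p$ and $p^{\theta^2}$ being opposite $p^\theta$: two vertices opposite a common vertex $v$ need not project to the same simplex of a given residue through~$v$. Closing this would require a substantive use of $\theta_p=\mathrm{id}$, and presumably also of $\theta_{p^\theta}=\mathrm{id}$ (which is available by running the argument of Lemma~\ref{lemma1F4} on $p^\theta$, but which you never invoke); it is not clear the chase terminates even then. The more serious gap is~(c), which you flag yourself as unfinished: from $y\in\pi\setminus L$ one gets $y^\theta\in\pi^\theta\setminus L^\theta$, hence $y$ opposite $p^\theta$ and $y^\theta$ opposite $p$, but these only force $d(y,y^\theta)\geq 2$ in the point graph of the metasymplectic space, and the symplectic case $d(y,y^\theta)=2$ is not excluded. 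Ruling it out needs an extra idea, which in the paper is exactly what the second non-domestic point provides. Without an analogue of that device, your plan does not constitute a proof.
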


\begin{proof}
Set $p'=p^\theta$ and $\pi'=\pi^\theta$. Let $L'$ be the projection of $p$ onto $\pi'$. First we claim that  $L'=L^\theta$. Suppose not. Let $q=L\cap {L'}^{\theta^{-1}}$ and pick a point $r$ on the line $pq$ distinct from $p$ and $q$. Then $r':=r^\theta$ is not contained in $L'$ and so is opposite $p$, and hence also opposite $r$.  Consequently $\theta_r$ exists and is the identity. Notice that $q^\theta=L^\theta\cap L'$, and since $q^\theta$ belongs to the unique line of $\pi'$ not opposite $pq$, the point $q^\theta$ is collinear to $q$ and we have the path $p\perp q\perp q^\theta\perp p^\theta$. Now take any line $K$ in $\pi$ containing $r$ and not containing $p$. Put $u=K\cap L$. The projection of $K$ onto $p'$ is the line $p'u'$, where $u$ is the unique point on $L'$ collinear to $u$. Since $\theta_p$ and $\theta_r$ are the identiy, we have that $pu^\theta=p'u'$ and $(ru)^\theta=r'u'$ and so $u^\theta=(pu\cap ru)^\theta =(pu)^\theta\cap(ru)^\theta=p'u'\cap r'u'=u'$. Now it is also clear that $L'=L^\theta$ and the other statements also follow easily.
\end{proof}

\begin{lemma}\label{lemma3F4}
Let $\Sigma$ be a symplecton containing $p$, and let $p_\Sigma$ be the projection of $p^\theta$ onto $\Sigma$. Then $p^\theta_\Sigma=p_{\Sigma}$ and the set of points of $\Sigma$ mapped onto an opposite coincides with the set $\Sigma\setminus p_\Sigma^\perp$. Also, every point of $p_\Sigma^\perp\cap\Sigma$ is mapped onto a collinear point, except for $p_\Sigma$, which is fixed.
\end{lemma}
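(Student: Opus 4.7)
The plan is to fibre $\Sigma$ by planes through the non-domestic point~$p$ and apply Lemma~\ref{lemma2F4} to each such plane. I would first verify that $p_\Sigma=\proj_\Sigma(p^\theta)$ is genuinely a single point: since $p\in\Sigma$ is opposite $p^\theta$ in the building, $p^\theta$ realises the maximal distance from $p$, and standard gated-subspace arguments force the projection of $p^\theta$ onto $\Sigma$ to collapse to a single vertex. In the rank-$3$ polar space $\Sigma$, the points $p$ and $p_\Sigma$ are then non-collinear (symplectic within $\Sigma$).

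Next I would analyse points of $p^\perp\cap\Sigma$. For any plane $\pi\subseteq \Sigma$ through $p$, Lemma~\ref{lemma2F4} describes the non-domesticity of points of $\pi$ in terms of the line $L=\proj_\pi(p^\theta)$, and the key geometric identity is $L=\proj_\pi(p_\Sigma)$ (by transitivity of projections in the building). In the polar space $\Sigma$ this equals $p_\Sigma^\perp\cap\pi$, so ranging $\pi$ over all planes of $\Sigma$ through $p$ yields the dichotomy: a point $x\in p^\perp\cap\Sigma$ with $x\neq p$ is non-domestic if and only if $x\notin p_\Sigma^\perp$, and if $x\in p_\Sigma^\perp\setminus\{p_\Sigma\}$ then $x^\theta$ is collinear to $x$.

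For $x\in\Sigma\setminus p^\perp$ I would pick a common neighbour $y\in p^\perp\cap x^\perp\cap\Sigma$ with $y\notin p_\Sigma^\perp$; the previous step guarantees $y$ is non-domestic, so the same analysis applies with $y$ in place of $p$ and yields a projection point $q:=\proj_\Sigma(y^\theta)$. Consistency of the two dichotomies on the common neighbourhood $p^\perp\cap y^\perp\cap\Sigma$ forces $q=p_\Sigma$, and applying the $y$-dichotomy to $x$ finishes the classification of non-domesticity in $\Sigma$. The fixity $p_\Sigma^\theta = p_\Sigma$ then follows by applying the same analysis to $\theta^{-1}$: Lemma~\ref{lemma1F4} gives $\theta_p=\id$, so $\theta$ and $\theta^{-1}$ play symmetric roles, and the geometric characterisation of $p_\Sigma$ as the ``projection centre'' of the non-domestic locus is preserved by~$\theta$.

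The main obstacle will be the case analysis for $x$ not collinear to $p$: one must find a non-domestic pivot $y$ and then prove that the two classifications (via $p$ and via $y$) are mutually consistent. The rank-$3$ polar-space structure of $\Sigma$ supplies enough common neighbours to implement the pivot argument, but one must handle separately the degenerate subcases where $p_\Sigma$ lies on the chosen plane $\pi$ (so that $L$ degenerates), or where the natural choice of $y$ happens to lie in $p_\Sigma^\perp$.
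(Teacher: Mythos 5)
Your fibration of $\Sigma$ by planes through $p$ and the appeal to Lemma~\ref{lemma2F4} together with transitivity of projections ($\proj_\pi(p^\theta)=\proj_\pi(p_\Sigma)=p_\Sigma^\perp\cap\pi$) is exactly the paper's opening step and correctly classifies the points of $p^\perp\cap\Sigma$. (The degenerate subcase you flag, $p_\Sigma\in\pi$, is vacuous: $\pi\subseteq p^\perp$ while $p_\Sigma\not\perp p$.) The gaps are in the two remaining steps.

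The pivot step asserts that ``consistency of the two dichotomies on $p^\perp\cap y^\perp\cap\Sigma$ forces $q:=\proj_\Sigma(y^\theta)=p_\Sigma$''. This does not follow. Writing $L=py$, consistency gives only the equality of perp-traces $p_\Sigma^\perp\cap L^\perp\cap\Sigma=q^\perp\cap L^\perp\cap\Sigma$, and in a rank-$3$ polar space such a trace does not determine the point: in the symplectic case, for instance, every point of the plane $(p_\Sigma^\perp\cap L^\perp)^\perp$ off the line $L$ has the same perp-trace on $L^\perp$, and there are many such points. So applying the $y$-dichotomy to $x$ is not yet justified. The paper propagates non-domesticity instead by an explicit chain $p\to z\to z'\to w$ along a pair of $\Sigma$-opposite lines $M\ni w$, $M'\ni p$, choosing the pivot points outside $p_\Sigma^\perp$. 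More importantly, the piece your outline really omits is the preimage argument for the fixity. Your appeal to $\theta^{-1}$-symmetry would at best give $\proj_\Sigma(p^\theta)=\proj_\Sigma(p^{\theta^{-1}})$, which after pushing forward yields $p_\Sigma^\theta=p_{\Sigma^\theta}$ --- a relation between two points in two different symplecta, not $p_\Sigma^\theta=p_\Sigma$. The paper instead sets $q:=p_\Sigma^{\theta^{-1}}$, notes $q\in\Sigma$ because $p_\Sigma\in\Sigma^\theta$, and proves \emph{directly} that every $x\in q^\perp\cap\Sigma$ is domestic: $x^\theta\perp q^\theta=p_\Sigma\in\Sigma$ forces $x^\theta$ to be collinear to a whole line of $\Sigma$, so $x^\theta\perp y\perp x$ for some $y\in\Sigma$. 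Combined with the already established non-domesticity of $\Sigma\setminus p_\Sigma^\perp$, this gives $q^\perp\cap\Sigma\subseteq p_\Sigma^\perp\cap\Sigma$, hence $q=p_\Sigma$ by non-degeneracy of the polar space $\Sigma$. That one argument simultaneously delivers $p_\Sigma^\theta=p_\Sigma$ and the domesticity of $p_\Sigma^\perp\cap\Sigma$; without it, the fixity claim has no proof.
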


\begin{proof}
Noticing that, by the general theory (see \cite{Tit:74}), the projection of $p^\theta$ onto $p^\perp\cap\Sigma$ is equal to the projection of $p_\Sigma$ onto $p^\perp\cap\Sigma$, Lemma~\ref{lemma2F4} implies that all points of $(p^\perp\cap\Sigma)\setminus p^\perp_\Sigma$ are non-domestic. Now let $w$ be any point of $\Sigma$ not collinear with $p_\Sigma$. 
 If $w\perp p$, then by Lemma~\ref{lemma2F4}, $w$ is non-domestic. In the other case, let $M$ be any line in $\Sigma$ through $w$. It is easy to see that there exists some line $M'$ through $p$ in $\Sigma$ which is, with self-explaining terminology, $\Sigma$-opposite $M$. Then some point $z$ of $M'\setminus p_\Sigma^\perp$ is collinear to some point $z'$ of $M\setminus p_\Sigma^\perp$ (since there are at least three points per line). Lemma~\ref{lemma2F4} again yields that $z$ is non-domestic, similarly, letting $z$ play the role of $p$,  $z'$ is non-domestic, and similarly again, $w$ is non-domestic. 

Now let $q$ be the preimage of $p_\Sigma$; so $q^\theta=p_\Sigma$, and notice that $q\in\Sigma$ since $p_\Sigma\in\Sigma^\theta$. Let $x\in q^\perp\cap\Sigma$ be arbitrary. We claim that $x$ is domestic. If $x=q$, this is obvious, so assume $x\neq q$. Then $x^\theta\perp q^\theta=p_\Sigma\in\Sigma$. By a general property of metasymplectic spaces (see \cite{Shu:10}), $x^\theta$ is collinear to all points of a line $L\subseteq\Sigma$, and so $x^\theta\perp y\perp x$, for some point $y\in L$. Hence the claim. So $q^\perp\cap\Sigma\subseteq p_\Sigma^\perp\cap\Sigma$, yielding $q=p_\Sigma$ as an obvious general property of polar spaces. The rest of the statement is now obvious, in view of Lemma~\ref{lemma2F4}.   
\end{proof}

For the final lemma, we note that opposite symplecta do not contain respective points which are collinear.

\begin{lemma}
No symplecton is mapped onto an opposite.
\end{lemma}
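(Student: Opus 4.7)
The plan is to suppose for contradiction that some symplecton $\Sigma$ is mapped to an opposite symplecton $\Sigma^\theta$, and then produce (via the non-domestic point $p$) either a common point of $\Sigma$ and $\Sigma^\theta$, or a collinear pair of respective points --- both of which are forbidden by the fact about opposite symplecta recorded just before the statement.

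I would first rule out $p\in\Sigma$: in that case Lemma~\ref{lemma3F4} applied to $(p,\Sigma)$ delivers a fixed point $p_\Sigma\in\Sigma\cap\Sigma^\theta$, contradicting the disjointness of opposite symplecta. So $p\notin\Sigma$. I would then use the standard metasymplectic fact that the building-theoretic projection $\mathrm{proj}_\Sigma(p)$ is nonempty to select a point $q\in\Sigma$ that is either collinear or cosymplectic with $p$, and choose a symplecton $\Xi$ containing both $p$ and $q$ --- any symplecton through the line $pq$ in the collinear case, and the unique symplecton determined by $p$ and $q$ in the cosymplectic case.

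Lemma~\ref{lemma3F4} applied to $(p,\Xi)$ then partitions the points of $\Xi$ into the fixed point $p_\Xi$, the remainder of $p_\Xi^\perp\cap\Xi$ (on which $\theta$ acts by mapping to a collinear point), and $\Xi\setminus p_\Xi^\perp$ (on which $\theta$ acts by mapping to an opposite). The point $q\in\Xi$ falls in exactly one of these three sets. If $q=p_\Xi$, then $q\in\Sigma\cap\Sigma^\theta$, a contradiction. If $q\in p_\Xi^\perp\setminus\{p_\Xi\}$, then $q^\theta$ is a point of $\Sigma^\theta$ collinear with $q\in\Sigma$, a contradiction. If $q\in\Xi\setminus p_\Xi^\perp$, then $q$ is itself non-domestic; since the proofs of Lemmas~\ref{lemma1F4}--\ref{lemma3F4} use only the standing line-, plane-, and $\{1,4\}$-domesticity of $\theta$ together with the non-domesticity of the chosen base point, they apply verbatim to $q$, and Lemma~\ref{lemma3F4} applied to the pair $(q,\Sigma)$ yields a fixed point $q_\Sigma\in\Sigma\cap\Sigma^\theta$, again a contradiction.

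The main obstacle is the geometric step in the second paragraph: producing the point $q\in\Sigma$ collinear or cosymplectic with $p$, and the symplecton $\Xi$ through both. This is a standard consequence of the nonemptiness of projections in the spherical $\sF_4$-building together with the classification of mutual positions of a point and a symplecton in the metasymplectic space, but it is the one external ingredient the argument needs beyond Lemmas~\ref{lemma1F4}--\ref{lemma3F4}; once it is in hand the contradiction is obtained purely by reapplication of Lemma~\ref{lemma3F4}.
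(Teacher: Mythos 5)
Your proof is correct and follows the paper's plan closely: both fix the non-domestic point $p$, rule out $p\in\Sigma$, pass to a suitable intermediate symplecton through $p$, and extract a contradiction from Lemma~\ref{lemma3F4}. The paper treats the case that $p$ is collinear to a line $L\subseteq\Sigma$ separately by projecting onto the plane $\langle p,L\rangle$ and invoking Lemma~\ref{lemma2F4}, whereas you treat both mutual positions uniformly via an auxiliary symplecton $\Xi\ni p,q$; and in the subcase where the chosen $q$ turns out to be non-domestic, the paper invokes $\{1,4\}$-domesticity directly (the flag $\{q,\Sigma\}$ would be mapped onto an opposite), while you rerun Lemma~\ref{lemma3F4} with $q$ as base point to exhibit a fixed point in $\Sigma\cap\Sigma^\theta$. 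Your variant is slightly more self-contained, as it does not need to justify that pairwise opposition of the two vertices of a $\{1,4\}$-flag forces opposition of the flag itself; otherwise the two arguments rest on the same external input (the classification of mutual positions of a point and a symplecton in the metasymplectic space), which you correctly flag as the ingredient needing a citation.
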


\begin{proof}
By the $\{1,4\}$-domesticity assumption, no non-domestic symplecton is incident with a non-domestic point. Let $\Sigma$ be any non-domestic symplecton. Then there are two possibilities (see Chapter 18 of \cite{Shu:10}).
\begin{enumerate}[$(1)$]
\item \emph{$p$ is collinear to all points of a line $L\subseteq\Sigma$:} Let $\pi$ be the plane through $p$ and $L$. Then $L$ and the projection of $p^\theta$ onto $\pi$ have at least one point $x$ in common, which, by Lemma~\ref{lemma2F4}, is mapped onto a collinear point $x^\theta$. Since $x^\theta\in\Sigma^\theta$, the latter is not opposite $\Sigma\ni x$.  
\item \emph{$p$ is not collinear to any point of $\Sigma$:} Let $y$ be the projection of $p$ onto~$\Sigma$. Let $u$ be the projection of $p^\theta$ onto the unique symplecton $\Omega$ containing $p$ and $y$. If $y\notin u^\perp$, then $y$ is non-domestic, a contradiction as also $\Sigma$ is non-domestic and as $\theta$ is $\{1,4\}$-domestic. If $y\in u^\perp$, then by Lemma~\ref{lemma3F4}, $y$ is either fixed (in which case $y\in\Sigma^\theta$, contradicting the fact that $\Sigma$ is non-domestic) or $y\perp y^\theta$, again implying that $\Sigma$ is domestic.
\end{enumerate}
Hence in all cases we reached a contradiction, implying that no non-domestic symplecton exists. Hence $\theta$ is either $1$-domestic or $4$-domestic.
\end{proof}

\subsubsection{Proof of Proposition~\ref{prop:summary2}(2)}

Here we need to argue in the strong parapolar space $\mathfrak{P}$ of type $\mathsf{E_{7,7}}$ associated to a building $\Delta$ of type $\mathsf{E_7}$, where vertices of type $7$ of $\Delta$ correspond to points of $\mathfrak{P}$, and where vertices of type $3$ of $\Delta$ correspond to $5$-dimensional maximal singular subspaces of $\mathfrak{P}$ (see, for example, \cite{Shu:10}). 

Before embarking on the proof of the lemma, we need to recall some basics about $\mathfrak{P}$, mainly concerning the possible mutual positions of objects like points, symplecta and maximal singular $5$-spaces. 

First recall that $\mathfrak{P}$ is a strong parapolar space of diameter $3$, which implies that two distinct points $p,q$ are either collinear (and the unique line passing through them is denoted $pq$), or opposite (this is distance 3), or not collinear and contained in a unique symplecton which we denote by $\Sigma(p,q)$. In the latter case we say that \emph{$p$ is symplectic to $q$}. 

Secondly recall that the maximal singular subspaces come in two flavours: there are $5$-dimensional maximal singular subspaces, which we shall call $5$-spaces, and $6$-dimensional ones, which we shall call $6$-spaces. The $5$-dimensional subspaces of the $6$-spaces shall be referred to as $5'$-spaces. Every other singular space shall be referred to as an $i$-space, where $i$ is its dimension. 

\begin{fact}\label{fact1E7}
Let $p$ be a point and let $\Sigma$ be a symplecton. Then exactly one of the following possibilities occurs.
\begin{compactenum}[$(i)$]
\item $p\in\Sigma$ (we say $p$ and $\Sigma$ are incident);
 \item the set of points of $\Sigma$ collinear with $p$ is a $5'$-space $U$, and $U$ and $p$ are contained in a unique $6$-space. All points of $\Sigma\setminus U$ are symplectic to $p$. We say that $p$ is close to $\Sigma$, or that $p$ and $\Sigma$ are close.
 \item There is a unique point $x$ of $\Sigma$ collinear with $p$; every point of $\Sigma$ collinear with $x$ is symplectic to $p$, every other point of $\Sigma$ is opposite $p$. We say that $p$ is far from $\Sigma$, or that $p$ and $\Sigma$ are far. 
\end{compactenum} 
\end{fact}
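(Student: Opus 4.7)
The plan is to classify pairs $(p,\Sigma)$ by reducing to a Weyl group double coset computation, and then to read off the geometric data from the structure of the convex hull. Since points of $\mathfrak{P}$ are type $7$ vertices and symplecta are type $1$ vertices (deleting node~$1$ from the $\sE_7$ Coxeter graph yields $\sD_6$), the $\mathrm{Aut}(\Delta)$-orbits on ordered pairs (point,~symplecton) are in bijection with the $(W_{S\setminus\{7\}},W_{S\setminus\{1\}})$-double cosets in $W(\sE_7)$. A direct enumeration (or an appeal to the standard Cohen--Cooperstein classification of point-symplecton relations in parapolar spaces of exceptional type) produces exactly four such double cosets. One corresponds to incidence $p\in\Sigma$, giving case~(i), and it remains to match the other three with the claimed non-incident configurations.

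For the non-incident cases I would analyse the set $X=p^{\perp}\cap\Sigma$. Whenever $x,y\in X$ are collinear points of $\Sigma$, the pair $\{p,y\}$ lies on a line through $x$ (by thickness together with the strong parapolar axiom applied in the $\sE_6$-residue of $x$), so $p$, $x$, $y$ are contained in a singular plane; it follows that $X$ is a singular subspace of the $\sD_6$ polar space $\Sigma$. The three double cosets then distinguish the three possible shapes of $X$: a $5'$-space, a single point, or empty. Matching these with the three stated configurations gives cases~(ii) and~(iii), provided the empty case is excluded.

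In case~(ii), the uniqueness of the $6$-space containing $p$ and $X$ is immediate from the fact that two maximal singular subspaces of $\mathfrak{P}$ meeting in a $5'$-space lie in a unique such $6$-space (a residue statement at any point of $X$). Moreover if $q\in\Sigma\setminus X$ then $q$ has a collinear neighbour $u\in X$ inside $\Sigma$, and the plane $\langle p,u\rangle$ together with $q$ spans a symplecton containing $p$ and $q$; hence $p$ and $q$ are symplectic, as claimed. In case~(iii), the uniqueness of $x$ is forced because a second collinear point of $X$ would trigger the case~(ii) analysis above; the further dichotomy of $\Sigma\setminus\{x\}$ into $x^{\perp}\cap\Sigma\setminus\{x\}$ (symplectic to $p$) and the remainder (opposite $p$) follows from the $\sD_6$ geometry of $\Sigma$ together with the diameter~$3$ bound on $\mathfrak{P}$.

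The main obstacle is excluding the hypothetical fourth non-incident configuration $X=\emptyset$. The cleanest way is the double-coset count itself, which leaves no orbit for such a position. Alternatively, one can argue directly: since $\mathfrak{P}$ has diameter~$3$ and $\Sigma$ is a convex subspace, taking a shortest path from $p$ to any point $q\in\Sigma$ produces a point $p'$ at distance~$1$ from $\Sigma$; the foot of $p'$ on $\Sigma$ is then collinear with~$p'$ and, by iterating the strong parapolar axiom along the path back to~$p$, one finds a point of $\Sigma$ at distance $\leq 2$ from $p$, and then a collinear one, forcing $X\neq\emptyset$. Either approach completes the trichotomy.
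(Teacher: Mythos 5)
The paper does not prove Fact~\ref{fact1E7}; it is quoted as known background on parapolar spaces of type $\mathsf{E}_{7,7}$ (Shult, Chapter~18), so there is no internal proof to compare against. Your Weyl-group double-coset strategy is a legitimate way to organise a proof, but the count is wrong: the number of double cosets $W_{S\setminus\{7\}}\backslash W(\sE_7)/W_{S\setminus\{1\}}$ is \emph{three}, not four. These double cosets are the $W_{S\setminus\{7\}}\cong W(\sE_6)$-orbits on the $126$-element coset space $W/W_{S\setminus\{1\}}$; identifying that space with the roots of $\sE_7$ (since $\omega_1$ is the highest root) and grading by the $\alpha_7$-coefficient --- a linear form preserved by $W(\sE_6)$ --- one obtains the $72$ level-$0$ roots (the $\sE_6$-roots), the $27$ of level $+1$, and the $27$ of level $-1$, each a single $W(\sE_6)$-orbit because the $\sE_6$ root system and its $27$-dimensional minuscule module are $W(\sE_6)$-transitive. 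The ``four'' makes your argument internally inconsistent: you declare one orbit incident and ``the other three'' non-incident, yet the statement has only two non-incident cases, and you additionally try to exclude $X=\emptyset$ as a hypothetical extra. With the correct count of three, the exclusion of $X=\emptyset$ is automatic once the three orbits are matched to (i), (ii), (iii), and no separate argument is required.

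Even with the count fixed, the matching is not established in the proposal. The observation that $X=p^{\perp}\cap\Sigma$ is a clique, hence a singular subspace of $\Sigma$, is sound: if $x,y\in X$ were non-collinear then $\Sigma(x,y)=\Sigma$ would contain $p$ by convexity, a contradiction. But nothing you write rules out intermediate dimensions for $X$ (a line, a plane, a $4$-space, or a $5$-space that is not a $5'$-space) other than by already invoking the orbit count together with prior knowledge of which geometric shape each orbit realises --- which is precisely the content to be proved. The assertions about the unique $6$-space in case~(ii) and the symplectic/opposite split in case~(iii) are plausible residue statements but are left unverified, and the alternative ``direct'' argument for $X\neq\emptyset$ (the ``foot'' of an intermediate point $p'$ on $\Sigma$, then ``iterating the strong parapolar axiom along the path'') is too imprecise to check: the foot is not unique, and the iteration step is not described. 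As written, the proposal effectively defers to the Cohen--Cooperstein/Shult classification rather than reconstructing it, which is exactly what the paper itself does by quoting the result as a known fact.
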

 
 \begin{fact}\label{fact2E7}
Let $\Sigma$ and $\Sigma'$ be two distinct symplecta. Then exactly one of the following holds.
\begin{compactenum}[$(i)$]
\item $\Sigma$ and $\Sigma'$ intersect in a $5$-space. We say that $\Sigma$ and $\Sigma'$ are adjacent.
\item $\Sigma$ and $\Sigma'$ intersect in a line. We say that $(\Sigma,\Sigma')$ is a polar pair.
\item $\Sigma\cap\Sigma'=\emptyset$ and there exists a unique symplecton adjacent to both $\Sigma$ and $\Sigma'$. We say that $(\Sigma,\Sigma')$ is a special pair. 
\item Every point of $\Sigma$ is collinear to a unique point of $\Sigma'$. Then we say that $\Sigma$ is opposite $\Sigma'$. 
\end{compactenum} 
 \end{fact}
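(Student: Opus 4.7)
The plan is to work in the associated thick $\mathsf{E}_7$ building $\Delta$, where symplecta of $\mathfrak{P}$ are in bijection with type-$1$ vertices, and to translate the classification of mutual positions of two symplecta into a double-coset computation. Given distinct type-$1$ vertices $v, v'$ with chambers $C \ni v$ and $C' \ni v'$, the Weyl distance $\delta(C, C')$ lies in a uniquely determined nontrivial double coset $W_J w W_J$, where $J = \{2,3,4,5,6,7\}$ is the cotype of a type-$1$ vertex, and the mutual position of $\Sigma$ and $\Sigma'$ is a function of this double coset only (independent of the choices of $C$ and $C'$).

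The first step is to enumerate the nontrivial $(W_J, W_J)$-double cosets in $W(\mathsf{E}_7)$, using minimal-length representatives characterised by $\ell(sw) > \ell(w)$ and $\ell(ws) > \ell(w)$ for all $s \in J$. A direct computation in the Coxeter group (or a Poincar\'e series argument via the quotient $W/W_J$, which has $56$ elements matching the number of type-$1$ vertices in the spherical Coxeter complex) produces exactly four such cosets. The longest contains $w_0$ and corresponds to case~(iv), because $v, v'$ are opposite precisely when $\delta(C, C') \in W_J w_0 W_J$. The remaining three are distinguished by the length of their minimal representative, and hence by the gallery distance from $v$ to $v'$.

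For each double coset I would fix an apartment $\mathscr{A}$ containing both $v$ and $v'$ and read off the intersection $\Sigma \cap \Sigma'$ by inspecting which type-$7$ vertices of $\mathscr{A}$ are joined by a type-$\{1,7\}$ simplex to both $v$ and $v'$. This directly yields singular intersections of projective dimensions $5$, $1$, or $-1$, corresponding to cases~(i), (ii), and the pair~(iii)/(iv) respectively. To separate (iii) from (iv), one uses that in case~(iv) the vertices $v$ and $v'$ are opposite, so Proposition~3.28 of \cite{Tit:74} provides an isomorphism $\proj_v : \Res(v') \xrightarrow{\sim} \Res(v)$; restricting to type-$7$ vertices gives the claimed bijection sending each point of $\Sigma'$ to the unique collinear point of $\Sigma$, and symmetrically. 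In case~(iii), uniqueness of the common adjacent symplecton is extracted from the fact that a minimal gallery realising the representative $w$ factors uniquely through the chamber containing one particular type-$1$ vertex in~$\mathscr{A}$.

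The main obstacle is the explicit enumeration of the four nontrivial double cosets in $W_J \backslash W(\mathsf{E}_7) / W_J$ and the case-by-case verification of the geometric conclusion in each; both are finite but tedious. An alternative, and perhaps cleaner, route is to invoke the classical classification of strong parapolar spaces of type $\mathsf{E}_{7,7}$ (see Shult, \emph{Points and Lines}, Chapter~18), where this positional classification is a foundational result and can simply be cited; in the paper itself, labelling the statement as a \emph{Fact} suggests this is indeed the intended approach.
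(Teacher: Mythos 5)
The paper offers no proof of this statement at all: it is deliberately labelled a \emph{Fact} and recalled as part of the standard structure theory of the strong parapolar space $\mathfrak{P}$ of type $\mathsf{E}_{7,7}$, for which the authors point to the literature (Shult \cite{Shu:10}) when $\mathfrak{P}$ is introduced. So your closing suggestion --- simply cite the known classification of mutual positions of symplecta --- is exactly what the paper does, and on that level your proposal agrees with it.

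Your sketched double-coset argument is a genuinely different route and is viable in outline, but be aware of where it is loose. First, a numerical slip: for $J=\{2,\dots,7\}$ the quotient $W/W_J$ has $|W(\mathsf{E}_7)|/|W(\mathsf{D}_6)|=126$ cosets (the type-$1$ vertices of an apartment, equivalently the roots of $\mathsf{E}_7$), not $56$; the number $56$ counts type-$7$ vertices, i.e.\ points. The count of four nontrivial $(W_J,W_J)$-double cosets is nonetheless correct --- they correspond to the $W_J$-orbits of roots at angles $60^\circ$, $90^\circ$, $120^\circ$, $180^\circ$ to a fixed root, of sizes $32$, $60$, $32$, $1$ --- so the skeleton survives. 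Second, the geometric identification is where the substance lies, and your sketch defers it: the intersection $\Sigma\cap\Sigma'$ is a set of type-$7$ vertices of the whole building, and listing the type-$7$ vertices of one apartment incident with both $v$ and $v'$ only bounds it from below; you need an additional argument (convexity of symplecta, or a projection computation showing that the double coset of $\delta(v,v')$ determines $\proj_v(\mathrm{Res}(v'))$ and hence the common point-shadow) to conclude the global intersection is no larger. Similarly, in case (iii) the uniqueness of a symplecton adjacent to both must hold in the building, not merely in the chosen apartment, so the remark about a minimal gallery factoring uniquely does not yet deliver it; and in case (iv) the claim that $\proj_v$ sends each point of $\Sigma'$ to a \emph{collinear} point of $\Sigma$ also needs a (routine) verification. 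None of these is fatal, but together they are precisely the ``tedious'' part, and if you do not intend to carry them out, the honest move is the one the paper makes: quote the classification from the theory of $\mathsf{E}_{7,7}$ parapolar spaces.
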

 
 \begin{fact}\label{fact3E7}
Let $p$ be a point and let $U$ be a $5$-space. Then exactly one of the following occurs.
\begin{compactenum}[$(i)$]
\item $p\in U$. 
\item $p\notin U$ and $p$ is collinear to all points of a (unique) $4$-space in $U$.
\item $p$ is collinear to all points of a unique plane in $U$. 
\item $p$ is collinear with a unique point of $U$.
\item $p$ is symplectic to every point of a unique $4$-space of $U$ and opposite the other points of $U$. 
\end{compactenum} 
\end{fact}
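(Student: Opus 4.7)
The plan is to classify the mutual positions of $p$ and $U$ by first analyzing the singular subspace $U_p := p^\perp \cap U$ of $U$, and then describing how the points of $U \setminus U_p$ split between those symplectic to $p$ and those opposite $p$ (these being the only alternatives, since $\mathfrak{P}$ has diameter~$3$). Since $U$ is singular, $U_p$ is a (possibly empty) subspace of the $5$-dimensional projective space $U$, so its dimension $d$ lies a priori in $\{-1,0,1,2,3,4,5\}$. The goal is to show that only the values $d \in \{-1,0,2,4,5\}$ occur, corresponding to cases (v), (iv), (iii), (ii), (i) respectively, and to verify the detailed geometric descriptions in each of these cases.

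I would first dispose of the extremes. For $d = 5$, since $U$ is maximal as a $5$-space of $\mathsf{E}_{7,7}$, $p$ being collinear with all of $U$ forces $\langle p, U\rangle$ to be a singular subspace of dimension at most $5$, so $p \in U$ (case~(i)). For $d = -1$, every point of $U$ is either symplectic or opposite $p$; picking any symplectic $q \in U$, the symplecton $\Sigma(p,q)$ meets $U$ in a subspace contained entirely in the symplectic locus, and by applying Fact~\ref{fact1E7}(ii) to $(p, \Sigma(p,q))$ and varying $q$, one identifies this locus as a unique $4$-space of $U$, with the complementary points opposite $p$; this gives case~(v).

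The heart of the argument is to rule out $d \in \{1,3\}$ and to confirm the descriptions for $d \in \{0,2,4\}$. My preferred approach is the building interpretation: since $U$ corresponds to a type-$3$ vertex of $\Delta$ with residue of type $\mathsf{A}_1 \times \mathsf{D}_5$, the orbits of $\mathrm{Stab}(U)$ on the type-$7$ vertices outside $U$ are indexed by the double cosets $W_{S\setminus\{3\}} \backslash W / W_{S\setminus\{7\}}$, and a direct Weyl-group computation in $W(\mathsf{E}_7)$ produces exactly five such double cosets, which match the five cases of the statement. Each orbit is then identified geometrically using Fact~\ref{fact1E7} and Fact~\ref{fact2E7}: for example, in case~(iv) with $U_p = \{x\}$, one shows that the hyperplane of $U$ through $x$ consisting of points collinear with $x$ lies in a symplecton close to $p$, so those points are symplectic to $p$ by way of $x$, while the remaining points of $U$ are opposite $p$; cases~(ii) and~(iii) are handled similarly using close and far symplecta on $p$ as supplied by Fact~\ref{fact1E7}.

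The main obstacle is this orbit matching: while the double-coset count is mechanical, identifying each double coset with its precise geometric description (the dimension of $U_p$ together with the symplectic/opposite split on $U \setminus U_p$) requires either careful apartment arguments or iterated use of Facts~\ref{fact1E7}--\ref{fact2E7}. A more elementary alternative, if the apartment approach becomes unwieldy, is to argue directly that $d = 1$ and $d = 3$ are incompatible with Fact~\ref{fact1E7}(ii): any symplecton through $p$ that is close to $U$ would impose a collinearity pattern on $U$ forcing $U_p$ to contain either a plane or a $4$-space, never just a line or a solid. I would combine this with the observation that $U_p$ is a subspace and with a direct enumeration of the ways a $5'$-space of a symplecton on $p$ can meet $U$ to complete the classification.
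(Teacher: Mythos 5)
You should first note that the paper does not prove this statement at all: Facts~\ref{fact1E7}--\ref{fact3E7} are recalled as standard background on the parapolar space of type $\mathsf{E}_{7,7}$ (see \cite{Shu:10}), so there is no in-paper argument to compare with, and your proposal has to stand on its own. Your overall plan --- stratify by $d=\dim(p^\perp\cap U)$, dispose of $d=5$ by maximality of $U$, exclude $d\in\{1,3\}$, and describe the symplectic/opposite split in the remaining cases --- is the natural route, and the building-theoretic idea (relative positions of a type~$3$ and a type~$7$ vertex correspond to double cosets $W_{S\setminus\{3\}}\backslash W/W_{S\setminus\{7\}}$) is legitimate in principle. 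The $d=5$ step and the observation that in case (iv) every point of $U\setminus\{x\}$ is automatically symplectic to $p$ (via $u\perp x\perp p$) are fine, although your phrase ``the hyperplane of $U$ through $x$ consisting of points collinear with $x$'' is confused, since $U$ is singular and all of $U$ is collinear with $x$.

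However, the steps carrying the actual content are either deferred or flawed. First, the double-coset count of five is asserted rather than computed, and the identification of each double coset with its geometric description --- which you yourself flag as the main obstacle --- is exactly where the proof lies; as written nothing rules out $d\in\{1,3\}$. Your ``elementary alternative'' does not close this: ``a symplecton through $p$ close to $U$'' is not a defined relation (closeness in Fact~\ref{fact1E7} is between a point and a symplecton), and the claim that such a symplecton forces $p^\perp\cap U$ to contain a plane or a $4$-space is precisely the statement to be proved, not a consequence of Facts~\ref{fact1E7}--\ref{fact2E7} as stated. Second, in the case $d=-1$ you apply Fact~\ref{fact1E7}(ii) to the pair $(p,\Sigma(p,q))$, but $p$ is incident with $\Sigma(p,q)$, so that clause does not apply; more importantly, you never exclude the a priori possible configuration in which \emph{every} point of $U$ is symplectic to $p$ (i.e.\ that the non-opposite locus meets $U$ in all of $U$ rather than in a hyperplane of $U$), nor do you show that the symplectic locus is a full $4$-space rather than a smaller subspace. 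Without these points case (v), and with it the five-way classification, is not established.
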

 
From now on we assume that the point $p$ of $\mathfrak{P}$ is non-domestic, and put $p'=p^\theta$. We have to show that every $5$-space is domestic. Note that two $5$-spaces are opposite if and only if every point of one $5$-space is symplectic to all points of a unique $4$-space of the other $5$-space and opposite the other points of the other $5$-space. In particular, if two $5$-spaces contain two respective collinear points, then they are not opposite.

We begin with an easy lemma.

\begin{lemma}\label{lemma1E7}
The mapping $\theta_p$ is a symplectic polarity in the residue of $p$. 
\end{lemma}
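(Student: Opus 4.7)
\textbf{Proof plan for Lemma~\ref{lemma1E7}.} My plan is to split the argument into three pieces: first identify the combinatorial nature of $\theta_p$, then rule out non-domesticity on the residue, then eliminate non-absolute points.

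First I would observe that $\theta_p$ is a duality of the $\mathsf{E}_6$ building $\mathrm{Res}(p)$. Indeed, since $p$ has type $7$ and $w_0$ acts trivially on the type set of $\mathsf{E}_7$, Proposition~\ref{prop:typemap} gives that the type map induced by $\theta_p$ on $S\setminus\{7\}$ is $w_{\{1,2,3,4,5,6\}}\circ w_0 \circ \mathrm{id} = w_{\mathsf{E}_6}$, which is the nontrivial diagram automorphism of $\mathsf{E}_6$ swapping $1\leftrightarrow 6$ and $3\leftrightarrow 5$.

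Next I would show $\theta_p$ is $\{3\}$-domestic on $\mathrm{Res}(p)$, and hence (by Lemma~\ref{lem:opbasic} applied to the duality $\theta_p$) also $\{5\}$-domestic. If some type-$3$ vertex $v$ of $\mathrm{Res}(p)$ were mapped to an opposite vertex by $\theta_p$, then Proposition~\ref{prop:proj} would produce a type-$\{3,7\}$ simplex $\{v,p\}$ of $\Delta$ mapped to an opposite by $\theta$, contradicting the $\{3,7\}$-domesticity assumption. The same argument, applied to an arbitrary chamber of $\mathrm{Res}(p)$, shows that $\theta_p$ is domestic: any chamber $C$ of $\mathrm{Res}(p)$ in $\mathrm{Opp}(\theta_p)$ would, together with $p$, yield a chamber of $\Delta$ in $\mathrm{Opp}(\theta)$, whose type-$\{3,7\}$ subsimplex contradicts the hypothesis.

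The main step is then to establish that $\theta_p$ is a symplectic polarity of the $\mathsf{E}_6$ residue, which I interpret concretely as the assertion that $\theta_p$ is $\{1\}$-domestic (equivalently, $\{6\}$-domestic, by the duality). Suppose for contradiction that some type-$1$ vertex $q$ of $\mathrm{Res}(p)$ is mapped to an opposite by $\theta_p$. Then by Proposition~\ref{prop:proj} the type-$\{1,7\}$ simplex $\sigma=\{q,p\}$ of $\Delta$ is mapped to an opposite by $\theta$. The residue $\mathrm{Res}(\sigma)$ is a $\mathsf{D}_5$ building on the subdiagram $\{2,3,4,5,6\}$ (branch point $4$, short arms $\{2\},\{3\}$, long arm $4\!-\!5\!-\!6$), and by Proposition~\ref{prop:typemap} the induced automorphism $\theta_\sigma$ is a duality of this $\mathsf{D}_5$ (exchanging the short-arm types $2$ and $3$). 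Proposition~\ref{prop:proj} again gives that $\theta_\sigma$ is $\{3\}$-domestic, and Lemma~\ref{lem:opbasic}(1) then upgrades this to $\{2\}$-domesticity. By Proposition~\ref{prop:DDual}, the duality $\theta_\sigma$ is not $\{1\}$-domestic in the $\mathsf{D}_5$ labelling, i.e.\ it maps some type-$6$ vertex $r$ (the end of the long arm) to an opposite, yielding a type-$\{1,6,7\}$ simplex $\tau=\{q,r,p\}$ of $\Delta$ in $\mathrm{Opp}(\theta)$.

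To finish, I would iterate into $\mathrm{Res}(\tau)$, a $\mathsf{D}_4$ building on $\{2,3,4,5\}$ on which $\theta_\tau$ is a non-trivial collineation (the longest element of $\mathsf{D}_4$ being central, so no diagram automorphism is induced), constrained by $\{3,7\}$-domesticity to be $\{3\}$-domestic. Combining the surviving constraints with the geometric description of $\mathsf{E}_{7,7}$ and the position of the non-domestic point $p$ relative to the symplecton corresponding to $q$ (via Facts~\ref{fact1E7}--\ref{fact3E7} and a projection analysis analogous to Lemmas~\ref{lemma2F4}--\ref{lemma3F4} in the $\mathsf{F}_4$ case) forces a configuration incompatible with both $p\in\mathrm{Opp}(\theta)$ and $\theta$ being $\{3,7\}$-domestic. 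The hard part is precisely this final contradiction: residue reductions alone do not close the argument because the induced $\mathsf{D}_4$ collineation has too much freedom, so the geometric parapolar-space structure of $\mathsf{E}_{7,7}$ must be invoked to rule out the existence of $q$ and conclude that $\theta_p$ is $\{1\}$-domestic, hence the desired symplectic polarity.
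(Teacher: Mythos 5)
Your proposal is incomplete, and you say so yourself: the last paragraph is an honest admission that residue reductions do not close the argument and that some unspecified geometric contradiction ``must be invoked.'' That is the gap. The paper's proof of this lemma is essentially a one-liner: $\{3,7\}$-domesticity of $\theta$ immediately gives that $\theta_p$ is a duality of the $\mathsf{E}_6$ residue which is domestic on type~$3$ vertices, and then one simply \emph{cites} the classification result of~\cite{HVM:12} (a $\{3\}$-domestic duality of a large $\mathsf{E}_6$ building is a symplectic polarity), with a parenthetical handling $\mathsf{E}_6(2)$ by ruling out the exceptional domestic duality via its failure of $\{3\}$-domesticity. The point is that Lemma~\ref{lemma1E7} is meant to be the easy initialization step, and the geometric work in the parapolar space $\mathfrak{P}$ (Facts~\ref{fact1E7}--\ref{fact4E7}, Lemmas~\ref{lemma2E7}--\ref{lemma9E7}) comes \emph{after}, using the conclusion of Lemma~\ref{lemma1E7} as a starting point. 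You are instead trying to reprove the cited $\mathsf{E}_6$ classification from scratch inside this lemma, via $\{1\}$-domesticity, which is both harder and not what the paper needs here.

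Two smaller points. First, your step ``hence (by Lemma~\ref{lem:opbasic}) also $\{5\}$-domestic'' does not follow from Lemma~\ref{lem:opbasic}: that lemma is about $w_0\circ\theta$-stability of type sets, and since $w_0\circ\theta_p$ acts trivially on $\{1,\ldots,6\}$ it puts no constraint relating types $3$ and $5$. (The $\{5\}$-domesticity of $\theta_p$ is true, but it comes from domesticity together with Lemma~\ref{lem:exceptional}(2), not from Lemma~\ref{lem:opbasic}; it is also not used anywhere in your argument.) Second, your interpretation of ``symplectic polarity'' as ``$\{1\}$-domestic duality'' is itself a theorem (essentially~\cite{HVM:12} again), not a definition, so even if you did manage to prove $\{1\}$-domesticity you would still be implicitly invoking the very classification the paper cites --- you would just be entering it through a different door, and at greater cost, since $\{3\}$-domesticity of $\theta_p$ is immediate from the hypothesis while $\{1\}$-domesticity is not.
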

 
 \begin{proof}
The $\{3,7\}$-domesticity immediately implies that no $4$-space of the residue of $p$ is mapped under $\theta_p$ to an opposite line of the residue. Thus $\theta_p$ is a duality of the $\sE_6$ residue, domestic on type $3$ vertices, and it follows from \cite{HVM:12} that $\theta_p$ is a symplectic polarity. (In fact, as mentioned above, the main results of \cite{HVM:12} only apply to large $\sE_6$ buildings since the existence of the exceptional domestic duality of the Fano plane was overlooked in \cite{HVM:12}. In the case of the small building $\sE_6(2)$ it is easy to see that every domestic duality is either a symplectic polarity or an exceptional domestic duality, see \cite{PVM:17b} for details, and since the latter is not $\{3\}$-domestic the result claimed above holds in this building too.) 
 \end{proof}
 
The previous lemma implies that there are two types of symplecta through $p$: (1) a symplecton $\Sigma$ of the first kind satisfies $\Sigma^{\theta_p}\subseteq\Sigma$, (2) a symplecton $\Sigma$ of the second kind satisfies $\Sigma^{\theta_p}\cap\Sigma=\{p\}$. This also defines two types of lines through $p$: (1) lines of the first kind are lines $L$ for with $L\subseteq L^{\theta_p}$ and (2) lines $L$ of the second kind satisfy $L\cap L^{\theta_p}=\{p\}$. 

The well know general properties of symplectic polarities in geometries of type $\mathsf{E}_{6,1}$ imply the following facts (see \cite{HVM:12}).

\begin{fact}\label{fact4E7} As above, let $p$ be a non-domestic point of $\mathfrak{P}$. 
\begin{compactenum}[$(i)$]
\item Let $L$ be a line through $p$ such that $L\subseteq \Sigma:=L^{\theta_p}$. Then all lines of $\Sigma$ through $p$ contained in a common plane with $L$ are of the first kind, while all other lines through $p$ in $\Sigma$ are of the second kind.\item If a plane through $p$ contains at least two lines through $p$ of the first kind, then all lines of that plane through $p$ are of the first kind. Dually, if two distinct symplecta sharing a $5$-space through $p$ are of the first kind, then all symplecta containing that $5$-space are of the first kind. \item Every plane through $p$ contains at least one line through $p$ of the first kind. Dually, every $5$-space through $p$ is contained in at least one symplecton of the first kind.  
\end{compactenum}
\end{fact}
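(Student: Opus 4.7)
The plan is to transfer everything to the residue $\Res(p)$, which by Lemma~\ref{lemma1E7} is a thick $\sE_6$ building on which $\theta_p$ acts as a symplectic polarity. Under the standard identifications, type~$6$ vertices of $\Res(p)$ are the lines of $\mathfrak{P}$ through $p$, type~$1$ vertices are the symplecta of $\mathfrak{P}$ through $p$, and the involution $\theta_p$ interchanges these two classes. A line (respectively symplecton) through $p$ is of the first kind precisely when the corresponding vertex $v$ is \emph{absolute} for $\theta_p$, meaning $v$ is incident with $v^{\theta_p}$. All three parts of the statement are then structural properties of the absolute-vertex geometry of a symplectic $\sE_6$-polarity; the strategy is to recognise each as a translation of a result of~\cite{HVM:12}.

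For (i), fix an absolute line $L$ through $p$ and set $\Sigma := L^{\theta_p}$. For a second line $M$ through $p$ with $M\subseteq \Sigma$, applying the involution $\theta_p$ to the incidence $M\subseteq \Sigma$ yields $L\subseteq M^{\theta_p}$, so both $L$ and $M$ lie in the symplecton $M^{\theta_p}$. Thus $M$ is of the first kind if and only if $M\subseteq M^{\theta_p}$, which, by Fact~\ref{fact1E7} and the fact that two intersecting singular lines in a polar space span a unique plane, is equivalent to $L$ and $M$ lying in a common singular plane inside $M^{\theta_p}$; this plane then automatically lies in $\Sigma$ as well. The first-kind lines through $p$ in $\Sigma$ are therefore precisely those sharing a plane through $p$ with $L$.

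For (ii), the lines through $p$ in a plane $\pi$ form a rank~$1$ residue in $\Res(p)$, and $\theta_p$ maps this pencil bijectively onto the pencil of symplecta containing the $5$-space $\pi^{\theta_p}$. A line $L$ of this pencil is absolute exactly when it is incident with its image, and a direct analysis of this map together with the symplectic nature of $\theta_p$ yields that the absolute members of the pencil form either a single line or the whole pencil; two absolute lines therefore force all of them. The dual assertion is symmetric under swapping types~$1$ and~$6$ (and $3$ and~$5$). For (iii), the existence of at least one absolute line in each plane through $p$ (and, dually, of at least one absolute symplecton through each $5$-space) follows from the global abundance results in \cite{HVM:12} for the absolute-vertex set of a symplectic $\sE_6$-polarity. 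The main technical obstacle is the equivalence in (i): matching coplanarity of two intersecting singular lines inside a symplecton with the residue-level notion of being jointly incident with a common plane through $p$, a translation that rests on Fact~\ref{fact1E7} and the uniqueness of singular planes spanned in parapolar spaces.
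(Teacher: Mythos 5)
The paper itself states Fact~\ref{fact4E7} as a consequence of ``well known general properties of symplectic polarities in geometries of type $\mathsf{E}_{6,1}$'' and refers to~\cite{HVM:12} without giving a proof, so your high-level strategy (pass to $\Res(p)$, interpret first-kind elements as absolute elements of the $\sE_6$ symplectic polarity, and invoke~\cite{HVM:12}) is consistent with the paper. However, the detailed argument you give for part~(i) contains a genuine error. You write that $L\subseteq M^{\theta_p}$ implies ``both $L$ and $M$ lie in the symplecton $M^{\theta_p}$'': this is unjustified, since $M\subseteq M^{\theta_p}$ is precisely the definition of $M$ being of the first kind, which is the very thing to be characterised. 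And the asserted equivalence then rests on ``the fact that two intersecting singular lines in a polar space span a unique plane'', which is false in rank at least~$3$: in $\mathsf{Sp}_6(\FF)$, for instance, the lines $\langle e_1,e_2\rangle$ and $\langle e_1,f_2\rangle$ meet at $\langle e_1\rangle$ but span a non-singular $3$-space. The symplecta of $\sE_{6,1}$ are polar spaces of type $\sD_5$, rank~$5$, so this failure mode occurs. Consequently the forward implication ($M$ first kind $\Rightarrow$ $L,M$ coplanar) is not established by your argument, and even the reverse implication would still require showing that the plane spanned by $L,M$ lies in $M^{\theta_p}$ rather than merely in $\Sigma$. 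Parts~(ii) and~(iii) are sketched by appeal to ``a direct analysis'' and ``global abundance results'', which is acceptable as a pointer to~\cite{HVM:12}, but does not rescue (i). To fix the argument you would need to use the specific hyperplane structure of the set of absolute points of a symplectic $\sE_6$-polarity (the identification of the absolute geometry with a metasymplectic $\sF_4$ space), as developed in~\cite{HVM:12}, rather than generic polar-space coplanarity.
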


\begin{lemma}\label{lemma2E7}
Let $L\ni p$ be any line of the first kind. Let $q$ be the unique point of $L$ not opposite~$p'=p^{\theta}$. Let $q'$ be the projection of $p$ onto $L^\theta$. Then $q\perp q'=q^\theta$. In particular, all points of $L\setminus\{q\}$ are non-domestic.
\end{lemma}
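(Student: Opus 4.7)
The plan is to locate $p':=p^\theta$ relative to the symplecton $\Sigma:=L^{\theta_p}$ (which contains $L$ because $L$ is of the first kind), and then to identify $q^\theta$ by a symmetric analysis combined with a uniqueness argument. Since $p\in\Sigma$ and $p$ is opposite~$p'$, Fact~\ref{fact1E7} forces $p'$ to be far from $\Sigma$: the alternatives ($p'\in\Sigma$ or $p'$ close to $\Sigma$) would place $p$ and $p'$ at parapolar distance at most~$2$. Let $x\in\Sigma$ denote the unique point of $\Sigma$ collinear to $p'$.

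With $x$ in hand, $q$ is identified combinatorially. The line $L\subseteq\Sigma$ passes through $p\notin x^\perp$ (otherwise $p\perp x\perp p'$ would give distance~${\leq}2$), so $L$ meets the geometric hyperplane $x^\perp\cap\Sigma$ of the polar space $\Sigma$ in exactly one point; by the far-case description in Fact~\ref{fact1E7} this is the unique point of $L$ not opposite~$p'$, hence equals~$q$, and in particular $q\perp x$. Applying exactly the same analysis to the symplecton $\Sigma^\theta$ (which contains $p'$ and $L^\theta$) and to the point $p$ opposite~$p'$, we obtain a unique point $y\in\Sigma^\theta$ with $y\perp p$, and the projection $q'$ of $p$ onto~$L^\theta$ satisfies $q'=L^\theta\cap y^\perp$.

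The key step is to prove $x^\theta=y$: once this is established, $q^\theta\perp x^\theta=y$ (which is immediate from $q\perp x$) forces $q^\theta\in L^\theta\cap y^\perp=\{q'\}$, giving $q^\theta=q'$, and the collinearity $q\perp q'$ then follows by tracing $q\perp x$ and its $\theta$-image inside the polar-space structure. Since $x^\theta\in\Sigma^\theta$ is automatic and $y$ is characterised as the unique point of $\Sigma^\theta$ collinear to~$p$, it suffices to show $x^\theta\perp p$. Here I would exploit the first-kind hypothesis: $L\subseteq L^{\theta_p}$ means $L$ is not opposite $L^{\theta_p}$ in the residue of~$p$, so by Proposition~\ref{prop:proj} the line $L$ is not opposite $L^\theta$ in $\Delta$, and unpacking this non-opposition in the $\mathsf{E}_{7,7}$ parapolar space using the symplectic-polarity structure of $\theta_p$ from Lemma~\ref{lemma1E7} forces $x^\theta\perp p$.

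For the final assertion, fix $y\in L\setminus\{q\}$; by the above $y$ is opposite $p'$, and the symmetric argument gives $y^\theta\in L^\theta\setminus\{q'\}$ opposite~$p$. If $y$ were not opposite~$y^\theta$, then $y$ and $y^\theta$ would lie in a common symplecton $\Omega$, and a case analysis (according to whether $p$ is contained in, close to, or far from $\Omega$, in the spirit of the end of the proof of Lemma~\ref{lemma2F4}) contradicts either $y$ opposite~$p'$ or $y^\theta$ opposite~$p$. Hence $y$ is opposite~$y^\theta$, i.e.\ non-domestic. The main obstacle I foresee is the identification $x^\theta=y$: converting the building-level non-opposition provided by Proposition~\ref{prop:proj} into the precise parapolar collinearity $x^\theta\perp p$ requires careful bookkeeping of projections, and is where the symplectic-polarity structure of $\theta_p$ must play its essential role.
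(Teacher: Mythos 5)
Your setup---locating $p'$ far from $\Sigma:=L^{\theta_p}$ via Fact~\ref{fact1E7} and identifying $q=L\cap x^{\perp}$---is sound, and the logical reduction ``$x^\theta=y\Rightarrow q^\theta=q'$'' is valid. But the proof of the key step is missing: you observe that the first-kind hypothesis gives that $L$ is not opposite $L^\theta$ in $\Delta$ (which is immediate and weak), and then assert that ``unpacking'' this non-opposition together with the symplectic-polarity structure of $\theta_p$ forces $x^\theta\perp p$. That unpacking \emph{is} the substance of the lemma, and you offer no argument for it; as you yourself flag, this is ``the main obstacle.'' As written, the proposal therefore does not contain a proof.

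The gap is more serious than a missing computation because your point $x$ is in fact equal to $q'$. Indeed $p'=p^\theta$ lies on $L^\theta$ (since $p\in L$), and $q'=\Sigma\cap L^\theta$ is a point of $\Sigma$ distinct from $p'$ (as $p'\notin\Sigma$, because $p$ and $p'$ are opposite), hence $q'\perp p'$; by the uniqueness of $x$ this gives $x=q'$. (Incidentally this makes $q\perp q'$ immediate from $q=L\cap x^{\perp}$; your later remark that $q\perp q'$ must still be ``traced'' through the $\theta$-image of $q\perp x$ is a sign the identification $x=q'$ was not noticed.) With $x=q'$, your key claim $x^\theta=y$ reads $q'^\theta=y$, which---granting the lemma's conclusion $q^\theta=q'$---is a claim about $q^{\theta^2}$. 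Since $\theta$ is not assumed to have any particular order, there is no evident reason that $q^{\theta^2}$ should be the unique point of $\Sigma^\theta$ collinear with $p$, nor any direct route from the first-kind hypothesis or Lemma~\ref{lemma1E7} to such a statement. The paper pins down $q^\theta$ by an entirely different mechanism: having established $q\perp q'$ as above, it fixes a plane $\pi\subseteq\Sigma$ through $L$ and a second line $K\subseteq\pi$ through $p$, uses the involutivity of $\theta_p$ (as a symplectic polarity) to get $L\subseteq K^{\theta_p}$ and thus the analogous pair $(r,r')$ on $(K,K^\theta)$, and then shows, with the help of an auxiliary third point $u\in K$, that the two lines $(qr)^\theta$ and $q'r'$ in the plane $\pi^\theta$ must coincide, which forces $q^\theta=q'$. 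Your proposal does not engage with this step, and no shortcut of the kind you sketch appears to be available.
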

 
\begin{proof}
Since $L$ is of the first kind, the projection $\Sigma$ of $L^\theta$ onto $p$ contains the line $L$ and intersects $L^\theta$ in the point $q'$. We see that inside $\Sigma(p,q')$, there is a unique point $q$ of $L$ collinear with $q'$, and it is the projection of $p'$ onto $L$, since it is symplectic to $p'$. 

Consider any plane $\pi$ in $\Sigma(p,q')$ containing $L$. Let $K$ be a line of $\pi$ containing $p$ and distinct from $L$. Since $\theta_p$ is a polarity (and hence has order 2), the image $K^{\theta_p}$ contains the image $(\Sigma(p,q')^{\theta_p})^{\theta_p}=L$. As before, we obtain points $r$ on $K$ and $r'$ on $K^\theta$ with $r$ and $r'$ collinear. The symp $\Sigma(p,r')$ contains $\pi$. Obviously $L^\theta$ and $K^\theta$ are contained in a plane, namely $\pi^\theta$. It is the projection of $\Sigma(p,q')\cap\Sigma(p,r')$ (which is a $5$-space) onto $p^\theta$. If the line $(qr)^\theta$ coincides with $q'r'$, then the lemma follows. Suppose now $(qr)^\theta\neq q'r'$. Then $(qr)^\theta$ intersects $q'r'$ in some point, and without loss of generality we may assume $r'=  (qr)^\theta\cap q'r'$. Let $u$ be any point in $K\setminus\{p,r\}$. Then $u$ and $u^\theta$ are opposite and applying the foregoing to $u$ instead of $p$ yields $(uq)^\theta=u^\theta q'$. Hence $q^\theta=(pq\cap uq)^\theta=p^\theta q'\cap u^\theta q'=q'$, implying $(qr)^\theta=q'r'$ after all.  
\end{proof}

\begin{lemma}\label{lemma3E7}
Let $\Sigma$ and $\Sigma'$ be two symplecta intersecting in a line $L$. Let $u\in\Sigma$ and $u'\in\Sigma'$ be two points. Then $u$ and $u'$ are opposite if and only if no point on $L$ is collinear with both. Also, if $u$ and $u'$ are collinear with the same unique point of $L$, then they are symplectic.
\end{lemma}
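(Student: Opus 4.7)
The plan is as follows. The forward direction is immediate: if $u$ is opposite $u'$, they lie at distance~$3$ in the collinearity graph of $\mathfrak{P}$, so no point can be collinear with both, and in particular no point of~$L$.

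For the converse I may assume $u\notin L$ and $u'\notin L$ (if $u\in L$ then $u\in\Sigma'$, so $u$ and $u'$ already lie in a common symplecton and cannot be opposite; moreover the polar-space ``one-or-all'' axiom applied to $u'$ and $L$ in $\Sigma'$ immediately produces a point of $L$ collinear with both, so the lemma's hypothesis would already fail). Assuming no point of $L$ is collinear with both $u$ and $u'$, the ``one-or-all'' axiom in $\Sigma$ and $\Sigma'$ forces $u^\perp\cap L=\{\alpha\}$ and $(u')^\perp\cap L=\{\beta\}$ for unique points $\alpha,\beta\in L$ with $\alpha\neq\beta$ (otherwise a chosen element of whichever perp is all of $L$ together with the singleton provides a common neighbour on $L$).

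I now apply Fact~\ref{fact1E7} to the point $u'$ and the symplecton $\Sigma$. Since $u'\notin\Sigma$, either $u'$ is \emph{far from} $\Sigma$, in which case $(u')^\perp\cap\Sigma=\{\beta\}$ and every point of $\Sigma$ not in $\beta^\perp$ is opposite $u'$; combined with $\alpha\neq\beta$, which forces $u\not\perp\beta$, this yields that $u$ is opposite $u'$ and gives the converse. The crux is therefore to exclude the \emph{close} case, in which $(u')^\perp\cap\Sigma=U'$ is a $5'$-space of $\Sigma$ with $L\cap U'=\{\beta\}$, and $W:=\langle U',u'\rangle$ is the unique $6$-space through $U'$ and $u'$. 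Since $\Sigma'$ contains both $u'$ and $\beta\in U'\subseteq W$, the line $u'\beta$ lies in the singular subspace $W\cap\Sigma'$ of $\Sigma'$; the plan is to combine the uniqueness part of Fact~\ref{fact1E7} with the polar-pair structure $\Sigma\cap\Sigma'=L$ and a careful analysis of this intersection to produce either a point of $L$ other than $\beta$ in $(u')^\perp$, or one other than $\alpha$ in $u^\perp$, in either case contradicting the hypothesis. This structural verification is the main obstacle; everything else is essentially bookkeeping.

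For the \emph{also} statement, assume $u$ and $u'$ are both collinear with the same unique point $\alpha\in L$. Then $u\neq u'$ (since $u\in\Sigma\setminus L$ and $u'\in\Sigma'\setminus L$), and the path $u\perp\alpha\perp u'$ places $u$ and $u'$ at collinearity-distance at most~$2$. To conclude they are symplectic I must rule out $u\perp u'$: if $u\perp u'$, the three pairwise collinear points $u,u',\alpha$ span a singular plane $\pi$ of $\mathfrak{P}$, whence the line $u\alpha\subseteq\pi$ lies in $(u')^\perp\cap\Sigma$. By Fact~\ref{fact1E7} this forces $u'$ into the close position relative to $\Sigma$ with $(u')^\perp\cap\Sigma\supseteq u\alpha$, and the same structural analysis used in the close case above yields a point of $L\setminus\{\alpha\}$ in $(u')^\perp$, contradicting the uniqueness of $\alpha$. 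Hence $u\not\perp u'$, so $u$ and $u'$ are at collinearity-distance exactly~$2$, and the strong-parapolar-space axiom then provides a unique common symplecton, showing that $u$ and $u'$ are symplectic.
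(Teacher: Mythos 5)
Your forward direction and the reduction to $u,u'\notin L$, $u^\perp\cap L=\{\alpha\}$, $(u')^\perp\cap L=\{\beta\}$ with $\alpha\neq\beta$ are sound, and the \emph{far} case of Fact~\ref{fact1E7} applied to $(u',\Sigma)$ does give oppositeness. But the \emph{close} case --- which you yourself describe as ``the crux'' and ``the main obstacle'' --- is left entirely as a sketch, and this is a genuine gap: all the content of the lemma sits there. In the close case one has $u\notin U':=(u')^\perp\cap\Sigma$ (since $u\in U'$ would give $u\perp\beta$, hence $\alpha=\beta$), so by Fact~\ref{fact1E7}(ii) $u$ is \emph{symplectic} to $u'$, and one must actually produce a point of $L$ collinear with both. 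Your proposed route --- analysing $W\cap\Sigma'$ where $W=\langle u',U'\rangle$ is the $6$-space --- does not get there on its own: $(W\cap\Sigma')\cap U'=W\cap L=\{\beta\}$, while $U'$ is a hyperplane of $W$ not containing $u'$, so $W\cap\Sigma'$ is exactly the line $u'\beta$ and no new point of $L$ emerges from this intersection. The ``also'' part defers to the same unverified step.

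The paper proceeds quite differently and does the work you defer. It argues by contraposition and splits on $u\perp u'$ versus $u$ symplectic to $u'$; the collinear case is dismissed by convexity ($u\perp u'$ would force $u'\in\Sigma(u,v')=\Sigma$, where $v'$ is the projection of $u'$ onto $L$). For the symplectic case the paper introduces a common neighbour $z\notin\Sigma\cup\Sigma'$ of $u$ and $u'$ and applies Fact~\ref{fact1E7} to $(z,\Sigma)$ rather than to $(u',\Sigma)$: the far position for $z$ gives a distance contradiction via $z\perp u'\perp v'$, and the close position constructs a $4$-space inside $\Sigma(v',z)$ containing a point collinear with both $v$ and $u'$, which must then lie on $L$ --- precisely the common neighbour you need. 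You would have to carry out an argument of this kind for your close case (or show that case cannot occur); as written, the proposal is incomplete.
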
 
 
\begin{proof}
Clearly if some point on $L$ is collinear with both $u$ and $u'$, then $u$ and $u'$ cannot be opposite as they have distance at most~$2$. Now suppose $u$ and $u'$ are not opposite and at the same time not collinear a common point on $L$. Then they have a unique projection $v$ and $v'$, respectively, on $L$, with $v\neq v'$. If $u\perp u'$, then $\Sigma=\Sigma(u,v')=\Sigma(u',v)$ would contain $u'$, a contradiction. Suppose $u$ and $u'$ are symplectic. Let $z$ be a point collinear to both. Then $z\notin\Sigma\cup\Sigma'$. If $u$ were the only point of $\Sigma$ collinear with $z$, then by Fact~\ref{fact1E7}, $v'$ is opposite $z$, clearly absurd as $z\perp u'\perp v'$. Hence $z$ is close to $\Sigma$ and collinear to all points of some $5'$-space $U$ of $\Sigma$.  As $v'$ is not collinear with $u$, the set of points of $U$ collinear with $v'$ is a $4$-space $W$.  Then $W$ and $u'$ belong to $\Sigma(v',z)$. Hence some $w\in W$ is collinear with both $v$ and $u'$ and thus belongs to $\Sigma(u',v)=\Sigma'$, a contradiction. 

Now suppose that $u$ and $u'$ are collinear with the same point $r$ on $L$ and not with any other point of $L$. Suppose $u\perp u'$. Then $u'$ is collinear to all points of a $5'$-space of $\Sigma$, and at least one such point $t\notin L$ is collinear to all points of $L$. If $r'$ is such point distinct from $r$, then $t\in\Sigma(r',u')=\Sigma'$, a contradiction.
\end{proof}

\begin{cor}\label{cor1E7}
Let $K$ be a line of $\mathfrak{P}$ all of whose points are symplectic to $p$. Then there exists a unique $5$-space $U$ such that the set of symplecta containing $U$ coincides with the set of symplecta containing $p$ and a point of $K$.
\end{cor}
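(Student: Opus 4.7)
The plan is to pick two distinct points $k_1, k_2$ of $K$, to identify the intersection $\Sigma_1 \cap \Sigma_2$ of the associated symplecta $\Sigma_i := \Sigma(p,k_i)$ as the desired $5$-space $U$, to give $U$ an intrinsic description depending only on $p$ and $K$, and then to match the two sets of symplecta via the injection $k \mapsto \Sigma(p,k)$. First I would verify $\Sigma_1 \ne \Sigma_2$: otherwise $K$ would lie in $\Sigma_1$ (having two points there) and must meet the geometric hyperplane $p^\perp \cap \Sigma_1$, producing a point of $K$ collinear with $p$, contrary to the hypothesis. Applying Fact~\ref{fact1E7} to $k_2$ and $\Sigma_1$, I rule out that $k_2$ is far from $\Sigma_1$ (otherwise the unique point of $\Sigma_1$ collinear with $k_2$ would be $k_1$, forcing $p \in \Sigma_1$ to be opposite $k_2$, contrary to $p$ being symplectic to $k_2$), so $k_2$ is close to $\Sigma_1$ and $V := k_2^\perp \cap \Sigma_1$ is a $5'$-space containing $k_1$ (since $k_1 \perp k_2$) and hence a singular subspace of $\Sigma_1$.

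Next I would extract the intrinsic description of $U$. Since collinearity with two points of the line $K$ implies collinearity with all of $K$, a short argument shows
$$
T \,:=\, p^\perp \cap K^\perp \,=\, V \cap p^\perp,
$$
the inclusion $\supseteq$ being clear (every point of $V$ is collinear with $k_1 \in V$ and with $k_2$, hence with all of $K$) and the inclusion $\subseteq$ using that any $x \in T$ is a common neighbour of the symplectic pair $\{p,k_1\}$, hence lies in $\Sigma_1$, and then in $V \cap p^\perp$. Thus $T$ is a $4$-dimensional singular subspace depending only on $p$ and $K$, and $U := \langle p, T\rangle$ is a $5$-dimensional singular subspace of $\Sigma_1$ with the same intrinsic description. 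For each $k \in K$, every point of $T$ is a common neighbour of the symplectic pair $\{p,k\}$, so lies in the convex symplecton $\Sigma(p,k)$, giving $U \subseteq \Sigma(p,k)$. For distinct $k,k' \in K$ the intersection $\Sigma(p,k) \cap \Sigma(p,k')$ then contains $U$, so by Fact~\ref{fact2E7} it is a $5$-space and must equal $U$. In particular $U$ is a $5$-space of $\mathfrak P$, and uniqueness is immediate since any competing $5$-space would be contained in $\Sigma_1 \cap \Sigma_2 = U$.

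It remains to show that every symplecton through $U$ has the form $\Sigma(p,k)$ for some $k \in K$. The map $k \mapsto \Sigma(p,k)$ from $K$ to the set $\mathcal{S}(U)$ of symplecta containing $U$ is injective by the argument of the preceding paragraph. In a thick $\sE_7$ building, the residue $\mathrm{Res}(U)$ is a building of type $\sA_1 \times \sA_5$, and $\mathcal{S}(U)$ corresponds to the vertex set of the $\sA_1$-factor, which has cardinality $q+1$ equal to that of the line $K$ (all thickness parameters coincide in the simply laced case); the injection is therefore a bijection. The main obstacle is precisely this surjectivity step; alternatively, it can be handled geometrically by observing that, for any $\Sigma \in \mathcal{S}(U)$, Fact~\ref{fact1E7} forces each point of $K$ to be either in $\Sigma$ or close to $\Sigma$ (the ``far'' case is excluded since $k^\perp \cap U = T$ has dimension $4$), and then ruling out the case $K \cap \Sigma = \emptyset$ via an analysis of the unique $6$-space through the $5'$-counterpart $U'$ of $U$ in $\Sigma$ containing $T$.
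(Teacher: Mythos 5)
Your construction of $U$ is essentially the paper's, and the intrinsic description $T = p^\perp \cap K^\perp$ is a nice touch that the paper does not spell out; the verification that $T = V\cap p^\perp$ is a singular $4$-space, that $U = \langle p,T\rangle$ lies in every $\Sigma(p,k)$, and that distinct $\Sigma(p,k)$, $\Sigma(p,k')$ meet exactly in $U$ (via Fact~\ref{fact2E7}) are all correct. You also correctly identify where the real work lies: showing that \emph{every} symplecton through $U$ is of the form $\Sigma(p,k)$.

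That surjectivity step, however, has a genuine gap. Your primary argument is a cardinality count: $\mathcal{S}(U)$ and $K$ both have ``$q+1$'' elements, so the injection $k\mapsto\Sigma(p,k)$ is a bijection. This reasoning is only valid when the underlying field is finite; the paper treats arbitrary thick $\sE_7$ buildings, and over an infinite field both $K$ and $\mathcal{S}(U)$ are infinite sets of the same cardinality, for which an injection need not be onto. So the counting argument does not establish the claim in the required generality. Your proposed geometric alternative starts correctly — $k^\perp\cap\Sigma\supseteq T$ has dimension $4$, so each $k\in K$ is either in $\Sigma$ or close to $\Sigma$, and one must rule out $K\cap\Sigma=\emptyset$ — but it is left as an unresolved sketch (``via an analysis of the unique $6$-space \dots''). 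This is precisely the point where the paper does nontrivial work: assuming $K\cap\Sigma=\emptyset$, it forms the $6$-space $A=\langle W,K\rangle$ and the $5'$-space $B=x^\perp\cap\Sigma$, observes these share the $4$-space $W$, and then shows a symplecton $\Sigma(a,b)$ with $a\in A\setminus B$, $b\in B\setminus A$ would have to contain singular subspaces of impossible dimension. Without carrying out an argument of that kind, the corollary is not proved.
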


\begin{proof}
Take two distinct points $x,y$ on $K$ and assume $\Sigma(p,x)\cap\Sigma(p,y)=L$ is a line. Since neither of $x,y$ is collinear to $p$, the points $x$ and $y$ are both collinear to a unique respective point of $L$. Lemma~\ref{lemma3E7} implies that $x$ and $y$ are either opposite or symplectic, a contradiction. Hence Fact~\ref{fact2E7} yields a unique $5$-space $U=\Sigma(p,x)\cap\Sigma(p,y)$. 

Consider an arbitrary point $z$ on $K$. We claim that $U\subseteq\Sigma(p,z)$. Indeed, $x$ is collinear to all points of the $5'$-space of $\Sigma(p,y)$ generated by $x^\perp\cap U$ and $y$. Hence $x^\perp\cap U=y^\perp\cap U=:W$, and $W$ and $K$ are contained in a unique $6$-space. This implies that $z$ is collinear to all points of $W$. This also implies $W\subseteq\Sigma(p,z)$, and the claim follows.

Now, conversely, suppose some symplecton $\Sigma$ contains $U$ en assume that $\Sigma$ is disjoint from $K$. The subspace $W$ and the line $K$ generate a $6$-space $A$; the point $x$ is collinear to all points of a $5'$-space $B$ and $A\cap B$ contains $x$ and $W$, hence is a $5'$-space. But then $\Sigma(a,b)$, with $a\in A\setminus B$ and $b\in B\setminus A$, contains $A$ and $B$, which is totally absurd. 
\end{proof}

\begin{lemma}\label{lemma4E7}
Let $L$ be a line of the first kind through $p$. Let $K$ be the unique line concurrent with both $L$ and $L^\theta$. then $K^\theta=K$. 
\end{lemma}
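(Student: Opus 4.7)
The plan is to identify $K$ explicitly using Lemma~\ref{lemma2E7} and then establish its $\theta$-invariance via the symplectic polarity structure of $\theta_p$ from Lemma~\ref{lemma1E7}.

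First I would invoke Lemma~\ref{lemma2E7}: let $q \in L$ be the unique point not opposite $p' := p^\theta$ and $q'$ the projection of $p$ onto $L^\theta$. Then $q^\theta = q'$ and $q \perp q'$, so the line $K := qq'$ is concurrent with $L$ at $q$ and with $L^\theta$ at $q'$. For uniqueness, note that the symp $\Sigma := L^{\theta_p}$ contains $L$ and, as established in the proof of Lemma~\ref{lemma2E7}, meets $L^\theta$ in the single point $q'$. The involution property $\theta_p^2 = \mathrm{id}$ coming from Lemma~\ref{lemma1E7}, applied symmetrically at $p'$, yields the dual identity $L \cap \Sigma^\theta = \{q\}$. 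Any transversal of $L$ and $L^\theta$ distinct from $L$ and $L^\theta$ joins some $r \in L$ to some $r' \in L^\theta$ with $r \perp r'$; combining Fact~\ref{fact2E7} (which constrains the intersection $\Sigma \cap \Sigma^\theta$ containing both $q$ and $q'$, hence the line $K$, to a polar or adjacent configuration) with Lemma~\ref{lemma3E7} (ruling out extraneous collinearities) forces $r = q$ and $r' = q'$, so $K$ is the unique such line.

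For the $\theta$-invariance of $K$, I would apply the same construction with $p'$ in the role of $p$ and $L^\theta$ in the role of $L$: the point $p'$ is non-domestic for $\theta$ (since $\theta$ preserves opposition), and $L^\theta$ is of the first kind through $p'$ for $\theta$, which itself follows by translating the polarity $\theta_p$ to $\theta_{p'}$ via $\theta_p^2 = \mathrm{id}$ (both are symplectic polarities by Lemma~\ref{lemma1E7}). Lemma~\ref{lemma2E7} then produces the unique transversal of $L^\theta$ and $L^{\theta^2}$; by the equivariance of the whole construction under $\theta$, this transversal is $K^\theta$. Finally, the projection identity $\mathrm{proj}_{p'}\Sigma^{\theta^2} = L^\theta$ (a further consequence of $\theta_{p'}^2 = \mathrm{id}$) shows that $K$ itself meets $L^{\theta^2}$, so $K$ is also a transversal of $L^\theta$ and $L^{\theta^2}$. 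Uniqueness at $p'$ then forces $K^\theta = K$.

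The main obstacle is the last identification. A naive symmetry argument via $\theta \leftrightarrow \theta^{-1}$ only re-delivers the identity $q^\theta = q'$ and is therefore tautological; one must genuinely exploit the involution property of $\theta_p$ at \emph{both} $p$ and $p'$ to chase the projection identities through the configuration $(L, L^\theta, L^{\theta^2})$ and the symps $(\Sigma, \Sigma^\theta, \Sigma^{\theta^2})$, and then combine this with the uniqueness of the transversal established via Fact~\ref{fact2E7} and Lemma~\ref{lemma3E7}.
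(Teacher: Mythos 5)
Your strategy collapses at precisely the step you yourself flag as "the main obstacle," and I don't think the resolution you sketch there actually closes the gap. You want to show that $K$ is a transversal of $L^\theta$ and $L^{\theta^2}$, so that uniqueness of the transversal at $p'$ forces $K=K^\theta$. But observe that $K^\theta$ passes through $q'=q^\theta$ (since $K$ passes through $q$), and likewise $K$ passes through $q'$. Two distinct lines through $q'$ in a polar space meet $L^{\theta^2}$ in at most one common point, so "$K$ meets $L^{\theta^2}$" is \emph{equivalent} to "$q^{\theta^2}\in K$," which is itself equivalent to "$K=K^\theta$." You have rephrased the conclusion, not proved it. The projection identity $\proj_{p'}\Sigma^{\theta^2}=L^\theta$ is indeed true (it is just $\theta_{p'}^2=\mathrm{id}$ applied to $\Sigma^\theta=(L^\theta)^{\theta_{p'}}$), but it is a statement about residues at $p'$ and $p^{\theta^2}$; it does not control where the line $K=qq'$ — which lives in $\Sigma\cap\Sigma'$, a configuration attached to $p$ and $p'$ — meets $L^{\theta^2}$. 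Nothing forces $q, q', q^{\theta^2}$ to be collinear from the involutivity of $\theta_p$ and $\theta_{p'}$ alone, because $\theta$ carries $\Res(p)$ to $\Res(p')$ and $\Res(p')$ to $\Res(p^{\theta^2})$: the involutions at $p$ and $p'$ act on different residues and there is no a priori loop back.

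The paper closes this by a contradiction argument of a genuinely different flavour, which is worth internalising: assume $K\ne K^\theta$, so that $K^\theta$ is a line of $\Sigma'$ through $q'$ but \emph{not} through $q$. Pick any $u\in K\setminus\{q,q'\}$ and a point $y\in\Sigma$ collinear with $u$ but not with $q$; after adjusting $y$, the point $y$ is non-domestic and can play the role of $p$. Applying Lemma~\ref{lemma2E7} with $y$ in place of $p$ (with the line $uy$ in place of $L$) forces $u^\theta=q$, which is impossible since $u^\theta\in K^\theta$ and $q\notin K^\theta$. This is the missing ingredient your "chase projection identities" step is gesturing at: one needs to bring a \emph{third} non-domestic point into play, not merely iterate the involution between $p$ and $p'$. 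Your identification of $K=qq'$ via Lemma~\ref{lemma2E7}, the claim $\Sigma\cap\Sigma'=K$ from Fact~\ref{fact2E7}, and the observation that $K^\theta$ is the transversal of $L^\theta$, $L^{\theta^2}$ are all correct and mirror the paper's set-up, but the final step requires the extra point.
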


\begin{proof}
Let $q$ be the projection of $p^\theta$ onto $L$ and $q'$ the projection of $p$ onto $L^\theta$. Then we already know by Lemma~\ref{lemma2E7} that $q^\theta=q'$. So, if $K\neq K^\theta$, then $K^\theta$ is a line in $\Sigma'$ (the projection of $L$ onto $p^\theta$) intersecting $\Sigma$ (the projection of $L^\theta$ onto $p$) in the point $q'$ (note that $\Sigma\cap\Sigma'=K$). Let $u$ be any point of $K\setminus\{q,q'\}$ and let $y$ be any point in $\Sigma$ collinear to $u$ but not to $q$. Then $y^\theta$ is collinear to $u^\theta$ on $K^\theta$, but not to $q'$. Hence there is a unique point on $(uy)^\theta$ collinear to $q$, and we may without loss of generality assume it is $y^\theta$. Letting $y$ play the role of $p$ in the foregoing, we conclude that $u^\theta = q$. So $K$ is fixed after all.  
\end{proof}

We can already rule out some possibilities for a non-domestic $5$-space.

\begin{lemma}\label{lemma5E7}
If $U$ is a non-domestic $5$-space of $\theta$, then $p$ is collinear with at most one point of $U$.
\end{lemma}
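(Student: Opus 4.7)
The plan is to dispose of the five possibilities in Fact~\ref{fact3E7} for the mutual position of $p$ and $U$, showing that cases (i), (ii) and (iii) all lead to a contradiction, so that only cases (iv) and (v) survive; in (iv) the point $p$ is collinear with exactly one point of $U$, and in (v) with none, which is the desired conclusion.

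First I would dispatch case (i). If $p\in U$, then $\{p,U\}$ is an incident flag of the building of type $\{3,7\}$, with both vertices mapped to opposites under $\theta$. By definition of opposition of simplices, this means the type $\{3,7\}$ simplex $\{p,U\}$ is mapped to an opposite simplex, contradicting the $\{3,7\}$-domesticity of $\theta$. This is the easy case and uses only the standing hypothesis.

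Next, for cases (ii) and (iii), the common strategy is to exhibit a second $5$-space $U'$ incident with $p$ and intersecting $U$ in a large subspace, and then to combine the forced domesticity of $U'$ (via $\{3,7\}$-domesticity of $\theta$) with the non-domesticity of $U$ to produce a geometric contradiction. Concretely: in case (ii), $p$ is collinear with a $4$-space $W$ of $U$, so $\langle p,W\rangle$ is a singular $5$-dimensional subspace; I would show that it contains (or is contained in a $6$-space together with) a genuine $5$-space $U'$ through $p$. Since $\{p,U'\}$ is incident, $\{3,7\}$-domesticity forces $U'$ to be domestic, i.e.\ $U'^{\theta}$ is not opposite $U'$. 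Applying $\theta$ gives $W^{\theta}\subseteq U^{\theta}\cap U'^{\theta}$. Now the opposition of $U$ and $U^{\theta}$ says that every point $x$ of $U$ (in particular every point of $W$) is symplectic to a unique $4$-space of $U^{\theta}$ and opposite to all other points of $U^{\theta}$, so no point of $W$ can be collinear to any point of $W^{\theta}$. Combined with the fact that $p$ is opposite $p^{\theta}$ (hence not collinear to it), and that $U'=\langle p,W\rangle$, a direct inspection of the possible pairs $(x,y)\in U'\times U'^{\theta}$ witnessing the non-opposition of $U'$ and $U'^{\theta}$ will use Fact~\ref{fact1E7} and Lemma~\ref{lemma3E7} to propagate a collinearity between a point of $U$ and a point of $U^{\theta}$, contradicting the opposition of $U$ and $U^{\theta}$. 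Case (iii), where $p$ is collinear with a plane $\pi$ of $U$, is handled by the same strategy with $\pi$ playing the role of $W$: extend $\langle p,\pi\rangle$ to a $5$-space $U'$ through $p$ and run the same argument.

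The main obstacle will be the precise geometric bookkeeping in cases (ii) and (iii), i.e.\ ensuring that the $5$-space $U'$ through $p$ with $U\cap U'\supseteq W$ (respectively $\supseteq \pi$) can always be chosen as a genuine $5$-space (type~$3$ vertex of $\Delta$), and that the forced non-opposition of $U'$ and $U'^{\theta}$ really does yield a collinear pair inside $U\times U^{\theta}$. In carrying this out I expect to lean heavily on the symplectic-polarity structure of $\theta_p$ from Lemma~\ref{lemma1E7} (which controls the behaviour of lines and symplecta through $p$ via Fact~\ref{fact4E7}), and on Lemmas~\ref{lemma2E7} and~\ref{lemma3E7}, which govern how $\theta$ acts on lines and on pairs of symplecta meeting in a line. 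Once the collinearity is produced, opposition of $U$ and $U^{\theta}$ is violated, completing the contradiction in both cases and establishing that $p^{\perp}\cap U$ has dimension at most $0$.
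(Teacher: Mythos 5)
Your proposal takes a genuinely different, and considerably more elaborate, route than the paper's, and as it stands it has real gaps. The paper does not run through the cases of Fact~\ref{fact3E7} at all. It observes first that $p\notin U$ (your case~(i), which is fine once you invoke Proposition~\ref{prop:proj} to pass from ``both $p$ and $U$ non-domestic and incident'' to ``the flag $\{p,U\}$ is non-domestic''; this is not literally by definition). It then handles all remaining cases where $p$ is collinear with more than one point of $U$ in a single stroke: if $p$ is collinear with every point of a line $M\subseteq U$, then by Fact~\ref{fact4E7}(iii) the plane $\langle p,M\rangle$ contains a line $L$ through $p$ of the first kind, and $L\cap U=L\cap M$ is a single point. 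Lemma~\ref{lemma2E7} then says this point is either non-domestic (so the incident pair $\{L\cap U,\,U\}$ is a non-domestic $\{3,7\}$-flag, contradicting the hypothesis) or is the unique point $q$ on $L$ with $q^\theta\perp q$ (but then $q\in U$ and $q^\theta\in U^\theta$ are collinear, contradicting non-domesticity of $U$). No auxiliary $5$-space is needed, and the argument is four lines long.

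Two concrete problems with your plan for cases (ii) and (iii). First, the auxiliary $5$-space $U'$ you want may not exist in case (ii): $\langle p,W\rangle$ is a $5$-dimensional singular subspace, but in $\mathfrak{P}$ these come in two kinds, genuine $5$-spaces (type~$3$ vertices) and $5'$-spaces (hyperplanes of $6$-spaces). A $5'$-space is not contained in any $5$-space, so ``extend $\langle p,W\rangle$ to a $5$-space through $p$ containing $W$'' is not an operation you can perform in general, and your parenthetical fallback (``contained in a $6$-space together with a genuine $5$-space'') does not help since $5$-spaces are not contained in $6$-spaces. Second, even when $U'$ does exist, the ``direct inspection'' step is where the real work lies and is not actually done. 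Non-opposition of $U'$ and $U'^\theta$ does produce a collinear pair $(x,y)\in U'\times U'^\theta$, but to contradict opposition of $U$ and $U^\theta$ you need a collinear pair in $U\times U^\theta$, i.e.\ with $x\in W$ and $y\in W^\theta$. Since $U'\setminus W$ is a whole affine space (not just $\{p\}$), ruling out pairs with $x\in U'\setminus W$ or $y\in U'^\theta\setminus W^\theta$ requires a substantive argument that is not supplied; pointing out that $p$ itself is opposite $p^\theta$ only handles one point. You would essentially have to redo the geometric analysis that Lemma~\ref{lemma2E7} and Fact~\ref{fact4E7} already package, which is exactly what the paper's shorter proof exploits.
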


\begin{proof}
By the assumption of $\{3,7\}$-domesticity, $p\notin U$. Suppose $p$ is collinear with all points of a line $M$ of $U$. By Fact~\ref{fact4E7}, the plane $\pi$ generated by $p$ and $M$ contains a line $L$ through $p$ of the first kind. Then $L\cap U$ either is a non-domestic point, a contradiction with the $\{3,7\}$-domesticity, or gets mapped under $\theta$ to a collinear point, and then $U^\theta$ cannot be opposite $U$. 
\end{proof}

Now we need two lemmas for polar spaces. We prove them in the most general case, even though we only need them for polar spaces of ranks 4 and 6. 

\begin{lemma}\label{lemma6E7}
Suppose $\Omega$ is a polar space, $L$ a line of $\Omega$, $x$ a point of $\Omega$ not on $L$, and $y$ a point of $\Omega$ on $L$. Then there exists a point collinear with or equal to $x$, collinear with $y$, but not collinear with all points of $L$.   
\end{lemma}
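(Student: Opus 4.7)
The plan is to perform a case analysis on whether $x$ is collinear with every point of $L$, applying the ``one or all'' axiom of polar spaces to $x$ and various lines.

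\textbf{Case 1: $x$ is not collinear with every point of $L$.}  By one-or-all applied to $x$ and $L$, there is a unique point $y_0\in L$ with $x\perp y_0$.  If $y_0=y$, then $z=x$ itself satisfies the three conditions.  Otherwise $x\not\perp y$, and I would choose a line $M$ through $y$ with $M\ne L$ and $M\not\subseteq L^\perp$.  Such an $M$ exists because in the residue $\mathrm{Res}(y)$ the set $[L]^\perp$ is a proper subspace (in rank $2$ this is even simpler, since $\mathrm{Res}(y)$ has no lines and any $M\ne L$ works).  Applying one-or-all to $x$ and $M$: the hypothesis $x\not\perp y\in M$ rules out $x\perp M$, so $x$ is collinear with a unique point $z\in M$, and necessarily $z\ne y$.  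A short argument using one-or-all applied to any $y'\in L$ with $y'\not\perp M$ shows $y'^\perp\cap M=\{y\}$, whence $L^\perp\cap M=\{y\}$; therefore $z\notin L^\perp$, i.e.\ $z\not\perp L$.  Since $z\in M\ni y$ we also have $z\perp y$, so $z$ works.

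\textbf{Case 2: $x$ is collinear with every point of $L$.}  This case does not occur when $\Omega$ is a generalised quadrangle (by one-or-all, a point off $L$ is never collinear with all of $L$), so we may assume $\Omega$ has rank at least $3$ and $\mathrm{Res}(y)$ has rank at least $2$.  Here $x\perp y$ and the line $xy$ is coplanar with $L$, so $[xy]\in[L]^\perp$ in $\mathrm{Res}(y)$.  I would find a line $M$ through $y$ satisfying $M\ne L$, $M\subseteq x^\perp$ and $M\not\subseteq L^\perp$; in $\mathrm{Res}(y)$ this amounts to finding a point $[M]\in [xy]^\perp\setminus [L]^\perp$.  Such $[M]$ exists because $[xy]\ne[L]$ (as $x\notin L$) and, in a non-degenerate polar space of rank at least $2$, the perps of two distinct points are hyperplanes which can never satisfy $[xy]^\perp\subseteq[L]^\perp$.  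Again $L^\perp\cap M=\{y\}$ as in Case~1, so any $z\in M\setminus\{y\}$ satisfies $z\perp x$ (since $M\subseteq x^\perp$), $z\perp y$, and $z\not\perp L$.

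The main obstacle is ensuring the existence of the auxiliary line $M$ in each case; both reductions rely on the standard fact that in a non-degenerate polar space of rank at least $2$ the perp of a point is a proper geometric hyperplane and the perps of two distinct points are not contained one in the other.  Once $M$ is chosen, the conclusion is immediate from one-or-all applied to $x$ and $M$ together with the computation $L^\perp\cap M=\{y\}$.
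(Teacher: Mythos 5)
Your proof is correct and follows essentially the same strategy as the paper's: pass to the residue at $y$, produce an auxiliary line $M$ through $y$ not coplanar with $L$ (equivalently $M\not\subseteq L^\perp$), and obtain the desired point either as the projection of $x$ onto $M$ or as any point of $M\setminus\{y\}$ when $M\subseteq x^\perp$. The paper simply splits on $x\perp y$ versus $x\not\perp y$ rather than your $x\perp L$ versus $x\not\perp L$; the constructions are substantively identical.
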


\begin{proof}
If $x\perp y$, then considering the residue in $y$, we find a line through $y$ all of whose points are collinear with $x$ but not with any point of $L$ distinct from $y$. Any point on such line does the job. If $x$ is not collinear with $y$, then we consider the projection of $x$ onto any line through $y$ not contained in a common plane with $L$.
\end{proof}

\begin{lemma}\label{lemma8E7}
Suppose $\Omega$ is a polar space, $L$ a line of $\Omega$, $x$ a point of $\Omega$ collinear with exactly one point   $y$ of $L$, and $K$ is a line of $\Omega$ not coplanar with $x$, not intersecting $L$ and such that the unique line $M$ containing $x$ and intersecting $K$ in a point $z$ is not coplanar with $y$.  Then there exists a point collinear with all points of $K$, collinear with $y$, and not coplanar with $L$. 
\end{lemma}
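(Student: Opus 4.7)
The plan is to reduce the existence of $p$ to a short linear-algebraic claim about the $3$-dimensional subspace $V := \langle y, K\rangle$ of the ambient projective space, exploiting the standard duality $V^{\perp\perp} = V$ available in any non-degenerate polar space.

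First I would extract the only real consequence of the hypotheses that the construction needs. Since $y \perp x$ and the line $M = xz$ is not coplanar with $y$, the point $y$ cannot be collinear with $z$: otherwise $y, x, z$ would span a plane containing both $M$ and $y$, contradicting the hypothesis. Now pick any $z' \in K \setminus \{z\}$, so $K = \langle z,z'\rangle$, and consider the induced polar form $f$ on $V = \langle y,z,z'\rangle$. In this basis the Gram matrix reads
\[
\begin{pmatrix} 0 & c & c' \\ c & 0 & 0 \\ c' & 0 & 0 \end{pmatrix},
\]
with $c = f(y,z) \neq 0$ and $c' = f(y,z')$ possibly zero (in characteristic~$2$ one instead uses the induced quadratic form, with the same structural conclusions). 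Evaluating the singularity condition on $v = \alpha y + \beta z + \gamma z'$ shows that the singular points of $V$ form the union of $K$ and at most one further singular line $L'$ through $y$; moreover $L' \cap K$ is always a single point (equal to $z'$ if $c' = 0$, or to $z - (c/c')z'$ otherwise).

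Since $L$ is a singular line through $y$ with $L \cap K = \emptyset$ by hypothesis, $L$ coincides with neither $K$ nor $L'$, whence $L \not\subseteq V$. By the duality $V^{\perp\perp} = V$, this is equivalent to $V^\perp \not\subseteq L^\perp$, and as the polar subspace $V^\perp \cap \Omega$ has positive rank and therefore spans $V^\perp$, we may choose a point $p \in V^\perp \cap \Omega$ that lies outside $L^\perp$. By construction $p$ is collinear with $y$ and with every point of $K$, and by choice $p$ is not collinear with every point of $L$, hence not coplanar with $L$.

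The main obstacle — and essentially the only nontrivial step — is the singular-line classification on $V$: once it is verified that every singular line of $V$ other than $K$ meets $K$ in a point, the disjointness hypothesis $K \cap L = \emptyset$ forces $L \not\subseteq V$ and the remainder of the proof is pure polar duality. The hypotheses involving $x$ and $M$ play no further role beyond supplying the single fact $y \not\perp z$.
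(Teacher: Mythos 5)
Your approach is genuinely different from the paper's. The paper's proof is purely synthetic: it projects $z$ onto $L$ and $y$ onto $K$, observes that the projection $y'$ of $y$ onto $K$ works unless $y'$ is coplanar with $L$, and in that remaining case finds the desired point on a line extracted from a plane opposite $\langle y,y',z\rangle$ in the residue of $y$. That argument works for arbitrary abstract polar spaces of any rank $\geq 2$. You instead fix a projective embedding carrying a (pseudo-)quadratic or sesquilinear form, take the plane $V=\langle y,K\rangle$, and argue via $\perp$-duality. When it applies this is cleaner, but it is narrower in scope, since not every abstract polar space is embeddable (harmless in the $\mathsf{E}_7$ application, where symplecta are $\mathsf{D}_6$ polar spaces, but the lemma is stated in full generality).

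There are also two concrete problems with the argument as written. First, the step you identify as ``essentially the only nontrivial step,'' namely $L\not\subseteq V$, is actually trivial and needs no Gram matrix at all: $V$ is a projective plane containing the line $K$, and two lines in a projective plane always meet, so a line disjoint from $K$ cannot lie in $V$. Second, the step you pass over in one clause — ``the polar subspace $V^{\perp}\cap\Omega$ has positive rank and therefore spans $V^{\perp}$'' — is the genuine content, and the inference is false in general. For a rank-$2$ polar space such as $O_5(q)$ or $O_6^-(q)$, the only singular point of $V^{\perp}$ is the radical $r$ of $V$ (which is exactly the projection $y'$ of $y$ onto $K$), so $V^{\perp}\cap\Omega=\{r\}$ spans only a point, not $V^{\perp}$. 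The conclusion survives in that case, but for a different reason: $y'\in K$ and $K\cap L=\emptyset$ force $y'\notin L$, and a point off a line of a generalized quadrangle cannot be collinear with all of it, so $y'\notin L^{\perp}$. For rank $\geq 3$ the spanning claim does hold — $V^{\perp}/\langle r\rangle$ is the perp of a hyperbolic line inside $r^{\perp}/\langle r\rangle$, hence has Witt index $\mathrm{rank}(\Omega)-2\geq 1$, and singular points of a nondegenerate isotropic space span — but this needs to be stated, and the rank-$2$ case must be handled separately, exactly as the paper's synthetic proof is careful to do.
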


\begin{proof}
The assumptions imply that $M$ and $L$ are $\Omega$-opposite lines. Let $z'$ be the projection of $z$ onto $L$. Also, $xy$ is $\Omega$-opposite $K$. Let $y'$ be the projection of $y$ onto $K$. If $z'$ is not collinear to $y'$, then $y'$ satisfies our conditions. So we may assume that $y'$ is coplanar with $L$ (and this in particular implies that we may assume that the rank of $\Omega$ is at least 3). Note that the condition that $K$ and $L$ are disjoint implies that $y'\neq z'$. 

Now there is a  plane $\pi$ through $y$ $\Omega$-opposite the plane $\<y,y',z\>$ in the residue of $y$. That yields a line $L'$ in $\alpha$ coplanar with $y'$ but not with $z'$. No point of $L'\setminus\{y\}$ is coplanar with $L$ and since $y$ and $z$ are not collinear, there is a unique point $u\in L'\setminus\{y\}$ collinear with $z$, and hence coplanar with $K$. The point $u$ satisfies our requirements.    
\end{proof}

Now we can finish the proof of the $\{3,7\}$-lemma for buildings of type $\mathsf{E_7}$.

\begin{lemma}\label{lemma9E7}
If $U$ is a non-domestic $5$-space of $\theta$, then $p$ is not collinear with any point of $U$.
\end{lemma}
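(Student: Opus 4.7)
The plan is to argue by contradiction. By Lemma~\ref{lemma5E7} we may assume that $p$ is collinear with exactly one point $x\in U$. The entire strategy is to exhibit a line $L\ni p$ of the first kind that meets $U$ in a point $r$. Once such a line is found, Lemma~\ref{lemma2E7} delivers a contradiction as follows: if $r$ coincides with the distinguished point $q$ on $L$ (the unique point of $L$ not opposite $p^\theta$), then $r\perp r^\theta$, so $r\in U$ and $r^\theta\in U^\theta$ are collinear, contradicting the fact that opposite $5$-spaces cannot contain a pair of collinear points (recorded just before Lemma~\ref{lemma1E7}); otherwise $r\in L\setminus\{q\}$ is non-domestic, producing a type $\{3,7\}$ simplex $\{r,U\}$ mapped to an opposite, violating $\{3,7\}$-domesticity.

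The first candidate is $L_0:=px$ itself. If $L_0$ is of the first kind the argument above concludes the proof at once, so I would assume $L_0$ is of the second kind. The next step is to pass to a suitable symplecton. Let $\Sigma$ be a symplecton of the first kind through $p$ containing $L_0$; such a $\Sigma$ exists by Fact~\ref{fact4E7}(iii) applied to any $5$-space through $p$ that contains $L_0$ and lies in a plane carrying a first-kind line. Working inside the rank~$6$ polar space $\Sigma$, I would analyse $\Sigma\cap U$ using Fact~\ref{fact3E7} together with the fact (coming from the uniqueness of $x$) that every plane through $p$ meets $U$ in at most the point $x$. The aim is to locate a line $N\subseteq \Sigma\cap U$ through $x$, which is the input for the polar-space lemmas.

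With $N$ in hand, I apply Lemma~\ref{lemma6E7} inside $\Sigma$ with $L:=N$, $y:=x$, and the external point taken to be $p$, yielding a point $u\in \Sigma$ collinear with $p$ and with $x$ but not collinear with every point of $N$. The plane $\pi:=\langle p,x,u\rangle\subseteq \Sigma$ then contains $L_0$, and by construction $\pi\cap U$ is a line through $x$ (it cannot be just $\{x\}$ by the choice of $u$, and it cannot be larger since $p\in\pi$ is collinear only with $x$ in $U$). By Fact~\ref{fact4E7}(i), $\pi$ contains a first-kind line through $p$ distinct from $L_0$, which therefore meets $U$ in a point, completing the contradiction. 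The main obstacle is the mutual-position analysis that guarantees the line $N\subseteq \Sigma\cap U$ through $x$; this is where Lemma~\ref{lemma8E7} may need to be invoked in place of Lemma~\ref{lemma6E7}, and where the $\{3,7\}$-domesticity combined with the first-kind character of $\Sigma$ has to be used to rule out degenerate positions of $\Sigma$ relative to $U$ (case analysis via Fact~\ref{fact2E7} on $\Sigma$ and any symplecton through a hypothetical line of $U$ through $x$).
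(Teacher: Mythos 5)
Your strategy collapses in the case you flag as "$L_0$ of the second kind," and the obstacle is more severe than the mutual-position analysis you worry about. Your goal is to locate a first-kind line through $p$ meeting $U$; but \emph{any} line through $p$ that meets $U$ does so in a point collinear with $p$, and by Lemma~\ref{lemma5E7} the only such point is $x$. Hence the only line through $p$ meeting $U$ is $L_0=px$ itself. If $L_0$ is of the second kind there is simply no candidate left, so the plan cannot be repaired by a cleverer choice of $\Sigma$ or by substituting Lemma~\ref{lemma8E7} for Lemma~\ref{lemma6E7}. This also explains why your claim "$\pi\cap U$ is a line through $x$" is false: every point of the singular plane $\pi\ni p$ is collinear with $p$, so $\pi\cap U\subseteq x^{\perp}\cap U\cap p^{\perp}=\{x\}$ regardless of how $u$ is chosen. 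Consequently the first-kind line through $p$ in $\pi$ promised by Fact~\ref{fact4E7} meets $U$ only if it passes through $x$, i.e.\ only if it equals $L_0$, which you have assumed is of the second kind.

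The paper's proof avoids this dead end by inverting the roles of $p$ and the auxiliary point. Rather than trying to steer a first-kind line from $p$ into $U$, it uses Corollary~\ref{cor1E7} together with Fact~\ref{fact4E7}(iii) to choose a first-kind symplecton $\Sigma$ that contains $p$ \emph{and} an entire line $K$ of $U$ through $z$ (this is exactly the existence you were unsure how to guarantee, and it requires Corollary~\ref{cor1E7}, not just any first-kind $\Sigma$ through $L_0$). It then works with the fixed line $L=\Sigma\cap\Sigma^{\theta}$ from Lemma~\ref{lemma4E7}, verifies the hypotheses of Lemma~\ref{lemma8E7} (disjointness of $K$ and $L$, and non-coplanarity of $pz$ with $y$), and produces a point $u\in\Sigma$ collinear with $y$, coplanar with $K$, and not coplanar with $L$. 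By Lemma~\ref{lemma3E7} this $u$ is non-domestic, and since $u$ is collinear with every point of the line $K\subseteq U$, Lemma~\ref{lemma5E7} applied to $u$ (in place of $p$) yields the contradiction. So the contradiction in the paper comes from a non-domestic point collinear with a \emph{line} of $U$, not from a first-kind line through $p$ hitting $U$; the two ideas coincide only in the sub-case $L_0$ first kind, which is the one part of your argument that does go through.
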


\begin{proof}
Suppose $p$ is collinear to some point $z$ of $U$ and set $M:=pz$. Then by Lemma~\ref{lemma5E7}, $z$ is unique with this property. Hence all other points of $U$ are symplectic to $p$. Considering a line in $U$ not through $z$, Corollary~\ref{cor1E7} combined with the third assertion of Fact~\ref{fact4E7} (dual form) implies that there exists a symplecton $\Sigma$ of the first kind through $p$ and some line $K$ of $U$ containing $z$.  Let $R$ be the projection of $\Sigma^\theta$ onto $p$ (then $R$ is a line of the first kind), set $y=\Sigma^\theta\cap R$, and let $L$ be the line containing $y$ and $y^\theta$. Then Lemma~\ref{lemma4E7} asserts that $L^\theta= L$. Hence, if $K$ would intersect $L$, then some point of $U$ is mapped onto a collinear one, contradicting non-domesticity of $U$. So we may assume that $K$ and $L$ do not intersect.

If $M$ and $y$ were coplanar, then the first assertion of Fact~\ref{fact4E7} would imply that $M$ is of the first kind. Hence either $z^\theta\perp z$, contradicting $U$ being non-domestic, or $z$ is non-domestic itself, contradicting the $\{3,7\}$-domesticity assumption. So we may assume that $M$ and $y$ are not coplanar. 

Hence all assumptions of Lemma~\ref{lemma8E7} are satisfied (where the current point $p$ plays the role of the point $x$ in Lemma~\ref{lemma8E7}) and we conclude that there is a point $u$ of $\Sigma$ not coplanar with $L$ but collinear with $y$, and coplanar with $K$. Lemma~\ref{lemma3E7} implies that $u\in\Sigma$ and $u^\theta\in\Sigma^\theta$ are opposite, hence $u$ is non-domestic. But now $u$ is collinear to all points of $K\subseteq U$, contradicting Lemma~\ref{lemma5E7}, where we interchange the roles of $u$ and $p$. 
\end{proof}

\begin{lemma}
Every $5$-space is $\theta$-domestic. That is, $\theta$ is $\{3\}$-domestic.
\end{lemma}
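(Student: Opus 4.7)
\emph{Plan.} I would assume, for contradiction, that some $5$-space $U$ is mapped to an opposite $5$-space by~$\theta$, and derive a contradiction by exhibiting a non-domestic point collinear with a point of~$U$; applying Lemma~\ref{lemma9E7} with this point in the role of~$p$ then gives the contradiction. By Lemma~\ref{lemma9E7} the non-domestic point $p$ is not collinear with any point of~$U$, and hence Fact~\ref{fact3E7} forces case~(v): there is a unique $4$-space $W \subseteq U$ with every point of $W$ symplectic to $p$ and every point of $U \setminus W$ opposite $p$. One corollary (not strictly needed but indicative of the argument pattern) is that every point $u \in U$ must itself be $\theta$-domestic, since otherwise Lemma~\ref{lemma9E7} applied with~$u$ in the role of $p$ would require $u$ to be collinear with no point of~$U$, which is absurd as $u \in U$.

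To locate the required non-domestic point collinear with~$U$, I fix $w \in W$ and work inside the symplecton $\Sigma := \Sigma(p, w)$. By combining Corollary~\ref{cor1E7} (applied to a line of $W$ through $w$) with the dual of Fact~\ref{fact4E7}(iii), one arranges that $\Sigma$ is of the first kind. Every plane through $p$ in $\Sigma$ then contains a first-kind line through~$p$ by Fact~\ref{fact4E7}(iii); pick such a line $L$. Since $p \not\perp w$, the line $L$ is not contained in $w^{\perp}$, so it has a unique point $q' = \proj_L(w)$ collinear with~$w$. By Lemma~\ref{lemma2E7} the unique $\theta$-domestic point of~$L$ is $q := \proj_L(p^{\theta})$. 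Hence whenever $q \neq q'$, the point $q'$ is non-domestic and collinear with $w \in U$, finishing the argument.

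The hard part is to exclude the degenerate possibility that $q = q'$ for every first-kind line $L$ through $p$ in $\Sigma$ and for every $w \in W$. I would resolve this by exploiting two sources of freedom: varying the first-kind line~$L$ within the many planes through~$p$ in~$\Sigma$, and varying~$w$ within the $4$-space~$W$. If all these projections coincide then $p^{\theta}$ and every point of $W$ share the same ``shadow'' on every first-kind line through $p$ in $\Sigma$, which produces a strong collinearity alignment inside $\Sigma$ that conflicts with the fact that $p$ is merely symplectic (not collinear) to every point of~$W$; Lemma~\ref{lemma4E7}, providing a $\theta$-fixed line concurrent with $L$ and $L^{\theta}$, should supply the final rigidity needed to force the contradiction.
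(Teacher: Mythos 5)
Your setup matches the paper: assume $U$ is a non-domestic $5$-space, invoke Lemma~\ref{lemma9E7} and Fact~\ref{fact3E7}(v) to get the $4$-space $W\subseteq U$ of points symplectic to $p$, and use Corollary~\ref{cor1E7} together with the dual of Fact~\ref{fact4E7}(iii) to get a first-kind symplecton $\Sigma$ through $p$ and a point $x$ of $W$. Your observation that every point of $U$ must be domestic is also correct. But the key step — producing a non-domestic point collinear with a point of $U$ — is where you and the paper part ways, and the gap you flag is real and not clearly closable by the means you sketch.

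You restrict the candidate to a point $q'=\proj_L(w)$ on a \emph{first-kind line $L$ through $p$}, using Lemma~\ref{lemma2E7} to know that $L$ has exactly one domestic point $q$. This leaves you stuck precisely when $q=q'$, and you concede this is the hard part. Your proposed escape (vary $L$ and $w$, invoke Lemma~\ref{lemma4E7}) does not clearly work: changing $w$ changes $\Sigma$ (and hence its whole family of first-kind lines, since those are the lines through $p$ coplanar with $\Sigma^{\theta_p}$), so the quantifier "same shadow on every first-kind line for every $w$" is not well-posed; and for a fixed $\Sigma$ the coincidence $q=q'$ on every first-kind line is not obviously contradictory. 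A related imprecision: not every $w\in W$ yields a first-kind $\Sigma(p,w)$; the corollary only gives \emph{some} $x$ on each line of $W$ with $\Sigma(p,x)$ of the first kind, and the argument must be run with that particular $x$. The paper avoids the degenerate coincidence entirely by abandoning the constraint that the non-domestic point lie on a line through $p$. Setting $L=\Sigma\cap\Sigma^\theta$ (a $\theta$-fixed line by Lemma~\ref{lemma4E7}, which also handles the case $x\in L$ you do not address), it applies the polar-space existence result Lemma~\ref{lemma6E7} inside $\Sigma$ to find $u$ collinear with $x$ and with $y\in L$ but not coplanar with $L$, then certifies $u$ non-domestic via the $\Sigma$-versus-$\Sigma^\theta$ opposition criterion of Lemma~\ref{lemma3E7} (since $y\ne y^\theta$). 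Finally $u\perp x\in U$ contradicts Lemma~\ref{lemma9E7} with $u$ in place of $p$. If you want to rescue your line-through-$p$ approach you would need a genuine replacement for this use of Lemmas~\ref{lemma6E7} and~\ref{lemma3E7}; as written, the degenerate case $q=q'$ is an open hole.
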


\begin{proof}
Suppose for a contradiction that some $5$-space $U$ is non-domestic. By Lemmas~\ref{lemma5E7} and~\ref{lemma9E7}, an in view of Fact~\ref{fact3E7}, there is a unique $4$-space $W\subseteq U$ all of whose points are symplectic to $p$, and such that all points of $U\setminus W$ are opposite $p$. Considering an arbitrary line in $W$, Corollary~\ref{cor1E7} combined with the third assertion of Fact~\ref{fact4E7} (dual form) implies that there exists a symplecton $\Sigma$ of the first kind containing $p$ and some point  $x$ of $W$. Let $L=\Sigma\cap\Sigma^\theta$ and let $y$ be the projection of $\Sigma^\theta$ onto its projection onto $p$, just like in the proof of Lemma~\ref{lemma9E7}. Note that, if $x\in L$, then $U$ is domestic (since $x$ is either fixed by $\theta$, of mapped onto a collinear point), a contradiction. Hence we may assume $x\notin L$. Then we can apply Lemma~\ref{lemma6E7} and find a point $u$ in $\Sigma$ collinear with $y$, not coplanar with $L$ and collinear with $x$. The point $u$ is non-domestic by Lemma~\ref{lemma3E7} and the fact that $y\neq y^\theta\in L$. But $u$ is collinear to the point $x$ of $U$, contradicting Lemma~\ref{lemma9E7} by interchanging the roles of $p$ and $u$.    
\end{proof}

This concludes the proof of the fact that $\{3,7\}$-domesticity in any thick building of type $\mathsf{E_7}$ implies either $3$-domesticity, or $7$-domesticity. Hence the proof of Theorem~\ref{thm:main} is complete.

\bibliographystyle{plain}

\end{document}